\documentclass[11pt,letterpaper]{article}

\usepackage[T1]{fontenc}
\usepackage{amsmath,amsthm,mathtools}
\usepackage[charter]{mathdesign}
\usepackage[shortlabels]{enumitem}
\setlist{topsep=.5ex,itemsep=.5ex,parsep=0ex,partopsep=0ex}
\usepackage[svgnames]{xcolor}
\usepackage{tikz}
\usetikzlibrary{calc,arrows.meta,positioning}
\usepackage{needspace}
\usepackage{longtable}
\usepackage[normal]{caption}
\usepackage[bookmarksnumbered,bookmarksopen,bookmarksdepth=2,
  linktocpage=true,linktoc=all,colorlinks=true]{hyperref}
\newcommand{\hidelinks}{\hypersetup{linkcolor=Black}}
\newcommand{\restorelinks}{\hypersetup{allcolors=Navy!75!DodgerBlue,
  citecolor=DarkGreen!75!Green}}
\restorelinks
\hypersetup{pdftitle={The maximum relaxation time of a random walk on regular graphs},
  pdfauthor={Haoran Zhu}}
\allowdisplaybreaks[1]
\newtheorem{theorem}{Theorem}[section]
\newtheorem{lemma}[theorem]{Lemma}
\newtheorem{corollary}[theorem]{Corollary}
\newtheorem{proposition}[theorem]{Proposition}
\theoremstyle{definition}

\theoremstyle{remark}

\numberwithin{equation}{section}

\newcommand{\R}{\mathbb{R}}
\newcommand{\one}{\mathbf{1}}
\newcommand{\cE}{\mathcal E}

\title{The maximum relaxation time of a random walk \\on regular graphs}
\author{Haoran Zhu}
\date{Aug 30, 2026}
\AtEndDocument{\bigskip{\footnotesize
\noindent\textsc{Division of Mathematical Sciences, Nanyang Technological
University, 21 Nanyang Link, Singapore 637371}\par
\noindent E-mail address:
\href{mailto:zhuh0031@e.ntu.edu.sg}{\texttt{zhuh0031@e.ntu.edu.sg}}\par}}

\begin{document}
\maketitle

\begin{abstract}
We establish sharp quadratic bounds on the relaxation time of simple random
walk on connected regular graphs, with leading constants \(3/(2\pi^2)\) and
\(1/\pi^2\) for even and odd orders, respectively. This resolves a
longstanding conjecture known as Aldous--Fill spectral gap conjecture (2002). We also prove
uniqueness and stability theorems for the corresponding cubic and quartic
chains, and settle the quartic uniqueness conjecture posed by Abdi,
Ghorbani, and Imrich (2021) and by Abdi and Ghorbani (2023). We establish
sharp bounds on algebraic connectivity under minimum-degree and regularity
constraints and prove the conjectured chain structure of the minimisers,
identifying their repeating blocks. This resolves a longstanding conjecture
of Guiduli and Mohar (1996) and a structural conjecture of Abdi and
Ghorbani (2024). Our quantitative stability theorem shows that nearly
minimum algebraic connectivity forces nearly maximum diameter, proving their
diameter conjecture. We further obtain sharp relaxation-time bounds in terms
of edge-connectivity. For nonregular graphs, we establish sharp bounds for
the gap between maximum degree and adjacency spectral radius, confirming a
conjecture of Liu (2024). As applications, we prove the sharp bounds on
hitting and commute times conjectured by Aldous and Fill (2002) for regular graphs.
\end{abstract}

{\small
\noindent\textit{Keywords:} algebraic connectivity, spectral gap, relaxation
 time, regular graph, Fiedler vector.\par
\noindent\textit{Mathematics Subject Classification (2020):} 05C50, 60J10.\par}

\setcounter{tocdepth}{2}
\hidelinks
\tableofcontents
\restorelinks

\section{Introduction}

The spectrum of a graph connects its combinatorial structure with the
behaviour of random walk. The relations between eigenvalues,
isoperimetry, and graph distance are fundamental to this connection
\cite{AlonMilman1985,BauerKellerWojciechowski2015,Chung1989,Dodziuk1984,LawlerSokal1988,LiuDualCheeger2015}.
They have played a central role in the study of expander graphs,
spectral gaps of random graphs, and convergence to equilibrium
\cite{HoffmanKahlePaquette2021,LubetzkySly2010,MarcusSpielmanSrivastava2015}.
This leads to extremal questions about the smallest possible spectral
gap and the structure of the graphs attaining it.

Degree constraints restrict the local density of a graph, while its
spectral gap depends on the arrangement of edges throughout the graph.
For fixed degree, the proposed extremisers consist of small dense graphs
joined in long chains.
Conjectures of Guiduli and Mohar (1996), Aldous and Fill (2002), and
Abdi and Ghorbani (2024) predict different aspects of this picture:
the structure of graphs with minimum algebraic connectivity, the
largest relaxation time of random walk on a regular graph, and the
relation between minimum algebraic connectivity and maximum diameter
\cite{GuiduliThesis,AldousFill,AbdiGhorbaniDiameter}.

In this paper, we establish sharp spectral bounds under degree
constraints, determine the structure of the minimisers, and prove
quantitative stability. These results resolve the structure conjecture
of Guiduli and Mohar and the structure and diameter conjectures of
Abdi and Ghorbani. For regular graphs, we determine the maximum
relaxation time and its unique extremiser in both order parities,
proving the Aldous--Fill spectral gap conjecture and the uniqueness
conjecture in degree four. The stability theorems show that nearly
extremal eigenvalues force the same chain structure on almost all
vertices. The degree-dependent constants also give the sharp adjacency
spectral bound conjectured by Liu (2024)
\cite{LiuSpectralRadius2024}. As applications, we prove the hitting-
and commute-time bounds conjectured by Aldous and Fill.

Throughout, graphs are finite, simple, and undirected, and are connected
unless stated otherwise. For a graph \(G\) with vertex set \(V(G)\),
edge set \(E(G)\), and order \(n=|V(G)|\geqslant2\),
let \(\mathsf A(G)\) be its adjacency matrix and \(\mathsf D(G)\)
the diagonal matrix of vertex degrees. Its \emph{algebraic connectivity}
\(\mu(G)\), introduced by Fiedler \cite{Fiedler1973}, is the second
smallest eigenvalue of the Laplacian
\(\mathsf L(G)=\mathsf D(G)-\mathsf A(G)\). A graph is
\(d\)-regular if each vertex has degree \(d\). On such a graph,
simple random walk moves at each step to a uniformly chosen neighbour,
and its spectral gap and relaxation time are
\[
 \lambda(G)=\frac{\mu(G)}d,
 \qquad \tau(G)=\frac1{\lambda(G)}=\frac d{\mu(G)}.
\]
This is also the relaxation time of the continuous-time walk with
independent exponential holding times of mean one; see
\cite{AldousFill,LevinPeresWilmer,Lovasz1996}.

\subsection{Algebraic connectivity under degree constraints}

Write \(\delta(G)\) for the minimum degree of \(G\). For integers
\(d\geqslant3\) and \(n\geqslant d+1\), let \(\mathcal D_{n,d}\)
be the class of connected graphs of order \(n\) with minimum degree
at least \(d\), and let \(\mathcal R_{n,d}\) be the class of
connected \(d\)-regular graphs of order \(n\).
All asymptotic statements at fixed degree are understood as
\(n\to\infty\), through orders for which the relevant class is
nonempty. A subscript \(d\) in \(O_d\) indicates dependence on \(d\).

Without a degree constraint, the path \(P_n\) is the unique graph
of order \(n\) with minimum algebraic connectivity, and
\(\mu(P_n)=2(1-\cos(\pi/n))\sim\pi^2/n^2\)
\cite{Fiedler1973}. The degree-constrained problem has been studied
since the early work on cubic (\(3\)-regular) graphs
\cite{BussemakerEtAl,GuiduliThesis,Guiduli1997}.
Brand, Guiduli, and Imrich \cite{BrandGuiduliImrich} determined the
unique cubic minimiser at each even order. The sharp leading constants
were subsequently obtained for cubic and quartic (\(4\)-regular)
graphs, and for graphs of minimum degree three
\cite{AbdiGhorbaniImrich,AbdiGhorbaniQuartic,AbdiGhorbaniMinDegree}.
For general degree, Abdi and Ghorbani
\cite[Theorem~1.8]{AbdiGhorbaniDiameter} obtained the predicted
constants assuming that the diameter is nearly maximum. Our first
theorem determines these constants from the degree condition alone.
Put
\[
 c_d=\begin{cases}
 d-1,&d\text{ odd},\\
 2(d-2),&d\text{ even}.
 \end{cases}
\]

\begin{theorem}\label{thm:degree-bounds-intro}
Let \(d\geqslant3\) be a fixed integer. Then
\[
 \min_{G\in\mathcal D_{n,d}}\mu(G)
   =(d-1+o(1))\frac{\pi^2}{n^2},
 \qquad
 \min_{G\in\mathcal R_{n,d}}\mu(G)
   =(c_d+o(1))\frac{\pi^2}{n^2}.
\]
\end{theorem}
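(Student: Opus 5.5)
Both equalities split into an upper bound, proved by an explicit chain construction, and a matching lower bound, which holds for every graph in the class.

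\emph{Upper bounds.} We build chains of small dense blocks. A block with \(k\) vertices and \(m\) edges behaves, at scale \(n\), like a segment of a continuum string. Its contribution to the Rayleigh quotient is governed by two quantities: the mass per unit length, and the conductance of the cut edges between consecutive blocks.

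Consider first the minimum-degree class \(\mathcal D_{n,d}\). Take blocks \(K_{d+1}\) joined by single edges after deleting one edge from each clique, and place a slightly modified block at each end. Each unit of length then carries about \(d+1\) vertices, and the link edge has conductance \(1\). Take the test function \(x_v=\cos(\pi t_v/L)\), constant on each block. The numerator is approximately \(\sum(\text{gaps})^2\approx L\cdot(\pi/L)^2/2\). The denominator is \((d+1)\cdot L/2\) minus a mean correction.

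Optimising, we want to minimise the number of edges crossing each cut per vertex. This suggests a repeating unit of about \(d-1\) conductance-adjusted vertices and gives \(\mu\le (d-1+o(1))\pi^2/n^2\). More precisely, the effective ratio is the number of crossing edges divided by the square of the number of vertices per unit length. The construction must realise \(\mathrm{crossing}/\text{block}^2\cdot n^2 \to d-1\) after the precise choice of block. Here we simply choose the blocks used by Abdi and Ghorbani and verify the constant \(d-1\). For regular graphs we do the same with their regular blocks: for odd \(d\), bridges joining \(K_{d+1}\) minus a matching; for even \(d\) no bridge is possible, so cuts have two edges, giving \(c_d=2(d-2)\).

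\emph{Lower bounds.} This is the main work. Let \(f\) be a Fiedler vector and order the vertices by the value of \(f\). For a threshold \(s\), let \(e(s)\) be the number of edges crossing the level \(s\), and let \(m(s)\) be the number of vertices below \(s\).

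We use the co-area identity \(\sum_{uv}(f_u-f_v)^2\ge\int e(s)\,(\text{local slope})\,ds\) together with a one-dimensional reduction. Viewing \(m\) as the arc-length parameter, the inequality \(\mu\ge\inf \int e\,(f')^2/\int f^2\,dm\) becomes a weighted Sturm--Liouville problem. Its smallest eigenvalue is at least \(\pi^2\min(\text{conductance}\cdot\text{density})/n^2\) in the limit.

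The key combinatorial input is then a local isoperimetric inequality. For a graph of minimum degree \(d\), any cut separating two large sets must be crossed by enough edges relative to the number of vertices in a window of levels. Concretely, a window containing \(k\) vertices, each with \(d\) neighbours all inside the window or crossing its boundary, forces \(k\ge d+1\). Moreover, the "edges crossing \(\times\) vertices squared per unit" should be at least \(d-1\), or \(2(d-2)\) under regularity with the parity constraint. The parity constraint enters as follows: in a \(d\)-regular graph with \(d\) even, every edge cut has even size, so single-edge cuts are impossible.

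The main obstacle is making this local isoperimetric averaging rigorous when \(f\) is not monotone along blocks. Vertices at similar levels may sit in different parts of the graph, and windows can contain partial blocks. I would handle this with a discretised weighted Wirtinger inequality on level sets of \(f\), charging each crossing edge to the windows it spans. I would then prove the sharp discrete block bound by a case analysis on windows of \(O_d(1)\) vertices. Error terms from windows of size comparable to \(n\) are \(o(1)\) relative to \(\pi^2/n^2\).
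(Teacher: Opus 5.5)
The gap is in the lower bound. In discrete form, your reduction \(\sum_{uv}(f_u-f_v)^2\geqslant\int e(s)\,(\text{local slope})\,ds\), i.e.\ \(\mu\geqslant\inf\int e\,(f')^2/\int f^2\,dm\), is the inequality \(\cE_G(f)\geqslant\sum_i q_iy_i^2\). An edge spanning several gaps is charged only the square of each single gap, so only the diagonal of the cut kernel survives. This series bound cannot reach \(d-1\):
\begin{enumerate}[(i)]
\item With the pointwise minimum you wrote, bridge cuts give only the path constant \(1\).
\item Even with the correct homogenised coefficient (the harmonic mean of the cut profile), the bound falls short on the chain of copies of \(L_d\). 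Its cut sizes over a period are \(1\) and \(t(d+1-t)-1\) for \(1\leqslant t\leqslant d\). Your weighted problem then has lowest eigenvalue \((1+o(1))\frac{(d+1)\pi^2}{n^2}\bigl(1+\sum_{t=1}^{d}(t(d+1-t)-1)^{-1}\bigr)^{-1}\).
\item This is strictly below \((d-1)\pi^2/n^2\): the terms \(t=1\) and \(t=d\) alone already equal \(2/(d-1)\), which is the true terminal resistance of \(L_d\).
\item For \(d=3\) you get \(12/7\) instead of \(2\). The series model gives the diamond resistance \(\tfrac12+\tfrac13+\tfrac12\) rather than \(1\).
\end{enumerate}
So the local inequality you call the key combinatorial input is false for the quantity your reduction retains.

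The missing idea is to keep the mixed entries \(K_{ij}=|\partial S_i\cap\partial S_j|\), \(i\ne j\), of the cut kernel. On the \(d+1\) increments starting at a bridge cut, \(K_{a+t,a+u}\geqslant\max\{0,t(d-u+1)-1\}\) for \(1\leqslant t\leqslant u\leqslant d\). This lower bound is the increment kernel \(B\) of \(K_{d+1}-e\), with \(B^{-1}\one=(1,0,\dots,0,1)^{\mathsf T}/(d-1)\). Cauchy--Schwarz then gives exactly the coefficient \(d-1\) on that window, and a strictly larger one after cuts of sizes \(2,\dots,d-2\) (Lemma~\ref{lem:general-low-cut}). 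Affine interpolation across these disjoint windows gives \(\cE_G(f)\geqslant(d-1)\cE_{P_n}(g)\) with \(\|f-g\|_2^2=O_d(\cE_G(f))\), and the path Poincar\'e inequality finishes (Lemma~\ref{lem:path-transfer}). Your phrase about charging each edge to the windows it spans points this way. However, the reduction you actually formulate discards exactly these terms, and the sharp window inequality is the heart of the proof rather than a routine case analysis.

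For even \(d\), the fact that every cut has even size does not by itself produce \(2(d-2)\). Even with the full kernel, the windows between consecutive size-two cuts do not each carry coefficient \(c=2(d-2)\) on their own length. In one configuration with both inner shores concentrated and the cuts \(d+2\) apart, the resistance across the core is \(\tfrac12+2(d+1)/(d^2-d-4)>(d+2)/c\) (Lemma~\ref{lem:even-short-cores}). The paper therefore:
\begin{enumerate}[(i)]
\item splits edges at the potential levels \(z_a\);
\item bounds boundary intervals by explicit flows according to shore type;
\item transfers coordinate length between neighbouring windows through the corrections \(\delta_a,h_a\), using that both shores of one cut cannot be concentrated (Proposition~\ref{prop:even-strict-compression}).
\end{enumerate}
Degree four is handled separately. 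Nothing in your outline supplies this step.

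Finally, citing the Abdi--Ghorbani chains for the upper bounds is what the paper does, but your heuristic is off. A test function constant on each block charges only the bridges and gives \((d+1)\pi^2/n^2\). The constant \(d-1\) needs harmonic interpolation inside each \(L_d\), so that a period of \(d+1\) vertices has resistance \(1+2/(d-1)\).
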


Thus regularity leaves the leading constant unchanged in odd degree
and increases it in even degree. To explain this distinction, recall
that an \emph{edge cut} consists of all edges between a nonempty
proper vertex set and its complement, and a \emph{bridge} is an edge
whose deletion disconnects the graph. Every edge cut in a regular
graph of even degree has even size, so such a graph has no bridges,
whereas the minimum-degree constructions use them. The finite bound
underlying the first assertion is Theorem~\ref{thm:minimum-degree-bound};
the regular bounds and sharp constructions are given in
Theorem~\ref{thm:fixed-regular-sharp}.

We next describe the graphs that attain the minimum. Let \(K_d\)
denote the complete graph on \(d\) vertices, and let
\(L_d=K_{d+1}-e\), with the endpoints of the deleted edge \(e\)
as its \emph{terminals}. A bridge chain of \(L_d\) is formed from
disjoint copies by joining a terminal of each copy to a terminal of
the next, using each terminal at most once. For even \(d\), let
\(M_d\) consist of four consecutive cliques of orders
\(1,2,d-2,1\), with all edges between consecutive cliques and no
edges between other distinct cliques, and with the two singleton
vertices as its terminals. Identifying the second terminal of each
copy with the first terminal of the next gives a chain of \(M_d\).
In both constructions, connected graphs attached at the two ends
complete the degree requirements; the two types of middle are shown
in Figure~\ref{fig:intro-chains}.

\begin{figure}[htbp]
\centering
\begin{minipage}[t]{.48\textwidth}
\centering
\begin{tikzpicture}[x=.95cm,y=.8cm,
  vertex/.style={circle,fill,inner sep=1.5pt},line width=.5pt]
\foreach \p/\x/\y in {a/0/0,b/.8/.5,c/.8/-.5,d/1.6/0,
  e/2.3/0,f/3.1/.5,g/3.1/-.5,h/3.9/0}
  \node[vertex] (\p) at (\x,\y) {};
\foreach \a/\b in {a/b,a/c,b/c,b/d,c/d,d/e,e/f,e/g,f/g,f/h,g/h}
  \draw (\a)--(\b);
\draw[densely dotted] (-.45,0)--(a);
\draw[densely dotted] (h)--(4.35,0);
\node at (.8,-1) {\small \(L_3\)};
\node at (3.1,-1) {\small \(L_3\)};
\end{tikzpicture}
\end{minipage}\hfill
\begin{minipage}[t]{.48\textwidth}
\centering
\begin{tikzpicture}[x=.95cm,y=.8cm,
  vertex/.style={circle,fill,inner sep=1.5pt},line width=.5pt]
\foreach \p/\x/\y in {a/0/0,b/.65/.5,c/.65/-.5,
  d/1.3/.5,e/1.3/-.5,f/1.95/0,g/2.6/.5,h/2.6/-.5,
  i/3.25/.5,j/3.25/-.5,k/3.9/0}
  \node[vertex] (\p) at (\x,\y) {};
\foreach \a/\b in {a/b,a/c,b/c,b/d,b/e,c/d,c/e,d/e,d/f,e/f,
  f/g,f/h,g/h,g/i,g/j,h/i,h/j,i/j,i/k,j/k}
  \draw (\a)--(\b);
\draw[densely dotted] (-.45,0)--(a);
\draw[densely dotted] (k)--(4.35,0);
\node at (.975,-1) {\small \(M_4\)};
\node at (2.925,-1) {\small \(M_4\)};
\end{tikzpicture}
\end{minipage}
\caption{The middles of the cubic and quartic chains. Successive copies
of \(L_3\) are joined by a bridge; successive copies of \(M_4\) share
a terminal. End subgraphs are omitted.}
\label{fig:intro-chains}
\end{figure}

The leading term does not distinguish the possible end subgraphs
of bounded order. The next theorem determines the middle of every
minimiser and confines the remaining choices to its two ends.

A \emph{cut vertex} is a vertex whose deletion disconnects the graph,
and a \emph{block} is a maximal connected subgraph with no cut vertex.
The \emph{block-cut tree} has the blocks and cut vertices as its
vertices, with incidence as adjacency; a graph is \emph{path-like}
if this tree is a path. Guiduli and Mohar conjectured that
every minimum-degree minimiser is path-like, with a chain of \(L_d\)
between two end subgraphs of bounded order. Abdi and Ghorbani proposed
the corresponding regular structure, with \(L_d\) in odd degree
and \(M_d\) in even degree
\cite[Conjectures~1.4 and~1.5]{AbdiGhorbaniDiameter}.
The following theorem determines the structure in both classes.

\Needspace{11\baselineskip}
\begin{theorem}\label{thm:guiduli-mohar-pathlike}
Let \(d\geqslant3\) be a fixed integer. Then, for all sufficiently large
\(n\), every minimiser of \(\mu\) in \(\mathcal D_{n,d}\) or
\(\mathcal R_{n,d}\) is path-like, has two end subgraphs of total
order \(O_d(1)\), and has middle
\begin{enumerate}[(i)]
\item a bridge chain of \(L_d\) in \(\mathcal D_{n,d}\), and in
\(\mathcal R_{n,d}\) when \(d\) is odd;
\item a chain of \(M_d\), identified at consecutive terminals, in
\(\mathcal R_{n,d}\) when \(d\) is even.
\end{enumerate}
The assertion for \(\mathcal D_{n,d}\) also holds when the minimum
degree is required to equal \(d\).
\end{theorem}

Theorem~\ref{thm:guiduli-mohar-pathlike} settles the structure
conjecture of Guiduli and Mohar from 1996 \cite{GuiduliThesis}
and its regular analogue. The degree-two case of the former was
proved by Lal, Patra, and Sahoo \cite{LalPatraSahoo}; the
degree-one case is the classical path theorem. In each class,
only the two end subgraphs remain to be determined, and their orders
are bounded in terms of the degree.

The stability theorems quantify how the chain structure persists
when the eigenvalue is close to the minimum.
For \(G\in\mathcal D_{n,d}\) satisfying
\(n^2\mu(G)\leqslant(d-1)\pi^2+\varepsilon\), where
\(0\leqslant\varepsilon\leqslant1\), all but
\(O_d((\varepsilon+n^{-1})^{1/3}n)\) vertices belong to induced
copies of \(L_d\) joined in chains. The corresponding regular
statement holds with \(c_d\pi^2\) and the chains appropriate to
the degree parity. Theorems~\ref{thm:minimum-degree-stability},
\ref{thm:parity-stability}, and~\ref{thm:fixed-even-stability}
give these quantitative bounds on the number of vertices outside
the chains, with the optimal exponent \(1/3\) in the minimum-degree
class (Proposition~\ref{prop:stability-exponent-sharp}).

One consequence is a sharp relation between minimum algebraic
connectivity and maximum diameter. Write \(\operatorname{diam}(G)\) for the diameter,
the largest distance between two vertices. Abdi and Ghorbani
\cite[Conjecture~5.4]{AbdiGhorbaniDiameter} asked whether
asymptotically minimum algebraic connectivity forces asymptotically
maximum diameter. We prove the following quantitative form.

\begin{corollary}[Abdi--Ghorbani diameter conjecture]
\label{cor:diameter-stability-intro}
Let \(d\geqslant3\) be fixed and \(0\leqslant\varepsilon\leqslant1\).
Suppose that \(G\in\mathcal D_{n,d}\) satisfies
\(n^2\mu(G)\leqslant(d-1)\pi^2+\varepsilon\), or that
\(G\in\mathcal R_{n,d}\) satisfies
\(n^2\mu(G)\leqslant c_d\pi^2+\varepsilon\).
Then, for all sufficiently large \(n\) depending only on \(d\),
\[
 \frac{3n}{d+1}-O_d\bigl((\varepsilon+n^{-1})^{1/3}n\bigr)
 \leqslant\operatorname{diam}(G)
 \leqslant3\left\lfloor\frac n{d+1}\right\rfloor-1.
\]
\end{corollary}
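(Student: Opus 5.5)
We split the statement into the upper bound, which is purely combinatorial and holds for every \(G\in\mathcal D_{n,d}\), and the lower bound, which we read off from the stability theorems (Theorems~\ref{thm:minimum-degree-stability}, \ref{thm:parity-stability} and~\ref{thm:fixed-even-stability}).

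\emph{Upper bound.} Let \(u,v\) be at distance \(D=\operatorname{diam}(G)\), and let \(V_i\) be the set of vertices at distance \(i\) from \(u\), for \(0\leqslant i\leqslant D\). Edges join only consecutive layers, so each vertex of \(V_i\) has all its \(\geqslant d\) neighbours in \(V_{i-1}\cup V_i\cup V_{i+1}\). This gives
\[
 |V_{i-1}|+|V_i|+|V_{i+1}|\geqslant d+1
\]
for every \(i\), with empty layers counted as zero. At the ends, \(|V_0|+|V_1|\geqslant d+1\) and \(|V_{D-1}|+|V_D|\geqslant d+1\). Group the layers \(0,\dots,D\) into consecutive windows: a window of two at each end and windows of three in between. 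Each window has at least \(d+1\) vertices. A counting argument over the residue of \(D\) modulo \(3\), comparing with the chain of \(L_d\) where the pattern \(1,\,d-1,\,1\) repeats, then shows \(D\leqslant3\lfloor n/(d+1)\rfloor-1\). I would check the three residues directly and use \(|V_0|=1\) to tighten the first window where necessary. Regular graphs lie in \(\mathcal D_{n,d}\), so the same bound covers them.

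\emph{Lower bound.} Apply the relevant stability theorem with the given \(\varepsilon\). It yields a set \(X\) of all but \(m=O_d((\varepsilon+n^{-1})^{1/3}n)\) vertices, covered by induced copies of \(L_d\) (or \(M_d\) in even regular degree) joined in chains. Each copy has \(d+1\) vertices and contributes \(3\) to the distance between its terminals: in \(L_d\), terminal to a clique vertex, then to the other terminal, then across the bridge. For \(M_d\) the copy has order \(d+2\) minus one shared terminal, so it again contributes \(d+1\) new vertices, and its terminal-to-terminal distance is \(3\). The key point is that the chains are \emph{geodesic}: the only connections between a chain's interior and the rest of the graph pass through its two ends, since interior vertices have their full degree inside the chain. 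Hence a chain of \(k\) copies has end-to-end distance about \(3k\) in \(G\).

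The main obstacle is that the stability theorems may produce several chains rather than one. I would handle this using the mechanism behind stability itself: the near-extremal Fiedler vector forces the chains to be laid out essentially monotonically along a path-like skeleton. Failing that, I would show that any vertex-disjoint chains are connected through the \(m\) exceptional vertices in series. A "parallel" arrangement would reduce the effective length, and a Rayleigh-quotient comparison shows it raises \(n^2\mu\) by an amount of order the parallel mass. That gain contradicts \(n^2\mu\leqslant(d-1)\pi^2+\varepsilon\) unless the parallel mass is \(O(m)\). If the chain theorem as stated already asserts a single path-like chain structure on the good vertices, this step is immediate.

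Given a series arrangement, the total number of copies is \((n-m)/(d+1)\). A shortest path from one end to the other must traverse every copy, so
\[
 \operatorname{diam}(G)\geqslant\frac{3(n-m)}{d+1}-O(1)=\frac{3n}{d+1}-O_d\bigl((\varepsilon+n^{-1})^{1/3}n\bigr).
\]
The \(O(1)\) loss is absorbed because \((\varepsilon+n^{-1})^{1/3}n\geqslant n^{2/3}\), and for large \(n\) this dominates any constant.
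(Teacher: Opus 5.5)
Your upper bound is correct and is the paper's argument written in breadth-first layers. The windows centred at layers \(0,3,\dots,3\lfloor D/3\rfloor\) are the closed neighbourhoods of every third vertex of a geodesic, so \((d+1)(\lfloor D/3\rfloor+1)\leqslant n\) for every residue of \(D\). Your tiling by two end pairs and interior triples only fits when \(3\mid D\), so use these windows instead.

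\textbf{The gap is in the lower bound, at exactly the step you flag.} The structural conclusions of the stability theorems do not control the diameter. For example, take a bounded hub carrying three bridge chains of \(L_d\), each of about \(n/(3(d+1))\) copies. It satisfies every structural assertion of Theorem~\ref{thm:minimum-degree-stability}, with \(O_d(1)\) exceptional vertices and three chains, yet its diameter is about \(2n/(d+1)\). In degrees three and four, Theorem~\ref{thm:parity-stability} asserts only small edit distance. A long chain of diamonds whose two ends attach to one bounded gadget is \(O(1)\) edits from \(X_n\), but has diameter about \(3n/8\). The analogous necklace of copies of \(Q_d\) is not excluded by the structural part of Theorem~\ref{thm:fixed-even-stability}. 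There your claim that a chain of \(k\) copies has end-to-end distance about \(3k\) in \(G\) fails.

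So the series arrangement is the whole content of the lower bound, and neither of your remedies supplies it. The first only describes the desired outcome. The second cannot work as stated: a test function in the Rayleigh quotient only bounds \(\mu\) from above, so it cannot certify that a branched or closed-up configuration has larger \(\mu\). You would need a quantitative lower bound with loss \(O_d(\eta n)\), where \(\eta=(\varepsilon+n^{-1})^{1/3}\), which amounts to redoing the stability proof.

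\textbf{The missing idea is the nested-cut structure inside those proofs.} This is how the paper obtains the diameter. In fact the diameter bounds are already part of the statements of Theorems~\ref{thm:minimum-degree-stability}, \ref{thm:parity-stability} and~\ref{thm:fixed-even-stability}, so the corollary is a direct citation of them.

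Every identified copy is recovered from an interval \(S_b\setminus S_a\) between consecutive small cuts \(a<b\) of a single ordered Fiedler vector. These are bridges, or cuts of size two in even degree. Hence any path from a vertex before the first of these cuts to a vertex after the last must cross every one of the nested cuts. It must also pass through each identified copy from terminal to terminal, using at least three edges per copy up to an additive constant. With \(r\) identified copies and \(n-(d+1)r=O_d(\eta n)\), this gives \(\operatorname{diam}(G)\geqslant3r-1\geqslant3n/(d+1)-O_d(\eta n)\), however many chains the copies form.

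You should either quote the diameter clauses of those theorems directly, or reproduce this argument in place of your series-arrangement step.
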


In particular, an eigenvalue within \(o(n^{-2})\) of the minimum
forces diameter \(3n/(d+1)-o(n)\). We also determine when the
converse holds: nearly maximum diameter implies nearly minimum
algebraic connectivity for minimum degree three and for quartic
graphs, hence also for cubic graphs
(Theorem~\ref{thm:diameter-converse}). Together with the
counterexamples of Abdi and Ghorbani in every higher degree
\cite[Theorem~5.3]{AbdiGhorbaniDiameter}, this completes the
answer to their converse question.

\subsection{Extremal regular graphs and relaxation time}

For a regular graph, minimising the spectral gap is equivalent to
maximising the relaxation time of simple random walk. Allowing the
degree to vary leads to the Aldous--Fill spectral gap conjecture.
In 2002, Aldous and Fill predicted that the largest relaxation time
among connected regular graphs of order \(n\) is at most
\((1+o(1))3n^2/(2\pi^2)\), with cubic graphs attaining the constant
\cite[Open Problem~6.14, p.~217]{AldousFill}.
Among unrestricted connected graphs, Aksoy, Chung, Tait, and Tobin
\cite[Theorem~4]{AksoyChungTaitTobin} proved that the maximum is
asymptotic to \(n^3/54\), attained asymptotically by two dense subgraphs
joined by a long path. Regularity imposes a uniform stationary distribution and changes
both the scale and the extremal construction.

We prove the Aldous--Fill conjecture and determine the extremal graph
in each order parity. For \(n\geqslant3\), define
\[
 \tau_{\max}(n)=
 \max\{\tau(G):G\text{ is a connected regular graph of order }n\}.
\]
For even \(n\geqslant4\), let \(X_n\) be the cubic chain
determined by Brand, Guiduli, and Imrich
\cite[Theorem~1]{BrandGuiduliImrich}; its construction is recalled
in Section~\ref{sec:cubic-sharpness}. For \(n\geqslant11\), let
\(\mathcal G_n\) be the quartic chain of Abdi and Ghorbani
\cite[Definition~1.4]{AbdiGhorbaniQuartic}, recalled in
Subsection~\ref{sec:quartic-extremisers}. Its middle consists of
copies of \(M_4\), and its end subgraphs depend on the order
modulo five. The next theorem identifies the extremisers over all
regular degrees and proves the uniqueness of the quartic chain.

\Needspace{14\baselineskip}
\begin{theorem}\label{thm:main}
\label{thm:relaxation-extrema-intro}
Let \(n\) be a sufficiently large integer. Then \(\tau_{\max}(n)\)
is attained uniquely, up to isomorphism, by \(X_n\) if \(n\) is even
and by \(\mathcal G_n\) if \(n\) is odd. Moreover,
\[
 \tau_{\max}(n)=\begin{cases}
 \displaystyle\frac{3n^2}{2\pi^2}+\frac12+O(n^{-1}),&n\text{ even},\\[5pt]
 \displaystyle\frac{n^2}{\pi^2}+\frac{29}{60}+O(n^{-1}),&n\text{ odd}.
 \end{cases}
\]
For every \(n\geqslant11\), the graph \(\mathcal G_n\) is the
unique minimiser of algebraic connectivity in \(\mathcal R_{n,4}\),
up to isomorphism.
\end{theorem}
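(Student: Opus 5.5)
We would reduce the theorem to a comparison of the per-degree extremal values. Since \(\tau(G)=d/\mu(G)\), for a \(d\)-regular graph Theorem~\ref{thm:degree-bounds-intro} gives
\[
 \max_{G\in\mathcal R_{n,d}}\tau(G)=\frac{d}{c_d+o(1)}\cdot\frac{n^2}{\pi^2}.
\]
The ratio \(d/c_d\) equals \(d/(d-1)\) for odd \(d\) and \(d/(2(d-2))\) for even \(d\). Over \(d\geqslant3\) it is maximised uniquely at \(d=3\), with value \(3/2\). Among even degrees it is maximised uniquely at \(d=4\), with value \(1\). Every other degree gives at most \(\max(5/4,\,6/8)=5/4\).

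For even \(n\) the cubic class is nonempty. For odd \(n\) only even degrees are possible, so the quartic class is the relevant one. Degrees growing with \(n\) also have to be handled. For these we would use a uniform finite bound, namely the finite version of the regular bound in Theorem~\ref{thm:fixed-regular-sharp}, or a crude diameter bound \(\mu\geqslant 4/(n\operatorname{diam})\) together with \(\operatorname{diam}\leqslant 3n/(d+1)\). Either gives \(\tau(G)\leqslant Cn^2/d\), which is far below the target once \(d\) exceeds a constant \(D_0\). The remaining degrees \(5\leqslant d\leqslant D_0\) are then finitely many, and each loses by a constant factor. Hence for large \(n\) every maximiser is cubic when \(n\) is even and quartic when \(n\) is odd.

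It then remains to identify the unique minimiser of \(\mu\) in \(\mathcal R_{n,3}\) and in \(\mathcal R_{n,4}\), and to expand its eigenvalue.

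\textbf{Cubic case.} We would invoke Brand--Guiduli--Imrich, which identifies \(X_n\) as the unique cubic minimiser.

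\textbf{Quartic case, large \(n\).} By Theorem~\ref{thm:guiduli-mohar-pathlike}(ii), every minimiser is a chain of \(M_4\) with two end subgraphs of bounded order. We would classify the admissible quartic end subgraphs by a finite enumeration and compare the candidates through the second-order term of \(\mu\). The comparison should go via a transfer-matrix or effective-resistance computation: along a homogeneous chain the Fiedler vector is nearly a sampled cosine, and each end contributes an effective boundary shift \(\theta\) to the phase. One gets \(\mu\approx c\pi^2/(n+\theta_1+\theta_2)^2\), so the minimiser is the graph that maximises the total end shift. Comparing these shifts among the finitely many ends for each residue of \(n\) modulo \(5\) should single out the ends of \(\mathcal G_n\). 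The same expansion gives the constant terms \(1/2\) and \(29/60\) in \(\tau_{\max}\), since \(3/\mu\) and \(4/\mu\) expand to \(n^2/(c\pi^2)\) plus a constant determined by the ends.

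\textbf{Quartic case, all \(n\geqslant11\).} The claim of uniqueness for every \(n\geqslant11\) needs the threshold in Theorem~\ref{thm:guiduli-mohar-pathlike} to be made explicit. Below that explicit threshold we would check the remaining small orders by computer enumeration of connected quartic graphs, or by an interlacing argument that verifies them directly.

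\textbf{Main obstacle.} The hardest part will be the second-order comparison of end subgraphs. There we need error bounds of order \(O(n^{-3})\) in \(\mu\), uniform over all candidate ends, so that distinct end shifts are genuinely separated. The first thing we would try is to compute exact eigenvector recursions across one copy of \(M_4\): a \(2\times2\) transfer matrix acting on terminal values and their discrete derivatives. Each end would be reduced to a Robin-type boundary condition, and the candidates would then be compared by the monotonicity of \(\mu\) in the boundary parameters.
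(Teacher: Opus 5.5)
Your comparison of \(d/c_d\) over fixed degrees and the cubic step via Brand--Guiduli--Imrich match the paper. The treatment of growing degree, however, fails.

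\textbf{Growing degree.} No bound \(\tau(G)\leqslant Cn^2/d\) can hold. The graphs \(G_{d,\kappa}\) of Proposition~\ref{prop:edge-connectivity-sharp} are two nearly complete graphs joined by \(\kappa\) edges; they have \(n\sim2d\) and \(\tau\sim n^2/(8\kappa)\). Neither of your tools gives a usable constant.
\begin{enumerate}[(i)]
\item The diameter bound gives \(\mu\geqslant4(d+1)/(3n^2)\), hence only \(\tau<3n^2/4\), which is larger than \(3n^2/(2\pi^2)\).
\item The finite bound \eqref{eq:minimum-degree-explicit} loses its constant through the factor \((1+\sqrt{d(d+1)\mu(P_n)})^{-2}\) once \(d\) is a positive proportion of \(n\).
\item Theorem~\ref{thm:fixed-regular-sharp} is proved only for fixed \(d\), with \(O_d(1)\) errors, so it has no version uniform in \(d\).
\end{enumerate}
For odd \(n\) the plan fails even when an even \(d\) tends to infinity slowly. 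The only uniform bound, \eqref{eq:minimum-degree-explicit}, gives a lower bound on \(n^2\lambda(G)\) of at most \((1-1/d)\pi^2+o(1)\). This lies below \(n^2\lambda(\mathcal G_n)\to\pi^2\), so it excludes nothing. What is needed is a sharp-constant bound valid for every \(d\to\infty\). It must exceed \(2\pi^2/3\approx6.58\) in general, and \(\pi^2\) in even degree; the true value \(8\) leaves little room. The paper proves \(\liminf n^2\lambda\geqslant8\kappa\) (Theorem~\ref{thm:edge-connectivity-intro}) using:
\begin{enumerate}[(i)]
\item the reciprocal-cut weighted Dirichlet bound \eqref{eq:weighted-dirichlet};
\item a Collatz--Wielandt estimate with the path Green matrix;
\item Lemma~\ref{lem:local-reciprocal-sum} and the fractional packing Lemma~\ref{lem:fractional-packing};
\item the bound \(\mu\geqslant2d-n+2\) for \(d\geqslant\lfloor n/2\rfloor\).
\end{enumerate}
In even degree, cut parity gives \(\kappa\geqslant2\), hence \(16>\pi^2\). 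Your plan has no substitute for this step.

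\textbf{Quartic case.} Citing Theorem~\ref{thm:guiduli-mohar-pathlike}(ii) is circular. The paper's even-degree structural machinery is developed only for \(d\geqslant6\), and the case \(d=4\) of Theorem~\ref{thm:bounded-end-canonical-middle} is deduced from the quartic uniqueness itself. The independent input is \cite[Theorem~3.2]{AbdiGhorbaniQuartic}, valid for every \(n\geqslant11\). It reduces the problem to the chains \(H_m^{i,j}\) with the explicit ends \(D_0,\ldots,D_4\). Without it you have no explicit bound on end orders to enumerate.

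Your comparison mechanism is also misidentified. With \(\phi_i=\frac{11+2i}{10}\theta-\frac{\gamma_i}{4375}\theta^3+O(\theta^5)\), the phase equation becomes \(\frac{n}{5}\theta-\frac{\gamma_i+\gamma_j}{4375}\theta^3+O(\theta^5)=\pi\). Thus the first-order end shifts are absorbed exactly into \(n\) for every candidate. All candidates agree through order \(n^{-4}\) in \(\mu\) and separate only at order \(n^{-5}\), so an \(O(n^{-3})\) error separates nothing. Likewise, the constants \(1/2\) and \(29/60\) come from the dispersion relations of the middle blocks. The ends enter only at order \(n^{-1}\) in \(\tau\).

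Finally, you cannot reach every \(n\geqslant11\) by making the structure-theorem threshold explicit and enumerating quartic graphs below it. That threshold comes from arguments along sequences and is not effective, and such an enumeration is infeasible. Your Robin-type reduction of the ends is the right idea, but it must be made exact and uniform in \(m\). The paper shows that \(k\theta+\phi_a+\phi_b-\phi_i-\phi_j>0\) on the fixed window \(0<x\leqslant1/10\) (Lemma~\ref{lem:quartic-phase-comparison}), where \(k\in\{0,1\}\) accounts for the different numbers of middle blocks. Combined with \(\mu(\mathcal G_n)<1/10\) for \(n\geqslant21\) and Lemma~\ref{lem:quartic-phase-equation}, this settles all \(n\geqslant21\) at once. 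That leaves twenty-four explicit candidates for \(11\leqslant n\leqslant20\), which are checked directly.
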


The first assertion strengthens the Aldous--Fill bound by identifying
the unique extremiser and the next term in each order parity.
In particular, for every \(\varepsilon>0\) and all sufficiently
large \(n\), every connected regular graph of order \(n\) satisfies
\begin{equation}\label{eq:intro-AF}
 \lambda(G)\geqslant(1-\varepsilon)\frac{2\pi^2}{3n^2},
\end{equation}
uniformly over the degree. The quartic assertion settles the
uniqueness conjecture posed by Abdi, Ghorbani, and Imrich (2021)
\cite[Conjecture~3.16]{AbdiGhorbaniImrich} and by Abdi and Ghorbani
(2023) \cite[Conjecture~1.5]{AbdiGhorbaniQuartic}. The latter
authors had determined the possible quartic minimisers up to their
end subgraphs and proved
\begin{equation}\label{eq:quartic-known}
 \min_{G\in\mathcal R_{n,4}}\mu(G)
       =(1+o(1))\frac{4\pi^2}{n^2}.
\end{equation}
Our exact comparison resolves the choice of end subgraphs. The proofs are given in
Theorems~\ref{thm:max-relaxation} and~\ref{thm:quartic-unique}.
For the related cubic bipartite problem, see \cite{LiuXue}.

The same extremal graphs describe all regular graphs whose relaxation
time is close to the maximum. For graphs on the same vertex set,
\(E(G)\mathbin{\triangle}E(H)\) denotes the symmetric difference
of their edge sets.

\begin{corollary}\label{cor:relaxation-stability-intro}
There are absolute constants \(C,\varepsilon_0>0\) such that the
following holds for all sufficiently large \(n\). Let \(G\) be a
connected regular graph of order \(n\), and suppose that
\(0\leqslant\varepsilon\leqslant\varepsilon_0\) and
\(\tau(G)\geqslant(1-\varepsilon)\tau_{\max}(n)\).
Then \(G\) is cubic for even \(n\) and quartic for odd \(n\).
After relabelling the vertices,
\begin{align*}
 |E(G)\mathbin{\triangle}E(X_n)|
 &\leqslant C(\varepsilon+n^{-1})^{1/3}n,
 &&n\text{ even},\\
 |E(G)\mathbin{\triangle}E(\mathcal G_n)|
 &\leqslant C(\varepsilon+n^{-1})^{1/3}n,
 &&n\text{ odd}.
\end{align*}
\end{corollary}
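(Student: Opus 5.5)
We reduce the statement to the degree-class stability theorems (Theorems~\ref{thm:parity-stability} and~\ref{thm:fixed-even-stability}) together with the exact asymptotics of Theorem~\ref{thm:main}. Write \(d\) for the degree of \(G\), so that \(\tau(G)=d/\mu(G)\). The hypothesis \(\tau(G)\geqslant(1-\varepsilon)\tau_{\max}(n)\) combined with Theorem~\ref{thm:main} gives
\[
 n^2\mu(G)\leqslant\frac{d\,n^2}{(1-\varepsilon)\tau_{\max}(n)}
 =\frac{d}{1-\varepsilon}\Bigl(\frac{2\pi^2}{3}\ \text{or}\ \pi^2\Bigr)+O(n^{-2}),
\]
according to the parity of \(n\).

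\textbf{Step 1: reduce to bounded degree.} For large \(d\), a degree-uniform lower bound on \(\mu\) is needed. The same bound that gives \eqref{eq:intro-AF} uniformly over the degree should already show that \(n^2\mu(G)/d\) is well above \(2\pi^2/3\) once \(d\) exceeds some absolute \(D\). Intuitively, a clique-chain estimate gives \(\mu\gtrsim c_d\pi^2/n^2\) with \(c_d/d\to1\) or \(2\), while \(2/3\) is strictly smaller. I would extract this from the proof of Theorem~\ref{thm:max-relaxation}, or from a finite bound of the form \(\mu(G)\geqslant(c_d-o(1))\pi^2/n^2\) that is uniform in \(d\) and \(n\), in the style of Theorem~\ref{thm:minimum-degree-bound}.

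\textbf{Step 2: fix the degree.} For each \(3\leqslant d\leqslant D\), the first part of Theorem~\ref{thm:degree-bounds-intro} gives \(\tau(G)\leqslant d n^2/((c_d-o(1))\pi^2)\). The ratios \(d/c_d\) are \(3/2\) for \(d=3\), \(1\) for \(d=4\), and strictly less than \(1\) for every other \(d\) with \(c_d\geqslant d\). Hence there is an absolute gap, and choosing \(\varepsilon_0\) below it forces \(d=3\) or \(d=4\). When \(n\) is even, the quartic value \(n^2/\pi^2\) is below \((1-\varepsilon_0)\cdot3n^2/(2\pi^2)\), so \(G\) is cubic. When \(n\) is odd, a cubic graph cannot exist, so \(G\) is quartic. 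We are then in the hypothesis of the regular stability theorems, namely \(n^2\mu(G)\leqslant c_d\pi^2+\varepsilon'\) with \(\varepsilon'=O(\varepsilon+n^{-1})\). If \(\varepsilon'>1\), we enlarge \(C\) so that the conclusion becomes trivial, since \(|E(G)|=O(n)\).

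\textbf{Step 3: from vertex stability to edge-set distance.} The stability theorems say that all but \(m=O((\varepsilon+n^{-1})^{1/3}n)\) vertices lie in induced copies of \(L_3\) (cubic case) or \(M_4\) (quartic case) joined in chains. I expect this to be the main obstacle: these chains may come as several pieces, arranged in an order different from that of \(X_n\) or \(\mathcal G_n\). The plan is as follows.
\begin{enumerate}[(a)]
\item Order the chain pieces along the Fiedler vector. The stability argument presumably arranges them monotonically along a path-like skeleton, since otherwise the eigenvalue would jump by a constant factor.
\item Concatenate the pieces and match them with the middle of \(X_n\) or \(\mathcal G_n\), placing the at most \(m\) leftover vertices in the end subgraphs.
\item Count the changed edges: each junction between pieces and each leftover vertex affects \(O(1)\) edges, because the degree is at most \(4\).
\end{enumerate}
The number of pieces must be controlled by \(O(m)\). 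This follows if the stability theorems also count breaks in the chains, which I expect they do, since every break creates an exceptional vertex. With that, the symmetric difference is \(O(m)\), which proves the claim with an absolute \(C\).
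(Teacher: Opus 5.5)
Your overall structure matches the paper's: use Theorem~\ref{thm:main} to turn the hypothesis into an upper bound on $n^2\lambda(G)$, pin down the degree, then invoke stability. But Step~1 has a genuine gap, and it sits exactly in the nontrivial part of the degree reduction.

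\textbf{The large-degree regime.} The fixed-degree constants do not hold uniformly in $d$. A bound $\mu(G)\geqslant(c_d-o(1))\pi^2/n^2$ with $o(1)$ uniform in $d$ and $n$ is false.
\begin{itemize}
\item The graphs $G_{d,\kappa}$ of Proposition~\ref{prop:edge-connectivity-sharp} have order $n=2d+4$ (odd $d$, $\kappa=1$) or $n=2d+2$ (even $d$, $\kappa=2$). They satisfy $n^2\mu\sim8d$ and $n^2\mu\sim16d$ respectively, well below $(d-1)\pi^2$ and $2(d-2)\pi^2$.
\item Likewise \eqref{eq:minimum-degree-explicit} degenerates once $d/n$ is bounded away from zero.
\end{itemize}
So your heuristic ``$c_d/d\to1$ or $2$'' misidentifies the large-degree extremisers. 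They are two near-cliques joined by a minimum cut, not clique chains.

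What is actually needed is Theorem~\ref{thm:edge-connectivity-intro}, the reciprocal-cut Green-function bound $\liminf n^2\lambda\geqslant8\kappa$. It must be combined with the complement bound $\mu(G)\geqslant2d-n+2$ for $d\geqslant\lfloor n/2\rfloor$. The subsequence argument of Theorem~\ref{thm:noncubic-gap} assembles these.

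\textbf{The odd-$n$ threshold.} You only test against $2\pi^2/3$. For odd $n$ we have $\tau_{\max}(n)\sim n^2/\pi^2$, so the threshold is $\pi^2$. The $\kappa=1$ value $8$ lies below $\pi^2$, so it cannot exclude anything there. You must use that every edge cut of an even-regular graph has even size. This gives $\kappa=2$ and the limit $16>\pi^2$, which is the second assertion of Theorem~\ref{thm:noncubic-gap}. Your proposal never supplies this, so large even degrees at odd orders are not excluded.

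\textbf{Smaller points.}
\begin{itemize}
\item In Step~2, the claim that $d/c_d<1$ for the remaining degrees overlooks odd $d\geqslant5$, where $d/c_d=d/(d-1)\leqslant5/4$. This is harmless, since $5/4<3/2$ and odd $d$ cannot occur at odd $n$.
\item The case $d=2$ (cycles, $n^2\lambda\to2\pi^2$) should be listed.
\item The regular bound with $c_d$ is the second, not the first, part of Theorem~\ref{thm:degree-bounds-intro}.
\item Step~3 is unnecessary. Theorem~\ref{thm:parity-stability} for $d\in\{3,4\}$ already states the edit-distance bound against $X_n$ and $\mathcal G_n$, and the chain bookkeeping you outline is done inside its proof.
\item Theorem~\ref{thm:fixed-even-stability} concerns $d\geqslant6$ and plays no role here.
\end{itemize}
Once the degree is fixed, the paper simply applies Theorem~\ref{thm:parity-stability} with $\varepsilon'=K(\varepsilon+n^{-2})$.
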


Thus a nearly maximum relaxation time determines the degree exactly
and the graph up to a small number of edge changes. This is
Corollary~\ref{cor:global-stability}, which also gives the diameter
estimate in both order parities.

To compare all regular degrees, we also need bounds when the degree
grows with the order. In this case, we obtain a sharp estimate in
terms of \emph{edge-connectivity}, the minimum size of an edge cut.
The extremal graphs in this range consist of two nearly complete
graphs joined by a small cut.

\begin{theorem}\label{thm:growing-degree-intro}
Let \(\kappa\) be a positive integer and \((G_m)\) a sequence of
connected regular graphs of orders \(n_m\), degrees \(d_m\to\infty\),
and edge-connectivity at least \(\kappa\). Then
\[
 \liminf_{m\to\infty}n_m^2\lambda(G_m)\geqslant8\kappa,
\]
and the constant is best possible. Furthermore,
\(n_m^2\lambda(G_m)\to8\kappa\) if and only if, for all
sufficiently large \(m\), there is an edge cut of size \(\kappa\)
separating \(V(G_m)\) into two sets of \(d_m+o(d_m)\) vertices each.
\end{theorem}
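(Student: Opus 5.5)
The lower bound comes from a two-set comparison using the Fiedler vector, the sharpness from an explicit two-clique construction, and the characterisation of equality from the case analysis of the lower bound. Write \(n=n_m\), \(d=d_m\), \(\mu=\mu(G_m)=d\lambda(G_m)\). A connected \(d\)-regular graph with a cut of size \(\kappa\) has each side of order at least \(d+1-\kappa/d\), hence at least \(d\) for large \(d\). So \(n\geqslant 2d\) roughly, and the target \(n^2\lambda\geqslant 8\kappa\) reads \(\mu\geqslant 8\kappa d/n^2\approx 2\kappa/d\) in the balanced case.

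\emph{Lower bound.} Take a unit Fiedler vector \(f\perp\one\) with \(\mu=\sum_{uv\in E}(f(u)-f(v))^2\). Sort vertices by \(f\) and split into the two sides \(S=\{f\geqslant t\}\) and \(\bar S\).

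The key local fact is that in a \(d\)-regular graph with \(d\to\infty\), each vertex \(v\) has \(d\) neighbours. If \(\mu\) is small, say \(\mu=O(1/d)\), then \(f\) is nearly constant on closed neighbourhoods in an averaged sense. Indeed \(\mu f(v)=\sum_{u\sim v}(f(v)-f(u))\), so \(f(v)\) is within \(\mu|f(v)|/d\) of the neighbour average.

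The plan is to show that \(f\) takes essentially two values \(a>0>b\) on two sets \(A,B\), each of size at least \(d(1-o(1))\), with the remaining \(o(d)\) vertices contributing negligibly. The cleanest route is to use the earlier Cheeger/edge-connectivity reasoning of the paper. Partition \(V\) into "layers" by the level sets of \(f\). Any cut has at least \(\kappa\) edges, and any nonempty set \(S\) with \(|S|\leqslant n/2\) has \(|\partial S|\geqslant\max(\kappa,\,|S|(d+1-|S|))\). This gives a co-area style bound
\[
\mu\sum f^2\geqslant \frac{\bigl(\sum_t |\partial\{f\geqslant t\}|\,dt\bigr)^2}{\sum_{uv}(|f(u)|+|f(v)|)^2}.
\]
The clique-like isoperimetry \(|S|(d+1-|S|)\) forces level sets of size between \(\eta d\) and \((1-\eta)d\) to have boundary \(\gtrsim\eta d^2\). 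Such level sets are therefore expensive, so the range of \(f\) is covered mostly by levels whose boundary is exactly the minimum cut.

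Optimising gives the following picture. With masses \(|A|=p\), \(|B|=q\), \(p+q=n\), the values \(a=\sqrt{q/(pn)}\) and \(b=-\sqrt{p/(qn)}\) give Rayleigh quotient \(\kappa(a-b)^2=\kappa n/(pq)\geqslant 4\kappa/n\). Hence \(\mu\geqslant 4\kappa/n\), which is not yet the target: we need \(4\kappa/n\cdot n^2/d=4\kappa n/d\geqslant 8\kappa\). This uses \(n\geqslant 2d(1-o(1))\), which follows since each side of a \(\kappa\)-cut has at least \(d+1-\kappa/d\) vertices; and in general, with cuts of size \(k\geqslant\kappa\), \(n/d\) large only helps. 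Combining the two, \(n^2\lambda=n^2\mu/d\geqslant (4\kappa/n)(n^2/d)=4\kappa n/d\geqslant8\kappa(1-o(1))\).

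\emph{Main obstacle.} The hard step is making the two-value reduction rigorous. Specifically, one must show that the contribution of "transition" vertices, and of several cuts in series (a chain of dense pieces), cannot beat \(4\kappa/n\). For a chain of \(r\) pieces joined by \(\kappa\)-cuts the quotient is about \(\pi^2\kappa r/n\cdot(\ldots)\). I would first handle this via a path-graph comparison: contract the near-constant dense pieces to weighted vertices, which gives a weighted path with edge weights \(\geqslant\kappa\) and vertex masses \(m_i\geqslant d(1-o(1))\). The minimal eigenvalue of such a weighted path with \(\sum m_i=n\) is minimised at two masses \(n/2\). This is a one-dimensional extremal problem solvable by the discrete Sturm comparison of the path case already used in the paper.

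\emph{Sharpness and equality.} Take two copies of \(K_{d+1}\), remove a matching of size \(\lceil\kappa/2\rceil\) from each (with minor parity adjustment), and reconnect by \(\kappa\) cross edges to restore regularity. The test vector \(\pm1\) gives \(\mu\leqslant 4\kappa/n\cdot(1+o(1))\) with \(n\approx2d\), so \(n^2\lambda\to8\kappa\). For the "iff": equality in the chain forces \(n/d\to2\), two pieces, and a minimum cut of size exactly \(\kappa\) with sides of size \(n/2+o(d)=d+o(d)\). Conversely, such a cut yields the test vector bound above.
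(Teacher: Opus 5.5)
\textbf{There is a genuine gap in the lower bound, which is the heart of the theorem.} The inequality $\mu\geqslant4\kappa/n$, which you combine with $n\gtrsim2d$, is false in general.

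The quantity $\kappa n/(pq)$ is the Rayleigh quotient of a two-valued test vector, so it bounds $\mu$ from \emph{above}. It would bound $\mu$ from below only if the Fiedler vector were already known to be essentially two-valued, and that fails once there are more than two dense pieces. Take three nearly complete graphs of order about $d+1$ joined in a path by cuts of size $\kappa$. Then $n\approx3d$, and the test vector $(1,0,-1)$ on the pieces gives $\mu\leqslant(3+o(1))\kappa/n<4\kappa/n$, while $n^2\mu/d$ is about $9\kappa$. So the mechanism is not ``$\mu\geqslant4\kappa/n$ and $n\geqslant2d$''; it is a trade-off between the number of pieces and $n/d$.

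Your fallback claim, that a weighted path of total mass $n$ is extremal with two masses $n/2$, fails for the same reason. At fixed $n$, $r$ equal masses give $(\kappa r/n)\cdot2(1-\cos(\pi/r))$, which decreases in $r$. What is true is that $n^2\mu/d$ is least at $r=2$ when every mass is at least $d$, because $4r^2\sin^2(\pi/(2r))$ increases with $r$.

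\textbf{The reduction to a path of near-constant dense pieces is not available for a general graph.} This is the more serious problem. A $d$-regular graph with edge-connectivity at least $\kappa$ need not decompose into near-cliques joined by small cuts. Nothing you state makes the Fiedler vector nearly constant on any pieces: not the neighbour-average identity, and not the coarea/Cheeger display, which in any case loses constant factors.

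The missing idea is a reduction that keeps the whole cut profile of the Fiedler ordering. The paper does this as follows.
\begin{enumerate}[(i)]
\item Summing $\mathsf Lf=\mu f$ over the prefix sets $S_i$ gives $\mu h_i\geqslant q_iy_i$, where $h_i=-\sum_{a\leqslant i}f_a\geqslant0$.
\item Summation by parts then gives $\mu\geqslant\lambda_D(\mathsf R)$, the Dirichlet eigenvalue of the path with masses $1/q_i$.
\item Lemma~\ref{lem:local-reciprocal-sum} bounds the mass of any $d+1$ consecutive indices by $\theta_{d,\kappa}=1/\kappa+O(\log d/d)$.
\item Lemma~\ref{lem:fractional-packing} writes the normalised masses as a convex combination of indicators of $(d+1)$-separated sets.
\item Lemma~\ref{lem:green-packing}, with the Collatz--Wielandt bound, gives $\lambda_D(\mathsf R)^{-1}\leqslant\theta_{d,\kappa}n^2/(8(d+1))+2H_d$.
\end{enumerate}
This is the rigorous form of your picture of atoms of weight $1/\kappa$ at least $d$ apart. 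The packing step is precisely what shows that spreading out the low cuts cannot help, which your proposal takes for granted. The dense case $d\geqslant\lfloor n/2\rfloor$ also needs separate treatment; the paper uses the complement bound $\mu\geqslant2d-n+2$.

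\textbf{The equality characterisation inherits the gap.} Without the reduction, ``equality forces two pieces'' has no content. The paper argues by compactness. The rescaled measures $\nu_m$ converge to a limit whose Green potential is at most $1/(8\kappa)$, and equality must hold on the support. Concavity of the potential then allows at most two atoms. The local mass bound leaves a single atom of mass $1/(2\kappa)$ at $1/2$, together with $d/n\to1/2$. Finally, integrality of the $q_i$ extracts an actual cut of size exactly $\kappa$ near the middle.

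\textbf{The odd-$\kappa$ construction is not a minor parity adjustment.} From $d|A|=2e(A)+\kappa$, both $d$ and $|A|$ must be odd. So shores of order $d+1$ are impossible, and deleting a matching from $K_{d+1}$ can never leave an odd number of free degree slots. The paper instead uses shores of order $d+2$. You also need to verify the edge-connectivity of whatever construction you use.
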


In the equality case, the cut is unique, \(n_m\sim2d_m\), and
each of its two sides is missing only \(o(d_m^2)\) edges from a
complete graph, giving two nearly complete graphs joined by a
smallest possible cut. Taking \(\kappa=1\) gives the sharp constant
eight, while the parity of edge cuts gives sixteen in even regular
degree. These bounds separate growing degree from the cubic and
quartic extremisers and complete the maximisation over all degrees
(Theorems~\ref{thm:edge-connectivity-intro}
and~\ref{thm:growing-stability}).

\subsection{Adjacency spectral radius and hitting times}
\label{sec:intro-applications}

The constants in Theorem~\ref{thm:degree-bounds-intro} also answer
an extremal question for the adjacency matrix. Write \(\Delta(G)\)
for the maximum degree and \(\rho(G)\) for the adjacency spectral
radius. The Perron--Frobenius theorem gives
\(\rho(G)\leqslant\Delta(G)\), with equality precisely when
\(G\) is regular. How close can a nonregular graph come to equality?
This problem has been studied in terms of the order, degree, and
diameter \cite{CioabaGregoryNikiforov2007,Cioaba2007,LiuSpectralRadius2024}.
Liu (2024) determined the sharp bound when the maximum degree is
three or four and conjectured its value in every fixed degree
\cite[Conjecture~7.1]{LiuSpectralRadius2024}.

For \(d\geqslant3\) and \(n\geqslant d+1\), let \(\rho(n,d)\)
be the largest adjacency spectral radius among connected nonregular
graphs of order \(n\) and maximum degree \(d\).

\begin{theorem}[Liu's conjecture]\label{thm:liu-spectral-radius}
Let \(d\geqslant3\) be a fixed integer. Then
\[
 d-\rho(n,d)=(c_d+o(1))\frac{\pi^2}{4n^2}.
\]
\end{theorem}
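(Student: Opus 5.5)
We reduce Liu's conjecture to the regular bound in Theorem~\ref{thm:degree-bounds-intro} by a doubling construction, and use the same correspondence in reverse for the lower bound.

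\emph{Upper bound on \(d-\rho(n,d)\).} By Theorem~\ref{thm:guiduli-mohar-pathlike}, take a minimiser \(H\in\mathcal R_{m,d}\) with \(m\approx 2n\). It is path-like with a chain middle. Cut \(H\) at the midpoint of the chain, through a bridge for odd \(d\) or a shared terminal for even \(d\). Keep one half \(G\) of order \(n\), so that exactly one vertex (or a bounded number of vertices) loses degree. Test \(\rho(G)\) with the restriction to \(G\) of the Fiedler vector of \(H\), shifted so that the vector is a Perron-type half-cosine vanishing at the far end. More concretely, for \(d\rho - \mathsf A\) on \(G\) we have
\[
d-\rho(G)=\min_x \frac{x^{\top}(\mathsf D-\mathsf A)x+\sum_v (d-\deg v)x_v^2}{x^{\top}x},
\]
which is a Laplacian Rayleigh quotient with a Dirichlet-type potential at the deficient vertex. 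The natural test vector is \(x_v=\cos(\pi\, t(v)/(2n))\) along the chain coordinate \(t(v)\), with \(t=0\) at the far end. Vertices at the cut end then carry value \(\approx0\), so the potential term costs \(O(n^{-2})\cdot O(1)\), which is negligible after normalisation by \(x^{\top}x\asymp n\).

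The Laplacian energy of a chain with effective constant \(c_d\) over length \(n\), using a quarter-wave, is \(c_d\pi^2/(4n^2)\,x^{\top}x(1+o(1))\). This is the same computation that gives \(c_d\pi^2/n^2\) for the half-wave on length \(n\), with wavelength doubled. For even \(d\) we must keep the graph nonregular with maximum degree \(d\), and the cut at a shared terminal gives a vertex of degree \(d/2\) or so, which is fine. I would redo the energy computation for the \(L_d\) and \(M_d\) blocks directly rather than cite it, since the blocks' effective resistances are what produce \(c_d\).

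\emph{Lower bound.} Let \(G\) be nonregular with \(\Delta=d\), and let \(x>0\) be the Perron vector. The identity above writes \(d-\rho\) as the Rayleigh quotient of \(\mathsf L(G)+P\), where \(P=\operatorname{diag}(d-\deg v)\) is nonzero. Form the double \(H\): take two copies of \(G\) and add, between corresponding deficient vertices, edges or small gadgets raising all degrees to exactly \(d\). The order is then \(2n+O_d(\#\text{deficient})\), but the deficient set may be large. The plan is to handle this via the eigenvector: set \(y=x\) on one copy and \(-x\) on the other. Each edge joining copy vertices \(u,u'\) costs \((2x_u)^2\), whereas the potential contributes \(x_u^2\) twice. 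So the gadgets need care; for odd \(d\) the gadget cost must be matched with \(P\) exactly.

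Instead I would avoid gadgets. Since \(\mathsf L+P\) is a \(d\)-regular-type operator, I would rerun the paper's proof of the finite regular bound (Theorem~\ref{thm:fixed-regular-sharp}) with a boundary potential. That proof presumably proceeds by level sets of the Fiedler vector and local counting showing that each unit of length costs at least \((d+1)\) vertices with effective conductance giving \(c_d\). With the Perron vector, which is monotone in the nearly-extremal regime, the same counting applies on the half-line with a Neumann end at the maximum of \(x\) and a Dirichlet-like end at deficient vertices. This yields \(d-\rho\geqslant (c_d-o(1))\pi^2/(4n^2)\). Parity enters as before: for even \(d\), the edge cuts inside \(G\) separating level sets far from deficient vertices still have even size, since the degree sum on a set avoiding deficiency is even. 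This forces the \(M_d\) constant.

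\emph{Main obstacle.} The main obstacle is this lower bound, specifically transporting the regular-graph argument to the operator \(\mathsf L+P\) when deficient vertices are numerous or spread out, and when the Perron vector is not monotone. I would first try to show that deficient vertices with \(x_v\) not small make \(d-\rho\) of order \(n^{-1}\) or larger, which would be much bigger than the target. Then only deficient vertices in the region where \(x\) is \(o(1)\cdot\max x\) matter, and that region acts as the Dirichlet end.
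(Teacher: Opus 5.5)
The upper-bound half of your proposal is sound in substance: a quarter-wave trial function on half of a regular chain, with the Dirichlet end at the cut. The paper simply cites Liu's constructions for this direction. Two small corrections: the cut terminal in even degree has degree $2$ or $d-2$, not about $d/2$, and a bounded end must be adjusted to reach every order $n$. Neither matters.

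\emph{The genuine gap is the lower bound, which you leave as a plan.} The doubling you abandoned is exactly the route that works, but it needs two ideas you are missing.

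First, you need not handle an arbitrary nonregular $G$: take $G$ attaining $\rho(n,d)$. Suppose it had at least $d+1$ vertices of degree below $d$. Each has degree at most $d-1$, so two of them are nonadjacent. Adding that edge keeps $G$ connected, nonregular and of maximum degree $d$, and strictly increases $\rho$ by Perron--Frobenius. Hence $D=\sum_v(d-\deg v)\leqslant d(d-1)$, and the concern that the deficient set is large disappears.

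Second, do not join the copies to each other. As you noticed, that costs $4x_u^2$ per edge against $2x_u^2$ from the potential, and it forces parallel edges when $d-\deg u\geqslant2$. Instead, take a fixed connected $d$-regular graph $R$ of bounded order and delete $D$ edges of a matching, keeping it connected. Join each deficient vertex $v$ of either copy by $d-\deg v$ edges to distinct vertices among the $2D$ resulting vertices of degree $d-1$. Put $x$ on one copy, $-x$ on the other, and $0$ on $R$. Then every new edge costs exactly $x_v^2$, the vector is centred, and on this $d$-regular graph $\widehat G$ of order $2n+O_d(1)$,
\[
 \mu(\widehat G)\leqslant\frac{2\bigl(\sum_{uv\in E(G)}(x_u-x_v)^2+\sum_v(d-\deg v)x_v^2\bigr)}{2\|x\|_2^2}=d-\rho(G).
\]
Theorem~\ref{thm:fixed-regular-sharp} then gives $d-\rho(n,d)\geqslant(c_d-o(1))\pi^2/(4n^2)$. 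This is the paper's proof.

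Your fallback, rerunning the regular argument with a boundary potential, is not yet an argument. You do not say how the local cut lemmas survive at deficient vertices, which may lie anywhere in the ordering, or what produces the Dirichlet end. The preliminary step you suggest is true but trivial ($d-\rho\geqslant x_v^2/\|x\|_2^2$), and it does not confine the deficient vertices to one end. Monotonicity of the Perron vector is not the issue, since the cut-kernel method sorts any vector.

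This route could be completed by merging all deficiency into a single ground vertex of value zero, joined to each $v$ by $d-\deg v$ edges. Then $d-\rho$ becomes the lowest eigenvalue with a Dirichlet condition at that vertex, which comes first in the ordering. But every local lemma, the even-degree machinery and the quartic case would have to be rechecked with this multigraph root. The doubling avoids all of that by reducing everything to the regular theorem as stated.
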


Liu constructed the sharp families in every degree
\cite[Theorems~6.1--6.3]{LiuSpectralRadius2024}.
Our lower bound follows by completing two copies of an extremal
nonregular graph to a regular graph of order \(2n+O_d(1)\) and
applying Theorem~\ref{thm:degree-bounds-intro}. This gives a direct
connection between the adjacency and Laplacian extremal problems,
including the dependence of their constants on degree parity.
We also prove that the constant one in Cioab\u{a}'s bound
\[
 \Delta(G)-\rho(G)>\frac1{n\operatorname{diam}(G)}
\]
is best possible, even for degree sequence \((3,\ldots,3,2)\)
(Proposition~\ref{prop:sharp-diameter-spectral-radius-constant}).
This answers the related question discussed in
\cite{CioabaGregoryNikiforov2007,Cioaba2007} and raised again in
\cite[Section~7]{LiuSpectralRadius2024}.

The same degree constraints yield sharp bounds on the time needed
for a random walk to reach a specified vertex. For
\(u,v\in V(G)\), let \(T_v\) be the first time simple random walk
visits \(v\), and write \(\mathbb E_u\) for expectation when it
starts at \(u\). Define the maximum hitting and commute times by
\[
 t_{\mathrm{hit}}(G)=\max_{u,v\in V(G)}\mathbb E_uT_v,
 \qquad
 \tau^*(G)=\max_{u,v\in V(G)}
       \bigl(\mathbb E_uT_v+\mathbb E_vT_u\bigr).
\]
Aldous and Fill \cite[Open Problem~6.14]{AldousFill} conjectured
sharp quadratic bounds for both quantities on regular graphs,
alongside their spectral gap conjecture. We prove the following
finite bounds.

\begin{corollary}[Aldous--Fill hitting- and commute-time conjectures]
\label{cor:extremal-times-intro}
Let \(G\) be a connected regular graph of order \(n\geqslant3\). Then
\[
 t_{\mathrm{hit}}(G)\leqslant\frac34(n-1)(n+3),
 \qquad
 \tau^*(G)\leqslant\frac32n^2-3n.
\]
Both leading constants are best possible.
\end{corollary}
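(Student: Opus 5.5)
The plan is to pass to electrical networks. For a connected $d$-regular graph, the commute time identity gives $\mathbb E_uT_v+\mathbb E_vT_u=2|E(G)|R(u,v)=dnR(u,v)$, where $R$ is effective resistance with unit edge resistances. Tetali's formula gives
\[
 \mathbb E_uT_v=\frac d2\sum_{w\in V(G)}\bigl(R(u,v)+R(v,w)-R(u,w)\bigr).
\]
Both assertions of Corollary~\ref{cor:extremal-times-intro} then reduce to sharp upper bounds on resistances in regular graphs. The target for the commute time is
\[
 R(u,v)\leqslant\frac{3n-6}{2d}\qquad(u,v\in V(G)),
\]
which gives $\tau^*(G)\leqslant\frac32n^2-3n$. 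For the hitting time, I would prove a bound on the resistance triangle excess $R(u,v)+R(v,w)-R(u,w)$, summed over $w$, that yields $\frac34(n-1)(n+3)$. Note that this bound is somewhat more than half of the commute bound.

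For the resistance bound, I would decompose $G$ along its bridges. Contracting each $2$-edge-connected component gives a tree, and the $u$–$v$ path in it crosses some bridges $b_1,\dots,b_k$. Each bridge contributes resistance exactly $1$. Inside a $2$-edge-connected piece, I would bound the resistance between its entry and exit vertices by Thomson's principle with an explicit flow. I would route the flow along BFS layers from the entry vertex and spread it over at least two edge-disjoint paths, which exist by Menger's theorem. The counting input is that in a $d$-regular graph every three consecutive BFS layers contain at least $d+1$ vertices. In even degree this is the same mechanism that gives the $3n/(d+1)$ diameter bound in Corollary~\ref{cor:diameter-stability-intro}.

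The bookkeeping then has two parts. First, a piece hanging off a bridge must contain at least $d+1$ vertices, because a vertex of degree $d-1$ inside it forces many neighbours; hence each bridge costs at least about $d+1$ vertices of the budget $n$. Second, a $2$-edge-connected stretch of $m$ vertices contributes resistance at most about $3m/(2d(d+1))\cdot d$ on its own scale. Optimising resistance per vertex shows that the worst case is the cubic bridge chain of $L_3$: each $4$-vertex block contributes $1$ (its bridge) plus $1$ (internal resistance between its terminals), so the resistance is about $n/2=3n/(2d)$ at $d=3$. This matches the cubic extremiser $X_n$ and explains the constant $\frac32$. The end blocks must be handled separately to obtain the exact additive term $-3n$; this should come from the two end subgraphs, each of which has at least $d+1$ vertices and resistance deficit at least $1$.

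For $t_{\mathrm{hit}}$, I would fix the extremal pair $u,v$ along the same tree decomposition. The summand $R(u,v)+R(v,w)-R(u,w)$ equals $2R(v,\pi(w))$, where $\pi(w)$ is the projection of $w$ onto the $u$–$v$ resistance geodesic in the tree of pieces; this is exact across bridges and an upper bound within pieces. Summing over $w$ turns the bound into $d\sum_w R(v,\pi(w))$, and maximising it under the same per-vertex budget gives a quadratic of the form $\frac34 n^2+O(n)$. Pinning the lower-order terms to $\frac34(n-1)(n+3)$ requires tracking the end blocks exactly. For sharpness, I would evaluate both quantities on $X_n$ between its two extreme vertices. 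The resistance is $n/2+O(1)$, which gives commute time $\frac32n^2+O(n)$ and hitting time $\frac34n^2+O(n)$, so both leading constants are best possible.

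The main obstacle is the uniform-in-degree internal resistance bound for $2$-edge-connected stretches. It must be sharp enough that no degree $d\geqslant4$ and no bridgeless configuration beats the cubic bridge chain, and precise enough to yield the exact finite constants rather than only $1+o(1)$. I would first try a layered flow whose value through each layer is split in proportion to the edges between consecutive layers. If that is too lossy for even $d$, I would instead bound $R$ by $\sum_i 1/|E(L_i,L_{i+1})|$ after shorting within layers, with a matching upper bound obtained from a potential that is constant on each layer, combined with the parity fact that edge cuts have even size when $d$ is even.
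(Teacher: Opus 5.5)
You reduce correctly to effective resistance, and your sharpness check on $X_n$ is fine. There is a genuine gap, though, at the step you yourself call the main obstacle. Nothing in the proposal proves a degree-uniform bound such as $R(u,v)\leqslant 3(n-2)/(2d)$, and none of your suggested routes gives one.

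\emph{Layered flow.} A current split between consecutive BFS layers according to edge counts is in general not a unit $u$--$v$ flow. Kirchhoff's law fails at vertices whose backward and forward degrees are not in the ratio of the layer cuts, and the current does not end at $v$.

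\emph{Menger.} It gives no control on path lengths once $d\geqslant4$. Two routes are also too few: inside each copy of $L_d$ in a bridge chain they give resistance $1$ rather than $2/(d-1)$. So a two-route estimate gives about $2n/(d+1)$ on that graph, which exceeds $3n/(2d)$ for $d\geqslant5$.

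\emph{Fallback.} This points the wrong way. Shorting within layers, or testing the Dirichlet principle with a layer-constant potential, gives the Nash-Williams \emph{lower} bound $R\geqslant\sum_i|E(L_i,L_{i+1})|^{-1}$. For even $d$ there are no bridges, so everything rests on this missing estimate.

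At $d=3$ your idea does close. In a subcubic $2$-edge-connected piece, two edge-disjoint paths are internally vertex-disjoint, so $R_C(a,b)\leqslant|C|/4$. Since bridge shores have at least five vertices and pieces between two bridges at least four, this gives $R\leqslant n/2-1$. But this is special to maximum degree three. The hitting-time optimisation and the exact lower-order terms are likewise only asserted.

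The missing idea is that an edge crossing several level cuts must be counted in each of them. Order the vertices by the potential $V$ itself. Unit current across every level cut gives $\mathsf K\mathbf y=\one$, and hence the valid upper-bound version of your fallback, $R\leqslant\sum_i1/q_i$. Even this fails at $d=5$. The $L_5$ bridge chain has cut profile $(1,4,7,8,7,4)$, giving about $0.318$ per vertex against the required $3/10$, although its true resistance is $1/4$ per vertex.

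The paper keeps the off-diagonal kernel entries. By Lemma~\ref{lem:general-low-cut}, the low cuts ($q_i<d-1$) are covered by disjoint blocks $B$ of length $r_B\leqslant d+1$ with $\mathbf y_B^{\mathsf T}\mathsf K_{BB}\mathbf y_B\geqslant\frac{d-1}{r_B}Y_B^2$, where $Y_B=\sum_{i\in B}y_i$; elsewhere $q_i\geqslant d-1$. So $\mathsf K\mathbf y=\one$ forces $Y_B\leqslant r_B/(d-1)$. Weighting by $n-i$ through $\mathbb E_sT_t=d\sum_vV(v)$ gives $t_{\mathrm{hit}}\leqslant d(n-1)(n+d)/(2(d-1))$. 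This is at most $\frac34(n-1)(n+3)$ for $d\leqslant(n+3)/2$.

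The exact commute bound comes from Proposition~\ref{prop:two-terminal-resistance}. Repeating $(G,s,t)$ along a long bridge chain and comparing a cosine trial function with Theorem~\ref{thm:minimum-degree-bound} gives $1+R_G(s,t)\leqslant n/(d-1)$. Hence $\tau^*(G)\leqslant dn(n/(d-1)-1)\leqslant\frac32n^2-3n$, which also covers the hitting bound for $d>(n+3)/2$.
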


Along even orders, the cubic chains are asymptotically extremal for
hitting and commute times as well as relaxation time.
The degree-dependent bounds and proofs are given
in Theorem~\ref{thm:extremal-times}. For the lazy walk, which stays
at its present vertex with probability \(1/2\) and otherwise makes
a simple random-walk step, the relaxation-time bound also yields
mixing time \(O(n^2\log n)\). Here mixing time is the least time
at which the distribution is within total variation distance
\(1/4\) of the stationary distribution for every initial vertex; see
\cite[Theorem~12.4]{LevinPeresWilmer}.

\subsection{Proofs and organisation}

Our proofs begin by ordering the vertices according to an eigenvector
for algebraic connectivity, called a \emph{Fiedler vector}, so that
the degree condition restricts the cuts between consecutive vertices.
Since an edge may cross several cuts, a sharp comparison with the
energy of a path must retain its contribution to each of them.
This comparison gives the leading constants, including the stronger
bound from cut parity in even regular degree, while its error terms
control the number of vertices outside the prescribed chains and
yield stability.

For growing degree, reciprocal cut sizes lead to a weighted path
problem whose sharp bound and equality conditions give
Theorem~\ref{thm:growing-degree-intro}. An exact comparison of the
end subgraphs gives uniqueness in degree four, with the small orders
checked directly. To determine the structure of exact minimisers
in every fixed degree, we compare their eigenvalue equations under
local rearrangements of the blocks, excluding branching and confining
every exceptional block to an end subgraph of bounded order.

In Section~\ref{sec:preliminaries}, we introduce the notation and
record the variational and cut identities used in the proofs.
In Section~\ref{sec:path-estimates}, we prove the fixed-degree
bounds and the stability and diameter theorems.
In Section~\ref{sec:reciprocal-cuts}, we determine the maximum relaxation
time and its unique extremisers, obtain refined asymptotics, and prove
quantitative stability.
In Section~\ref{sec:exact-canonical-middle}, we prove
the structure theorem for exact minimisers in every fixed degree.
In Section~\ref{sec:applications}, we apply the spectral and cut
bounds to adjacency eigenvalues and to hitting and commute times.

\medskip
\noindent\textbf{Independent work.}\quad
The first version of this paper proved the Aldous--Fill spectral gap
conjecture. During the preparation of this expanded version, we became
aware of two preprints by Abdi and Ghorbani
\cite{AbdiGhorbaniPartI,AbdiGhorbaniPartII}, which independently prove
this conjecture and obtain fixed-degree bounds, structural theorems,
and several diameter results overlapping with those presented here.
Their preprints use similar ideas in the fixed-degree setting and do not
include a statement on the use of AI. The AI-assisted proofs in the
expanded version of this paper were developed before these
preprints~\cite{AbdiGhorbaniPartI,AbdiGhorbaniPartII} became publicly
available, and neither preprint was used as a source in developing
those proofs.

The expanded version also proves the quartic uniqueness conjecture
\cite[Conjecture~1.5]{AbdiGhorbaniQuartic} and Liu's adjacency spectral
radius conjecture \cite[Conjecture~7.1]{LiuSpectralRadius2024}, and gives
asymptotically sharp hitting- and commute-time bounds corresponding to
Aldous and Fill's Open Problem~6.14 \cite{AldousFill}. It also includes
results on quartic near-minimisers and converse diameter implications.
These additional conclusions are not stated as results in their
preprints, although the commute-time upper bound follows from a
resistance estimate common to both works
\cite[Theorem~5.3]{AbdiGhorbaniPartI}. Their Part~I also establishes
a sharp degree-uniform asymptotic theorem for graphs of prescribed
minimum degree, including nonregular graphs
\cite[Theorem~1.6]{AbdiGhorbaniPartI}.
Our use of generative AI is described below.

\medskip
\noindent\textbf{Use of generative AI.}\quad
After completing an initial draft, the author used ChatGPT 5.6 and
ChatGPT 6 Plus to explore possible extensions and applications and to
assist in developing the expanded manuscript. These tools were also
used for computations on examples, English language editing, and
identifying references. The author takes full responsibility for the
mathematical content and the writing of the manuscript.

\section{Preliminaries}\label{sec:preliminaries}

In this section, we fix the spectral notation and record the variational
and cut identities used throughout the paper.

\subsection{Laplacian eigenvalues}

Let \(G\) be a connected graph of order \(n\). We write \(\mathsf I\)
for the identity matrix, with dimension determined by the context,
\(\one\) for the constant function one on \(V(G)\), and
\(\|\cdot\|_2\) for the Euclidean norm on \(\R^{V(G)}\).
For \(x:V(G)\to\R\), its Dirichlet energy is
\[
 \cE_G(x)=\sum_{uv\in E(G)}(x(u)-x(v))^2.
\]
The variational characterisation of algebraic connectivity gives
\[
 \mu(G)=
 \min_{\substack{x\in\R^{V(G)}\setminus\{0\}\\
                  \sum_{v\in V(G)}x(v)=0}}
 \frac{\cE_G(x)}{\sum_{v\in V(G)}x(v)^2}.
\]
An eigenvector for \(\mu(G)\) is called a Fiedler vector.
A function \(x:V(G)\to\R\) is \emph{harmonic} at a vertex
\(v\) if \((\mathsf L(G)x)(v)=0\).

\subsection{Fiedler orderings and edge cuts}

In this subsection, \(G\) is \(d\)-regular unless stated otherwise.
Fix a Fiedler vector \(f\), and label the vertices \(v_1,\ldots,v_n\) so that
\[
 f_1\leqslant f_2\leqslant\cdots\leqslant f_n,
 \qquad f_i=f(v_i),\qquad \sum_{i=1}^{n}f_i=0.
\]
\begin{samepage}
We call \(f\), with this labelling, an ordered Fiedler vector.  For
\(1\leqslant i<n\), let \(y_i=f_{i+1}-f_i\), and write
\(\mathbf y=(y_1,\ldots,y_{n-1})^{\mathsf T}\).  Let
\[
 S_i=\{v_1,\ldots,v_i\},\qquad
 q_i=|\partial S_i|,
\]
where \(\partial S_i\) is the set of edges having exactly one endpoint in
\(S_i\). The sets \(S_i\) and \(V(G)\setminus S_i\) are
the two shores of this cut. We also put \(S_0=\varnothing\), \(S_n=V(G)\), and
\(q_0=q_n=0\).
\end{samepage}
For \(S\subseteq V(G)\), let \(e(S)\) denote the number of edges
with both endpoints in \(S\), and for disjoint vertex sets \(X,Y\),
let \(e(X,Y)\) denote the number of edges joining them. The ordering
gives \(y_i\geqslant0\), while connectivity gives \(q_i\geqslant1\)
for \(1\leqslant i<n\); the identity
\begin{equation}
 q_i=di-2e(S_i),
 \label{eq:cut-parity}
\end{equation}
also gives \(q_i\equiv di\pmod 2\).

The following inequality relates the sizes of two such cuts whose indices
differ by at most \(d\).

\begin{lemma}\label{lem:interval-cut}
Let \(G\) be a connected simple graph of order \(n\) and minimum degree
at least \(d\), and let
\(q_i\) be the cut sizes in a Fiedler ordering of \(G\).  Let
\(0\leqslant i<j\leqslant n\), and suppose that \(j-i\leqslant d\).  Then
\(q_i+q_j\geqslant(j-i)(d-j+i+1)\).
\end{lemma}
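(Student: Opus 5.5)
Set \(k=j-i\), so that \(1\leqslant k\leqslant d\), and let \(T=S_j\setminus S_i=\{v_{i+1},\ldots,v_j\}\), a set of exactly \(k\) vertices. The argument is purely combinatorial and uses nothing about the Fiedler vector beyond the nested structure \(S_i\subseteq S_j\); it works for any chain of initial segments in any ordering. The plan is to count edges leaving \(T\) and show that each one is counted in \(q_i\) or in \(q_j\).

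\textbf{Step 1 (edges leaving \(T\) are counted).} Take an edge \(uv\) with \(u\in T\) and \(v\notin T\). Then either \(v\in S_i\) or \(v\in V(G)\setminus S_j\). In the first case \(v\in S_i\) and \(u\notin S_i\), so \(uv\in\partial S_i\). In the second case \(u\in S_j\) and \(v\notin S_j\), so \(uv\in\partial S_j\). Since the two cases are disjoint, \(|\partial T|=e(T,S_i)+e(T,V(G)\setminus S_j)\leqslant q_i+q_j\). When \(i=0\) or \(j=n\), the corresponding cut is empty and \(q_0=q_n=0\), so the bound still holds; no separate treatment is needed.

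\textbf{Step 2 (lower bound on \(|\partial T|\)).} Each vertex of \(T\) has degree at least \(d\). Because the graph is simple, each vertex of \(T\) has at most \(k-1\) neighbours inside \(T\). Therefore each vertex of \(T\) sends at least \(d-k+1\) edges out of \(T\), and this quantity is nonnegative because \(k\leqslant d\). Summing over the \(k\) vertices of \(T\), each edge of \(\partial T\) is counted exactly once, since it has exactly one endpoint in \(T\). Hence \(|\partial T|\geqslant k(d-k+1)\).

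Combining the two steps gives \(q_i+q_j\geqslant (j-i)(d-j+i+1)\). I do not expect a real obstacle here. The only points to check are that the two cuts do not miss any edge of \(\partial T\), which is settled by the case split in Step 1, and the boundary conventions for \(i=0\) or \(j=n\).
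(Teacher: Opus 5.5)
Your proof is correct and is essentially the paper's argument: the paper also takes \(T=S_j\setminus S_i\), bounds \(|\partial T|\geqslant dt-2e(T)\geqslant t(d-t+1)\) using simplicity and the degree condition, and concludes from the exact identity \(q_i+q_j=|\partial T|+2e(S_i,V(G)\setminus S_j)\), whose inequality half is what your Step~1 proves. You are also right that only the nesting of the initial segments is used, and the paper relies on this when it later applies the same cut bounds to orderings by other potentials.
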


\begin{proof}
Let \(t=j-i\) and \(T=S_j\setminus S_i=\{v_{i+1},\ldots,v_j\}\).  Since \(G\)
is simple and has minimum degree at least \(d\),
\(|\partial T|\geqslant dt-2e(T)\geqslant
dt-t(t-1)=t(d-t+1)\).
The identity
\[
 q_i+q_j=|\partial T|+2e(S_i,V(G)\setminus S_j),
\]
therefore gives \(q_i+q_j\geqslant t(d-t+1)\).
\end{proof}

For \(1\leqslant i,j<n\), let
\(K_{ij}=|\partial S_i\cap\partial S_j|\) and write
\(\mathsf K=(K_{ij})\). We call \(\mathsf K\) the cut matrix,
and its entries the cut kernel, of this ordering. The matrix \(\mathsf K\) is the Gram matrix
of the cut indicators, and the energy has the following expression
in terms of \(\mathbf y\):

\begin{equation}
 \cE_G(f)=\mathbf y^{\mathsf T}\mathsf K\mathbf y,\qquad
 K_{ii}=q_i,\qquad
 2K_{i,i+1}=q_i+q_{i+1}-d.
 \label{eq:kernel}
\end{equation}
In particular,
\begin{equation}
 \cE_G(f)\geqslant
 \sum_{i=1}^{n-1}q_iy_i^2+
 \sum_{i=1}^{n-2}(q_i+q_{i+1}-d)y_iy_{i+1}.
 \label{eq:tridiagonal-kernel}
\end{equation}

Indeed, each edge \(v_av_b\) with \(a<b\) contributes
\((f_b-f_a)^2=(\sum_{i=a}^{b-1}y_i)^2\) to the energy.
Expanding and summing over all edges proves the first identity in
\eqref{eq:kernel}; its diagonal entries are \(K_{ii}=|\partial S_i|=q_i\).

The symmetric difference
\(\partial S_i\mathbin{\triangle}\partial S_{i+1}\) consists precisely of
the \(d\) edges incident with \(v_{i+1}\), which proves the formula for
the adjacent entries. Since every entry of \(\mathsf K\) and every
coordinate of \(\mathbf y\) is nonnegative, discarding the terms
indexed by nonadjacent pairs in \(\mathbf y^{\mathsf T}\mathsf K\mathbf y\)
gives \eqref{eq:tridiagonal-kernel}.

\subsection{Effective resistance}

Regard each edge of \(G\) as a unit resistor. For distinct
vertices \(s,t\), the \emph{effective resistance} \(R_G(s,t)\) is the
voltage difference produced by passing a unit current from \(s\) to
\(t\). If \(\mathbf e_v\) denotes the indicator of a vertex \(v\),
the potential \(p:V(G)\to\R\), normalised by \(p(t)=0\), satisfies
\[
 \mathsf L(G)p=\mathbf e_s-\mathbf e_t,
 \qquad R_G(s,t)=p(s).
\]
The Dirichlet principle gives
\[
 R_G(s,t)^{-1}=
 \min\{\cE_G(h):h:V(G)\to\R,\ h(s)=1,\ h(t)=0\}.
\]
An \(s\)--\(t\) \emph{unit flow} assigns a real number
\(\theta(u,v)\) to each ordered pair of adjacent vertices, with
\(\theta(u,v)=-\theta(v,u)\), and satisfies
\[
 \sum_{u:uv\in E(G)}\theta(v,u)
   =\mathbf e_s(v)-\mathbf e_t(v)
 \qquad(v\in V(G)).
\]
Thomson's principle states that
\[
 R_G(s,t)=\min_{\theta}\sum_{uv\in E(G)}\theta(u,v)^2,
\]
where the minimum is over all \(s\)--\(t\) unit flows and each
undirected edge is counted once. More generally, if edge \(uv\)
has resistance \(r_{uv}>0\), the same principle holds with
\(\sum_{uv\in E(G)}r_{uv}\theta(u,v)^2\) in place of the
unweighted sum.
We also use the commute-time identity
\[
 \mathbb E_sT_t+\mathbb E_tT_s=2|E(G)|R_G(s,t),
\]
with the hitting-time notation introduced in Section~\ref{sec:intro-applications}. These standard
facts are discussed in \cite{AldousFill,LevinPeresWilmer,Lovasz1996}.
\section{Sharp bounds and stability}\label{sec:path-estimates}

In this section, we prove the sharp bounds for algebraic connectivity
under minimum-degree and regularity constraints. We first establish a
comparison with the path in degree three, then extend the argument to
general minimum degree and to even regular degree. The terms retained
in these comparisons yield the stability theorems and the diameter
characterisation.

\subsection{A path comparison in degree three}

Let \(G\) be a connected cubic graph of order \(n\), and let \(f\)
be an ordered Fiedler vector. We use the cut notation from
Section~\ref{sec:preliminaries}.

An index \(a\) for which \(q_a=1\) will be
called a bridge index.  Lemma~\ref{lem:interval-cut}, applied with
\(t=1,2,3\), gives
\begin{equation}
 q_{a+1}\geqslant2,\qquad q_{a+2}\geqslant3,
 \qquad q_{a+3}\geqslant2.
 \label{eq:cubic-cut-profile}
\end{equation}
It also shows that distinct bridge indices differ by at least four.  The
endpoint bound \(q_i\geqslant i(4-i)\), and its counterpart for the
complementary shore, give \(4\leqslant a\leqslant n-4\).

For a bridge index \(a\), consider the principal block of
\eqref{eq:tridiagonal-kernel} on the indices \(a,\ldots,a+3\).  Replacing the
four cut sizes by the lower bounds \((1,2,3,2)\) in
\eqref{eq:cubic-cut-profile} gives the quadratic form in
\(\mathbf x=(x_0,x_1,x_2,x_3)^{\mathsf T}\)
\[
 Q(\mathbf x)=x_0^2+2x_1^2+3x_2^2+2x_3^2
       +2x_1x_2+2x_2x_3.
\]
We shall use the identity
\begin{equation}
 \begin{split}
 Q(\mathbf x)-\frac12(x_0+x_1+x_2+x_3)^2
  ={}&\frac12(x_0-x_1-x_2-x_3)^2\\
    &+2x_2^2+(x_1-x_3)^2.
 \end{split}
 \label{eq:cubic-local-identity}
\end{equation}

For every bridge index \(a\), let
\(W_a=\{a,a+1,a+2,a+3\}\).  These sets are pairwise disjoint.  Given an
ordered Fiedler vector \(f\), let \(g=(g_1,\ldots,g_n)\) agree with \(f\)
outside the interiors of the intervals \(\{a,\ldots,a+4\}\), and interpolate
affinely between their endpoints:
\begin{equation}
 g_{a+t}=f_a+\frac{t}{4}(f_{a+4}-f_a)
 \qquad(0\leqslant t\leqslant4).
 \label{eq:cubic-interpolant}
\end{equation}

\begin{proposition}\label{prop:cubic-path}
Let \(G\) be a connected simple cubic graph of order \(n\), and let
\(f\) be an ordered Fiedler vector. Then
\begin{equation}
 \cE_G(f)\geqslant2\cE_{P_n}(g),\qquad
 \|f-g\|_2^2\leqslant6\cE_G(f).
 \label{eq:cubic-path-energy}
\end{equation}
\end{proposition}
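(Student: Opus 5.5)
We work with the tridiagonal lower bound \eqref{eq:tridiagonal-kernel} and split the index range \(1,\ldots,n-1\) into the disjoint windows \(W_a\) (one for each bridge index \(a\)) and the remaining indices. Note that \(\cE_{P_n}(g)=\sum_{i}(g_{i+1}-g_i)^2\). Outside the windows, \(g\) agrees with \(f\), so there the path increment equals \(y_i\). Inside a window \(W_a\), each of the four increments of \(g\) equals \(\frac14(x_0+x_1+x_2+x_3)\), where \(x_t=y_{a+t}\). The window therefore contributes \(\frac14(x_0+\cdots+x_3)^2\) to \(\cE_{P_n}(g)\), and twice that is exactly the \(\frac12(\sum x_t)^2\) appearing in \eqref{eq:cubic-local-identity}.

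\emph{The energy bound.} For each non-bridge index we have \(q_i\geqslant2\), so the diagonal term \(q_iy_i^2\) already gives \(2y_i^2\). The off-diagonal coefficients \(q_i+q_{i+1}-3\) are nonnegative, since \(q_i\geqslant1\) and \(q_i+q_{i+1}\geqslant3\) by Lemma~\ref{lem:interval-cut} with \(t=1\). We discard all off-diagonal terms that couple different windows, or a window to an outside index; this only lowers the right-hand side. Inside \(W_a\), the coefficients dominate those of \(Q\) entrywise, because \(q\geqslant(1,2,3,2)\) on \(W_a\) and all \(y_i\geqslant0\). Here the coupling coefficient between \(a\) and \(a+1\) is \(q_a+q_{a+1}-3\geqslant0\), which we may also drop, and \(Q\) has no \(x_0x_1\) term. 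Hence the window part is at least \(Q(\mathbf x)\), which by \eqref{eq:cubic-local-identity} is at least \(\frac12(\sum x_t)^2\). Summing gives \(\cE_G(f)\geqslant 2\cE_{P_n}(g)\). The windows stay inside \(\{1,\ldots,n-1\}\) because \(4\leqslant a\leqslant n-4\).

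\emph{The distance bound.} We also retain the slack \(R_a=\frac12(x_0-x_1-x_2-x_3)^2+2x_2^2+(x_1-x_3)^2\). The refined inequality is then \(\cE_G(f)\geqslant2\cE_{P_n}(g)+\sum_a R_a\), and it suffices to show \(\sum_{t=1}^{3}(f_{a+t}-g_{a+t})^2\leqslant 6R_a\) for each window, since \(f=g\) off the window interiors. Write \(s=\sum x_t\). Then
\[
f_{a+t}-g_{a+t}=\sum_{r<t}x_r-\frac t4 s,
\]
a linear form in \(\mathbf x\). For example, \(f_{a+2}-g_{a+2}=\frac12(x_0+x_1-x_2-x_3)\). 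Each such form vanishes on the kernel of \(R_a\), which is spanned by \(x_2=0\), \(x_1=x_3\), \(x_0=x_1+x_3\), i.e. \(\mathbf x=(2,1,0,1)\). Checking: \(f_{a+1}-g_{a+1}=2-1=1\)? No: with \(s=4\), \(f_{a+1}-g_{a+1}=x_0-s/4=2-1=1\neq0\). So the kernel does not annihilate the error, and \(R_a\) alone cannot control it. The fix is that the full energy also contains the uncancelled part of \(2\cE_{P_n}\) in a neighbourhood. Rather than this, I would instead use the slack from the dropped coupling term \(q_a+q_{a+1}-3\)\ldots{} but that may vanish.

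Since the local approach fails, the main obstacle is finding the right quantity to control \(\|f-g\|_2^2\). I would switch to a global estimate. Each window error satisfies \(|f_{a+t}-g_{a+t}|\leqslant s\), so \(\sum_t(f_{a+t}-g_{a+t})^2\leqslant 3s^2\). Moreover \(s^2\leqslant 2Q(\mathbf x)\), because \(Q-\frac12 s^2\geqslant0\) gives \(Q\geqslant\frac12s^2\). Summing over disjoint windows gives
\[
\|f-g\|_2^2\leqslant 3\sum_a s_a^2\leqslant 6\sum_a Q(\mathbf x^{(a)})\leqslant 6\,\cE_G(f),
\]
using that the window quadratic forms are disjoint pieces of a nonnegative decomposition of \(\cE_G(f)\). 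This gives the stated constant \(6\) exactly, so the second inequality in \eqref{eq:cubic-path-energy} follows from the same window decomposition as the first.
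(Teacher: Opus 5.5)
Your argument is correct and is essentially the paper's proof: keep the principal block of \eqref{eq:tridiagonal-kernel} on each $W_a$ and the diagonal terms elsewhere, and use \eqref{eq:cubic-local-identity} to get $\tfrac12(f_{a+4}-f_a)^2=2\sum_{t}(g_{a+t+1}-g_{a+t})^2$ per window. Then bound the interpolation error by $3\sum_a(f_{a+4}-f_a)^2\leqslant 6\cE_G(f)$, since $f$ and $g$ both lie between the endpoint values on each window. The abandoned attempt to control $\|f-g\|_2^2$ by the slack $R_a$ is unnecessary and, as you noticed, fails, so you can delete it.
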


\begin{proof}
Retain in \eqref{eq:tridiagonal-kernel} the principal block on each
\(W_a\) and the diagonal terms outside their union.  By
\eqref{eq:cubic-local-identity}, the contribution from \(W_a\) is at least
\[
 \frac12(f_{a+4}-f_a)^2
 =2\sum_{t=0}^{3}(g_{a+t+1}-g_{a+t})^2.
\]
If \(i\notin\bigcup_aW_a\), then \(q_i\geqslant2\) and
\(g_{i+1}-g_i=y_i\).  Summing proves the first inequality in
\eqref{eq:cubic-path-energy}.

Both \(f_i\) and \(g_i\) lie between the endpoint values on each interpolated
interval.  Hence
\[
 \|f-g\|_2^2
 \leqslant3\sum_a(f_{a+4}-f_a)^2
 \leqslant6\cE_G(f),
\]
which proves the second inequality in \eqref{eq:cubic-path-energy}.
\end{proof}

The algebraic connectivity of the path \(P_n\) is
\(\mu(P_n)=2(1-\cos(\pi/n))\).  We shall use the following consequence of
its Poincar\'e inequality in both degree ranges.

\begin{lemma}\label{lem:path-transfer}
Let \(G\) be a connected graph of order \(n\), let \(f\) be a unit Fiedler
vector indexed by \(v_1,\ldots,v_n\), and let \(g=(g_1,\ldots,g_n)\).  Suppose
that \(\alpha,\beta>0\) and
\[
 \cE_G(f)\geqslant \alpha\cE_{P_n}(g),
 \qquad
 \|f-g\|_2^2\leqslant \beta\cE_G(f).
\]
Then
\[
 \mu(G)\geqslant
 \frac{\alpha\mu(P_n)}
 {\bigl(1+\sqrt{\alpha\beta\mu(P_n)}\bigr)^2}.
\]
\end{lemma}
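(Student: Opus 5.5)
We prove Lemma~\ref{lem:path-transfer} by transferring the path Poincar\'e inequality from \(g\) back to \(f\), paying only for the distance \(\|f-g\|_2\). Throughout, \(f\) is a unit Fiedler vector, so \(\cE_G(f)=\mu(G)\), \(\|f\|_2=1\) and \(\sum_i f_i=0\). Write \(\mu=\mu(G)\), \(\mu_P=\mu(P_n)\) and \(\bar g=\frac1n\sum_i g_i\).

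The first step applies the variational characterisation on the path to the centred vector \(g-\bar g\one\). Since \(\cE_{P_n}\) is unchanged by adding constants, this gives
\[
 \mu_P\|g-\bar g\one\|_2^2\leqslant\cE_{P_n}(g)\leqslant\frac{\mu}{\alpha}.
\]
The second step bounds \(\|g-\bar g\one\|_2\) from below. Because \(f\perp\one\), the vector \(f\) is at distance at most \(\|f-g\|_2\) from \(g\). Orthogonal projection onto \(\one^\perp\) is a contraction and fixes \(f\), so
\[
 \|f-(g-\bar g\one)\|_2\leqslant\|f-g\|_2\leqslant\sqrt{\beta\mu}.
\]
The triangle inequality then yields \(\|g-\bar g\one\|_2\geqslant 1-\sqrt{\beta\mu}\).

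Combining the two steps, whenever \(\sqrt{\beta\mu}\leqslant1\) we obtain
\[
 \sqrt{\mu/\alpha}\geqslant\sqrt{\mu_P}\bigl(1-\sqrt{\beta\mu}\bigr).
\]
Set \(s=\sqrt{\mu}\). The inequality reads \(s/\sqrt\alpha+\sqrt{\mu_P\beta}\,s\geqslant\sqrt{\mu_P}\). Multiplying by \(\sqrt\alpha\) gives
\[
 s\bigl(1+\sqrt{\alpha\beta\mu_P}\bigr)\geqslant\sqrt{\alpha\mu_P}.
\]
Squaring gives exactly the stated bound. In the remaining case \(\sqrt{\beta\mu}>1\), we have \(\mu>1/\beta\). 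The stated bound is at most \(\alpha\mu_P/(\alpha\beta\mu_P)=1/\beta\), since \((1+\sqrt{x})^2\geqslant x\). Hence the bound holds trivially in this case too.

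The only point requiring care is the projection step, which uses that centring \(g\) cannot increase its distance to \(f\). Everything else is elementary algebra, so no serious obstacle is expected.
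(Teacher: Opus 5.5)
Your proof is correct and follows the paper's argument essentially step for step. Like the paper, you use that the projection onto $\one^\perp$ is a contraction fixing $f$ to get $\|g-\bar g\one\|_2\geqslant 1-\sqrt{\beta\mu(G)}$, apply the path Poincar\'e inequality to the centred vector, and split according to whether $\beta\mu(G)<1$; your explicit check via $(1+\sqrt{x})^2\geqslant x$ in the second case simply spells out what the paper compresses into ``this inequality follows from $\sqrt\mu\geqslant\beta^{-1/2}$''.
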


\begin{proof}
Put \(\mu=\mu(G)\), and let \(\mathsf P\) be the orthogonal projection onto
\(\one^\perp\).  Since \(f=\mathsf Pf\), the contraction property gives
\(\|\mathsf Pg\|_2\geqslant1-\sqrt{\beta\mu}\).
Subtracting a constant does not change the path energy.  Hence
\[
 \mu\geqslant \alpha\mu(P_n)\max\{1-\sqrt{\beta\mu},0\}^2.
\]
If \(\beta\mu<1\), taking square roots and rearranging gives
\[
 \sqrt\mu\geqslant
 \frac{\sqrt{\alpha\mu(P_n)}}
      {1+\sqrt{\alpha\beta\mu(P_n)}}.
\]
If \(\beta\mu\geqslant1\), this inequality follows from
\(\sqrt\mu\geqslant\beta^{-1/2}\).  Squaring completes the proof.
\end{proof}

Applying Lemma~\ref{lem:path-transfer} to
Proposition~\ref{prop:cubic-path}, with \(\alpha=2\) and \(\beta=6\), gives the
explicit bound for cubic graphs.

\begin{corollary}\label{cor:cubic-explicit}
Let \(G\) be a connected simple cubic graph of order \(n\).  Then
\begin{equation}
 \mu(G)\geqslant
 \frac{2\mu(P_n)}{\bigl(1+\sqrt{12\mu(P_n)}\bigr)^2}.
 \label{eq:cubic-explicit}
\end{equation}
\end{corollary}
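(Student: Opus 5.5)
This is a direct substitution: we combine Proposition~\ref{prop:cubic-path} with Lemma~\ref{lem:path-transfer}.

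First, let \(f\) be a Fiedler vector of \(G\), normalised so that \(\|f\|_2=1\), and order the vertices so that \(f\) becomes an ordered Fiedler vector. Both the energy \(\cE_G(f)\) and the Rayleigh quotient are unchanged by this relabelling and rescaling. Next, form the piecewise affine interpolant \(g\) from \eqref{eq:cubic-interpolant}, using the bridge indices of this ordering. Proposition~\ref{prop:cubic-path} then gives
\[
 \cE_G(f)\geqslant2\cE_{P_n}(g),\qquad \|f-g\|_2^2\leqslant6\cE_G(f).
\]
These are exactly the two hypotheses of Lemma~\ref{lem:path-transfer} with \(\alpha=2\) and \(\beta=6\).

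Substituting these values, the lemma yields
\[
 \mu(G)\geqslant\frac{2\mu(P_n)}{\bigl(1+\sqrt{2\cdot6\,\mu(P_n)}\bigr)^2}
 =\frac{2\mu(P_n)}{\bigl(1+\sqrt{12\mu(P_n)}\bigr)^2},
\]
which is \eqref{eq:cubic-explicit}.

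No step is a real obstacle. The only point to check is that the lemma's requirement of a unit Fiedler vector is met. This holds because both inequalities in Proposition~\ref{prop:cubic-path} are homogeneous of degree two in \((f,g)\), and \(g\) depends linearly on \(f\). Hence normalising \(f\) preserves them.
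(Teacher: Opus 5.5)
Your proposal is correct and is exactly the paper's argument: the corollary is obtained by feeding the two inequalities of Proposition~\ref{prop:cubic-path} into Lemma~\ref{lem:path-transfer} with \(\alpha=2\), \(\beta=6\), so that \(\alpha\beta=12\). The homogeneity remark is harmless but not needed, since Proposition~\ref{prop:cubic-path} applies to any ordered Fiedler vector, including a unit one.
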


\subsection{A sharp bound in terms of the minimum degree}
\label{sec:minimum-degree-bound}

The full cut kernel gives a bound in terms of the minimum degree.
Its leading constant is the one predicted by the Guiduli--Mohar
conjectures.

\begin{theorem}\label{thm:minimum-degree-bound}
Let \(G\) be a connected simple graph of order \(n\) and minimum
degree at least \(d\geqslant3\).  Then
\begin{equation}\label{eq:minimum-degree-explicit}
 \mu(G)\geqslant
 \frac{(d-1)\mu(P_n)}
 {(1+\sqrt{d(d+1)\mu(P_n)})^2}.
\end{equation}
For every fixed \(d\geqslant3\), this gives
\[
 \min_{\substack{|V(G)|=n\\G\text{ connected},\ \delta(G)\geqslant d}}
 \mu(G)=(1+o(1))\frac{(d-1)\pi^2}{n^2}.
\]
For each fixed odd \(d\geqslant3\), the same asymptotic equality holds
when the minimum is restricted to \(d\)-regular graphs and \(n\)
tends to infinity through even integers.
\end{theorem}

Throughout this subsection, \(G\) has minimum degree at least
\(d\geqslant3\). The cut-kernel representation of the energy and
Lemma~\ref{lem:interval-cut} apply under this hypothesis. We first
estimate the principal form beginning at a cut of size less than
\(d-1\).

For a cut of size \(k\), write \(r=d+1\) when \(k=1\), and
\(r=d\) otherwise.

\begin{lemma}\label{lem:general-low-cut}
Suppose that \(q_a=k\), where \(1\leqslant k\leqslant d-2\), and that
\(a+r\leqslant n\). Then the principal cut-kernel form on
\(a,\ldots,a+r-1\) satisfies
\[
 \sum_{t,u=0}^{r-1}K_{a+t,a+u}x_tx_u
 \geqslant\frac{d-1}{r}\left(\sum_{t=0}^{r-1}x_t\right)^2
 \qquad(\mathbf x\in[0,\infty)^r).
\]
For \(k\geqslant2\), the coefficient \(d-1\) can be increased by
a positive quantity depending only on \(d\).
\end{lemma}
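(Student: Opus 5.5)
The plan is to read the principal form as a Dirichlet energy and bound it by an effective resistance, rather than expanding the kernel entrywise. For \(\mathbf x\in[0,\infty)^r\), define \(h:V(G)\to\R\) by \(h(v_j)=0\) for \(j\leqslant a\), \(h(v_{a+s})=\sum_{t<s}x_t\) for \(1\leqslant s\leqslant r\), and \(h(v_j)=X:=\sum_{t=0}^{r-1}x_t\) for \(j\geqslant a+r\). By the same expansion that proved \eqref{eq:kernel}, every edge \(e\) contributes the square of the sum of \(x_t\) over the window cuts \(S_{a+t}\) it crosses. Hence
\[
 \sum_{t,u=0}^{r-1}K_{a+t,a+u}x_tx_u=\cE_G(h).
\]
Let \(H\) be the multigraph obtained by contracting \(S_a\) to a vertex \(s\) and \(V(G)\setminus S_{a+r}\) to a vertex \(z\), discarding loops. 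Then \(\cE_G(h)=\cE_H(h)\), and the Dirichlet principle gives \(\cE_H(h)\geqslant X^2/R_H(s,z)\). So it suffices to prove
\[
 R_H(s,z)\leqslant\frac r{d-1},
\]
with strict inequality by a margin depending only on \(d\) when \(k\geqslant2\). The bound is consistent with the extremal block. In a bridge chain of \(L_d\), the bridge has resistance \(1\) and \(K_{d+1}-e\) has resistance about \(2/(d-1)\) between its terminals, for a total of \((d+1)/(d-1)\) when \(r=d+1\).

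To bound \(R_H(s,z)\), I would use Thomson's principle and build an explicit unit \(s\)--\(z\) flow on the window \(T=\{v_{a+1},\dots,v_{a+r}\}\), using only the structural facts available. These are \(|\partial S_a|=k\), minimum degree at least \(d\), \(|T|=r\), and the cut bounds of Lemma~\ref{lem:interval-cut} for every interval inside the window. By Rayleigh monotonicity we may delete edges, so I would first reduce to a minimal configuration. Each vertex of \(T\) has at most \(r-1\) neighbours inside \(T\), so it sends at least \(d-r+1\) edges to \(\{s,z\}\). For \(r=d\) this is at least one edge; for \(r=d+1\) it may be zero. For \(k=1\), with \(r=d+1\), the flow sends the unit current along the bridge into \(v_{a+1}\), spreads it evenly over its at least \(d-1\) neighbours in \(T\), and collects it into the vertices adjacent to \(z\). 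The bound \(q_{a+r}+q_a\geqslant\) (from Lemma~\ref{lem:interval-cut} with \(t\leqslant d\)) guarantees enough edges leaving towards \(z\). For \(2\leqslant k\leqslant d-2\), with \(r=d\), the current enters through \(k\) edges and there are correspondingly more disjoint routes. The energy then falls strictly below \(d/(d-1)\) by a margin of order \(1/d^2\), which gives the positive improvement claimed.

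The main obstacle is the second step: controlling the resistance of an arbitrary \(T\), which need not be nearly complete. Missing edges inside \(T\) force extra edges to \(s\) or \(z\), and I would exploit this trade-off quantitatively. The first thing I would try is to charge each missing internal edge at a vertex against its forced external edge, and to handle the worst case, \(T\) a clique minus few edges, by direct computation. If the general flow construction becomes unwieldy, I would fall back on the tridiagonal bound \eqref{eq:tridiagonal-kernel} together with the cut profile \(q_{a+t}\geqslant\max\{\,t(d-t+1)-k,\ (r-t)(d-r+t+1)-q_{a+r}\,\}\). A sum-of-squares identity in the spirit of \eqref{eq:cubic-local-identity}, checked for the worst profile, would then give the same conclusion.
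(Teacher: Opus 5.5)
\textbf{There is a genuine gap.} Your reduction to a resistance bound is off by one, and as written it targets a false inequality. Your $h$ already equals $X$ at $v_{a+r}$, so the set to contract to $z$ is $V(G)\setminus S_{a+r-1}$. That set is nonempty exactly because $a+r\leqslant n$, and it leaves $r-1$ free vertices, not $r$. Your choice $z=V(G)\setminus S_{a+r}$ causes two problems. First, it is empty when $a+r=n$. Second, in the extremal case $k=1$ the vertices $v_{a+1},\ldots,v_{a+d+1}$ form a copy of $L_d$ between two bridges. There your $H$ is bridge, then $L_d$, then bridge, so $R_H(s,z)=2+2/(d-1)=2d/(d-1)>r/(d-1)$. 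Your consistency check counts only one bridge, so it silently uses the corrected contraction.

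After this repair the route is viable, but the step that carries the whole lemma is not supplied: a bound on $R_H(s,z)$ valid for every possible window.
\begin{itemize}
\item Your $k=1$ flow assumes that the bridge ends at $v_{a+1}$ and that this vertex has $d-1$ neighbours in $T$. Neither is given.
\item For $k\geqslant2$, the margin is asserted rather than derived.
\end{itemize}
For $k=1$ the gap can be closed by completion. Every vertex of $T$ has at least as many edges to $z$ as non-neighbours in $T$. Adding a missing edge of $T$ while deleting one edge to $z$ at each endpoint lowers the grounded form at the nonnegative grounded potential. By the variational characterisation this can only increase the resistance, and it reduces the case to a bridge followed by $K_{d+1}-e$. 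The paper uses this device later, in Lemma~\ref{lem:even-collar}.

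Your fallback does not work. Take $k=1$ and $d\geqslant4$, and use the profile $p_0=1$, $p_t=t(d+1-t)-1$, which a copy of $L_d$ attains exactly. On the coordinates $x_1,\ldots,x_d$, put $\mathbf x^*=(\mathbf e_1+\mathbf e_d)/2$. The tridiagonal matrix then has $\mathbf x^{*\mathsf T}A\mathbf x^*=(d-1)/2$ but $(A\mathbf x^*)_2=(d-2)/2$. Hence its minimum on the simplex lies strictly below $(d-1)/2$, and the resulting coefficient falls below $d-1$. The term involving $q_{a+r}$ lies outside the window, is unconstrained, and does not help.

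The missing ingredient is the contribution of edges crossing nonadjacent cuts. The paper bounds the full kernel entrywise, $K_{a+t,a+u}\geqslant\max\{0,t(d-u+1)-k\}$. For $k=1$ it recognises the resulting matrix as the increment kernel $B$ of $K_{d+1}-e$, with $B^{-1}\one=(\mathbf e_1+\mathbf e_d)/(d-1)$. For $k\geqslant2$ it uses a finite check when $4\leqslant d\leqslant11$, and the reciprocal diagonal sum $F_d(k)<d/(d-1)$ when $d\geqslant12$.
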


\begin{proof}
For \(1\leqslant t\leqslant u\leqslant r-1\), put
\(A=S_a\), \(U=S_{a+t}\setminus S_a\),
\(V=S_{a+u}\setminus S_{a+t}\), and
\(R=V(G)\setminus S_{a+u}\).  Simplicity and the degree bound give
\[
 e(U,R)\geqslant dt-t(t-1)-t(u-t)-e(A,U).
\]
Since \(e(A,U)\leqslant k\), we obtain
\begin{equation}\label{eq:full-low-kernel}
 K_{a+t,a+u}=e(A,R)+e(U,R)
 \geqslant\max\{0,t(d-u+1)-k\}.
\end{equation}

When \(k=1\), the symmetric matrix on the right for
\(1\leqslant t,u\leqslant d\) is the increment kernel \(B\) of
\(K_{d+1}\) with its first-to-last edge removed.  It is positive
definite, and
\[
 B^{-1}\one=\frac1{d-1}(1,0,\ldots,0,1)^{\mathsf T}.
\]
Using \(K_{aa}=1\) and discarding its nonnegative mixed terms,
Cauchy--Schwarz for \(\operatorname{diag}(1,B)\) gives the required
coefficient, because \(1+\one^{\mathsf T}B^{-1}\one=(d+1)/(d-1)\).

Now suppose that \(2\leqslant k\leqslant d-2\).  Let \(A_{d,k}\)
be the \(d\times d\) symmetric matrix whose first diagonal entry is
\(k\), whose other entries in the first row and column are zero,
and whose remaining entries are
\[
 (A_{d,k})_{tu}=
 \max\{0,\min(t,u)(d-\max(t,u)+1)-k\},
 \qquad 1\leqslant t,u\leqslant d-1.
\]
For \(4\leqslant d\leqslant11\), it is checked directly that
\(A_{d,k}\) is positive definite and
\[
 \frac{d}{\one^{\mathsf T}A_{d,k}^{-1}\one}>d-1
 \qquad(2\leqslant k\leqslant d-2).
\]
Cauchy--Schwarz proves the assertion for these degrees.

For larger \(d\), we use the diagonal terms and write
\[
 F_d(k)=\frac1k+
 \sum_{t=1}^{d-1}\frac1{t(d-t+1)-k}.
\]
The denominators are positive, and \(F_d\) is convex on \([2,d-2]\),
so its maximum occurs at an endpoint. Since the terms
\(1/k+1/(d-k)\) agree at the endpoints and every remaining term
increases with \(k\), we have \(F_d(2)\leqslant F_d(d-2)\).  Direct calculation for \(12\leqslant d\leqslant16\)
gives \(F_d(d-2)<d/(d-1)\).  For \(d\geqslant17\), writing
\(H_j=\sum_{i=1}^j1/i\), we have
\[
 \begin{split}
 F_d(d-2)
 &=\frac12+\frac1{d-2}
   +\sum_{u=1}^{d-2}\frac1{u(d-1-u)+2}\\
 &\leqslant\frac12+\frac1{d-2}+\frac{2H_{d-2}}{d-1}<1.
 \end{split}
\]
The last expression decreases with \(d\), and at \(d=17\) it is
\(2829389/2882880<1\).  Weighted Cauchy--Schwarz therefore gives
\[
 \mathbf x^{\mathsf T}A_{d,k}\mathbf x
 \geqslant\frac{(\sum x_t)^2}{F_d(k)}
 >\frac{d-1}{d}\left(\sum x_t\right)^2.
\]
This proves both assertions.  For \(d=3\) there is no nonbridge case.
\end{proof}

\begin{proof}[Proof of Theorem~\ref{thm:minimum-degree-bound}]
Call an index low if \(q_i<d-1\).  For
\(2\leqslant t\leqslant d-1\), the interval-cut inequality gives
\(q_i+q_{i+t}\geqslant t(d-t+1)\geqslant2d-2\).  Two low indices
therefore cannot be at these distances.  Their maximal consecutive
clusters have length at most two, and the first indices of distinct
clusters differ by at least \(d\).  If \(q_a=1\), the same
inequality for \(t=1,d\) shows that every other low index is at
distance at least \(d+1\) from \(a\).

At every bridge index use the \(d+1\) increment indices beginning
there, and at the first index of each other low cluster use the
\(d\) increment indices beginning there.  These blocks are disjoint
and cover all low indices.  The endpoint bounds
\(q_i\geqslant i(d-i+1)\geqslant d\), for \(1\leqslant i\leqslant d\),
and their complementary versions ensure that the blocks fit inside
\(\{1,\ldots,n-1\}\).

Given an ordered Fiedler vector \(f\), interpolate affinely across
these blocks to obtain \(g\), leaving all other increments unchanged.
In the full cut-kernel identity, retain the principal forms on the
chosen blocks and the other diagonal entries.  All omitted terms
are nonnegative.  Lemma~\ref{lem:general-low-cut} and the inequality
\(q_i\geqslant d-1\) elsewhere give
\[
 \cE_G(f)\geqslant(d-1)\cE_{P_n}(g),\qquad
 \|f-g\|_2^2\leqslant\frac{d(d+1)}{d-1}\cE_G(f).
\]
For the interpolation error, a block of length \(r\leqslant d+1\)
contributes at most \((r-1)(f_{a+r}-f_a)^2\), while its retained
energy is at least \((d-1)(f_{a+r}-f_a)^2/r\).
Lemma~\ref{lem:path-transfer} proves \eqref{eq:minimum-degree-explicit}.

The matching upper bound is given by chains of copies of \(K_{d+1}-e\)
joined by bridges, with bounded end graphs ensuring minimum degree
at least \(d\).  Such graphs exist at every sufficiently large
order: one may use complete end graphs of orders \(d+1\) and
\(d+1+r\), where \(0\leqslant r\leqslant d\) is chosen to match
the order modulo \(d+1\).  Their asymptotic algebraic connectivity
is \((d-1)\pi^2/n^2\), by
\cite[Corollary~3.10]{AbdiGhorbaniDiameter}.  For odd \(d\), regular
end graphs can be chosen at every sufficiently large even order, as
in \cite[Table~1]{AbdiGhorbaniDiameter}.
\end{proof}

\subsection{The sharp constant for every fixed even degree}

In an even-regular graph, every cut has even size. The smallest possible cuts
therefore have size two, and the sharp coefficient is determined by
the intervals between them. Together with the minimum-degree bound,
the resulting estimate gives the sharp formula for every regular degree.

\begin{theorem}\label{thm:fixed-even-sharp}\label{thm:fixed-regular-sharp}
For every fixed integer \(d\geqslant3\),
\[
 \min_{\substack{|V(G)|=n\\G\text{ connected and }d\text{-regular}}}
 \mu(G)=(1+o(1))\frac{c_d\pi^2}{n^2},
\]
where \(n\) tends to infinity through orders for which the class is
nonempty.
\end{theorem}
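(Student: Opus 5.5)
The proof splits according to the parity of \(d\). For odd \(d\), \(c_d=d-1\) and the lower bound is exactly Theorem~\ref{thm:minimum-degree-bound}, since \(d\)-regular graphs have minimum degree \(d\). That theorem also provides the matching upper bound along even orders, which are the only orders with nonempty class. So all the work is in even \(d\), where we must show
\[
 \mu(G)\geqslant(1+o(1))\frac{2(d-2)\pi^2}{n^2},
\]
and then exhibit chains of \(M_d\) attaining this.

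For the lower bound in even degree, we repeat the path comparison of Proposition~\ref{prop:cubic-path} and Lemma~\ref{lem:path-transfer} with \(\alpha=2(d-2)\). Fix an ordered Fiedler vector. By \eqref{eq:cut-parity} every \(q_i\) is even, so \(q_i\geqslant2\). Call an index low if \(q_i<2(d-2)\).

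The first step is to locate the low indices. Lemma~\ref{lem:interval-cut} gives \(q_i+q_{i+t}\geqslant t(d-t+1)\), and this is at least \(4(d-2)\) for \(t\) in a middle range. Combined with parity, this should confine the low indices to clusters of bounded length separated by gaps of length about \(d+1\), matching the period \(d+1\) of \(M_d\) (orders \(1+2+(d-2)+1\) with one terminal shared). Near a cut of size \(2\) at index \(a\), the profile of \(M_d\) suggests lower bounds on \(q_{a+1},\dots,q_{a+d}\) that are roughly \(2\cdot\) (the corresponding odd profile). These bounds would be obtained by the argument of \eqref{eq:full-low-kernel} with \(k=2\), refined so that parity forces the larger values.

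The second step is a local lemma, the analogue of Lemma~\ref{lem:general-low-cut}. On a block of \(r=d+1\) increments starting at a cut of size \(2\), the principal cut-kernel form should dominate
\[
 \frac{2(d-2)}{d+1}\Bigl(\sum x_t\Bigr)^2 .
\]
Blocks starting at other low cuts (size between \(4\) and \(2d-6\)) should give a strictly larger coefficient. As in Lemma~\ref{lem:general-low-cut}, this is checked by computing \(\one^{\mathsf T}A^{-1}\one\) for the lower-bound kernel matrix \(A\) when \(d\) is small, and by weighted Cauchy--Schwarz with the diagonal bounds, \(F_d(k)\leqslant(d+1)/(2(d-2))\), when \(d\) is large. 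Outside the blocks we have \(q_i\geqslant2(d-2)\) directly. Interpolating affinely across the blocks then gives
\[
 \cE_G(f)\geqslant2(d-2)\cE_{P_n}(g),\qquad \|f-g\|_2^2\leqslant\beta_d\,\cE_G(f),
\]
and Lemma~\ref{lem:path-transfer} yields the bound since \(\mu(P_n)\sim\pi^2/n^2\).

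The third step is the upper bound. We take a chain of \(M_d\) with consecutive terminals identified, closed off by bounded \(d\)-regular end graphs, and use the test vector that is piecewise linear along the chain. The cut sequence along one period has harmonic-mean-type effective conductance \(2(d-2)\) per vertex, which gives \(\mu\leqslant(1+o(1))2(d-2)\pi^2/n^2\). The end graphs must be adjusted for every residue of \(n\) modulo \(d+1\); alternatively one can cite the constructions and asymptotics of \cite{AbdiGhorbaniDiameter}.

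I expect the main obstacle to be the local lemma for even degree. The cut sizes near a size-\(2\) cut are not determined by the single-interval estimate \eqref{eq:full-low-kernel}, and the off-diagonal kernel entries must be used to reach exactly \(2(d-2)\). Showing that the low clusters cannot be packed more densely than the \(M_d\) pattern may also require using parity jointly with the bound \(e(A,U)\leqslant2\).
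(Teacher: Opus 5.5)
Your odd-degree reduction and the upper bound are fine. The problem is the local lemma for even \(d\geqslant6\): it is false, and your argument uses only \(\mathbf y\geqslant0\) and the cut kernel, so the lemma would have to hold for every nonnegative vector.

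\textbf{The concentrated-shore counterexample.} Take the \(M_d\)-chain itself, read in the direction in which each size-two cut \(a\) has its two edges meeting at a terminal \(t\) on the right shore (order by \(-f\) if necessary). The \(d\) intermediate vertices of your block \(W=\{a,\dots,a+d\}\) are \(t\), the \(d-2\) vertices of the \(K_{d-2}\)-layer, and one vertex \(p\) of the next \(K_2\)-layer; its partner \(p'\) is \(v_{a+d+1}\). These vertices induce \(K_d\) minus the edge \(tp\). Each \(K_{d-2}\)-vertex has one edge to \(p'\), and \(p\) has two edges to the right, to \(p'\) and to the next terminal. The principal form on \(W\) is the energy of the clipped function, and solving this network (its harmonic minimiser is monotone) gives
\[
 \min\Bigl\{\mathbf x^{\mathsf T}\mathsf K_{WW}\mathbf x:\ \mathbf x\geqslant0,\ \textstyle\sum_t x_t=1\Bigr\}
 =\frac{2(d-2)(d+2)}{d(d+4)}<\frac{2(d-2)}{d+1}.
\]
Equivalently, the block resistance is \((d+1)/c+1/(2(d+2))\) with \(c=2(d-2)\), which is exactly the gap \(R_C(d,d)-R_S(d,d)\) in Lemma~\ref{lem:even-collar}. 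The lost energy sits in the coupling \(K_{a+d,a+d+1}=1\), coming from the edge \(p\)--next terminal, which your scheme discards. Summing blocks in this reading therefore gives only the constant \(2(d-2)(d+1)(d+2)/(d(d+4))\), for example \(112/15<8\) when \(d=6\).

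Fixing the sign of \(f\) does not help. A general graph can contain both orientations, for instance two oppositely oriented chain segments joined through a bounded piece. Turning the block around at concentrated cuts then forces overlapping blocks or uncovered size-two cuts at split--split junctions, e.g. consecutive size-two cuts at distance \(d+1\) with split inner shores.

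\textbf{How the paper handles this.} It cuts each size-two increment at its midpoint level \(z_a\), subdividing edges there so that energies add over potential intervals (Lemma~\ref{lem:even-level-split}). It gives the cut the shifted coordinate \(x_a=a+\tfrac12+\delta_a+ch_a\) with \(\delta_a=\pm\gamma\), which moves coordinate length from the concentrated side to the split side. Both one-sided boundary intervals then have the common deficit \(B_d\), and equality survives only on \(K_{d+1}-P_3\) cores with concentrated and split inner shores (Lemma~\ref{lem:even-short-cores}). Because the coordinates are shifted, the conclusion uses the Neumann Poincar\'e inequality on an interval of length \(n+O_d(1)\), not Lemma~\ref{lem:path-transfer}.

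\textbf{The low-cut structure in your first step also fails.} With \emph{low} meaning \(q_i<2(d-2)\), the largest possible sum of two low cuts is \(2(2d-6)\). For \(d\in\{6,8\}\) one has \(t(d+1-t)\leqslant2(2d-6)\) for every \(1\leqslant t\leqslant d\), so Lemma~\ref{lem:interval-cut} and parity never separate two low cuts. Concretely, a \(d\)-regular graph containing a long stretch of the path power \(P_N^{d/2}\) has every middle cut of size \(d(d+2)/8=2d-6\) in the natural order. So low indices need not form bounded clusters, and on the indices outside your blocks the diagonal bound \(q_i\geqslant2(d-2)\) is simply unavailable.

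The paper isolates only cuts of size \(2\) and of sizes \(4,\dots,d-2\); the latter form clusters of at most two indices with starts at least \(d\) apart. Runs of cuts of size at least \(d\) are treated with the tridiagonal kernel of diagonal \(d\) and off-diagonal \(d/2\) (Lemma~\ref{lem:even-high-run}). The boundary excess \(P_d\) of those runs must then be paid for by reserves in the neighbouring low-cut blocks and boundary intervals (Lemma~\ref{lem:even-low-block-reserve}). These off-diagonal and redistribution ingredients are what your plan is missing.
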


Our aim is the comparison in
Proposition~\ref{prop:even-strict-compression}: the graph energy
dominates the Dirichlet energy of a piecewise affine function on an
interval of length \(n+O_d(1)\), with a strict improvement away from
copies of \(K_{d+1}-P_3\). We first estimate the intervals adjacent
to cuts of size two, then the increments between them. The strict
part of the comparison will also prove stability in
Section~\ref{sec:fixed-even-stability}.

For the even-degree argument, let \(d\geqslant6\) be even and let
\(G=(V,E)\) be a connected \(d\)-regular graph of order \(n\). Write
\(c=2(d-2)\) and \(\gamma=(d-2)/(2(d+2))\). For an ordered vector
$f_1\leq\cdots\leq f_n$, put $y_i=f_{i+1}-f_i$ and
$q_i=|\partial S_i|$, so that
\[
 \cE_G(f)=\mathbf y^{\mathsf T}K\mathbf y,\qquad
 K_{ij}=e(S_{\min(i,j)},V\setminus S_{\max(i,j)}).
\]
Every $q_i$ is a positive even integer. The degree sum formula on
an intervening set of $t$ vertices gives
\[
\begin{aligned}
 q_i+q_{i+t}&\geq t(d+1-t),\\
 K_{ij}&\geq\frac{q_i+q_j-d|i-j|}{2},\qquad
 K_{i,i+1}=\frac{q_i+q_{i+1}-d}{2}.
\end{aligned}
\]
We also use the full low-cut bound of
Lemma~\ref{lem:general-low-cut}. In particular, size-two cuts are
separated by at least $d+1$ indices and do not occur among the
first or last $d$ increments.

We use resistance bounds both for electrical networks and for their
associated quadratic forms. A resistance bound \(R>0\) for a
quadratic form \(Q\) means that
\(Q(\mathbf y)\geqslant R^{-1}(\sum_i y_i)^2\) for
nonnegative \(\mathbf y\). For an interval of coordinate length
\(\ell\), its resistance deficit is \(\ell/c-R\), and its
resistance excess is \(R-\ell/c\).

We also decompose the energy according to the values of \(f\).
For an edge \(e=uv\), orient its endpoints so that
\(f(u)\leqslant f(v)\), and put \(D_e=f(v)-f(u)\).
For real numbers \(s\leqslant t\), let
\(d_e(s,t)=|[f(u),f(v)]\cap[s,t]|\), where the bars denote
interval length, and define
\[
 \mathcal E_{[s,t]}(f)=\sum_{e\in E(G)}D_e d_e(s,t).
\]
The clipped function
\(v\mapsto\min\{\max\{f(v),s\},t\}\) has edge drops
\(d_e(s,t)\). These energies add over adjacent potential intervals.
Splitting an
edge at the levels \(s,t\) assigns resistance \(d_e(s,t)/D_e\)
to the segment between them when \(D_e>0\); that segment has
energy \(D_e d_e(s,t)\).

For each size-two cut $a$, let $z_a=(f_a+f_{a+1})/2$.
A shore is concentrated if the two crossing edges have a common
endpoint on that shore, and split otherwise. Set $\delta_a=\gamma$
if the left shore is concentrated, $\delta_a=-\gamma$ if the right
shore is concentrated, and $\delta_a=0$ otherwise. Define
\[
 h_a=\frac14\sum_{uv\in\partial S_a}
       \frac{z_a-f_u}{f_v-f_u},\qquad
 x_a=a+\frac12+\delta_a+ch_a,
\]
where $u<v$ and a ratio with zero denominator is interpreted as
$1/2$. For consecutive size-two cuts $a<b$, write
$T=S_b\setminus S_a$ and let $z$ count the edges bypassing $T$.
After subdivision at the cut levels, let $R_{\rm int}$ be the
minimum interior flow energy when current $1/2$ passes through
each boundary vertex.

\Needspace{18\baselineskip}
\begin{lemma}\label{lem:even-level-split}
Let $d\geq6$ be even, let $G$ be a connected simple $d$-regular
graph of order $n$, and let $f$ be a real-valued function on its
vertices, ordered so that $f_1\leq\cdots\leq f_n$.
Then the edges admit a simultaneous subdivision at the levels $z_a$
in which segment resistances are the fractions of the total voltage
drop. The subdivision preserves the total energy, and every new
boundary vertex at cut $a$ has value $z_a$. The coordinates $x_a$
are increasing and differ from their ranks by a bounded amount
depending only on $d$.

For consecutive size-two cuts $a<b$, a flow carrying current $1/2$
through each boundary vertex has total boundary and bypass energy
$(2-z)/4+h_b-h_a$. Thus the resulting unit flow has energy
\[
 R_{\rm int}+\frac{2-z}{4}+h_b-h_a,
\]
which is an upper bound for the effective resistance between the
two cut levels.
\end{lemma}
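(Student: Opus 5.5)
We prove the lemma in three stages: the subdivision, the monotonicity of the coordinates \(x_a\), and the flow computation.

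\textbf{Subdivision.} For an edge \(e=uv\) with \(f(u)\leqslant f(v)\) and \(D_e>0\), we insert a new vertex at each level \(z_a\) lying in \((f(u),f(v))\). Each new vertex gets the value \(z_a\). The segment between consecutive levels \(s<t\) gets resistance \(d_e(s,t)/D_e\), so the resistances of the segments of \(e\) add up to one.

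With this choice the linear interpolation of \(f\) along \(e\) has constant current \(D_e\). Each segment therefore has energy \(D_e^2\cdot d_e(s,t)/D_e=D_e d_e(s,t)\). Summing over segments gives \(D_e^2\), so the total energy is preserved. This is exactly the additivity of \(\mathcal E_{[s,t]}(f)\) over adjacent potential intervals. Edges with \(D_e=0\) are left unsubdivided.

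Since \(f_a\leqslant z_a\leqslant f_{a+1}\), the edges crossing level \(z_a\) strictly are precisely the two edges of \(\partial S_a\), up to degenerate ties. If \(f_a=f_{a+1}\), we interpret the ratio as \(1/2\), as the statement prescribes.

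\textbf{The coordinates \(x_a\).} Each ratio \((z_a-f_u)/(f_v-f_u)\) lies in \([0,1]\), so \(0\leqslant h_a\leqslant 1/2\). We also have \(|\delta_a|\leqslant\gamma<1/2\). Hence \(|x_a-a|\) is bounded by \(1/2+\gamma+c/2\), a bound depending only on \(d\).

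For monotonicity we use that consecutive size-two cuts satisfy \(b-a\geqslant d+1\), by Lemma~\ref{lem:general-low-cut} and the interval-cut inequality. Then
\[
x_b-x_a\geqslant d+1-2\gamma-c/2=d+1-2\gamma-(d-2)>0.
\]

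\textbf{Flow energy.} Fix consecutive size-two cuts \(a<b\). In the subdivided network, route current \(1/2\) through each of the two boundary vertices at level \(z_a\) and at level \(z_b\).

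Take an edge \(uv\) of \(\partial S_a\) with \(u\in S_a\). The portion of this edge on the \(T\)-side of level \(z_a\), running from \(z_a\) to \(f_v\), has resistance \(1-(z_a-f_u)/(f_v-f_u)\). Symmetrically, at cut \(b\) the portion on the \(T\)-side has resistance \((z_b-f_u)/(f_v-f_u)\).

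Write \(\rho_a\) and \(\rho_b\) for these fractions. Each boundary segment carries current \(1/2\), so it contributes energy \(\rho/4\). The boundary terms therefore contribute
\[
\tfrac14\sum_{\partial S_a}(1-\rho_a)+\tfrac14\sum_{\partial S_b}\rho_b=\tfrac{2}{4}-h_a+h_b .
\]

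The main obstacle is the bypass edges, that is, edges meeting both levels. There are \(z\) of them, and \(z\leqslant2\). A bypass edge belongs to both cuts. Its segment between the two levels is counted once by the formula \(1/2-h_a+h_b\), but it is traversed by the flow only once and should be credited rather than counted twice.

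Concretely, for a bypass edge the combined contribution of its two boundary fractions exceeds the resistance of its actual between-level segment by exactly one quarter of the edge's full fraction. We check this by a direct computation, case by case according to the number of bypass edges. This yields the correction \(-z/4\), so the total boundary and bypass energy is \((2-z)/4+h_b-h_a\).

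Adding the interior optimum \(R_{\rm int}\) gives a unit flow between the two level sets with the stated energy. Thomson's principle then shows that this energy bounds the effective resistance between the two cut levels.
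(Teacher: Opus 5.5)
Your proposal is correct and follows the paper's proof: the same fractional subdivision with segment energies $D_e d_e(s,t)$, the same bounds $0\leqslant h_a\leqslant 1/2$, $|\delta_a|\leqslant\gamma<1/2$ and $b-a\geqslant d+1$ for the coordinates, and the same per-edge accounting of the boundary flow. One remark is garbled: the between-level segment of a bypass edge is not ``counted once'' by your naive sum $\frac14(1-\rho_a)+\frac14\rho_b$, which counts it twice plus both outer pieces, i.e.\ exactly one unit of resistance too much; that excess is the $-1/4$ per bypass edge you then correctly subtract, uniformly and with no case analysis, whereas the paper simply assigns such an edge its between-level energy $(r_e(b)-r_e(a))/4$ directly.
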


\begin{proof}
For an edge with voltage drop $D>0$, a segment of drop $b$ has
resistance $b/D$ and energy $Db$. These resistances are nonnegative
and sum to one, and the energies sum to $D^2$. Zero-drop edges also
have zero energy with the stated convention. The boundary formula
follows by summing one quarter of the segment resistances: an entering
edge contributes $(1-r_e(a))/4$, a leaving edge contributes $r_e(b)/4$,
and a bypass contributes $(r_e(b)-r_e(a))/4$, where
$r_e(a)=(z_a-f_u)/(f_v-f_u)$, with the stated convention when
$f_u=f_v$.

Simplicity prevents both shores of a cut from being concentrated.
Moreover, two size-two cuts have distance at least $d+1$, since an
intervening set of $t$ vertices gives $4\geq t(d+1-t)$, which is
impossible for $1\leq t\leq d$. Since $0\leq h_a\leq1/2$ and
$|\delta_a|\leq\gamma<1/2$, the possible coordinate displacement
varies by less than $d-1$. This proves the assertions about $x_a$.
\end{proof}

For a size-two cut $a$ and $j=a+r+1$, call the potential interval
$[z_a,f_j]$ associated with $v_{a+1},\ldots,v_{a+r}$ a boundary
interval, and give its right endpoint coordinate $y_j=j+c/4$.
For $2\leq r\leq d$ and $1\leq r\leq d-1$, respectively, set
\[
 R_S(d,r)=\frac14+\frac{d-r+3}{2\{d(d-r+1)-2\}},\qquad
 R_C(d,r)=\frac14+\frac{d-r+2}{(d-1)(d-r+1)-2}.
\]
We also set
\[
 R_C(d,d)=\frac14+\frac{2(d+1)}{(d-2)(d+2)},\qquad
 B_d=\frac{d^2-2d-4}{4(d-2)(d+2)},\qquad C_d=2B_d.
\]

\Needspace{9\baselineskip}
\begin{lemma}\label{lem:even-collar}
Let $a$ be a size-two cut. Then the boundary interval $[z_a,f_j]$
has resistance bound $R_S(d,r)+1/4-h_a$ when the right shore is
split, and $R_C(d,r)+1/4-h_a$ when it is concentrated, in the
corresponding ranges of $r$. Its energy is at least the squared
difference between its endpoint values divided by this bound.
For a reversed boundary interval ending at a size-two cut $b$,
the respective bounds are $R_S(d,r)-1/4+h_b$ and
$R_C(d,r)-1/4+h_b$.

For $r=d$, the resistance deficit for either shore type is at least
$B_d>0$. Thus the resistance is at most the coordinate length
divided by $c$, minus $B_d$, and two such boundary intervals have
total resistance deficit at least $C_d$.
\end{lemma}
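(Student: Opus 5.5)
The plan is to bound the energy on the boundary interval by Thomson's principle on the subdivided network of Lemma~\ref{lem:even-level-split}. We exhibit an explicit unit flow from the level $z_a$ to the level $f_j$ and compute its energy. Because the clipped function $\min\{\max\{f,z_a\},f_j\}$ has energy $\mathcal E_{[z_a,f_j]}(f)$ and takes the boundary values $z_a$ and $f_j$, the Dirichlet principle then gives
\[
 \mathcal E_{[z_a,f_j]}(f)\geqslant\frac{(f_j-z_a)^2}{R},
\]
where $R$ is the energy of the flow. This energy splits into two parts:
\begin{enumerate}[(i)]
\item the boundary part, carrying current $1/2$ through each of the two crossing segments of the cut at $a$;
\item the interior part, spreading current across $v_{a+1},\ldots,v_{a+r}$ and the edges leaving $S_{a+r}$.
\end{enumerate}

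For the boundary part, we use the computation in Lemma~\ref{lem:even-level-split}. Each crossing edge $uv$ is split at $z_a$, and its segment above $z_a$ has resistance $1-r_e(a)$. With current $1/2$ on it, the contribution is $(1-r_e(a))/4$. Summing over the two crossing edges gives $1/2-h_a$. Equivalently, this is $1/4-h_a$ plus the leading $1/4$ that appears in both $R_S$ and $R_C$, which explains the shape of the bound.

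For the interior part, the two shore types need different flows.
\begin{enumerate}[(i)]
\item \emph{Split right shore.} The two crossing edges land at distinct vertices. We distribute the current uniformly over the $r$ vertices of the interval, using the clique-like structure forced by the degree count: each vertex of $T=\{v_{a+1},\ldots,v_{a+r}\}$ has at most $r-1$ neighbours inside $T$, so at least $d-r+1$ edges leave it. The number of edges from $S_{a+r}$ to the rest is at least $r(d-r+1)-2$ by the interval-cut inequality of Lemma~\ref{lem:interval-cut}. A symmetric flow then has energy $(d-r+3)/(2\{d(d-r+1)-2\})$. Here the $+3$ and the factor $1/2$ account for the two entry vertices injecting $1/2$ each and spreading it to the others through the internal edges.
\item \emph{Concentrated right shore.} A single vertex receives current $1$ and must spread it. This gives the $R_C$ formula with $(d-1)(d-r+1)-2$ in the denominator, since the entry vertex uses two of its edge slots on the cut.
\end{enumerate}
The case $r=d$ for a concentrated shore is treated separately. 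There the exit edges may be fewer, and we must argue directly that $S_{a+d}$ has cut at least $2(d-2)$-compatible structure, yielding the stated $R_C(d,d)$.

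The reversed intervals follow by applying the same argument to $-f$, which reverses the ordering. Under this reflection $h$ becomes $1/2-h$, which converts $1/4-h_a$ into $h_b-1/4$.

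For the final claim with $r=d$, the coordinate length is $(a+d+1+c/4)-x_a$ with $x_a=a+1/2+\delta_a+ch_a$. The $h_a$ term cancels between the length divided by $c$ and the resistance bound. What remains is an explicit rational inequality in $d$, with the shift $\delta_a=\pm\gamma$ chosen precisely so that the split and concentrated cases give deficits of at least $B_d$. We verify it by clearing denominators. The split case is the binding one, and we expect the $\gamma$ shift to make the two cases balance. Two intervals then add to $C_d=2B_d$.

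The main obstacle is constructing the interior flow, and especially proving that the degree constraints in every simple $d$-regular configuration allow the claimed uniform spreading. This requires a careful worst-case count of the internal edges of $T$ against the exit edges, not merely cut sizes. We would first try the flow that sends current along each vertex's exit edges in proportion, and bound its energy by Cauchy--Schwarz using Lemma~\ref{lem:general-low-cut}-type kernel estimates.
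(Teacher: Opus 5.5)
\textbf{There is a genuine gap: the interior estimate.} Several parts of your proposal are fine. The incoming segments contribute at most $1/2-h_a$. The reversal sends $h\mapsto 1/2-h$. The final deficit reduces to $(r+1/2-\delta_a)/c-R_\ast$. One small correction: with $\gamma=c/(4(d+2))$ both shore types give the same lower bound $B_d$, so neither case is binding.

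The interior estimate is the heart of the lemma, and you flag it as the main obstacle without resolving it. The quantities $(d-r+3)/(2\{d(d-r+1)-2\})$ and $(d-r+2)/((d-1)(d-r+1)-2)$ are the grounded effective resistances of one extremal configuration, namely the one in which $G[\{v_{a+1},\ldots,v_{a+r}\}]$ is complete. Your symmetric ansatz describes only that configuration. Even there the optimal flow is not uniform, since entry and non-entry vertices have different potentials.

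Nothing in your argument shows that an arbitrary simple $d$-regular configuration does no worse. Cut-size counts such as $q_{a+r}\geqslant r(d-r+1)-2$ cannot show this. The resistance depends on where the exit edges sit relative to the entry vertices, and on the internal connectivity, not just on how many exit edges there are. That count also does not produce the denominator $d(d-r+1)-2$.

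Your remark on the concentrated case with $r=d$ is likewise not an argument. What actually happens is that the doubly entered vertex has only $d-2$ edges left, so it cannot be adjacent to all of $T$. The extremal graph is therefore $K_d$ minus one edge at that vertex.

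Your fallback, a proportional flow bounded by cut-kernel Cauchy--Schwarz, would at best give different constants. Exact constants are needed here, because $B_d$ comes from an exact balance between $R_S(d,d)$ and $R_C(d,d)$.

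\textbf{The missing idea is a monotone reduction to the extremal configuration.}
\begin{enumerate}[(i)]
\item Send any bypassing incoming current straight to level $f_j$.
\item Replace each remaining outgoing segment from $T$ to level $f_j$ by a unit resistor to ground. Its actual resistance is at most one, so this can only increase resistance.
\item The interior problem is now a grounded Laplacian $A$ on $T$, with diagonal $d-l_i$, entries $-1$ on internal edges, and source $l/2$. Here $l_i$ counts the incoming cut edges at $v_{a+i}$. Its flow energy is $\max_v\{l^{\mathsf T}v-v^{\mathsf T}Av\}$, attained at $v=A^{-1}l/2\geqslant0$.
\item Add a missing internal edge $ij$ and delete one ground edge at each of $i$ and $j$. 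This changes $v^{\mathsf T}Av$ by $(v_i-v_j)^2-v_i^2-v_j^2=-2v_iv_j\leqslant0$ on nonnegative $v$, so the maximum can only grow.
\item The same holds when a split incoming edge that bypasses $T$ is instead made to enter $T$ at a second vertex.
\item Complete the graph this way. Ground edges stay available throughout, except at the concentrated vertex when $r=d$. The result is $A=(d+1)I_r-J_r-\operatorname{diag}(l)$ with $l=e_1+e_2$ or $l=2e_1$, and with $A_{12}=A_{21}=0$ in the concentrated case $r=d$.
\item Solving the resulting two- or three-valued symmetric system gives exactly $R_S(d,r)-1/4$, $R_C(d,r)-1/4$ and $R_C(d,d)-1/4$.
\end{enumerate}
Adding the incoming part and applying Thomson's principle then gives the stated resistance bounds and the energy inequality.
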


\begin{proof}
Introduce current $1/2$ at each left boundary vertex. If an incoming
edge bypasses $v_{a+1},\ldots,v_{a+r}$, send its current directly
to the level $f_j$. The incoming segments have total flow energy at
most $1/2-h_a$. Replace each remaining outgoing segment by an edge
of resistance one to ground, which can only increase resistance.
Let $l_i$ count the incoming edges ending at $v_{a+i}$, and write
$l=(l_1,\ldots,l_r)^{\mathsf T}$. The grounded Laplacian has
diagonal $d-l_i$ and off-diagonal $-1$ on internal edges, and its
source vector is $l/2$.

For nonnegative sources the grounded potentials are nonnegative.
Adding a missing internal edge and removing one ground edge at each
endpoint reduces the energy of a nonnegative vector by $2v_iv_j$,
so it can only increase the minimum flow energy for these sources. Thus the largest resistance is attained by completing the induced
graph to $K_r$, except that when $r=d$ and both incoming edges
meet one vertex it is $K_d$ with one edge missing at that vertex.
The completion is always possible: vertices not receiving both
incoming edges retain a ground edge until all their internal edges
have been added. If only one edge of a split shore enters this vertex set, adding
the second at a distinct vertex and removing a ground edge there
also increases resistance. In the concentrated case, either both
incoming edges enter the set or both bypass it.

Writing $I_r$ and $J_r$ for the identity and all-ones matrices,
respectively, and $e_1,\ldots,e_r$ for the standard basis vectors,
the grounded Laplacian of the completed graph is
\[
 A=(d+1)I_r-J_r-\operatorname{diag}(l),
\]
where $l=e_1+e_2$ or $l=2e_1$, except that $A_{12}$ and $A_{21}$
are increased by one in the concentrated case with $r=d$.
The displayed formulas follow from $1/4+l^{\mathsf T}A^{-1}l/4$
and are checked directly from the Laplacian equations; applying
Cauchy--Schwarz to this flow and the actual potentials then gives
the energy inequality.

Finally, let $R_\ast$ be $R_S(d,r)$ or $R_C(d,r)$ according
to the shore type. Then
\[
 \frac{y_j-x_a}{c}-(R_\ast+1/4-h_a)
   =\frac{r+1/2-\delta_a}{c}-R_\ast.
\]
For a split right shore $\delta_a\leq\gamma$, whereas for a
concentrated right shore $\delta_a=-\gamma$. With $r=d$,
\[
 R_S(d,d)=\frac14+\frac3{2(d-2)},\qquad
 R_C(d,d)-R_S(d,d)=\frac1{2(d+2)}.
\]
The choice $\gamma=c/(4(d+2))$ makes the two lower bounds on the
resistance deficit equal to $B_d$. Reversal proves the corresponding right-hand
statement.
\end{proof}

We next estimate the increments between two boundary intervals.
Cuts of sizes $4,6,\ldots,d-2$ require the full cut kernel;
cuts of size at least $d$ admit a tridiagonal estimate. The following
three lemmas give the bounds for these two cases and for a truncated
interval at the end.

For an increment index $a$, write $B=\{a,\ldots,a+d-1\}$, and set
$P_d=(d-4)/(2d(d-2))$.

\begin{lemma}\label{lem:even-low-block-reserve}
Let $d\geqslant6$ be even, and let $G$ be a simple $d$-regular graph.
Suppose that $q_a=k\in\{4,6,\ldots,d-2\}$ and that
$q_i\geqslant4$ for every $i\in B$.
Then there is a number $\rho_d$, depending only on $d$, such that
\[
 \rho_d<\frac d c-P_d,\qquad
 \rho_d<\frac d c-2P_d+\frac{2}{(d-2)(d+2)},
\]
and
\[
 \mathbf y_B^{\mathsf T}K_{BB}\mathbf y_B
 \geqslant \rho_d^{-1}
 \left(\sum_{i\in B}y_i\right)^2
 \qquad(\mathbf y_B\geqslant0).
\]
\end{lemma}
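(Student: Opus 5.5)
The plan is to follow the proof of Lemma~\ref{lem:general-low-cut} closely, now on a window of exactly $d$ increments with $k=q_a\in\{4,\ldots,d-2\}$, and to exploit the extra information $q_i\geqslant4$ on $B$ together with evenness of every cut. The first step is the entrywise lower bound for the kernel. Repeating the argument that gave \eqref{eq:full-low-kernel}, for $1\leqslant t\leqslant u\leqslant d-1$ one has
\[
 K_{a+t,a+u}\geqslant\max\{0,\,t(d-u+1)-k\},
\]
and $K_{aa}=k$. On the diagonal, strengthen this to $K_{a+t,a+t}\geqslant\max\{4,\,t(d-t+1)-k\}$, rounded up to the next even integer, using the hypothesis on $B$ and the parity of cuts in even-regular graphs. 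Denote the resulting nonnegative symmetric matrix by $A'_{d,k}$. All entries of $K_{BB}$ and of $\mathbf y_B$ are nonnegative, so $\mathbf y_B^{\mathsf T}K_{BB}\mathbf y_B\geqslant\mathbf y_B^{\mathsf T}A'_{d,k}\mathbf y_B$.

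The second step converts this into a resistance bound by Cauchy--Schwarz. If $A'_{d,k}$ is positive definite, then $\mathbf y^{\mathsf T}A'\mathbf y\geqslant(\sum y_i)^2/\one^{\mathsf T}A'^{-1}\one$. The candidate is therefore
\[
 \rho_d=\max_{k}\,\one^{\mathsf T}(A'_{d,k})^{-1}\one,
\]
or, where that is awkward, the diagonal-only bound
\[
 \rho_d=\max_k F'_d(k),\qquad F'_d(k)=\frac1k+\sum_{t=1}^{d-1}\frac1{\max\{4,\,t(d-t+1)-k\}}.
\]
Because $k$ ranges over finitely many even values, $\rho_d$ depends only on $d$.

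The third step checks the two strict inequalities against
\[
 \frac dc-P_d=\frac{d}{2(d-2)}-\frac{d-4}{2d(d-2)}=\frac{d^2-d+4}{2d(d-2)}.
\]
The second inequality, $d/c-2P_d+2/((d-2)(d+2))$, is the binding one for large $d$. For large $d$ (say $d\geqslant d_0$), use the diagonal bound $F'_d$. Convexity in $k$ puts its maximum at an endpoint, $k=4$ or $k=d-2$. Then sum as in the proof of Lemma~\ref{lem:general-low-cut}: the terms with $t$ or $d-t$ small contribute about $1/k+1/(d-k)+\ldots$, and the bulk is $O(\log d/d)$. The target is of size about $1/2+O(1/d)$, and $F'_d(d-2)$ behaves like $1/2+O(\log d/d)$, so a sharper handling of the near-endpoint terms is needed. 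Here the floor $4$ on the diagonal and the inclusion of the first off-diagonal entries $K_{a+t,a+t+1}=(q_{a+t}+q_{a+t+1}-d)/2$ should supply the missing constant. For small $d$ (say $6\leqslant d<d_0$), compute $\one^{\mathsf T}A'^{-1}\one$ exactly for each even $k$, as the paper does for $4\leqslant d\leqslant11$.

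The main obstacle is this last comparison, since the margin below $d/c-2P_d+2/((d-2)(d+2))$ is only of order $1/d^2$ relative to $1/2$. The diagonal-only estimate may be too lossy near $t=1$ and $t=d-1$. If it fails, the first fallback is to keep the full tridiagonal (or full) matrix $A'_{d,k}$ and bound $\one^{\mathsf T}A'^{-1}\one$ by an explicit test potential, that is, a feasible flow in the Thomson sense, built from the near-complete structure $K_{d+1}$ minus edges. The parity rounding ($q_i\geqslant4$ and even) is what separates this case from the bridge-free extremal $M_d$ blocks, and should produce the strict gap.
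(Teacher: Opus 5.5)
Your architecture (an entrywise minorant of $K_{BB}$, then Cauchy--Schwarz, then a finite computation for small $d$ and the reciprocal diagonal sum for large $d$) is the one the paper uses. However, step three places the difficulty in the wrong regime, and the small-$d$ step fails with the matrix you actually define.

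\textbf{Large $d$: there is no obstacle.} With the floor $4$ you impose, the $t=1$ term at $k=d-2$ is $1/\max\{4,2\}=1/4$, not $1/2$. The value $1/2$ belongs to Lemma~\ref{lem:general-low-cut}, where there is no floor. For $t\geqslant2$ and $k\leqslant d-2$ you have $t(d+1-t)-k\geqslant(t-1)(d-t)+2$. For $4\leqslant k\leqslant d-4$, convexity gives $1/k+1/(d-k)\leqslant1/4+1/(d-4)$; convexity holds only on this range, so $k=d-2$ must be treated separately, where the floor gives $1/(d-2)+1/4$. Hence every admissible $k$ satisfies $F'_d(k)\leqslant\frac14+\frac1{d-4}+\frac{2H_{d-2}}{d-1}$, which is below $1/2$ for $d\geqslant40$. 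Both targets exceed $1/2$; for instance $d/c-2P_d+2/((d-2)(d+2))=\frac12+\frac4{d(d-2)}+\frac2{(d-2)(d+2)}$. So asymptotically the bound is $1/4+O(\log d/d)$ against targets above $1/2$, and no off-diagonal entries are needed in this regime.

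\textbf{Small $d$: this is the genuine gap.} Here the diagonal sum is far too large (at $d=6$ it equals $13/12$), so the off-diagonal entries are essential. Your $A'$ uses only the low-cut bound $\max\{0,t(d-u+1)-k\}$ off the diagonal, which gives nothing in row $0$, and this is too weak. At $d=6$, where $k=4$ is the only case:
\begin{itemize}
\item $A'$ has diagonal $(4,4,6,8,8,6)$ and off-diagonal entries $A'_{12}=1$, $A'_{23}=4$, $A'_{24}=2$, $A'_{34}=5$, $A'_{35}=2$, $A'_{45}=4$.
\item Solving $A'\mathbf x=\one$ gives $\one^{\mathsf T}(A')^{-1}\one=\frac14+\frac{163}{314}\approx0.769$, which exceeds $d/c-P_d=17/24\approx0.708$.
\item The solution $\mathbf x$ is nonnegative, so no better constant can be extracted from $A'$, even restricting to $\mathbf y\geqslant0$.
\end{itemize}
The missing input is the interval degree bound $K_{ij}\geqslant(q_i+q_j-d|i-j|)/2$, used with the floors $p_0=k$ and $p_t=\max\{4,t(d+1-t)-k\}$. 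At $d=6$ it gives $A_{01}=(k+p_1-d)/2=1$ and raises $A_{12}$ to $2$. With these entries the same computation gives $147/227\approx0.648$, which is below the target.

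The paper's minorant takes, entry by entry, the maximum of $0$, the low-cut bound, and $(p_t+p_u-d|t-u|)/2$, and checks $6\leqslant d\leqslant38$ by direct calculation. You do mention the adjacent entries $K_{a+t,a+t+1}=(q_{a+t}+q_{a+t+1}-d)/2$, but you attach them to the large-$d$ case, where they are unnecessary; they are needed in the finite check.
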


\begin{proof}
Set $p_0=k$ and
$p_t=\max\{4,t(d+1-t)-k\}$ for $1\leqslant t\leqslant d-1$.
The full low-cut estimate and the degree identity on an intervening
interval show that the kernel on $B$ dominates entrywise the
symmetric matrix
\[
\begin{aligned}
 A_{tt}&=p_t,\\
 A_{tu}&=\max\Bigl\{0,\,
 \min(t,u)(d+1-\max(t,u))-k,\\
 &\hspace{4.5em}\frac{p_t+p_u-d|t-u|}{2}\Bigr\}
 \qquad(t\ne u).
\end{aligned}
\]
For $6\leqslant d\leqslant38$, direct calculation gives
$A\succ0$ and
\[
 \one^{\mathsf T}A^{-1}\one
 \leqslant \min\left\{\frac d c-P_d,\,
       \frac d c-2P_d+\frac{2}{(d-2)(d+2)}\right\}
       -\frac2{1335}.
\]
For $d\geqslant40$ the diagonal entries alone suffice.
For $4\leqslant k\leqslant d-4$, convexity gives
$1/k+1/(d-k)\leqslant1/4+1/(d-4)$.
The remaining value $k=d-2$ satisfies the same bound after replacing
$d-k$ by $\max\{4,d-k\}$.
Furthermore, for $2\leqslant t\leqslant d-1$,
\[
 p_t\geqslant(t-1)(d-t)+2\geqslant(t-1)(d-t).
\]
Writing $H_m=\sum_{j=1}^m1/j$, we obtain
\[
\begin{aligned}
 \sum_{t=0}^{d-1}\frac1{p_t}
 &\leqslant U_d:=\frac14+\frac1{d-4}
                   +\frac{2H_{d-2}}{d-1},\\
 U_d&<\frac12<\min\left\{\frac d c-P_d,\,
       \frac d c-2P_d+\frac{2}{(d-2)(d+2)}\right\}.
\end{aligned}
\]
The strict inequality holds at $d=40$ by direct calculation and
persists thereafter, since $H_m/(m+1)$ is decreasing for $m\geqslant2$.
Cauchy--Schwarz for the positive definite matrix $A$, or for the
diagonal matrix in the latter case, proves the assertion after taking
the maximum over the finitely many possible $k$.
\end{proof}

Write $T_d=1/2-1/(d-2)$.

\begin{lemma}\label{lem:even-truncated-low-block}
Let $B$ be a block satisfying the hypotheses of
Lemma~\ref{lem:even-low-block-reserve}, and let $1\leqslant r<d$.
Then the principal cut-kernel form on the first $r$ increment
indices of $B$ has a resistance bound
\[
 R_r\leqslant\frac r c+T_d.
\]
\end{lemma}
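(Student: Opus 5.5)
The plan is to drop all off-diagonal terms and use only the diagonal entries \(p_t\) of the dominating matrix \(A\) from the proof of Lemma~\ref{lem:even-low-block-reserve}. Since \(\mathbf y\geqslant0\) and every entry of \(K\) is nonnegative, the principal form on the first \(r\) indices dominates \(\sum_{t=0}^{r-1}p_ty_{a+t}^2\). Weighted Cauchy--Schwarz then gives the resistance bound
\[
 R_r\leqslant\sum_{t=0}^{r-1}\frac1{p_t},
\]
where \(p_0=k\) and \(p_t=\max\{4,t(d+1-t)-k\}\). It therefore suffices to show that \(\sum_{t=0}^{r-1}1/p_t\leqslant r/c+T_d\) for every \(1\leqslant r<d\) and every admissible \(k\in\{4,\ldots,d-2\}\).

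The key step is to write the difference as
\[
 \sum_{t<r}\Bigl(\frac1{p_t}-\frac1c\Bigr),
\]
and to bound it by its positive part only, summing the terms with \(p_t<c\). For interior indices \(2\leqslant t\leqslant d-2\) we have \(p_t\geqslant(t-1)(d-t)+2\geqslant2(d-2)=c\), so those terms are nonpositive. The only potentially positive terms are \(t=0\), \(t=1\) and \(t=d-1\).

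At \(t=0\) we have \(1/k\leqslant 1/4\). At \(t=1\) the bound \(p_1\geqslant\max\{4,d-k\}\) gives at most \(1/\max\{4,d-k\}\), and \(t=d-1\) gives the same. Because \(k+(d-k)=d\), the pair \(t=0,1\) contributes at most \(1/4+1/(d-4)\) by the convexity remark in Lemma~\ref{lem:even-low-block-reserve}. From this we subtract \(2/c=1/(d-2)\), which leaves
\[
 \frac14+\frac1{d-4}-\frac1{d-2}.
\]
The remaining step is to compare this with \(T_d=1/2-1/(d-2)\), i.e.\ to check that \(1/(d-4)\leqslant1/4\), which holds for \(d\geqslant8\).

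If \(r=d-1\) is allowed, the term \(t=d-1\) is not included, since \(t\leqslant r-1=d-2\). So only \(t=0\) and \(t=1\) can exceed \(1/c\), and the bound closes.

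The main obstacle is the smallest degree \(d=6\), where \(k=4\), \(p_0=p_1=4\), \(c=8\) and \(T_6=1/4\). There the pair \(t=0,1\) gives \(1/4+1/4-2/8=1/4\), which is exactly \(T_d\), so I must also verify that the other terms are nonpositive. For \(t=2\) we get \(p_2=\max\{4,10-4\}=6<8\), a positive excess of \(1/24\), and the crude diagonal bound fails. At \(d=6\) I would therefore fall back on the full matrix \(A\) with its off-diagonal entries, as in the finite check of Lemma~\ref{lem:even-low-block-reserve}. This means computing \(\one^{\mathsf T}A_{[r]}^{-1}\one\) for \(r\leqslant5\) directly, and the finite range \(d\leqslant 38\) used there can be handled the same way.

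For the interior terms at general \(d\), I would recheck more carefully that \(t(d+1-t)-k\geqslant c\) for \(2\leqslant t\leqslant d-2\) whenever \(k\leqslant d-2\). At \(t=2\) this reads \(2(d-1)-k\geqslant 2d-4\), i.e.\ \(k\leqslant2\), which is false. So the diagonal surplus at \(t=2\) and \(t=d-2\) is not automatic, and the honest route is the direct computation for small \(d\) plus a harmonic-sum estimate for large \(d\). For large \(d\), I would bound \(\sum_{t<r}(1/p_t-1/c)\) using \(p_t\geqslant(t-1)(d-t)\), as in the \(U_d\) estimate, and show the positive excess is \(O(\log d/d)+1/4\), which stays below \(1/2-1/(d-2)\).
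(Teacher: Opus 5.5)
Your eventual fallback (the full matrix $A$ for small $d$, and the diagonal reciprocal-sum bound for large $d$) is exactly the paper's proof. The diagonal argument you develop first, however, is wrong as stated.

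The claim $p_t\geqslant c$ for $2\leqslant t\leqslant d-2$ fails at $t=2$. There $p_2=2(d-1)-k\leqslant 2d-6<c$ for every admissible $k$, so this term always has positive excess. For $3\leqslant t\leqslant d-2$ the claim is true, since $t(d+1-t)\geqslant3(d-2)$; your later worry about $t=d-2$ is unfounded.

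Hence your assertion that the $t=0,1$ analysis closes for $d\geqslant8$, including $r=d-1$, is false. At $d=8$, $k=4$, $r=3$ the diagonal sum $\tfrac14+\tfrac14+\tfrac1{10}$ exceeds $r/c$ by $\tfrac7{20}>T_8=\tfrac13$.

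The repair is to keep the $t=2$ term, whose excess is at most $\tfrac1d-\tfrac1c=P_d$. Then $R_r-r/c\leqslant\tfrac14+\tfrac1{d-4}-\tfrac1{d-2}+P_d$, which is at most $T_d$ exactly when $\tfrac1{d-4}+P_d\leqslant\tfrac14$, i.e.\ for every even $d\geqslant10$. Corrected this way, your route needs less than the paper's: only $d=6$ and $d=8$ require the full matrix, rather than a direct check over the whole range $6\leqslant d\leqslant38$.

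If you keep the fallback instead, you must fix its threshold. Comparing an excess bounded by $U_d=\tfrac14+O(\log d/d)$ directly with $T_d$ fails at $d=40$ ($U_{40}\approx0.495>T_{40}\approx0.474$), so it would not meet the finite range $d\leqslant38$. The paper avoids this in two steps:
\begin{enumerate}[(i)]
\item when $r\geqslant2$ it subtracts $r/c\geqslant2/c=1/(d-2)$, which reduces the claim to $U_d<\tfrac12$, valid from $d=40$ on;
\item it treats $r=1$ separately via $R_1=1/k\leqslant\tfrac14$.
\end{enumerate}
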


\begin{proof}
Use the leading $r$-by-$r$ submatrix of the matrix in
Lemma~\ref{lem:even-low-block-reserve}.
For even $6\leqslant d\leqslant38$, the stated bound follows by
direct calculation for $k=4,6,\ldots,d-2$ and $1\leqslant r<d$.
For $d\geqslant40$, the reciprocal sum of the first $r$ diagonal
entries is at most $U_d<1/2$. If $r\geqslant2$, subtracting
$r/c\geqslant2/c=1/(d-2)$ proves the claim.
For $r=1$, use $R_1=1/k\leqslant1/4$ directly.
\end{proof}

\begin{lemma}\label{lem:even-high-run}
Let an increment interval have length $r$, and suppose that
$q_i\geqslant d$ throughout it. Then its principal cut-kernel form
has a resistance bound
\[
 R_r\leqslant\frac r c+P_d.
\]
The bounds supplied by the tridiagonal matrix with diagonal $d$
and off-diagonal $d/2$ are
\[
 R_{2m-1}=\frac m d,\qquad
 R_{2m}=\frac{2m(m+1)}{d(2m+1)}.
\]
In particular, $R_r\leqslant r/c$ for $r\geqslant d-1$.
If the interval follows a full low-cut block, it can be partitioned
into intervals of length bounded in terms of $d$ so that, after
combining the first with that block, every resulting interval has
a resistance bound at most its length divided by $c$.
\end{lemma}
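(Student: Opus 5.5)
We will work with the principal cut-kernel form on an increment interval \(\{a,\ldots,a+r-1\}\) with \(q_i\geqslant d\) throughout. The plan is to bound it from below by the tridiagonal matrix \(M_r\) with diagonal \(d\) and off-diagonal \(d/2\), and then compute \(\one^{\mathsf T}M_r^{-1}\one\) exactly.

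\textbf{Reduction to \(M_r\).} By \eqref{eq:kernel}, \(K_{ii}=q_i\geqslant d\) and \(K_{i,i+1}=(q_i+q_{i+1}-d)/2\geqslant d/2\). All other entries are nonnegative and \(\mathbf y\geqslant0\), so discarding nonadjacent terms and lowering the remaining entries gives \(\mathbf y^{\mathsf T}K\mathbf y\geqslant\mathbf y^{\mathsf T}M_r\mathbf y\) on the interval. The matrix \(M_r=\tfrac d2\,\mathrm{tridiag}(1,2,1)\) is positive definite, so Cauchy--Schwarz gives the resistance bound \(R_r=\one^{\mathsf T}M_r^{-1}\one\).

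\textbf{Computing \(R_r\).} We solve \(M_r\mathbf w=\one\), that is, \(w_{i-1}+2w_i+w_{i+1}=2/d\) with \(w_0=w_{r+1}=0\). A particular solution is \(1/(2d)\), and the homogeneous solutions are \((-1)^i(\alpha+\beta i)\).
\begin{itemize}
\item For \(r=2m-1\), \(\mathbf w=(1/d,0,1/d,0,\ldots,1/d)\) works directly, giving \(R_r=m/d\).
\item For \(r=2m\), imposing the boundary conditions gives \(w_i=\frac1{2d}\bigl(1-(-1)^i(1-2i/(2m+1))\bigr)\). Summing this yields \(2m(m+1)/(d(2m+1))\). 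This can also be checked by induction on \(m\).
\end{itemize}

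\textbf{Comparison with \(r/c\).} We have \(m/d-(2m-1)/c=(d-2m)/(2d(d-2))\), since \(c=2(d-2)\). This is maximised at \(m=2\), giving \((d-4)/(2d(d-2))=P_d\). For the even case, \(R_{2m}\leqslant R_{2m+1}=(m+1)/d\), and a direct comparison gives the same bound \(R_{2m}-2m/c\leqslant P_d\); the maximum is at small \(m\), so we check \(m=1,2\) and note monotonicity afterwards. For \(r\geqslant d-1\), the odd formula gives a deficit \(\leqslant0\) once \(2m\geqslant d\). The even case \(r=2m\geqslant d\) reduces to \(2m(m+1)(d-2)\leqslant md(2m+1)\), i.e. \(2(m+1)(d-2)\leqslant d(2m+1)\), which holds when \(d\leqslant 4m+4\).

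\textbf{Partition after a low-cut block.} This is the expected main obstacle. By Lemma~\ref{lem:even-low-block-reserve}, the block has resistance slack at least \(d/c-\rho_d>P_d\). We split the high run into a short initial piece of length \(<d-1\) (or nothing) followed by pieces of length in \([d-1,2d-3]\). The initial piece is merged with the low-cut block. Resistances add in series, because dropping cross terms only decreases the form and the series inequality \((\sum y)^2\leqslant(R_1+R_2)(Q_1+Q_2)\) holds. So the merged interval has bound at most \(\rho_d+r/c+P_d<(d+r)/c\). Each later piece has length \(\geqslant d-1\) and so satisfies \(R\leqslant\text{length}/c\). If the whole run has length \(<d-1\), the merged piece is the entire run. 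The delicate point is checking that series combination is legitimate with the coupling terms between pieces discarded, which holds since all of those terms are nonnegative.
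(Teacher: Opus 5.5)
Your overall route matches the paper's. You reduce to the tridiagonal matrix $M_r$ and compute $\one^{\mathsf T}M_r^{-1}\one$ exactly; your closed forms are correct. You then merge a short initial remainder with the full low-cut block, using the slack $d/c-\rho_d>P_d$ and Cauchy--Schwarz in series. The paper writes the run as $s+md$ with $0\leqslant s<d$ and uses $d$-blocks; your lengths are equivalent.

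The gap is in the step proving $R_r\leqslant r/c+P_d$.

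\emph{Odd case.} The algebra is off: $m/d-(2m-1)/c=(d-4m)/(2d(d-2))$, not $(d-2m)/(2d(d-2))$. The correct expression is decreasing in $m$, and its maximum $P_d$ occurs at $m=1$, not at $m=2$. Your formula, taken literally, gives $1/(2d)>P_d$ at $r=1$, so it would contradict the claim you are proving.

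\emph{Even case.} The comparison $R_{2m}\leqslant R_{2m+1}$ only yields $R_2-2/c\leqslant2P_d$. The fallback ``check $m=1,2$ and use monotonicity afterwards'' is false. Indeed,
\[
 R_{2m}-\frac{2m}{c}=\frac1{2d}\Bigl(1-\frac1{2m+1}-\frac{4m}{d-2}\Bigr),
\]
and this increases from $m=2$ to $m=3$ whenever $d>72$. Its maximiser grows like $\sqrt d$, so checking small $m$ does not suffice.

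The missing observation is the one the paper uses: $4m(m+1)\leqslant(2m+1)^2$, so $R_{2m}\leqslant(2m+1)/(2d)$. Hence $R_r\leqslant(r+1)/(2d)$ in both parities, with equality for odd $r$. This gives
\[
 R_r-\frac rc\leqslant\frac{d-2-2r}{2d(d-2)}\leqslant P_d\qquad(r\geqslant1).
\]
The same bound also follows from the display above by dropping the term $-1/(2m+1)$. It also shows $R_r\leqslant r/c$ as soon as $2r\geqslant d-2$, so your conclusion for $r\geqslant d-1$ survives the corrected odd formula. With this repair, your partition argument after the low-cut block goes through unchanged.
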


\begin{proof}
The adjacent-cut identity gives the stated tridiagonal lower matrix,
and solving the corresponding second-order recurrence gives the
displayed sums of the entries of its inverse. Thus
$R_r\leqslant(r+1)/(2d)$ in both parities, with equality for odd
$r$, and hence
\[
 R_r-\frac r c
 \leqslant\frac{d-2-2r}{2d(d-2)}\leqslant P_d.
\]
To obtain bounded pieces, write the interval length as $s+md$,
where $0\leqslant s<d$. Combine its first $s$ increments with the
preceding low-cut block, and partition the remainder into blocks of
length $d$. The strict bound in
Lemma~\ref{lem:even-low-block-reserve} gives a resistance deficit
larger than the excess of the first interval. Combining the two inequalities by
Cauchy--Schwarz gives coefficient at least $c$. The combined block has length at most $2d-1$.
\end{proof}

The constants above satisfy $C_d=T_d+2/((d-2)(d+2))$.

\begin{lemma}\label{lem:even-boundary-charges}
An initial high-cut block of length $0\leqslant s<d$, followed
immediately by a truncated low-cut block of length $1\leqslant r<d$,
has total resistance excess strictly less than $C_d$.
\end{lemma}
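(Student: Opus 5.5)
Write the excess of a block as its resistance bound minus its length divided by \(c\). The plan is to bound the two pieces separately by the earlier estimates, add the excesses, and compare the sum with \(C_d\), using the identity \(C_d=T_d+2/((d-2)(d+2))\) recorded just before the statement.

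\emph{The truncated low-cut block.} For the block of length \(r\), \(1\leqslant r<d\), Lemma~\ref{lem:even-truncated-low-block} gives resistance bound \(R_r\leqslant r/c+T_d\). Its excess is therefore at most \(T_d\).

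\emph{The initial high-cut block.} For the block of length \(s\), \(0\leqslant s<d\), Lemma~\ref{lem:even-high-run} and the exact tridiagonal values \(R_{2m-1}=m/d\) and \(R_{2m}=2m(m+1)/(d(2m+1))\) give excess at most
\[
\frac{d-2-2s}{2d(d-2)}\leqslant P_d .
\]
If \(s=0\) this block is empty and has excess zero. If \(s\geqslant1\), the bound is at most \((d-4)/(2d(d-2))\), which is \(P_d\).

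\emph{The sum.} The total excess is then at most \(T_d+P_d\). It remains to verify \(P_d<2/((d-2)(d+2))\), which would finish by the identity for \(C_d\). This is where I expect the main obstacle. The inequality reads \((d-4)(d+2)<4d\), i.e.\ \(d^2-6d-8<0\). It holds at \(d=6\) but fails for \(d\geqslant8\), so the naive addition of the two worst cases is not enough.

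\emph{Repairing the sum.} First, the high-run excess \((d-2-2s)/(2d(d-2))\) is decreasing in \(s\) and is nonpositive once \(s\geqslant(d-2)/2\). Its worst case is small \(s\). Second, the truncated bound \(T_d\) came from a crude diagonal estimate (or from direct calculation for \(d\leqslant38\)). It is only attained for particular \(r\), and it has slack for small \(r\): for \(r=1\), \(R_1\leqslant1/4\) gives excess at most \(1/4-1/c\), which is well below \(T_d\). So I would redo the estimate jointly. Treat the concatenated block of length \(s+r\) as one principal form, and apply Cauchy--Schwarz to the direct sum of the two pieces' resistance bounds; series resistances add, so excesses add. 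Then bound
\[
\max_{s,r}\Bigl[\Bigl(R^{\mathrm{high}}_s-\frac sc\Bigr)+\Bigl(R^{\mathrm{low}}_r-\frac rc\Bigr)\Bigr].
\]
For \(6\leqslant d\leqslant38\), compute \(R^{\mathrm{low}}_r\) from the leading submatrices of the matrix in Lemma~\ref{lem:even-low-block-reserve}, using the margin \(2/1335\) stated there, and check the strict inequality against \(C_d\) for all \(k,r,s\) by direct calculation. For \(d\geqslant40\), use the diagonal bound \(U_d\) for the low part. Then note that \(U_d-r/c+P_d\) is below \(1/2-1/(d-2)+2/((d-2)(d+2))\) once \(r\geqslant3\). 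The cases \(r\leqslant2\) are handled by the explicit small-\(r\) values: \(1/k\leqslant1/4\), together with the first two diagonal entries.
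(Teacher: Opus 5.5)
Your reduction checks $\max_{s,k,r}\bigl[(R^{\mathrm{high}}_s-s/c)+(R^{\mathrm{low}}_r-r/c)\bigr]<C_d$, where the two pieces are estimated separately. Despite the phrase ``one principal form,'' this is still the direct sum. It fails in one case, so your claim that direct calculation for $6\leqslant d\leqslant38$ succeeds ``for all $k,r,s$'' is false.

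Take $d=8$, so that $c=12$, $P_8=1/24$, $T_8=1/3$ and $C_8=11/30$. Let $s=1$, and let the truncated block have $k=4$ and $r=2$. The off-diagonal entry of the leading $2\times2$ submatrix of the matrix in Lemma~\ref{lem:even-low-block-reserve} is $\max\{0,-4,(4+4-8)/2\}=0$, so the submatrix is $\operatorname{diag}(4,4)$. Hence $R^{\mathrm{low}}_2=1/2$, and the low excess equals $T_8$ exactly. The high run has $R_1=1/8$, so its excess is exactly $P_8$. The sum is $3/8>11/30$.

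Your expectation that $T_d$ has slack for small $r$ therefore fails exactly where it matters. Replacing $P_d$ by exact high-run values does not help either, since $1/d-1/c=P_d$.

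The missing idea is that in this case you must not take the direct sum. Retain the adjacent kernel entry $K_{i,i+1}=(q_i+q_{i+1}-d)/2\geqslant(8+4-8)/2=2$ between the last high-cut index and the first low cut. The combined principal form then dominates entrywise
\[
 \begin{pmatrix}8&2&0\\2&4&0\\0&0&4\end{pmatrix}.
\]
The entries of its inverse sum to $2/7+1/4=15/28$, so the combined excess is $15/28-1/4=2/7<11/30$.

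At $d=8$ this one estimate suffices. For $s\geqslant2$ the high-run excess is already nonpositive, and the case $s=0$ needs only $T_8<C_8$. With this combined estimate added, your plan coincides with the paper's proof: separate estimates elsewhere for $6\leqslant d\leqslant38$, and for $d\geqslant40$ the $U_d$ bound together with the diagonal bounds for $r\leqslant2$.

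A smaller point: the margin $2/1335$ in Lemma~\ref{lem:even-low-block-reserve} applies only to the full $d$-block and gives nothing for truncated leading submatrices.
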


\begin{proof}
Let $k$ be the size of the first cut in the truncated block.
The two blocks may be estimated separately except when
$(d,s,k,r)=(8,1,4,2)$.
The high-cut resistance excess is at most $P_d$.
For even $6\leqslant d\leqslant38$, direct calculation for the
leading low-cut matrices gives
\[
 R_r-r/c+P_d<C_d
\]
except when $(d,k,r)=(8,4,2)$.
For $d\geqslant40$ and $r\geqslant3$, use $R_r<U_d<1/2$ and
\[
 C_d-P_d+\frac3c
 =\frac12+\frac{4(d+1)}{d(d^2-4)}>\frac12.
\]
For $r=1$, use $R_1\leqslant1/4$.
For $r=2$, the first two diagonal entries give
$R_2\leqslant1/4+1/(d-4)$, and for $d\geqslant10$,
\[
 C_d-\left(\frac14+\frac1{d-4}-\frac1{d-2}+P_d\right)
 =\frac{d^4-10d^3+16d^2-64}
        {4d(d-4)(d-2)(d+2)}>0.
\]
For the exceptional low-cut prefix at $d=8$, every high-cut
block with $s\geqslant2$ has nonpositive resistance excess, so only $s=1$
requires a combined estimate. Its kernel dominates
\[
 \begin{pmatrix}8&2&0\\2&4&0\\0&0&4\end{pmatrix},
\]
whose inverse has entries summing to $15/28$. Its resistance excess is
$15/28-3/12=2/7<C_8=11/30$.
The case $s=0$ is immediate from the truncated-block bounds.
\end{proof}

We now combine the boundary and interior estimates. For a long
interval between cuts of size two, they give a uniform improvement
over the coefficient $c$.

\begin{lemma}\label{lem:even-central-allocation}
Let $a<b$ be consecutive cuts of size two with $b-a\geqslant2d+1$.
Then there is a constant $\nu_d>0$, depending only on $d$, and a
continuous nondecreasing piecewise affine interpolation $g$
between the two cut levels such that
\[
 (c+\nu_d)\int_{x_a}^{x_b}|g'|^2
 \leqslant\mathcal E_{[z_a,z_b]}(f).
\]
Every affine piece spans at most $2d$ rank increments, its coordinate
length is bounded above and below by positive constants depending
only on $d$, and every knot has bounded displacement from its rank.
\end{lemma}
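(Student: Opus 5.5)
The plan is to build $g$ from a partition of the potential interval $[z_a,z_b]$ into consecutive pieces, each corresponding to a bounded run of rank increments. For each piece we prove a resistance bound strictly smaller than its coordinate length divided by $c$. Then, on each piece, we take $g$ affine with slope equal to the potential drop divided by the coordinate length. Write $\Delta$ for the potential drop, $\ell$ for the coordinate length and $R$ for the resistance bound of a piece. Each piece then contributes $\Delta^2/\ell$ to $\int|g'|^2$, while its energy in $\mathcal E$ is at least $\Delta^2/R$. If $R\leqslant\ell/(c+\nu_d)$ uniformly, summing gives the claim, since the energies $\mathcal E_{[s,t]}$ add over adjacent potential intervals.

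\textbf{Partition.} At the left end we use the boundary interval $[z_a,f_j]$ with $r=d$, from coordinate $x_a$ to $y_j=a+d+1+c/4$. By Lemma~\ref{lem:even-collar} it has resistance deficit at least $B_d>0$. The reversed boundary interval at $b$ is handled symmetrically. The hypothesis $b-a\geqslant2d+1$ guarantees that the two collars do not overlap; if they abut, the middle is empty. Between them every cut has size at least $4$, because $a,b$ are consecutive size-two cuts.

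We scan the middle increments from left to right. Whenever $q_i\in\{4,\ldots,d-2\}$, we take the full low-cut block of length $d$ from Lemma~\ref{lem:even-low-block-reserve}. Its resistance bound $\rho_d$ is strictly less than $d/c$, with margin exceeding $P_d$. Maximal runs with $q_i\geqslant d$ are cut into bounded pieces as in Lemma~\ref{lem:even-high-run}:
\begin{itemize}
\item a run following a low block is absorbed via that lemma's final assertion, giving combined pieces of length at most $2d-1$ with no excess;
\item runs of length at least $d-1$ have $R_r\leqslant r/c$ anyway.
\end{itemize}
Near the right collar, a low block may be truncated. An initial high block preceding the first low block may also have length $s<d$ with excess up to $P_d$. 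These leftover pieces are controlled by Lemmas~\ref{lem:even-truncated-low-block} and~\ref{lem:even-boundary-charges}: their total excess is strictly less than $C_d=2B_d$, which is exactly the combined deficit of the two collars.

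\textbf{Redistribution.} Pieces with excess are merged with a neighbouring piece that has strict deficit. The terminal leftovers are merged into the collars, and the first high run into the following low block, whose margin exceeds $P_d$ by the first inequality in Lemma~\ref{lem:even-low-block-reserve}. For two consecutive intervals with resistance bounds $R_1,R_2$, Cauchy--Schwarz gives the series bound $R_1+R_2$, so deficits and excesses simply add. After merging, every piece has deficit at least some $\eta_d>0$, because all margins come from finitely many strict inequalities depending only on $d$. Pieces with no strict deficit, namely long high runs with $R_r\leqslant r/c$ only, are not allowed to stand alone. Each is merged with an adjacent deficit-carrying piece so that its length stays bounded by about $2d$ increments; a long run is split into chunks of length about $d$, each attached to a neighbour. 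This chunking is the delicate point.

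\textbf{Main obstacle.} The difficulty is obtaining a \emph{uniform} strict improvement $\nu_d$ on every piece, not just in total. Pure high-cut intervals of odd length $r\geqslant d-1$ have $R_r=(r+1)/(2d)$, which is strictly below $r/c$ once $r>d-2$ (indeed $r(d-2)^{-1}/2$ versus $(r+1)/(2d)$ gives a strict margin proportional to $r$). So chunks of length $d$ do carry a deficit of order $1/d^2$, and long runs are fine. The genuinely tight configurations are $K_{d+1}-P_3$ copies, which produce exactly size-two cuts; between two consecutive size-two cuts at distance at least $2d+1$ there cannot be a perfect repetition.

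Finally we verify the two remaining assertions. Coordinate lengths are bounded below, since each piece contains at least one increment plus bounded shifts, and the $x_a$ lie within bounded distance of ranks by Lemma~\ref{lem:even-level-split}; they are bounded above by at most $2d$ increments. The constant $\nu_d$ is then $c^2\eta_d/(\text{max length})$.
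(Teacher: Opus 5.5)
You have the paper's architecture: two collars on $d$ vertices, full low-cut blocks, high-cut $d$-blocks and remainders, one truncated block, and Cauchy--Schwarz on each piece. The deficit bookkeeping, however, has a genuine gap. It occurs when at least one full low-cut block lies between a short initial high-cut remainder and a final truncated low-cut block.

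Lemma~\ref{lem:even-boundary-charges} only treats an initial remainder \emph{immediately} followed by the truncated block. Otherwise you have only the separate bounds $P_d$ and $T_d$. Since $C_d=T_d+2/((d-2)(d+2))$, your claim $P_d+T_d<C_d$ is equivalent to $(d-4)(d+2)<4d$, which fails for every even $d\geqslant8$.

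Your fallback feeds the initial run into the first full low-cut block. That spends the block's reserve $d/c-\rho_d$ twice, because you also use it, via Lemma~\ref{lem:even-high-run}, on the high-cut remainder after it. The first inequality of Lemma~\ref{lem:even-low-block-reserve} gives a reserve exceeding $P_d$, not $2P_d$.

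There is also a transport problem your mechanism cannot solve. For $d\geqslant6$ we have $T_d>B_d$, so the right collar alone cannot absorb the truncated block. Part of the left collar's deficit must therefore be moved across the whole middle. Merging neighbouring pieces into one affine piece cannot do this without breaking the bound on rank span.

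The missing ingredient is the paper's pooled estimate. Designate one full low-cut block together with its following remainder. Group it with the two collars, the initial remainder and the truncated block. The second inequality of Lemma~\ref{lem:even-low-block-reserve}, $\rho_d<d/c-2P_d+2/((d-2)(d+2))$, exists precisely to give $C_d+(d/c-\rho_d)-2P_d-T_d>0$.

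These pieces are not contiguous, so you do not merge them. Keep every other piece at its rank length; each of those already has a uniform strict coefficient. Then redistribute the total coordinate length of this bounded group among its members in proportion to their resistance bounds. This gives coefficient $c+\nu_d$ on each member. Only boundedly many pieces change length, so every knot moves by $O_d(1)$, which is what secures the displacement assertion.
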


\begin{proof}
Apply the two boundary-interval estimates on $d$ vertices, with
coordinate $y_j=j+c/4$ at vertex rank $j$.
The two boundary intervals end at ranks $j_L=a+d+1$ and $j_R=b-d$.
Their total resistance deficit is at least $C_d$, and the stated
coordinate corrections give a deficit of at least $C_d/2$ on each
one. All cuts in the remaining interval have size at least four.

Indices with $q_i<d$ form clusters of one or two consecutive
indices, and different cluster starts have distance at least $d$.
Indeed, two such indices at distance $2\leqslant t\leqslant d-1$
would contradict $q_i+q_{i+t}\geqslant t(d+1-t)\geqslant2d-2$.
Starting at the first such cut, take full low-cut blocks of length
$d$ as long as possible, leaving at most one final truncated block.
Between full low-cut blocks, write each intervening high-cut run
as $s+md$, with $0\leqslant s<d$. Keep the first $s$ increments
with the preceding low-cut block and use blocks of length $d$
for the rest. Partition the initial high-cut run into $d$-blocks
followed by its remainder of length less than $d$.

Each full low-cut block, together with the following high-cut
remainder, has positive resistance deficit: the deficit of the block
exceeds $P_d$, which bounds the excess of the remainder. Every
high-cut $d$-block also has positive deficit. The initial high-cut
remainder has excess at most $P_d$, and the final truncated block
has excess at most $T_d$.
If a full low-cut block occurs, combine it with these two remainders
and the boundary intervals. The stronger bound in
Lemma~\ref{lem:even-low-block-reserve} gives
\[
 C_d+\bigl(d/c-\rho_d\bigr)-2P_d-T_d>0.
\]
Thus the combined intervals have positive deficit when the high-cut
remainder following this full low-cut block is included. If no full
low-cut block occurs, the initial remainder and the truncated block
are adjacent, and Lemma~\ref{lem:even-boundary-charges} gives
positive deficit after adding the boundary intervals. If there is
no truncated block, their deficit exceeds the excess of the initial
high-cut remainder.

We choose interpolation coordinates as follows.
Set the coordinate length equal to the rank length on
every interior $d$-block, or on a full low-cut block together with
its high-cut remainder. On the bounded number of remaining pieces,
including the boundary intervals and, if needed, the designated first full
low-cut block and its remainder, divide the available coordinate
length in proportion to their resistance bounds.
The strict inequalities give coefficient $c+\nu_d$ on these
intervals for some fixed $\nu_d>0$.
The other interior pieces already permit such a coefficient, after
decreasing $\nu_d$ if necessary. A combined low-cut block and its
high-cut remainder may be split into its two affine pieces by the
same proportional division.

Only a bounded number of groups have total coordinate lengths
different from their rank lengths. Splitting any other combined
group changes its internal knot by a bounded amount and preserves
its total length. The cumulative coordinate displacement is
therefore bounded.
All resistance bounds are positive and bounded above and below
by constants depending only on $d$, so every coordinate length is
positive and uniformly bounded.

Finally, split the edges at the interpolation levels as in
Lemma~\ref{lem:even-level-split}. On a potential interval $[s,t]$,
the clipped edge drop $d_e=d_e(s,t)$ satisfies
$d_e^2\leqslant D_ed_e$. Thus the cut-kernel estimate for the
clipped vector and the flow estimate on each boundary interval are
bounded by the energy on their respective potential intervals.
These energies add to the original energy, proving the assertion.
\end{proof}

\Needspace{14\baselineskip}
\begin{lemma}\label{lem:even-short-cores}
Let \(d\geqslant6\) be even, and let \(a<b\) be consecutive
indices with \(q_a=q_b=2\) and \(d+1\leqslant b-a\leqslant2d+1\).
Then the potential interval $[z_a,z_b]$ has a resistance bound
\[
 R\leqslant\frac{x_b-x_a}{c}.
\]
Moreover, its resistance deficit is bounded below by a positive
constant depending only on \(d\), except when \(b-a=d+1\),
\(G[S_b\setminus S_a]=K_{d+1}-P_3\), its inner shores are
concentrated and split, and \(\delta_b=\delta_a\).
\end{lemma}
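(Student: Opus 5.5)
The plan is to split the potential interval \([z_a,z_b]\) into the two boundary intervals of Lemma~\ref{lem:even-collar} and, when they do not already meet, a middle increment run. Write \(m=b-a\). The shorter of the two should be chosen so that the boundary intervals use \(r_L\) and \(r_R\) vertices with \(r_L+r_R\) as large as allowed, at most \(m-1\), and each of \(r_L,r_R\) at most \(d\).

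\emph{Case \(m=d+1\).} Here \(T=S_b\setminus S_a\) has \(d\) vertices and exactly four boundary edges, counting each of the two cut edges at each end. So \(e(T)=(d^2-4)/2\), and \(G[T]\) is \(K_d\) minus two edges. Since \(T\) is small, I would not use the collar lemmas here. Instead I would apply Lemma~\ref{lem:even-level-split} directly: the effective resistance between the two cut levels is at most \(R_{\rm int}+(2-z)/4+h_b-h_a\). Then \(R_{\rm int}\) can be computed exactly for each configuration:
\begin{enumerate}[(i)]
\item the two missing edges form a \(P_3\) or a matching \(2K_2\);
\item each shore is concentrated or split;
\item there may be bypass edges \(z\), which at this distance can only be \(z=0\) by the degree count.
\end{enumerate}
In every configuration the quantity to compare with \((x_b-x_a)/c\) is
\[
 \frac{d+\delta_b-\delta_a}{c}+h_b-h_a ,
\]
so the \(h\) terms cancel. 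What remains is a finite list of explicit rational functions of \(d\), built from \(1/4\)-type boundary contributions and a grounded-Laplacian quadratic form \(l^{\mathsf T}A^{-1}l\) as in Lemma~\ref{lem:even-collar}. I would check that each deficit is positive for every even \(d\geqslant6\). The single exception should be the configuration with missing \(P_3\), shores concentrated and split, and \(\delta_b=\delta_a\). There the value of \(\gamma=c/(4(d+2))\) was chosen precisely so that the deficit is zero.

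\emph{Case \(d+2\leqslant m\leqslant 2d+1\).} Take boundary intervals of \(r_L=\min\{d,\lfloor (m-1)/2\rfloor\}\) and \(r_R=m-1-r_L\) vertices when they fit, or full \(d\)-collars on both sides when \(m-1\geqslant 2d\). Any leftover increments lie strictly inside, and their cut sizes are at least four. They are handled by Lemmas~\ref{lem:even-high-run} and~\ref{lem:even-truncated-low-block}, with excesses at most \(P_d\) or \(T_d\) respectively. The collar bounds \(R_S(d,r)\) and \(R_C(d,r)\) for \(r<d\) are explicit. Their deficits against the coordinate length \((r+1/2\mp\delta)/c\) are to be summed with the leftover excess and shown to be positive, uniformly in \(d\). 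For \(r=d\) on both sides, the deficit is \(C_d\), and the leftover excess is at most \(T_d<C_d\) by the identity \(C_d=T_d+2/((d-2)(d+2))\).

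The two collars are then glued by series composition: resistance bounds add, and Cauchy--Schwarz across the pieces yields a resistance bound for \([z_a,z_b]\). Each piece's energy is controlled by its clipped energy \(\mathcal E_{[s,t]}(f)\), and these add, as in Lemma~\ref{lem:even-central-allocation}. The one subtlety is overlap: when \(r_L+r_R=m-1\), the collars share the potential level at an interior vertex, and the coordinate \(y_j=j+c/4\) conventions must match there. I would fix the splitting vertex so that both collars use the same \(y_j\).

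The main obstacle is the finite but \(d\)-dependent verification of the positive-deficit inequalities for \(r<d\) collars. This holds for all even \(d\geqslant6\), and identifying the exact equality case in the \(m=d+1\) analysis requires care. For the \(r<d\) inequalities, I would first simplify to the rational inequality in \(d\), clear denominators, verify small \(d\), say \(d\leqslant 38\), directly as the paper does elsewhere, and prove the large-\(d\) case from the leading-order expansions \(R_S,R_C\approx 1/4+1/(2d)\) against a margin of order \(r/c\).
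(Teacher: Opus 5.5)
\paragraph{The range $d+2\leqslant b-a\leqslant 2d+1$.} Here there is a genuine gap. You assert that the two-collar inequalities hold for every even $d\geqslant6$, but they fail at $d=6$, $b-a=8$ when both inner shores are concentrated. In that case $c=8$, $\gamma=\tfrac14$ and $\delta_b-\delta_a=2\gamma$, so you need $R_C(6,r_L)+R_C(6,r_R)\leqslant\tfrac{17}{16}$ with $r_L+r_R=7$. The best split, $(3,4)$, gives $\tfrac12+\tfrac{5}{18}+\tfrac{4}{13}\approx1.085$. The splits $(2,5)$ and $(1,6)$ (with $R_C(6,6)$) give about $1.136$ and $1.188$. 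No small-$d$ check or expansion can rescue collars here; you need the structural step the paper uses:
\begin{itemize}
\item A concentrated inner shore excludes bypass edges, since otherwise both edges of that cut would bypass $H=G[S_b\setminus S_a]$ and disconnect it. So $H$ has $d+2$ vertices and exactly four boundary edges.
\item If the two concentrated terminals coincide, no interior current flows and the bound is $\tfrac12$.
\item Otherwise the complement of $H$ has degree sequence $(3,3,1^d)$. The two resulting graphs have terminal resistances $\frac{2(d+1)}{(d+2)(d-2)}$ and $\frac{2(d+1)}{d^2-d-4}$. Even the larger satisfies $\tfrac12+\frac{2(d+1)}{d^2-d-4}<\frac{d+2+2\gamma}{c}$.
\end{itemize}

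Your large-$d$ sketch also misjudges the margin. It is not of order $r/c$. At $b-a=d+2$ in the $SS$ case it is only $\Theta(1/d)$, the same order as the corrections in $R_S,R_C$ you discard. Those corrections are also not uniform in $r$: for example $R_S(d,d)-\tfrac14=\frac{3}{2(d-2)}$. The paper gets uniformity in $t=b-a$ from convexity of $R_S(d,\cdot)$ and $R_C(d,\cdot)$ in $r$. This makes the difference concave in $t$ within each parity, so only the endpoints need checking. Finally, for $b-a\leqslant2d+1$ the two boundary intervals always exhaust the interior ranks. There are no leftover increments, and Lemmas~\ref{lem:even-high-run} and~\ref{lem:even-truncated-low-block} play no role.

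\paragraph{The case $b-a=d+1$.} Your setup is off by one, and as written it cannot succeed.
\begin{itemize}
\item The set $T=S_b\setminus S_a$ has $d+1$ vertices, not $d$. A $d$-set in a simple $d$-regular graph has at least $d>4$ boundary edges, and your count $(d^2-4)/2$ exceeds $\binom{d}{2}$.
\item The target is $\frac{x_b-x_a}{c}=\frac{d+1+\delta_b-\delta_a}{c}+h_b-h_a$. Against your target $\frac{d+\delta_b-\delta_a}{c}+h_b-h_a$, the inequality fails at the canonical core itself: for $K_{d+1}-P_3$ one has $R_{\rm int}=\frac{3}{2(d-2)}$, so its bound is exactly $\frac{d+1}{c}+h_b-h_a$.
\item The degree count does not force $z=0$; it only gives $|\partial T|\in\{2,4\}$. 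The value $|\partial T|=2$ is realised by one bypass edge, both inner shores split, and $G[T]=K_{d+1}-e$, with current entering and leaving at the ends of the missing edge. This configuration must be in your enumeration; its bound $\tfrac14+\frac{1}{2(d-1)}+h_b-h_a$ is well below the target.
\item The vanishing deficit in the exceptional case has nothing to do with the choice of $\gamma$. It comes from $R_{\rm int}+\tfrac12=\frac{d+1}{c}$ exactly, together with $\delta_b-\delta_a=0$.
\end{itemize}
With these corrections your exact enumeration works. In the mixed case the missing $P_3$ is forced, and the deficit is $(\delta_b-\delta_a)/c\in\{0,\gamma/c\}$; two concentrated shores are impossible. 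The paper instead handles $SS$ at $b-a=d+1$ by the collar sum, whose margin is $\frac{8(d+1)}{(d-2)(d+2)(d^2+2d-4)}$ and which allows bypass edges. It uses the structural argument only when a shore is concentrated.
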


\begin{proof}
Put \(t=b-a\), and write \(S\) for a split inner shore and \(C\) for a concentrated
inner shore. A one-sided boundary interval on \(r\) vertices has the following
uncorrected resistance bounds:
\[
 S_d(r)=\frac14+
 \frac{d-r+3}{2\{d(d-r+1)-2\}},\qquad
 C_d(r)=\frac14+
 \frac{d-r+2}{(d-1)(d-r+1)-2}.
\]
The first is valid for \(r\leqslant d\), and the second for
\(r\leqslant d-1\). At \(r=d\), the concentrated bound is
\[
 C_d^*=\frac14+\frac{2(d+1)}{(d-2)(d+2)}\leqslant C_d(d).
\]
These are the bounds of Lemma~\ref{lem:even-collar}.
Take \(r_L+r_R=t-1\), so that the two rank endpoints coincide.
The sum of their uncorrected bounds, plus \(h_b-h_a\), is a
resistance bound for the whole potential interval. This remains valid for bypass
edges, since the two estimates use disjoint potential intervals.

For shore types \(SS,SC,CC\), respectively, the least possible
value of \(\delta_b-\delta_a\) is
\(-2\gamma,0,2\gamma\). Choose the boundary interval lengths as equally
as possible, assigning the smaller length to \(C\) in the mixed
case. For all \(d+2\leqslant t\leqslant2d+1\), their uncorrected
sum is strictly below
\[
 \frac{t-2\gamma}{c},\qquad
 \frac t c,\qquad
 \frac{t+2\gamma}{c},
\]
respectively, except for the case \(d=6,t=8,CC\), which is
handled below. The \(SS\) inequality also holds for \(t=d+1\).

To verify these inequalities, observe that both \(S_d(r)\) and \(C_d(r)\)
are increasing convex functions of \(r\) on the relevant range.
For each parity of \(t\), the difference between its displayed
linear bound and the sum of the two boundary interval bounds is therefore
concave. It suffices to check the endpoints: for odd \(t\),
the lower endpoint is \(d+1\) for \(SS\), and \(d+3\) for
the other two types, while the upper endpoint is \(2d-1\).
For even \(t\), the endpoints are \(d+2\) and \(2d\).
The largest value \(t=2d+1\) uses \(C_d^*\) at each concentrated
shore. These endpoint inequalities are checked directly after
clearing positive denominators. In the \(CC\) even case use
\(d\geqslant8\); when \(d=6\), its lower endpoint is instead
\(t=10\). For example, the \(SS\) difference at \(t=d+1\) is
\[
 \frac{8(d+1)}{(d-2)(d+2)(d^2+2d-4)}>0,
\]
and all three differences at \(t=2d+1\) equal
\[
 \frac{d^2-2d-4}{2(d-2)(d+2)}>0.
\]

It remains to examine the shortest intervening subgraphs. Put
\(H=G[S_b\setminus S_a]\). A bypass edge forces both inner
shores to be split: otherwise both edges of a concentrated cut
would bypass \(H\), disconnecting it. Thus a concentrated shore
excludes bypass edges. For \(t=d+1\), the complement of \(H\)
has exactly two edges. Two concentrated shores are impossible; a mixed shore
pair forces the complement to be \(P_3\). Its concentrated terminal
is the centre of this path and its split terminals are the leaves.
The uncorrected resistance is
\[
 \frac12+\frac{3}{2(d-2)}=\frac{d+1}{c}.
\]
The correction \(\delta_b-\delta_a\geqslant0\) is zero precisely
in the case stated in the lemma. Otherwise it is at least
\(\gamma>0\).

Consider now \(t=d+2\) with two concentrated shores. If the terminals coincide, the interior
source vanishes and the uncorrected resistance is \(1/2\).
If they are distinct, the complement degree sequence is
\((3,3,1^d)\). According as its centres are nonadjacent or adjacent,
it is two three-leaf stars together with a matching, or two adjacent
centres with two leaves each together with a matching. The terminal
resistances in \(H\) are, respectively,
\[
 \frac{2(d+1)}{(d+2)(d-2)},\qquad
 \frac{2(d+1)}{d^2-d-4}.
\]
These values are checked directly from the Laplacian equations. The latter is larger, and
\[
\begin{aligned}
 \frac{d+2+2\gamma}{c}
 -\left(\frac12+\frac{2(d+1)}{d^2-d-4}\right)
 &=\frac{d^3-3d^2-10d-8}
 {2(d-2)(d+2)(d^2-d-4)}\\
 &>0\qquad(d\geqslant6).
\end{aligned}
\]
This includes \(d=6,t=8,CC\). Since \(d\) is fixed and there are only finitely many
possible boundary-interval lengths and shore types, the positive
resistance deficits have a common lower bound depending only on
\(d\).
\end{proof}

For an increasingly ordered vector $f$, let $F$ take value $f_i$
on the unit interval centred at $i+c/4$. On any interpolation
interval, extend or restrict $F$ by its endpoint values as needed.
Call the interval between consecutive size-two cuts $a<b$ canonical
when
\[
 b-a=d+1,\qquad G[S_b\setminus S_a]=K_{d+1}-P_3,
 \qquad\delta_b=\delta_a,
\]
and its inner shores are concentrated and split. The concentrated
terminal is the centre of the missing path and the split terminals
are its leaves.

\begin{proposition}
\label{prop:even-strict-compression}
Let $G$ be a connected simple $d$-regular graph of order $n$, and
let $f$ be an increasingly ordered vector on its vertices.
Then there are constants $C_d<\infty$ and $\nu_d>0$, depending
only on $d$, a continuous nondecreasing piecewise affine function
$g$ on an interval $[A,B]$ of length $n+O_d(1)$, and a set
$\mathcal B\subseteq[A,B]$ consisting of affine pieces of the
interpolation such that
\begin{equation}\label{eq:even-strict-energy}
 \cE_G(f)\geq c\int_A^B|g'|^2
           +\nu_d\int_{\mathcal B}|g'|^2.
\end{equation}
Moreover,
\begin{equation}\label{eq:even-step-comparison}
 \int_A^B|g-F|^2\leq C_d\sum_{i=1}^{n-1}(f_{i+1}-f_i)^2
                       \leq C_d\cE_G(f).
\end{equation}
Every affine piece spans a bounded number of consecutive vertex
indices, its coordinate length is bounded above and below by positive
constants depending only on $d$, and all knots have bounded
displacement from their ranks. Every piece belongs to $\mathcal B$
except for a bounded number of end pieces and the canonical intervals.
\end{proposition}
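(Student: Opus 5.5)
We will assemble the global interpolation by cutting the rank line at the size-two cuts and treating the resulting potential intervals separately, then gluing the local interpolants. Let \(a_1<\cdots<a_m\) be the size-two cuts of the ordering. Lemma~\ref{lem:even-level-split} subdivides the edges at the levels \(z_{a_j}\) without changing the total energy, so \(\cE_G(f)\) splits as a sum of the clipped energies \(\mathcal E_{[z_{a_j},z_{a_{j+1}}]}(f)\), together with the two end energies \(\mathcal E_{(-\infty,z_{a_1}]}\) and \(\mathcal E_{[z_{a_m},\infty)}\). The same lemma shows that the coordinates \(x_{a_j}\) are increasing and lie within \(O_d(1)\) of their ranks. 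We therefore place the knots at the points \(x_{a_j}\), with \(g(x_{a_j})=z_{a_j}\), and build \(g\) separately on each interval \([x_{a_j},x_{a_{j+1}}]\).

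\emph{Interior intervals.} Consider consecutive size-two cuts \(a<b\).
\begin{enumerate}[(i)]
\item If \(b-a\geqslant2d+1\), Lemma~\ref{lem:even-central-allocation} supplies a piecewise affine \(g\) on \([x_a,x_b]\) satisfying \((c+\nu_d)\int|g'|^2\leqslant\mathcal E_{[z_a,z_b]}(f)\). Every piece of such an interval goes into \(\mathcal B\).
\item If \(d+1\leqslant b-a\leqslant2d+1\), Lemma~\ref{lem:even-short-cores} gives a resistance bound \(R\leqslant(x_b-x_a)/c\). We take \(g\) affine on \([x_a,x_b]\); Cauchy--Schwarz, applied to the unit flow and the actual potentials as in Lemma~\ref{lem:even-collar}, then gives \(\mathcal E_{[z_a,z_b]}(f)\geqslant (z_b-z_a)^2/R\geqslant c\int_{x_a}^{x_b}|g'|^2\). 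Whenever the interval is not canonical, the lemma's uniform lower bound on the deficit upgrades the coefficient to \(c+\nu_d\), and we put the piece into \(\mathcal B\). The canonical intervals are exactly the ones left out of \(\mathcal B\).
\end{enumerate}
Since distinct size-two cuts are at least \(d+1\) apart, these two cases are exhaustive.

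\emph{End intervals.} Before \(a_1\) and after \(a_m\) there are no size-two cuts; if no size-two cut exists at all, the whole ordering is a single end interval. Here we use the endpoint bounds \(q_i\geqslant i(d+1-i)\) for the first and last \(d\) increments, followed by the block decomposition from the proof of Lemma~\ref{lem:even-central-allocation}. That decomposition uses full low-cut blocks (Lemma~\ref{lem:even-low-block-reserve}), high-cut \(d\)-blocks (Lemma~\ref{lem:even-high-run}), and at most one truncated block (Lemmas~\ref{lem:even-truncated-low-block} and~\ref{lem:even-boundary-charges}). Every full block has resistance at most its length divided by \(c\), with a strict deficit, so those pieces go into \(\mathcal B\). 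Only a bounded number of end pieces carry a bounded resistance excess. We absorb that excess by lengthening their coordinate intervals by \(O_d(1)\). This is the reason the domain is an interval \([A,B]\) of length \(n+O_d(1)\) rather than exactly \(n\), and why the end pieces are excluded from \(\mathcal B\).

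\emph{Gluing and the comparison estimate.} Summing the local inequalities over all intervals gives \eqref{eq:even-strict-energy}, because the clipped energies add to \(\cE_G(f)\). The structural claims are inherited from the local lemmas: each piece spans a bounded number of ranks, has length bounded above and below, and its knots are displaced from their ranks by \(O_d(1)\). For \eqref{eq:even-step-comparison} we argue piece by piece. On each piece, \(g\) and \(F\) both take values between the values of \(f\) at ranks that differ by \(O_d(1)\), and the knot displacements are \(O_d(1)\). Hence \(|g-F|\) is bounded on the piece by a sum of \(O_d(1)\) consecutive increments \(y_i\). Squaring, integrating over a piece of bounded length, and noting that each increment is counted boundedly often gives \(\int|g-F|^2\leqslant C_d\sum_i y_i^2\). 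Finally, \(\sum_i y_i^2\leqslant\sum_i q_iy_i^2\leqslant\cE_G(f)\) by \eqref{eq:tridiagonal-kernel}.

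The main obstacle is the bookkeeping at the junctions of the local interpolations. The boundary-interval estimates charge the corrections \(h_a\) and \(\delta_a\) to the cut coordinates \(x_a\), and these must cancel consistently between the two intervals meeting at each size-two cut. We will check this telescoping first: in Lemma~\ref{lem:even-level-split} the terms \(h_b-h_a\) telescope, and the \(\delta\)-corrections appear with opposite signs from the left and right sides. Only after that do we verify that the end intervals can be handled with bounded excess.
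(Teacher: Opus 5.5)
Your proposal is correct and follows the paper's own proof: knots $(x_a,z_a)$ at the size-two cuts, then either Lemma~\ref{lem:even-short-cores} (one affine piece) or Lemma~\ref{lem:even-central-allocation} between consecutive cuts, the block partition from that lemma's proof with boundedly many lengthened pieces at the two ends, summation through the additive interval energies $\mathcal E_{[s,t]}(f)$, and the piecewise step comparison. The junction bookkeeping you flag as the main obstacle needs no further check, because those lemmas are already stated in terms of the shared knots $x_a$; the only real difference is at the ends, where the paper places a reversed $d$-vertex boundary interval from Lemma~\ref{lem:even-collar} before the first size-two cut (this also keeps the $q_i\geqslant4$ window required by Lemma~\ref{lem:even-truncated-low-block}), whereas without it you should observe that the boundedly many pieces next to $a_1$, including the half-increment at the size-two cut, have bounded resistance by the diagonal bound, so their excess is absorbed into $A$.
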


\begin{proof}
At each size-two cut use the point $(x_a,z_a)$ of
Lemma~\ref{lem:even-level-split}. For consecutive size-two cuts,
use Lemma~\ref{lem:even-short-cores} if their distance is at most
$2d+1$, and Lemma~\ref{lem:even-central-allocation} otherwise.
The short interval is represented by one affine piece; its
resistance bound proves the desired energy estimate. The long interval is partitioned into boundary and interior
intervals of bounded length. Both arguments
allow bypass edges, including an edge that bypasses many such
intervals. Their strict estimates apply uniformly except on the
canonical intervals defined above.

We specify the ends. Before the first size-two cut $a$, take a
reversed boundary interval on $d$ vertices ending at rank
$j=a-d\geq1$. Partition the increments $1,\ldots,j-1$ as in the proof of
Lemma~\ref{lem:even-central-allocation}: full low-cut blocks and
their following high-cut remainders have their rank lengths,
and all remaining long high-cut runs are partitioned into $d$-blocks.
Only the initial high-cut remainder and the final truncated low-cut block
may have positive resistance excess. Give each of these pieces
length equal to the larger of its rank length and $c$ times its
resistance bound. The total additional length $D_L$ is at most
$c(P_d+T_d)$, a constant depending only on $d$. Start at coordinate
$A=1+c/4-D_L$ with value $f_1$. Then the right endpoint of this
partition is $j+c/4$, the endpoint coordinate of the boundary interval.
Do the same after the last size-two cut, ending at
$B=n+c/4+D_R$ with $D_R\leq c(P_d+T_d)$.
If there is no size-two cut, use the same partition on all $n-1$
increments and put its bounded extra length at an outer endpoint.

These constructions have positive bounded piece lengths. The full
low-cut blocks, high-cut $d$-blocks, and boundary intervals at the
ends all have positive resistance deficit. The at most four end
remainders have coefficient at least $c$. The finitely many local strict estimates,
together with the strictly positive bounds in
Lemma~\ref{lem:even-central-allocation}, have a common positive
coefficient increment $\nu_d$.

The energy decomposition allows us to sum these estimates. Split each
original edge at every interpolation level, as in
Lemma~\ref{lem:even-level-split}. If the edge has total drop $D_e$
and drop $b_e$ across one potential interval, its contribution to
the energy on that interval is $D_eb_e$.
The corresponding energy in the clipped vector, with unit edge
resistance, is
$b_e^2\leq D_eb_e$. Thus both the cut-kernel estimates and the
flow estimates use at most the energy of their respective potential
intervals, whose energies sum to the original energy. This proves
\eqref{eq:even-strict-energy}.

The coordinate adjustments affect a bounded number of pieces
by a bounded amount, or preserve the total length of a bounded
combined block. Consequently every knot stays at bounded distance
from its associated rank. Its value is a vertex value or the mean
of two consecutive vertex values. On each affine piece, comparison
with the adjacent values of the monotone step function and
Cauchy--Schwarz over a bounded number of increments bound its
squared error by a constant times the sum of squared nearby
increments. Each increment occurs in a bounded number of these bounds,
since the rank spans are ordered and disjoint and the coordinate
displacements are bounded. This proves
\eqref{eq:even-step-comparison}.
\end{proof}

\begin{proof}[Proof of Theorem~\ref{thm:fixed-even-sharp}]
For odd degrees the assertion follows from
Theorem~\ref{thm:minimum-degree-bound}; for degree four it follows
from \eqref{eq:quartic-known}. We prove the remaining even cases.
The matching upper bound is supplied by the regular chain families
in \cite[Theorem~1.8(ii) and Table~1]{AbdiGhorbaniDiameter}.
It suffices to prove the lower bound along any sequence with
$n^2\mu(G)$ bounded. Order a unit, mean-zero Fiedler vector
increasingly and put $E=\cE_G(f)=\mu(G)=O_d(n^{-2})$.
Use the interpolation of
Proposition~\ref{prop:even-strict-compression}.

Since $\sum_i(f_{i+1}-f_i)^2\leq E$ and $\sum_i f_i=0$,
\[
 \max_i|f_i|\leq\sqrt{nE}=O_d(n^{-1/2}).
\]
Thus the step function $F$ in that proposition satisfies
\[
 \int_A^B F^2=1+O_d(n^{-1}),\qquad
 \int_A^B F=O_d(n^{-1/2}),\qquad
 \|g-F\|_2=O_d(n^{-1}).
\]
Writing $\overline g=(B-A)^{-1}\int_A^B g$ for the mean, we obtain
\[
 \int_A^B|g-\overline g|^2=1+o(1).
\]
The Neumann Poincar\'e inequality on $[A,B]$ therefore gives
\[
 E\geq c\int_A^B|g'|^2
 \geq\frac{c\pi^2}{(B-A)^2}
       \int_A^B|g-\overline g|^2
 =(2(d-2)-o(1))\frac{\pi^2}{n^2},
\]
as required.
\end{proof}

\subsection{Cubic sharpness and stability}\label{sec:cubic-sharpness}

The graph \(L_3=K_4-e\) is called a diamond. The cubic chains
described in the introduction attain the leading constant in
Corollary~\ref{cor:cubic-explicit} for every sufficiently large even
order. We now describe their end graphs explicitly. Let \(C_5\) be obtained by
subdividing one edge of \(K_4\), and distinguish the new vertex
\(\xi_5\).  Let \(C_7\) have vertex set \(\{0,1,\ldots,6\}\),
distinguished vertex \(\xi_7=0\), and edge set
\[
 \{01,02,12,13,24,35,46,36,45,56\}.
\]
In each \(C_r\), the distinguished vertex has degree two and all other
vertices have degree three.

Take \(m\) disjoint diamonds, and label the terminals of the \(i\)th diamond
by \(a_i,b_i\).  Add the edges \(b_ia_{i+1}\) for \(1\leqslant i<m\), and attach
disjoint copies of \(C_r\) and \(C_s\) at the two ends by joining their
distinguished vertices to \(a_1\) and \(b_m\), respectively.  Denote the
resulting diamond-chain graph by \(N_m^{r,s}\).  It is connected, simple,
and cubic, and has \(4m+r+s\) vertices.  The choices \((r,s)=(5,5)\) and \((5,7)\) give the cubic minimisers
of \cite{BrandGuiduliImrich} for sufficiently large even orders.  Thus
\[
 X_n=
 \begin{cases}
 N_{(n-10)/4}^{5,5},&n\equiv2\pmod4,\\
 N_{(n-12)/4}^{5,7},&n\equiv0\pmod4.
 \end{cases}
\]

\begin{proposition}\label{prop:cubic-sharp}
The graphs \(X_n\) satisfy
\[
 n^2\mu(X_n)\longrightarrow2\pi^2
 \qquad(n\to\infty,\ n\text{ even}).
\]
\end{proposition}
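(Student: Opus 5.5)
The lower bound is already available: Corollary~\ref{cor:cubic-explicit} gives \(\mu(X_n)\geqslant 2\mu(P_n)/(1+\sqrt{12\mu(P_n)})^2\). Since \(\mu(P_n)=2(1-\cos(\pi/n))\sim\pi^2/n^2\), this yields \(\liminf n^2\mu(X_n)\geqslant2\pi^2\). So the whole task is the matching upper bound \(\limsup n^2\mu(X_n)\leqslant2\pi^2\), and I would obtain it from an explicit test function in the variational characterisation of \(\mu\).

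\textbf{Test function.} Write \(X_n=N_m^{r,s}\) with \(r,s\in\{5,7\}\) and \(m=(n-r-s)/4\). I would construct \(x\) as follows.
\begin{itemize}
\item Assign each vertex a coordinate along the chain: diamond \(i\) occupies positions \(4(i-1)+r+1,\ldots,4i+r\).
\item Let \(\phi(t)=\cos(\pi t/n)\) for \(t\in[0,n]\).
\item On diamond \(i\), set \(x=\phi(t_i)\) at both terminals \(a_i,b_i\), where \(t_i\) is the position of the bridge \(b_{i-1}a_i\), i.e.\ the left end of the block.
\item Set \(x=\phi(t_i+2)\) at the two middle vertices, the ones not incident to a bridge.
\item Make \(x\) constant on each end graph \(C_r\), \(C_s\), equal to the value at the adjacent terminal.
\end{itemize}

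\textbf{Energy.} With this choice the only edges carrying energy inside diamond \(i\) are the four edges from the terminals to the middle pair. Each such edge has drop \(\phi(t_i+2)-\phi(t_i)\approx2\phi'\), so diamond \(i\) contributes about \(4\cdot4\phi'^2=16\phi'^2\). The next bridge has drop \(\phi(t_{i+1})-\phi(t_i)\), which is about \(4\phi'\), contributing about \(16\phi'^2\). That bridge is probably better used as the cut: the cleaner choice is to put the values \(\phi\) at the terminals so that each bridge carries a single increment.

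Alternatively, and more simply, I would take the function that is constant on each "half" between cuts. Assign the value \(\phi(4i)\) to \(b_i\) and the same value to \(a_{i+1}\), and give the middle pair the value \(\phi(4i-2)\). Then each diamond has four edges with drop about \(2\phi'\), for energy about \(16\phi'^2\) per length \(4\), while the bridges carry zero energy.

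That gives energy density \(4\phi'^2\) per unit length, which is the wrong constant. To get constant \(2\), the bridges must carry most of the drop. The right profile follows from the equality case in \eqref{eq:cubic-local-identity}: \(x_2=0\) and \(x_1=x_3\), with \(x_0=x_1+x_2+x_3\). So the bridge increment is \(2h\), the two cuts of size \(2\) each have increment \(h\), and the middle cut of size \(3\) has increment \(0\). The total drop per block is \(4h\), and the energy is \(4h^2+2h^2+2h^2=8h^2=\tfrac12(4h)^2\).

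Concretely, the values within one block are \(\phi\) at \(a_i\), \(\phi+h\) on both middle vertices, and \(\phi+2h\) at \(b_i\), followed by a drop of \(2h\) across the bridge. With \(4h\approx4\phi'\), the energy per block is \(\tfrac12\cdot16\phi'^2=8\phi'^2\) per length \(4\). This is density \(2\phi'^2\), as required.

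\textbf{Putting it together.} Summing over blocks gives
\[
\cE(x)=2\int_0^n\phi'^2+O(n^{-1})=\frac{\pi^2}{n}+O(n^{-1}),
\]
where the error comes from Riemann-sum errors and the \(O(1)\) end graphs. For the denominator, \(\sum x(v)^2=n/2+O(1)\). The mean of \(x\) is \(O(1/n)\) by the antisymmetry of the cosine, so centring changes \(\|x\|^2\) only by \(O(1/n)\). Hence the Rayleigh quotient is at most \((2\pi^2+o(1))/n^2\).

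The main obstacle is this bookkeeping: checking that the per-block discrete sums approximate \(2\int\phi'^2\) up to \(o(1/n)\), and that the end graphs \(C_5,C_7\), which carry zero energy since \(x\) is constant on them, affect only lower-order terms. Both are routine Taylor estimates, because \(\phi''=O(n^{-2})\).
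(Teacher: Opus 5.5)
Your final test function (middle pair at the average of the terminal values, with each diamond and its following bridge carrying equal halves of the cosine drop over a period of four) is essentially the paper's, which places the exact Fiedler vector of \(P_{2m}\) at the \(2m\) terminals so that the energy and norm come out in closed form; combined with Corollary~\ref{cor:cubic-explicit} for the lower bound, this is the same proof. One small correction: the error in your energy display must be \(o(n^{-1})\), not \(O(n^{-1})\), as you say afterwards, and in fact Cauchy--Schwarz on each period, \(\tfrac12(\phi(t+4)-\phi(t))^2\leqslant2\int_t^{t+4}\phi'^2\), gives \(\cE(x)\leqslant2\int_0^n\phi'^2=\pi^2/n\) with no error term at all.
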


\begin{proof}
Fix \(r,s\in\{5,7\}\), put \(\vartheta=\pi/(2m)\), and let
\[
 x_j=\cos\bigl((j+\tfrac12)\vartheta\bigr)
 \qquad(0\leqslant j<2m).
\]
On the \(i\)th diamond, give the terminals the values \(x_{2i-2}\) and \(x_{2i-1}\),
and give each of the other two vertices their average.  Give every vertex in
the left end graph the value \(x_0\), and every vertex in the right end graph
the value \(x_{2m-1}=-x_0\).  Denote the resulting function by \(F\).

The vector \((x_0,\ldots,x_{2m-1})\) is a Fiedler vector of
\(P_{2m}\).  In particular,
\begin{equation}
 \sum_{j=0}^{2m-1}x_j=0,
 \qquad
 \sum_{j=0}^{2m-1}x_j^2=m.
 \label{eq:cubic-trial-path}
\end{equation}
The end graphs and their attaching edges contribute no energy.  A diamond
contributes the square of the difference between its terminal values, while
the edges joining successive diamonds supply the remaining differences.
Consequently,
\begin{equation}
 \cE_{N_m^{r,s}}(F)
 =\sum_{j=0}^{2m-2}(x_{j+1}-x_j)^2
 =2m(1-\cos\vartheta).
 \label{eq:cubic-trial-energy}
\end{equation}

For \(m\geqslant2\), the trial values satisfy
\[
 \sum_{i=1}^{m}x_{2i-2}x_{2i-1}=\frac m2\cos\vartheta,
 \qquad \sum_vF(v)=(r-s)x_0.
\]
Let \(F_0\) be obtained from \(F\) by
subtracting its mean.  Using \eqref{eq:cubic-trial-path}, we obtain
\[
 \|F_0\|_2^2
 =\frac m2(3+\cos\vartheta)
  +\left(r+s-\frac{(r-s)^2}{4m+r+s}\right)\cos^2\frac{\vartheta}{2}.
\]
Since centring preserves the energy, the Rayleigh principle and
\eqref{eq:cubic-trial-energy} give
\[
 \mu(N_m^{r,s})\leqslant\frac{\pi^2}{8m^2}+O(m^{-3}).
\]

For both choices defining \(X_n\), we have \(n=4m+O(1)\).  The preceding
upper bound and Corollary~\ref{cor:cubic-explicit} prove the assertion.
\end{proof}

To prove Theorem~\ref{thm:cubic-stability}, we retain the terms discarded in
Proposition~\ref{prop:cubic-path}.  Near equality forces almost all vertices to lie in chains of
diamonds.

\begin{samepage}
Let \(f\) be an ordered unit Fiedler vector of a connected cubic graph,
and let \(g\) be given by \eqref{eq:cubic-interpolant}.  Let \(\mathcal B\)
be the set of bridge indices, write
\(\mathcal W=\bigcup_{a\in\mathcal B}W_a\), and let \(\mathcal D\) denote the deficit
\(\cE_G(f)-2\cE_{P_n}(g)\).  The proof of
Proposition~\ref{prop:cubic-path}, with the discarded terms retained, gives
\begin{equation}
 \mathcal D\geqslant
 \sum_{i\notin\mathcal W}(q_i-2)y_i^2\geqslant0.
 \label{eq:cubic-deficit}
\end{equation}
\end{samepage}

For the two extremal degrees, write \(a_3=2\), \(a_4=4\),
\(Z_{3,n}=X_n\), and \(Z_{4,n}=\mathcal G_n\). The following theorem
gives quantitative stability in both degrees.

\Needspace{14\baselineskip}
\begin{theorem}\label{thm:parity-stability}
\label{thm:cubic-stability}\label{thm:quartic-stability}
There are an absolute constant \(C>0\) and an integer \(n_0\) for
which the following holds.  Let \(d\in\{3,4\}\), let
\(0\leqslant\varepsilon\leqslant1\), and let \(G\) be a connected simple
\(d\)-regular graph of order \(n\geqslant n_0\) satisfying
\[
 n^2\mu(G)\leqslant a_d\pi^2+\varepsilon.
\]
Then the vertices can be relabelled so that
\begin{align*}
 |E(G)\mathbin{\triangle}E(Z_{d,n})|
 &\leqslant C(\varepsilon+n^{-1})^{1/3}n,\\
 \frac{3n}{d+1}-C(\varepsilon+n^{-1})^{1/3}n
 &\leqslant\operatorname{diam}(G)\leqslant\frac{3n}{d+1}.
\end{align*}
\end{theorem}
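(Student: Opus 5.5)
We treat the cubic case in detail; the quartic case follows the same scheme, with the quartic analogue of Proposition~\ref{prop:cubic-path} (cuts of size two, the block $M_4$ in place of the diamond) in place of the cubic comparison. Let $f$ be an ordered unit Fiedler vector, $g$ the interpolant \eqref{eq:cubic-interpolant}, and $\mathcal D=\cE_G(f)-2\cE_{P_n}(g)$. The first step is to show that $\mathcal D$ is small, namely $\mathcal D=O((\varepsilon+n^{-1})n^{-2})$. We have $\cE_G(f)=\mu(G)\leqslant(2\pi^2+\varepsilon)/n^2$. In the other direction, the argument of Lemma~\ref{lem:path-transfer} (contraction $\|\mathsf Pg\|_2\geqslant1-\sqrt{6\mu}=1-O(n^{-1})$) gives $2\cE_{P_n}(g)\geqslant2\mu(P_n)(1-O(n^{-1}))=(2\pi^2-O(n^{-1}))/n^2$.

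Next we spend this budget in three ways.
\begin{enumerate}[(a)]
\item By \eqref{eq:cubic-deficit} and the identity \eqref{eq:cubic-local-identity}, the retained terms $(q_i-2)y_i^2$, $2y_{a+2}^2$, $(y_{a+1}-y_{a+3})^2$ and $\tfrac12(y_a-y_{a+1}-y_{a+2}-y_{a+3})^2$, together with the discarded nonadjacent kernel entries, sum to at most $\mathcal D$.
\item The function $g$ is close to the path eigenfunction. Since the Rayleigh quotient of $\mathsf Pg$ on $P_n$ is within $O((\varepsilon+n^{-1})/n^2)$ of $\mu(P_n)$ and the spectral gap of $P_n$ above $\mu(P_n)$ is of order $n^{-2}$, $g$ is within $L^2$ distance $O((\varepsilon+n^{-1})^{1/2})$ of $\pm\sqrt{2/n}\cos(\pi(i-\tfrac12)/n)$.
\item Hence on all indices with $i/n$ bounded away from $0$ and $1$ (distance $\eta$), the local increments of $g$ are of order $n^{-3/2}$, typically. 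A Chebyshev argument against (a) then shows the following: among indices in the bulk, the number at which $q_i\geqslant3$ is not inside some $W_a$, or at which the local deficit of a block exceeds a threshold $\theta\,y^2$, is $O(\mathcal D n^3/\theta)$.
\end{enumerate}
Optimising $\eta$ and $\theta$ against the boundary loss $\eta n$ and the distortion of the typical increment gives the exponent $1/3$. This is where I expect the real difficulty: one must convert pointwise closeness in $L^2$ into lower bounds on most individual $y_i$. I would do this by splitting $[1,n]$ into windows of length $(\varepsilon+n^{-1})^{1/3}n$ and using the monotonicity of $f$, which guarantees $y_i\gtrsim n^{-3/2}$ on average in each bulk window.

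Third, the structure must be identified. On a good block $W_a$ the near-equality in \eqref{eq:cubic-local-identity} forces $q_{a},\dots,q_{a+3}=(1,2,3,2)$, with negligible nonadjacent kernel; since the $q_i$ are integers, "small deficit" means exact equality. The cut profile $(1,2,3,2)$ together with a bridge at $a$ and at $a+4$ forces $G[\{v_{a+1},\dots,v_{a+4}\}]=K_4-e$ with terminals $v_{a+1},v_{a+4}$, joined by bridges. Away from bridge blocks, $q_i=2$ exactly, and consecutive good blocks are separated only by such indices. The pattern $1,2,3,2$ repeating with period four then shows that a maximal run of good blocks is an induced diamond chain. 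Thus all but $O((\varepsilon+n^{-1})^{1/3}n)$ vertices lie in diamond chains, and there are $O((\varepsilon+n^{-1})^{1/3}n)$ chain breaks, each touching $O(1)$ edges. Relabelling $Z_{3,n}$ to match the long chains and then spending $O(1)$ edge changes per break and at the ends gives the bound on the symmetric difference.

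Finally, the diameter. The upper bound $\operatorname{diam}(G)\leqslant 3n/(d+1)$ is elementary: each distance layer from a vertex at maximal eccentricity, apart from the end layers, needs enough vertices to support the degree condition, so that any three consecutive layers together contain at least $d+1$ vertices. The lower bound holds because a diamond chain of $k$ diamonds has diameter $3k+O(1)$, and the long chains are concatenated along the Fiedler ordering. A geodesic between the two ends must therefore traverse every bridge, giving $\operatorname{diam}(G)\geqslant 3(n-O((\varepsilon+n^{-1})^{1/3}n))/4$.
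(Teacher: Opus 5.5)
Your analytic front end agrees with the paper: the deficit is small, $\mathcal D=O(\eta^3n^{-2})$ with $\eta=(\varepsilon+n^{-1})^{1/3}$, $g$ is close to the path mode, and a Chebyshev count is run on the central indices. The combinatorial back end, however, has a genuine gap, and one step in the front end needs repair.

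\emph{The Chebyshev step.} $L^2$ closeness is not what this count needs. Windows and monotonicity only control averages, and the increments could sit on the bridge indices. Your own spectral-gap computation also bounds the path energy of the orthogonal remainder $\psi$. This gives $\sum_i\bigl(g_{i+1}-g_i-(\varphi_{i+1}-\varphi_i)\bigr)^2=O(\eta^3n^{-2})$. Since $y_i=g_{i+1}-g_i$ off $\mathcal W$, and $\varphi_{i+1}-\varphi_i\geqslant4\sqrt2\,\eta n^{-3/2}$ on central indices, this bounds the number of uncovered indices with $q_i\geqslant3$ by $O(\eta n)$.

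\emph{The missing count of uncovered indices.} Uncovered indices with $q_i=2$ contribute nothing to \eqref{eq:cubic-deficit}, so no deficit count can see them. Yet you need all uncovered indices to number $O(\eta n)$ in order to get consecutive bridge indices four apart for most pairs. The missing idea is the parity $q_i\equiv i\pmod 2$ from \eqref{eq:cut-parity}. It also shows that your picture of good blocks separated by runs with $q_i=2$ cannot occur. With parity in hand:
\begin{itemize}
\item every bridge index is odd, and each $W_a$ contains exactly two odd indices;
\item every odd uncovered index has $q_i\geqslant3$, so the set $\mathcal O$ of these satisfies $\mathcal D\geqslant\sum_{i\in\mathcal O}y_i^2$ and hence $|\mathcal O|=O(\eta n)$;
\item since $n$ is even there are $n/2$ odd indices, so $n/2=|\mathcal O|+2k$, and the number of uncovered indices is exactly $u=n-1-4k=2|\mathcal O|-1$.
\end{itemize}

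\emph{The structural step.} As stated it fails. Integrality does not turn a small deficit into the exact profile $(1,2,3,2)$, because each excess $q_i-p_i$ is weighted by $y_i^2$. On a diamond attached at its terminals, the two middle vertices have equal Fiedler values, so $y_{a+2}=0$ and the deficit carries no information about $q_{a+2}$. The profile is unnecessary anyway. Suppose $a$ and $a+4$ are bridge indices, and put $T=S_{a+4}\setminus S_a$. Then $2=q_a+q_{a+4}=|\partial T|+2e(S_a,V(G)\setminus S_{a+4})$, where $|\partial T|=12-2e(T)$ is positive and even. This forces $|\partial T|=2$, no bypassing edge, and $e(T)=5$. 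So $G[T]$ is a diamond whose terminals carry the two bridges. At most $u$ consecutive pairs of bridge indices are farther apart than four, and this yields the chains, the edit bound and the diameter bound.

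\emph{The quartic case.} This is not a verbatim repetition. The paper charges the deficit to whole exceptional blocks of Lemma~\ref{lem:quartic-blocks}. It recognises $M_0$ from a regular block whose right size-two cluster has two indices; there $|\partial T|=4$ and $G[T]=K_4$. Singleton clusters are charged to the exceptional intervals that follow them.
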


\begin{proof}[Proof of Theorem~\ref{thm:parity-stability} in degree three]
Put \(\eta=(\varepsilon+n^{-1})^{1/3}\).  All constants implicit in this proof
are absolute.  Let \(\mathsf P\) be the
orthogonal projection onto \(\one^\perp\).  Proposition~\ref{prop:cubic-path} gives
\[
 \|\mathsf Pg-f\|_2\leqslant\|g-f\|_2
 \leqslant\sqrt{6\mu(G)}=O(n^{-1}),
\]
so \(\|\mathsf Pg\|_2=1+O(n^{-1})\).  The Poincar\'e inequality for the path,
the hypothesis, and \(\mu(P_n)=\pi^2/n^2+O(n^{-4})\) give
\begin{equation}
 \begin{split}
 0\leqslant{}&\mathcal D
 +2\bigl(\cE_{P_n}(g)-\mu(P_n)\|\mathsf Pg\|_2^2\bigr)\\
 ={}&\mu(G)-2\mu(P_n)\|\mathsf Pg\|_2^2
 =O\left(\frac{\eta^3}{n^2}\right).
 \end{split}
 \label{eq:deficit-upper}
\end{equation}

We first show that the increments of \(g\) are close to those of a unit
Fiedler vector of \(P_n\).  Let \(\boldsymbol\varphi=(\varphi_i)_{i=1}^n\) be the
increasing unit Fiedler vector of \(P_n\), where
\[
 \varphi_i=-\sqrt{\frac2n}
 \cos\frac{(2i-1)\pi}{2n}
 \qquad(1\leqslant i\leqslant n).
\]
The next positive eigenvalue of \(P_n\) is \(2(1-\cos(2\pi/n))\), and its
difference from \(\mu(P_n)\) is at least \(12/n^2\) for \(n\geqslant4\).

Decompose
\[
 \mathsf Pg=\rho\boldsymbol\varphi+\boldsymbol\psi,
 \qquad
 \boldsymbol\psi\perp\one,\boldsymbol\varphi.
\]
Here \(\rho>0\).  Indeed, \(\mathsf Pg\) and \(\boldsymbol\varphi\) are
nonconstant, nondecreasing, and centred, whence
\[
 \langle\mathsf Pg,\boldsymbol\varphi\rangle
 =\frac1{2n}\sum_{i,j=1}^{n}
   (g_i-g_j)(\varphi_i-\varphi_j)>0.
\]
The spectral separation on the path and \eqref{eq:deficit-upper} give
\[
 \|\boldsymbol\psi\|_2^2=O(\eta^3),
 \qquad
 \cE_{P_n}(\boldsymbol\psi)=O\left(\frac{\eta^3}{n^2}\right).
\]
Since \(\rho^2+\|\boldsymbol\psi\|_2^2=\|\mathsf Pg\|_2^2=1+O(n^{-1})\)
and \(n^{-1}\leqslant\eta^3\), we have \(\rho=1+O(\eta^3)\).  It follows that
\begin{equation}
 \sum_{i=1}^{n-1}
 \left(g_{i+1}-g_i-(\varphi_{i+1}-\varphi_i)\right)^2
 =O\left(\frac{\eta^3}{n^2}\right).
 \label{eq:path-mode-stability}
\end{equation}

\begin{samepage}
We next bound the number of indices outside the bridge blocks.
By increasing \(C\), we may assume that \(\eta\leqslant1/4\).  Let \(J_\eta\)
be the set of indices between \(\lceil\eta n\rceil\) and
\(\lfloor(1-\eta)n\rfloor\).  For \(i\in J_\eta\),
\begin{equation}
 \varphi_{i+1}-\varphi_i
 =2\sqrt{\frac2n}\sin\frac{\pi}{2n}\sin\frac{\pi i}{n}
 \geqslant4\sqrt2\,\eta n^{-3/2}.
 \label{eq:path-increment-lower}
\end{equation}
\end{samepage}
Let \(\mathcal O\) be the set of odd indices not contained in \(\mathcal W\).
The parity
identity \eqref{eq:cut-parity} gives \(q_i\equiv i\pmod2\).  Thus
\(q_i\geqslant3\) for \(i\in\mathcal O\), and \eqref{eq:cubic-deficit} yields
\[
 \mathcal D\geqslant\sum_{i\in\mathcal O}y_i^2.
\]
For \(i\in\mathcal O\cap J_\eta\),
\[
 (\varphi_{i+1}-\varphi_i)^2
 \leqslant2\bigl(y_i-(\varphi_{i+1}-\varphi_i)\bigr)^2+2y_i^2.
\]
It follows from \eqref{eq:deficit-upper},
\eqref{eq:path-mode-stability}, and \eqref{eq:path-increment-lower} that
\[
 |\mathcal O\cap J_\eta|=O(\eta n).
\]
There are at most \(2\eta n+2\) indices outside \(J_\eta\), and hence
\begin{equation}
 |\mathcal O|=O(\eta n).
 \label{eq:uncovered-odd-indices}
\end{equation}

Every bridge index is odd, and each set \(W_a\) contains two odd indices.
There are \(n/2\) odd indices in \(\{1,\ldots,n-1\}\), since \(n\) is even.
Let \(k=|\mathcal B|\), and let \(u=n-1-|\mathcal W|\) be the number of
indices outside \(\mathcal W\).  Then
\[
 \frac n2=|\mathcal O|+2k,
 \qquad u=n-1-4k=2|\mathcal O|-1.
\]
Together with \eqref{eq:uncovered-odd-indices}, this gives
\begin{equation}
 u=O(\eta n).
 \label{eq:indices-outside-blocks}
\end{equation}

If \(k=0\), then \(u=n-1=O(\eta n)\), and both stability
conclusions follow by increasing the absolute constant.  We may therefore
assume \(k\geqslant1\).  We next determine the graph away from the
exceptional indices.  List the
bridge indices as
\(a_1<\cdots<a_k\).  Since the blocks are disjoint and each has four indices,
\(4k=n-1-u\).
Moreover,
\[
 u=(a_1-1)+\sum_{j=1}^{k-1}(a_{j+1}-a_j-4)+(n-a_k-4),
\]
so at most \(u\) consecutive pairs of bridge indices have distance greater
than four.

Suppose that \(a_{j+1}=a_j+4\), and let
\(T=S_{a_j+4}\setminus S_{a_j}\).  Then
\[
 q_{a_j}+q_{a_j+4}
 =|\partial T|+2e(S_{a_j},V(G)\setminus S_{a_j+4})=2.
\]
The set \(T\) has four vertices, and \(|\partial T|\) is a positive even
integer.  It follows that \(|\partial T|=2\), that there is no edge from
\(S_{a_j}\) to \(V(G)\setminus S_{a_j+4}\), and that
\[
 e(T)=\frac{3|T|-|\partial T|}{2}=5.
\]
Thus \(G[T]\) is a diamond, and its two boundary edges meet the two terminals.
The diamonds arising from all four-step gaps form at most
\(u+1\) disjoint chains.  If \(r\) denotes the number of these diamonds,
then
\[
 r\geqslant k-1-u,
 \qquad n-4r\leqslant5u+5.
\]

Every bridge index lies between \(5\) and \(n-5\), since it is odd.  It
follows that \(r\leqslant\lfloor(n-10)/4\rfloor\), which is precisely the
number of diamonds in \(X_n\).  Let \(X\) contain all vertices not belonging
to the \(r\) identified diamonds.  Then
\begin{equation}
 |X|=n-4r\leqslant5u+5.
 \label{eq:exceptional-vertices}
\end{equation}
Choose a bijection from \(V(G)\) to \(V(X_n)\) which identifies each of
these chains with a consecutive string of diamonds in \(X_n\), preserves
all edges within the chains, and maps \(X\) to the remaining vertices.  If the
identified diamonds form several chains, their number is at most \(u+1\).
Under this identification, only edges incident with \(X\) or with an end of
one of these chains can differ.  There are at most \(3|X|\) edges of each
graph incident with \(X\), and at most \(4(u+1)\) further edges at the chain
ends.  Hence
\[
 |E(G)\mathbin{\triangle}E(X_n)|
 \leqslant6|X|+4(u+1)=O(\eta n)
\]
by \eqref{eq:indices-outside-blocks} and
\eqref{eq:exceptional-vertices}.  This proves the edit-distance assertion.

Finally, let \(a<b\) be bridge indices.  Since \(b-a\) is even, applying the
identity above to \(S_b\setminus S_a\) shows that the corresponding bridge
edges are distinct.
Every path from \(S_{a_1}\) to \(V(G)\setminus S_{a_k}\) traverses all \(k\)
bridges.  Each of the \(r\) identified diamonds between successive bridges
contributes two further edges, and hence
\[
 \operatorname{diam}(G)\geqslant k+2r
 \geqslant\frac{3n}{4}-O(u+1).
\]
Together with \eqref{eq:indices-outside-blocks}, this proves the lower bound
in the theorem.  For the upper bound, a diametral path is induced.  If it has
length \(L\), exactly \(L+3\) edges join it to its complement, and each vertex
outside the path is incident with at most three of them.  Thus
\[
 n-(L+1)\geqslant\frac{L+3}{3},
\]
which is equivalent to \(L\leqslant(3n-6)/4\).
\end{proof}

\subsection{Quartic sharpness and stability}\label{sec:quartic-stability}

For quartic graphs, the cuts of size two play the role of the bridge cuts
in the cubic argument.  Let \(M_0\) be obtained from \(K_4\) by adding
two nonadjacent terminals, each adjacent to two vertices of the clique,
with their two neighbourhoods partitioning the clique.  This is the
middle block in the construction of \(\mathcal G_n\).
We first obtain a sharp path inequality while
retaining a uniform deficit outside the extremal cut pattern.

For an ordering of a connected simple quartic graph and a consecutive
increment block $W$ of length $r$, a lower profile is a vector
$\mathbf p=(p_1,\ldots,p_r)$ such that the cut sizes on $W$ are
at least the corresponding $p_i$. Let $Q_W$ be the principal
quadratic form obtained by restricting
\eqref{eq:tridiagonal-kernel} to $W$. We use the following profiles
and coefficients, with the same coefficient for a reversed profile:
\[
\begin{array}{c|c}
\mathbf p & \alpha(\mathbf p)\\ \hline
(2,4,4,4,2)&4\\
(2,4,4,4)&52/11\\
(2,4,4,4,4)&5\\
(2,4,4,4,4,2)&102/23\\
(2,4,4,4,4,4,2)&14/3\\
(4,4)&6\\
(4,4,4)&6
\end{array}
\]

\Needspace{13\baselineskip}
\begin{lemma}\label{lem:quartic-blocks}
Let $q_1,\ldots,q_{n-1}$ be the cut sizes in an ordering of a connected
simple quartic graph. Then the increment indices can be partitioned
into consecutive blocks of lengths at most seven, each admitting a
lower profile from the table above or its reversal. For every block
$W$ of length $r$ with its assigned lower profile $\mathbf p$,
\[
 Q_W(\mathbf x)\geqslant
 \frac{\alpha(\mathbf p)}{r}
 \left(\sum_{i=1}^{r}x_i\right)^2
 \qquad(\mathbf x\in[0,\infty)^r).
\]
Moreover, a block with profile $(2,4,4,4,2)$ starts at the second
of two consecutive cuts of size two and ends at the next cut of
size two.
\end{lemma}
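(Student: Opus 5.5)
The lemma splits into a combinatorial part and an analytic part, and I would prove them in that order.

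For a quartic graph every \(q_i\) is even and positive, so \(q_i\in\{2\}\cup[4,\infty)\). Lemma~\ref{lem:interval-cut} gives \(q_i+q_{i+t}\geqslant t(5-t)\), which is \(4,6,6,4\) for \(t=1,2,3,4\). Hence two size-two cuts are never at distance \(2\) or \(3\), although distance \(1\) and distance \(\geqslant4\) are allowed.

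The endpoint bounds \(q_i\geqslant i(5-i)\) for \(i\leqslant4\) show that no size-two cut occurs among the first or last three indices. The same holds for \(i=4\) when the complement is used. So \(q_1,\dots,q_3\geqslant4\), and symmetrically at the right end.

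I would then scan the size-two cuts \(a_1<\dots<a_k\) from left to right and cover the indices by greedy blocks. Each block starts at a size-two cut, or at index \(1\), and extends to just before the next size-two cut. A maximal run of cuts of size at least four, of length \(m\), is cut into pieces of lengths \(2\) and \(3\) using the profiles \((4,4)\) and \((4,4,4)\); this is possible for every \(m\geqslant2\).

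A size-two cut at \(a\) is then absorbed as the head of a block \((2,4,4,4)\) or \((2,4,4,4,4)\). Alternatively, when the next size-two cut lies at distance \(4\), \(5\) or \(6\), the two cuts together give \((2,4,4,4,2)\), \((2,4,4,4,4,2)\) or \((2,4,4,4,4,4,2)\), with the closing \(2\) included. Consecutive pairs \(a,a+1\) of size-two cuts need care: the first of them would go at the end of a reversed block \((4,4,4,2)\), and the second starts the next block. This is exactly why the lemma asserts that a \((2,4,4,4,2)\) block starts at the second of two consecutive size-two cuts.

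I would carry out a finite case analysis on the gap \(g=a_{j+1}-a_j\in\{1\}\cup\{4,5,\dots\}\):
\begin{itemize}
\item \(g=1\): consecutive pair, handled as above;
\item \(g=4,5,6\): close with a two-sided block;
\item \(g\geqslant7\): use \((2,4,4,4)\) or \((2,4,4,4,4)\) and fill the rest with \((4,4)\) and \((4,4,4)\) pieces, choosing the lengths so that the leftover run is never of length \(1\).
\end{itemize}
A leftover of length \(1\) is the real combinatorial obstacle. I expect to resolve it by switching between the length-\(4\) and length-\(5\) heads, or by using reversed profiles at a right boundary.

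For the quadratic bounds I would use the tridiagonal form \eqref{eq:tridiagonal-kernel} restricted to \(W\), with diagonal \(p_i\) and off-diagonal \((p_i+p_{i+1}-4)/2\). By nonnegativity, replacing the \(q_i\) by their lower bounds \(p_i\) only decreases the form. Each resulting matrix \(A\) is explicit, so the claim reduces to checking that \(A\succ0\) and that \(\one^{\mathsf T}A^{-1}\one\leqslant r/\alpha(\mathbf p)\). The inequality then follows by Cauchy--Schwarz, as in Lemma~\ref{lem:general-low-cut}.

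For example, \((4,4)\) gives the matrix \(\begin{pmatrix}4&2\\2&4\end{pmatrix}\), for which \(\one^{\mathsf T}A^{-1}\one=1/3=2/6\). The profile \((2,4,4,4,2)\) gives diagonal \((2,4,4,4,2)\) and off-diagonal \((1,2,2,1)\); solving \(A\mathbf w=\one\) yields \(\mathbf w=(1/2,0,1/4,0,1/2)\), whose sum is \(5/4=5/\alpha\). The other tables are analogous \(r\times r\) solves, and I would verify them by the recurrence for tridiagonal inverses. I expect that recurrence to reproduce \(52/11\), \(102/23\) and \(14/3\).

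The one subtle point is that off-diagonal entries between a \(2\) and a \(4\) equal \(1\), not \(0\). This holds automatically, since \(K_{i,i+1}=(q_i+q_{i+1}-4)/2\), and it is monotone in the \(q_i\), so no further argument is needed there.
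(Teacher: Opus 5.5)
Your analytic half is correct and is the paper's argument: lower the $q_i$ to the profile using monotonicity of the tridiagonal form on $[0,\infty)^r$, then apply Cauchy--Schwarz with $\one^{\mathsf T}A_{\mathbf p}^{-1}\one=r/\alpha(\mathbf p)$. Your checks for $(4,4)$ and $(2,4,4,4,2)$ are right. The gap is in the partition, and it breaks the final clause of the lemma.

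In your scheme an isolated size-two cut $a$ (so $q_{a-1}\geqslant4$) heads a block. If the next size-two cut is $a+4$, you close with $(2,4,4,4,2)$ on $\{a,\dots,a+4\}$. That block does not start at the second of two consecutive size-two cuts, so the ``Moreover'' assertion fails. That clause is not a by-product of handling pairs carefully, as you suggest; the construction has to enforce it.

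It also matters downstream. The quartic stability proof uses $q_{a-1}=q_a=2$ to force the two edges of $\partial S_a$ to share the endpoint $v_a$, and hence to turn regular blocks into copies of $M_0$. At an isolated cut the two boundary edges can leave from different vertices. Your rule would then assign the sharp coefficient $4$ to a non-canonical configuration.

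The fix is the paper's assignment rule. Give every isolated size-two cut, and the first cut of every consecutive pair, to the interval on its left. Give only the second cut of a pair to the interval on its right. Apply the distance-$2,3$ exclusion to both members of a pair: clusters then have length at most two and are separated by at least three indices with $q_i\geqslant4$. So every interval between clusters has profile $(2,4^h,2)$ or $(4^h,2)$ with $h\geqslant3$.

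Take the whole interval when $(2,4^h,2)$ has $h\leqslant5$ or $(4^h,2)$ has $h\leqslant4$. Otherwise put $(2,4,4,4)$ and/or $(4,4,4,2)$ at the ends, fill the middle with $(4,4)$ and $(4,4,4)$, and absorb a single leftover four into an end block. The end intervals contain at least four fours, since $q_4\geqslant4$ follows directly from $i(5-i)$; it is the first four indices that are excluded, not three.

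Under this rule a block can begin with a $2$ only at the second cut of a pair, which gives the final clause immediately. Your bad configuration instead falls into a $(4,4,4,2)$ block with coefficient $52/11>4$, which is what lets the stability proof count isolated clusters among exceptional blocks. The rule also removes the inconsistency in your description between ``each size-two cut heads a block'' and ``the first cut of a pair ends a block''.
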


\begin{proof}
Every cut size is a positive even integer.  Lemma~\ref{lem:interval-cut}
with \(t=2,3\) shows that two indices at which the cut size is two cannot
have distance two or three.  Thus maximal clusters of such indices have
length one or two, and consecutive clusters are separated by at least
three indices with cut size at least four.  The endpoint bounds
\(q_i\geqslant i(5-i)\), for \(1\leqslant i\leqslant4\), and their
counterparts at the other end show that there are at least four such
indices before the first cluster and after the last cluster.

Assign the first index of every cluster to the interval on its left,
and assign the second index, when present, to the interval on its right.
Between successive clusters the resulting lower profile is
\((2,4^h,2)\) or \((4^h,2)\), where \(h\geqslant3\) and \(4^h\)
denotes a string of \(h\) entries equal to four.  A profile
\((2,4^h,2)\) with \(h=3,4,5\) is already in the table.  For
\(h\geqslant6\), use a block \((2,4,4,4)\) at each end and partition
the remaining string of fours into blocks of length two or three.  If
there is precisely one remaining four, absorb it into either end block.
A profile \((4^h,2)\) is treated by taking the whole interval when
\(h=3,4\), and otherwise taking \((4,4,4,2)\) at the right end and
partitioning the remaining fours into blocks of length two or three.
The initial and final intervals are treated in the same way; each has
at least four fours and at most one entry equal to two.  If there are
no cuts of size two, partition the entire string into blocks of length
two or three.  This is possible since a simple quartic graph has
\(n\geqslant5\).

Let \(A_{\mathbf p}\) be the symmetric tridiagonal matrix with diagonal
\(p_i\) and adjacent entries \((p_i+p_{i+1}-4)/2\).  It is checked directly that each \(A_{\mathbf p}\) is positive
definite and
\[
 \one^{\mathsf T}A_{\mathbf p}^{-1}\one=\frac{r}{\alpha(\mathbf p)}.
\]
Cauchy--Schwarz in the quadratic form defined by \(A_{\mathbf p}\)
gives the listed coefficients.
Since the increments are nonnegative, replacing the actual cut sizes
by the lower profile can only decrease the tridiagonal form.
\end{proof}

Call the blocks with lower profile \((2,4,4,4,2)\) regular, and call
all the other blocks exceptional.  These terms refer only to the
partition in Lemma~\ref{lem:quartic-blocks}.  Given an ordered Fiedler
vector \(f\), define \(g\) by affine interpolation over each block:
if \(W=\{a,\ldots,a+r-1\}\), put
\[
 g_{a+t}=f_a+\frac{t}{r}(f_{a+r}-f_a)
 \qquad(0\leqslant t\leqslant r).
\]
The definitions agree at endpoints of adjacent blocks.  Let
\(\mathcal X\) be the union of the exceptional blocks.

\Needspace{12\baselineskip}
\begin{proposition}\label{prop:quartic-path}
Let $G$ be a connected simple quartic graph of order $n$, and let
$f$ be an ordered Fiedler vector. Then
\begin{align}
 \cE_G(f)-4\cE_{P_n}(g)
 &\geqslant\frac{10}{23}
   \sum_{i\in\mathcal X}(g_{i+1}-g_i)^2,
 \label{eq:quartic-deficit}\\
 \|f-g\|_2^2&\leqslant\frac{21}{2}\cE_G(f).
 \label{eq:quartic-interpolation-error}
\end{align}
Consequently,
\[
 \mu(G)\geqslant
 \frac{4\mu(P_n)}{(1+\sqrt{42\mu(P_n)})^2}.
\]
\end{proposition}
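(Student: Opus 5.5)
The plan mirrors the cubic argument of Proposition~\ref{prop:cubic-path}, with the partition of Lemma~\ref{lem:quartic-blocks} replacing the bridge blocks. The first step is to expand the energy by the tridiagonal estimate \eqref{eq:tridiagonal-kernel}. We discard the mixed terms $y_iy_{i+1}$ whose indices lie in different blocks; these terms are nonnegative, so only the principal forms $Q_W$ on the blocks remain. Lemma~\ref{lem:quartic-blocks} then bounds each $Q_W$, for a block $W=\{a,\ldots,a+r-1\}$, by
\[
 Q_W(\mathbf y_W)\geqslant\frac{\alpha(\mathbf p)}{r}(f_{a+r}-f_a)^2
 =\alpha(\mathbf p)\sum_{t=0}^{r-1}(g_{a+t+1}-g_{a+t})^2 ,
\]
because the interpolant has the $r$ equal increments $(f_{a+r}-f_a)/r$ on $W$. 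For a regular block we have $\alpha=4$, so it contributes exactly $4\cE_{P_n}(g)$ restricted to $W$. For an exceptional block we use $\alpha(\mathbf p)-4\geqslant\min\{52/11,5,102/23,14/3,6\}-4=10/23$, which is attained at $(2,4,4,4,4,2)$. Summing over all blocks gives \eqref{eq:quartic-deficit}.

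The second step is the interpolation error \eqref{eq:quartic-interpolation-error}. On a block of length $r$, both $f$ and $g$ are monotone between the endpoint values $f_a$ and $f_{a+r}$, and they agree at the endpoints. So each of the $r-1$ interior indices contributes at most $(f_{a+r}-f_a)^2$. The retained energy of that block is at least $\alpha(\mathbf p)(f_{a+r}-f_a)^2/r$, so the ratio of error to energy is at most $r(r-1)/\alpha(\mathbf p)$. Checking the table row by row gives $20/4=5$, $12/(52/11)=33/13$, $20/5=4$, $30\cdot23/102\approx6.8$, $42\cdot3/14=9$, $2/6$, and $6/6$. The worst case is $9$, so the constant $21/2$ is comfortable. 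A finer bound with $\max|f-g|$ in the middle could do better, but it is not needed. Summing over blocks, and using that the retained block energies add to at most $\cE_G(f)$, gives the second inequality.

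Finally we apply Lemma~\ref{lem:path-transfer} with $\alpha=4$ (dropping the nonnegative right side of \eqref{eq:quartic-deficit}) and $\beta=21/2$. This gives $\alpha\beta=42$ and hence the stated bound on $\mu(G)$. The main point to verify is the second step: disjointness of the blocks and nonnegativity of every discarded mixed term, which holds because all $y_i\geqslant0$ and all kernel entries are nonnegative. Beyond that, the only remaining check is the arithmetic over the seven profiles, so no genuine obstacle is expected.
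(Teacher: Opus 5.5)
Your proposal is correct and follows the paper's proof: you retain the block principal forms, use the coefficients from Lemma~\ref{lem:quartic-blocks} (with $102/23=4+10/23$ the smallest exceptional one), bound the interpolation error on each block by $(r-1)(f_{a+r}-f_a)^2$, and apply Lemma~\ref{lem:path-transfer} with $\alpha\beta=42$. The only difference is that the paper obtains $21/2$ directly as $r(r-1)/4\leqslant 42/4$ from $\alpha\geqslant4$ and $r\leqslant7$, whereas your row-by-row check shows that the sharper constant $9$ also works.
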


\begin{proof}
In \eqref{eq:tridiagonal-kernel}, retain the principal forms on the
blocks from Lemma~\ref{lem:quartic-blocks}.  The discarded terms are
nonnegative.  Affine interpolation over a block of length \(r\) has
path energy \((f_{a+r}-f_a)^2/r\).  Every regular block has coefficient
four, whereas every exceptional block has coefficient at least
\(102/23=4+10/23\).  Summing proves
\eqref{eq:quartic-deficit}.

On a block of length \(r\), both vectors lie between the endpoint
values, so its contribution to \(\|f-g\|_2^2\) is at most
\((r-1)(f_{a+r}-f_a)^2\).  The retained graph energy on that block
is at least \(4(f_{a+r}-f_a)^2/r\).  Since \(r\leqslant7\), summing
gives \eqref{eq:quartic-interpolation-error}.  The last assertion
follows from Lemma~\ref{lem:path-transfer}.
\end{proof}

\begin{proof}[Proof of Theorem~\ref{thm:parity-stability} in degree four]
Put \(\eta=(\varepsilon+n^{-1})^{1/3}\), take an ordered unit Fiedler
vector \(f\), and form \(g\) as above.  All implicit constants are
absolute.  Let \(\mathsf P\) be the orthogonal projection onto
\(\one^\perp\), and put
\(\mathcal D=\mu(G)-4\cE_{P_n}(g)\).  Proposition~\ref{prop:quartic-path}
gives \(\|\mathsf Pg-f\|_2=O(n^{-1})\), and therefore
\[
 0\leqslant\mathcal D+
 4\bigl(\cE_{P_n}(g)-\mu(P_n)\|\mathsf Pg\|_2^2\bigr)
 =O(\eta^3n^{-2}).
\]
Let \(\boldsymbol\varphi\) be the increasing unit Fiedler vector of
\(P_n\), as in the proof of Theorem~\ref{thm:cubic-stability}.
Decomposing \(\mathsf Pg\) into its component along
\(\boldsymbol\varphi\) and an orthogonal remainder, and using the
separation between the first two positive path eigenvalues, gives
\[
 \sum_{i=1}^{n-1}
 \bigl(g_{i+1}-g_i-(\varphi_{i+1}-\varphi_i)\bigr)^2
 =O(\eta^3n^{-2}).
\]
The coefficient along \(\boldsymbol\varphi\) is positive because
both vectors are nonconstant, nondecreasing and centred. This is
the same estimate for the increments used in the cubic proof.

We claim that \(b=|\mathcal X|=O(\eta n)\).  We may assume
\(\eta\leqslant1/4\), since otherwise all the conclusions follow by
increasing \(C\).  On the indices between \(\lceil\eta n\rceil\)
and \(\lfloor(1-\eta)n\rfloor\), the path increments satisfy
\(\varphi_{i+1}-\varphi_i\geqslant4\sqrt2\eta n^{-3/2}\).
For \(i\in\mathcal X\), use
\[
 (\varphi_{i+1}-\varphi_i)^2
 \leqslant2\bigl(g_{i+1}-g_i-(\varphi_{i+1}-\varphi_i)\bigr)^2
       +2(g_{i+1}-g_i)^2.
\]
Summing and applying \eqref{eq:quartic-deficit} bounds the number of
such central indices by \(O(\eta n)\).  At most \(2\eta n+2\)
indices lie outside this interval, proving the claim.

We next recover the graph on almost all vertices.  A regular block
has indices \(a,\ldots,a+4\), with \(q_{a-1}=q_a=2\) and
\(q_{a+4}=2\).  Suppose also that \(q_{a+5}=2\), so that the
cluster at its right end has two indices.  Put
\(T=S_{a+4}\setminus S_a\).  Since \(|T|=4\), simplicity gives
\(|\partial T|\geqslant4\), while
\[
 4=q_a+q_{a+4}
  =|\partial T|+2e(S_a,V(G)\setminus S_{a+4}).
\]
Thus \(|\partial T|=4\), there is no edge bypassing \(T\), and
\(G[T]=K_4\).  Each vertex of \(T\) has exactly one edge leaving
\(T\).  Two consecutive cuts of size two are disjoint, since their
intersection has size \((2+2-4)/2=0\).  Their symmetric difference
is the four edges at the intervening vertex.  Consequently the two
edges of \(\partial S_a\) have common endpoint \(v_a\), and the
two edges of \(\partial S_{a+4}\) have common endpoint
\(v_{a+5}\).  The subgraph on \(\{v_a\}\cup T\cup\{v_{a+5}\}\)
is therefore \(M_0\), with terminals \(v_a,v_{a+5}\).

Every singleton cluster is followed by an interval all of whose
blocks are exceptional, including when it is the last cluster.
These intervals are nonempty and disjoint.  Hence the number of
singleton clusters is at most \(b\).  There are
\((n-1-b)/5\) regular blocks, and at most \(b\) of them end at
a singleton cluster.  If \(r\) is the number of copies of \(M_0\)
just identified, then
\[
 n-5r\leqslant6b+1.
\]
Consecutive copies share one terminal and form chains; nonconsecutive
copies have disjoint vertex sets.  If there are \(c\) nonempty chains,
their union has \(5r+c\) vertices, whence \(c\leqslant6b+1\).

The graph \(\mathcal G_n\) consists of
\(m=\lfloor(n-11)/5\rfloor\) copies of \(M_0\) between two end
blocks of bounded order.  If necessary, delete copies from the ends
of the identified chains until the retained \(r'\) copies in \(c'\)
nonempty chains satisfy \(r'+c'-1\leqslant m\).  At most \(c+2\)
copies need be deleted, since \(r\leqslant n/5\) and
\(m\geqslant(n-15)/5\).  If all copies are deleted, use the empty
collection.  Thus \(n-5r'=O(b+1)\).

Map these chains to strings of copies of \(M_0\) in
\(\mathcal G_n\), leaving at least one unused copy between strings.
The strings then have disjoint vertex sets, and the map preserves
every edge within a retained chain.  Extend it to a bijection on
all vertices.  Let \(Y\) consist of the vertices outside the retained
chains and the two end terminals of each retained chain.  Then
\(|Y|=O(b+1)\), and every
edge which can differ under this bijection is incident with \(Y\).
Both graphs are quartic, so the symmetric difference has size at most
\(8|Y|=O(b+1)=O(\eta n)\).

Finally, if \(r=0\), the lower diameter bound follows immediately
from \(n\leqslant6b+1=O(\eta n)\), after increasing \(C\).
If \(r\geqslant1\), the terminal vertices of each identified copy of \(M_0\)
separate its interior from the graph on either side.  Any path from
the first left terminal to the last right terminal must traverse
every one of the \(r\) copies, using at least three edges in each.
Thus \(\operatorname{diam}(G)\geqslant3r\geqslant3n/5-O(\eta n)\).
For the upper bound, take every third vertex on a geodesic of length
\(D\).  Their closed neighbourhoods are pairwise disjoint and each
contains five vertices.  Hence
\(5(\lfloor D/3\rfloor+1)\leqslant n\), which gives
\(D\leqslant3\lfloor n/5\rfloor-1\).
\end{proof}

\subsection{Stability at fixed minimum degree}
\label{sec:minimum-degree-stability}

For \(d\geqslant3\), let \(L_d=K_{d+1}-e\), with the endpoints of
the missing edge as terminals.  Each terminal has degree \(d-1\).
Chains of these graphs joined by bridges attain the bound in
Theorem~\ref{thm:minimum-degree-bound}.  The next theorem shows that
every asymptotic minimiser has this structure on almost all vertices.

For $n\geqslant1$ and $\varepsilon\geqslant0$, write
$\eta=(\varepsilon+n^{-1})^{1/3}$.

\begin{theorem}\label{thm:minimum-degree-stability}
For every fixed integer \(d\geqslant3\), there are constants
\(C=C(d)>0\) and \(n_0=n_0(d)\) with the following property.
Let \(0\leqslant\varepsilon\leqslant1\), and let \(G\) be a connected
simple graph of order \(n\geqslant n_0\) and minimum degree at least
\(d\), satisfying
\[
 n^2\mu(G)\leqslant(d-1)\pi^2+\varepsilon.
\]
Then all but at most
\(C\eta n\) vertices of \(G\) belong to vertex-disjoint induced
copies of \(L_d\).  Each copy has exactly two boundary edges, both
bridges, one at each terminal, and the copies form at most
\(C\eta n\) chains.  Moreover,
\[
 \frac{3n}{d+1}-C\eta n
 \leqslant\operatorname{diam}(G)
 \leqslant3\left\lfloor\frac{n}{d+1}\right\rfloor-1.
\]
\end{theorem}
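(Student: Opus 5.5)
We follow the cubic and quartic stability proofs, now using the block decomposition from the proof of Theorem~\ref{thm:minimum-degree-bound}. Fix an ordered unit Fiedler vector \(f\). Call an index low if \(q_i<d-1\). At each bridge index use the block of length \(d+1\); at the first index of every other low cluster use the block of length \(d\). Let \(g\) be the affine interpolation across these blocks.

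Retaining the discarded terms gives a deficit inequality
\[
 \mathcal D=\cE_G(f)-(d-1)\cE_{P_n}(g)\geqslant
 \sum_{i\notin\mathcal W}(q_i-d+1)y_i^2+\nu_d\sum_{W\text{ nonbridge}}(f_{a+r}-f_a)^2 .
\]
Here \(\mathcal W\) is the union of the blocks. The second sum uses the strict improvement for \(2\leqslant k\leqslant d-2\) in Lemma~\ref{lem:general-low-cut}. A bridge block also has positive deficit unless the retained principal form attains the Cauchy--Schwarz bound with every relevant cut equal to its lower bound \(\max\{0,t(d-u+1)-1\}\).

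As in the cubic proof, Lemma~\ref{lem:path-transfer} gives \(\|\mathsf Pg-f\|_2=O_d(n^{-1})\). With the hypothesis and the spectral gap of \(P_n\), this yields
\[
 \mathcal D=O_d(\eta^3n^{-2})
\]
and the increment estimate \eqref{eq:path-mode-stability} with constants depending on \(d\). The lower bound \eqref{eq:path-increment-lower} on the central indices then shows that the total length of the following index sets is \(O_d(\eta n)\):
\begin{enumerate}[(i)]
\item indices outside bridge blocks with \(q_i\geqslant d\);
\item nonbridge low blocks;
\item bridge blocks whose deficit exceeds a fixed multiple of their squared increment.
\end{enumerate}
A bridge block with near-zero deficit has integer cut sizes, so these sizes are forced to equal the profile exactly. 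We still need to count indices with \(q_i=d-1\) outside blocks, which cost nothing. This replaces the cubic parity count. The plan is a counting identity: sum \(q_i\) over all indices, or equivalently compare \(e(S_i)\), to show that a run of cuts of size \(d-1\) of length \(\ell\) forces roughly \(\ell\) extra vertices whose edges must appear elsewhere. This is where the main difficulty lies. If it fails, the fallback is to add to \(\mathcal D\) the nonnegative off-tridiagonal kernel terms: consecutive cuts of size \(d-1\) have \(K_{i,i+1}=(d-2)/2\) by \eqref{eq:kernel}, which forces coefficient above \(d-1\) on long runs. With this, the number of indices outside exact bridge blocks is \(u=O_d(\eta n)\).

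Next comes the structural step. Take consecutive bridge indices \(a<b\) with \(b-a=d+1\) and exact profiles. Let \(T=S_b\setminus S_a\). Then
\[
 q_a+q_b=2=|\partial T|+2e(S_a,V\setminus S_b),
\]
so \(|\partial T|=2\) and there are no bypass edges. Hence \(e(T)\geqslant\bigl((d+1)d-2\bigr)/2\), so \(G[T]=K_{d+1}-e\). The two boundary bridges meet the two deficient vertices, which are its terminals. When the gap exceeds \(d+1\), \(T\) contains extra indices, so at most \(u\) gaps are not exactly \(d+1\). The copies therefore form at most \(u+1\) chains and cover all but \(O_d(u+1)\) vertices.

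The diameter bounds follow as before. A path crossing \(r\) consecutive copies uses the bridges and at least two edges inside each copy, giving at least \(3r-O(1)\) edges, with \(r\geqslant n/(d+1)-O_d(\eta n)\). For the upper bound, the closed neighbourhoods of every third vertex on a geodesic are disjoint and each has at least \(d+1\) vertices, which gives \(\operatorname{diam}(G)\leqslant 3\lfloor n/(d+1)\rfloor-1\).
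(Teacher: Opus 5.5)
\paragraph{Gap: the cuts of size \(d-1\).}
Your proposal has a genuine gap at exactly the step you flag as the main difficulty. This is where the minimum-degree case really differs from the cubic and quartic proofs.

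Indices outside the low-cut blocks with \(q_i=d-1\) have coefficient \(q_i-d+1=0\) in your \(\mathcal D\), so nothing you have written bounds their number. Without that bound you do not get \(u=O_d(\eta n)\), and \((d+1)k=n-1-u\) yields no structure.

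Neither proposed remedy works as stated.
\begin{enumerate}[(a)]
\item The counting plan has no identity to run on. In this class \(q_i=\sum_{v\in S_i}\deg v-2e(S_i)\), and the degrees are unconstrained.
\item The fallback uses \(2K_{i,i+1}=q_i+q_{i+1}-d\), but \eqref{eq:kernel} is the \(d\)-regular identity. Here it reads \(2K_{i,i+1}=q_i+q_{i+1}-\deg v_{i+1}\), so two consecutive cuts of size \(d-1\) share no edge when \(\deg v_{i+1}=2d-2\).
\item An isolated index with \(q_i=d-1\) is not reached by any argument about runs.
\end{enumerate}

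\paragraph{The missing local idea.}
If \(q_i=q_{i+1}=d-1\) and \(K_{i,i+1}=0\), every edge of both cuts meets \(v_{i+1}\). So \(v_{i+2}\) has at most one neighbour in \(S_{i+1}\), and symmetrically \(v_i\) has at most one neighbour outside \(S_i\). Minimum degree then gives \(q_{i+2},q_{i-1}\geqslant 2d-3\geqslant d\).

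Consequences, with \(c=d-1\):
\begin{enumerate}[(a)]
\item In a run of cuts of size \(d-1\) of length at least three, every adjacent kernel entry is at least one, not \((d-2)/2\). If you retain these entries, as your fallback intends, and regroup the run into blocks of length two or three, you get coefficients \(c+1\) and \(c+(4c-6)/(3c-4)\).
\item Every remaining neutral run has length at most two. It is adjacent to an uncovered high-cut index or a strict nonbridge block, and can be charged to it.
\item The one exception is a single index \(a+d+1\) squeezed between bridge indices \(a\) and \(a+d+2\). There \(q_{a+d+1}+q_{a+d+2}=d\) forces \(K_{a+d,a+d+1}\geqslant d-2\geqslant1\). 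The paper absorbs the index into the left bridge block and gets coefficient \(c+\frac{2d+1}{(d+1)(d+3)}\) from a Schur complement. Alternatively, the same facts give \(q_{a+d}\geqslant 2d-3\), an excess at a coordinate where the Cauchy--Schwarz optimiser is nonzero.
\end{enumerate}

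\paragraph{Two smaller corrections.}
\begin{enumerate}[(a)]
\item Near-zero deficit of a bridge block does not force every cut to its lower bound. The optimiser \(\operatorname{diag}(1,B)^{-1}\one\) is supported on \(\{a,a+1,a+d\}\), so only the kernel entries indexed there are controlled.
\item Your edge count \(e(T)\geqslant\binom{d+1}{2}-1\) also allows \(G[T]=K_{d+1}\), which is legal under a minimum-degree hypothesis. You must show separately that this bridge block is strict before concluding \(G[T]=K_{d+1}-e\). Its coefficient is \((d+1)^2/(d+3)=c+4/(d+3)\); alternatively, \(q_{a+1}\geqslant d\) lies on the optimiser's support.
\end{enumerate}
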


\begin{proof}[Proof of Theorem~\ref{thm:minimum-degree-stability}]
Put \(c=d-1\), and start with the disjoint low-cut blocks used in
the proof of Theorem~\ref{thm:minimum-degree-bound}: a block of length
\(d+1\) at each bridge index, and a block of length \(d\) at the
first index of every other cluster with \(q_i<c\).  We call these
the initial blocks.  Lemma~\ref{lem:general-low-cut} gives a fixed
strict improvement over \(c\) for each nonbridge initial block.
The remaining increment indices have cut size at least \(c\).
We refine this partition and retain a deficit on suitable blocks.

First observe that, if \(q_i=q_{i+1}=c\) and \(K_{i,i+1}=0\),
all edges in these two cuts are incident with the intervening vertex.
By simplicity, the next vertex has at most one neighbour in
\(S_{i+1}\).  The minimum-degree condition therefore gives
\begin{equation}\label{eq:neutral-cut-neighbours}
 q_{i+2}\geqslant c+d-2=2d-3,
 \qquad q_{i-1}\geqslant2d-3.
\end{equation}
The second inequality follows by reversing the order.  The endpoint
cut sizes are at least \(d\), so the neighbouring indices needed
here exist whenever two consecutive cuts have size \(c\).

Make every remaining index with \(q_i\geqslant d\) a strict
singleton block.  Consider the maximal runs of the remaining
indices with \(q_i=c\).  In a run of length at least three,
\eqref{eq:neutral-cut-neighbours} implies that every adjacent kernel
entry is at least one.  Partition the run into blocks of length two
or three.  Their diagonal entries are \(c\), and their adjacent
entries are at least one.  Direct calculation gives path coefficients
\[
 c+1,\qquad
 \frac{3(c^2-2)}{3c-4}
 =c+\frac{4c-6}{3c-4},
\]
respectively.  Both are at least \(c+1\), since \(c\geqslant2\).
Declare these blocks strict.  A run of length two is treated in the
same way if its adjacent kernel entry is positive.  Otherwise retain
its diagonal form, whose path coefficient is \(c\), and call this
block of length two neutral.  By \eqref{eq:neutral-cut-neighbours}, the index
immediately following it is a high-cut singleton.  It is uncovered
by the initial blocks, since the first cut in an initial block has
size less than \(c\).  Associate the neutral block with that strict singleton.

A run of length one is also neutral.  If it has an uncovered neighbour, that neighbour is a strict
high-cut singleton; associate the neutral index with this singleton.  Otherwise its whole uncovered interval consists of
this one index, and it lies between two initial blocks.  Indeed,
a nonempty initial or final uncovered interval contains an endpoint
cut, whose size is at least \(d\).  If one of the neighbouring
initial blocks is nonbridge, associate the neutral singleton with
that strict block.

It remains to treat a neutral singleton between two bridge blocks.
If the first bridge index is \(a\), the singleton has index
\(a+d+1\), and the next bridge has index \(a+d+2\).  The two
latter cuts have sizes \(c\) and one.  Since their sum is \(d\),
the intervening vertex has degree \(d\), their intersection is
empty, and all \(c\) edges of the earlier cut end at that vertex.
At most one starts at \(v_{a+d+1}\).  Hence
\[
 K_{a+d,a+d+1}\geqslant d-2\geqslant1.
\]
Absorb this singleton into the bridge block on its left.  This does
not reach the next bridge block, so the enlarged blocks remain
disjoint.

We record the strict coefficient for this enlargement.  Write
\(A=\operatorname{diag}(1,B)\) for the bridge matrix in
Lemma~\ref{lem:general-low-cut}.  If \(T\) is the \(d\times d\)
tridiagonal matrix with diagonal two and adjacent entries minus one,
and \(e=e_1+e_d\), then
\[
 B^{-1}=\frac{T}{d+1}+
 \frac{ee^{\mathsf T}}{(d+1)(d-1)}.
\]
This follows from the rank-one inverse formula applied to
\(B=(d+1)T^{-1}-\one\one^{\mathsf T}\).
In particular,
\[
 B^{-1}\one=\frac{e}{d-1},\qquad
 (B^{-1})_{dd}=\frac{2d-1}{d^2-1}.
\]
Append to \(A\) a diagonal entry \(c\) and an adjacent entry one.
The resulting matrix is positive definite, and its Schur complement
gives the path coefficient
\[
 c+\frac{2d+1}{(d+1)(d+3)}>c.
\]
Thus the enlarged block is strict.  All block lengths are at most
\(d+2\).

There is one further source of strictness.  Suppose that bridge
indices \(a\) and \(a+d+1\) occur, and put
\(U=S_{a+d+1}\setminus S_a\).  The cut identity and connectedness
give \(|\partial U|=2\) and no edge bypassing \(U\).  Since
\(|U|=d+1\), the minimum-degree condition implies that \(G[U]\)
is either \(K_{d+1}-e\) or \(K_{d+1}\).  In the complete case,
the principal energy of the bridge block at \(a\) is at least
\[
 y_a^2+\frac{d+1}{2}
 \left(\sum_{t=1}^{d}y_{a+t}\right)^2
 \geqslant\frac{d+1}{d+3}
 \left(\sum_{t=0}^{d}y_{a+t}\right)^2.
\]
Here we used the effective conductance \((d+1)/2\) between two
vertices of \(K_{d+1}\).  The corresponding path coefficient is
\((d+1)^2/(d+3)=c+4/(d+3)\).  Declare this bridge block strict
as well.  This reclassification does not change the partition.

Let \(\mathcal S\) be the union of all strict blocks, and let
\(\mathcal T\) contain the indices in the remaining neutral blocks of lengths one and two.  The only other blocks are ordinary bridge blocks of
length \(d+1\).  Each neutral block is associated with a high-cut singleton or a
nonbridge initial block. A high-cut singleton has at most two
associated neutral runs, each of length at most two. A nonbridge
initial block has at most one associated singleton at each end.  Therefore
\begin{equation}\label{eq:neutral-block-charging}
 |\mathcal T|\leqslant4|\mathcal S|.
\end{equation}
We keep the neutral blocks separate in the energy estimate.
If there are no low cuts, the same partition and counting argument
apply to the whole increment interval.

Affinely interpolate an ordered unit Fiedler vector \(f\) over this
partition to obtain \(g\).  The strict coefficients above and
Lemma~\ref{lem:general-low-cut} give constants \(\gamma_d>0\) and
\(\beta_d=(d+1)(d+2)/(d-1)\) for which
\begin{align}
 \cE_G(f)-c\cE_{P_n}(g)
 &\geqslant\gamma_d
   \sum_{i\in\mathcal S}(g_{i+1}-g_i)^2,
 \label{eq:min-degree-deficit}\\
 \|f-g\|_2^2&\leqslant\beta_d\cE_G(f).
 \label{eq:min-degree-interpolation-error}
\end{align}
Indeed, retain the principal cut-kernel forms on the blocks and
discard the nonnegative mixed terms between them.  The interpolation
error follows from the block-length bound just as in the proof of
Theorem~\ref{thm:minimum-degree-bound}.

The comparison with the first nonconstant path eigenvector in the
proof of Theorem~\ref{thm:cubic-stability} applies with coefficient \(c\).
For the increasing unit Fiedler vector \(\boldsymbol\varphi\) of
\(P_n\), it gives
\[
 \cE_G(f)-c\cE_{P_n}(g)=O_d(\eta^3n^{-2}),
\]
\[
 \sum_{i=1}^{n-1}
 \bigl(g_{i+1}-g_i-(\varphi_{i+1}-\varphi_i)\bigr)^2
 =O_d(\eta^3n^{-2}).
\]
The increments of \(\boldsymbol\varphi\) are bounded below by
\(4\sqrt2\eta n^{-3/2}\) on the central indices from
\(\lceil\eta n\rceil\) to \(\lfloor(1-\eta)n\rfloor\).
Using \eqref{eq:min-degree-deficit} and the preceding estimate there,
and counting the at most \(2\eta n+2\) remaining indices, yields
\(|\mathcal S|=O_d(\eta n)\). For \(\eta>1/4\), this bound
follows by increasing the final constant.  Consequently, the number
of indices outside ordinary bridge blocks satisfies
\[
 u=|\mathcal S|+|\mathcal T|=O_d(\eta n).
\]

List the ordinary bridge indices as \(a_1<\cdots<a_k\).
If \(k=0\), then \(n-1=u=O_d(\eta n)\), and the claimed lower
bounds and coverage are immediate after increasing \(C\).
Otherwise \((d+1)k=n-1-u\), and
\[
 u=(a_1-1)+\sum_{j=1}^{k-1}(a_{j+1}-a_j-d-1)
       +(n-a_k-d-1).
\]
All terms are nonnegative.  At most \(u\) consecutive pairs have
separation larger than \(d+1\).  Each pair with separation
\(d+1\) gives an induced \(L_d\): the complete alternative would
have made its left block strict.  Its two boundary edges must meet
the two missing-edge terminals, since all vertices have degree at
least \(d\).  Each of these vertices therefore has degree exactly
\(d\) in \(G\).

If \(r\) is the number of identified copies, then
\[
 r\geqslant k-1-u,
 \qquad
 n-(d+1)r\leqslant(d+2)u+d+2=O_d(\eta n).
\]
Their vertex sets are disjoint, and they form at most \(u+1\)
chains joined by the corresponding bridges.  A path from before the
first ordinary bridge to after the last must cross all \(k\)
bridges and use two further edges in every identified copy.  Hence
\[
 \operatorname{diam}(G)\geqslant k+2r\geqslant3r
 \geqslant\frac{3n}{d+1}-O_d(\eta n).
\]
For the upper bound, closed neighbourhoods of every third vertex
on a geodesic are disjoint, and each contains at least \(d+1\)
vertices.  This proves the stated bound and completes the proof.
\end{proof}

Under a bound on the maximum degree, the vertex conclusion also
gives stability in edge distance.  For sufficiently large \(n\), let
\(\mathcal L_{d,n}\) be a chain of \(L_d\)'s with complete end
graphs of orders \(d+1\) and \(d+1+s\), where
\(0\leqslant s\leqslant d\) makes the total order \(n\).
Every joining edge uses a terminal, and its end in a complete end
graph may be chosen arbitrarily.

\begin{corollary}\label{cor:bounded-degree-edit-stability}
Fix integers \(\Delta\geqslant d\geqslant3\).  Under the hypotheses
of Theorem~\ref{thm:minimum-degree-stability}, suppose also that
\(\Delta(G)\leqslant\Delta\).  Then the vertices can be relabelled
so that
\[
 |E(G)\mathbin{\triangle}E(\mathcal L_{d,n})|
 \leqslant C(d,\Delta)(\varepsilon+n^{-1})^{1/3}n.
\]
For odd \(d\) and \(d\)-regular \(G\), the reference graphs may
instead be chosen from any fixed family of \(d\)-regular chains
with the same middle blocks and end graphs of bounded order.
\end{corollary}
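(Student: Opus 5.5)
The plan is to derive the edit-distance bound directly from the vertex conclusion of Theorem~\ref{thm:minimum-degree-stability}, using the degree bound \(\Delta\) to convert vertex counts into edge counts. First I apply that theorem: all but at most \(C\eta n\) vertices lie in vertex-disjoint induced copies of \(L_d\), and these copies have the following properties.
\begin{enumerate}[(a)]
\item Each copy has exactly two boundary edges, both bridges, one at each terminal.
\item The copies form at most \(C\eta n\) chains.
\end{enumerate}
Let \(r\) be the number of identified copies and \(Y\) the set of uncovered vertices, so that \(|Y|\leqslant C\eta n\) and \(n-(d+1)r=O_d(\eta n)\).

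Next I match these chains to the chain of \(L_d\)'s in \(\mathcal L_{d,n}\). That chain contains \(m=(n-2d-2-s)/(d+1)\) copies, and \(m\geqslant r-O(1)\). If \(r>m\), I delete a bounded number of copies from chain ends and add their vertices to \(Y\). I then map the identified chains to disjoint consecutive strings of copies in \(\mathcal L_{d,n}\), leaving at least one unused copy between strings. The map should respect the terminal orientation so that each internal bridge of \(G\) goes to a bridge of \(\mathcal L_{d,n}\). This is possible because in both graphs a chain is a sequence of copies, each bridge joins a terminal of one copy to a terminal of the next, and each terminal is used at most once. Finally I extend the map arbitrarily to a bijection of all vertices.

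Now I count the edges that can differ. The map preserves every edge inside a retained copy and every bridge between consecutive retained copies. Hence any edge in \(E(G)\mathbin{\triangle}E(\mathcal L_{d,n})\) is incident with one of two sets:
\begin{enumerate}[(a)]
\item the vertices outside the retained chains, which number \(n-(d+1)r'=O_d(\eta n+1)\);
\item the two end terminals of each retained chain, which number at most \(2(C\eta n+1)\).
\end{enumerate}
Call the union of these two sets \(Y'\), so \(|Y'|=O_d(\eta n+1)\). Both graphs have maximum degree at most \(\Delta\); for \(\mathcal L_{d,n}\) the maximum degree is \(\max(d,d+s+1)\leqslant 2d+1\). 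So each graph has at most \(\max(\Delta,2d+1)|Y'|\) edges incident with \(Y'\). Since \(n^{-1}\leqslant\eta^3\leqslant\eta\) gives \(1\leqslant\eta n\), the total is \(O_{d,\Delta}(\eta n)\), as claimed.

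For the regular odd case I choose the reference graph from the given fixed family of \(d\)-regular chains, with the same middle blocks \(L_d\) and end graphs of bounded order. Its middle has \(n/(d+1)-O_d(1)\) copies, so the same matching and counting apply with \(\Delta=d\). The main obstacle is only bookkeeping, namely ensuring that the number of retained copies fits into the reference chain with gaps between strings. I handle this, as in the quartic proof, by deleting at most (number of chains)\(+O(1)\) copies from chain ends; this costs \(O(\eta n)\) further vertices in \(Y'\).
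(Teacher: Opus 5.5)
Your proposal is correct and follows essentially the same argument as the paper: apply Theorem~\ref{thm:minimum-degree-stability}, discard a few end copies, map the identified chains onto strings of $L_d$'s in $\mathcal L_{d,n}$, and bound the symmetric difference by the maximum degrees $\Delta$ and $2d+1$ times the $O_d(\eta n)$ uncovered and chain-end vertices. The only difference is that you leave an unused copy between strings. This is harmless but unnecessary, because copies of $L_d$ are vertex-disjoint, so the paper places the strings consecutively and charges the joining bridge to the chain-end terminals.
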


\begin{proof}
There are \(n/(d+1)+O_d(1)\) copies of \(L_d\) in
\(\mathcal L_{d,n}\).  Discarding at most \(O_d(1)\) copies from
the ends of the identified chains if necessary, map each retained
chain to a consecutive string of copies in \(\mathcal L_{d,n}\),
and extend this map to a vertex bijection.  Copies of \(L_d\) have
disjoint vertex sets, so these strings can be placed consecutively.
All edges within the retained chains are preserved.  Every edge
which may differ is incident with an uncovered vertex or one of
the two end terminals of a chain.  There are \(O_d(\eta n)\)
such vertices, the degrees of \(G\) are at most \(\Delta\), and
those of \(\mathcal L_{d,n}\) are at most \(2d+1\).  The asserted
bound follows.  Changing the end graphs of the reference chain affects
only \(O_d(1)\) vertices and edges, which proves the final assertion.
\end{proof}

The exponent in Theorem~\ref{thm:minimum-degree-stability} cannot be
increased, either in the diameter estimate or in the number of
exceptional vertices.

\begin{proposition}\label{prop:stability-exponent-sharp}
Fix \(d\geqslant3\). For every \(p>1/3\), there is a sequence
\((G_n)\) of connected simple graphs of minimum degree \(d\), with orders
\(n\to\infty\) and parameters \(\varepsilon_n\to0\), such that
\[
 n^2\mu(G_n)\leqslant(d-1)\pi^2+\varepsilon_n,
\]
but
\[
 \frac{3n/(d+1)-\operatorname{diam}(G_n)}
 {n(\varepsilon_n+n^{-1})^p}\longrightarrow\infty.
\]
The same assertion holds with the numerator replaced by the minimum
number of vertices outside copies of \(L_d\) having the boundary
properties in Theorem~\ref{thm:minimum-degree-stability}.
\end{proposition}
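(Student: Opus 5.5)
We construct explicit near-extremal graphs by inserting a block of defects into the middle of an extremal chain. Fix \(p>1/3\) and a small exponent parameter. Start from the chain \(\mathcal L_{d,n}\) of copies of \(L_d\) joined by bridges, with complete end graphs. Choose a defect count \(m=m(n)\), growing with \(n\) but with \(m=o(n)\). Replace \(m\) consecutive copies of \(L_d\) near the centre of the chain by a different chain of the same total order, built to lower the diameter. A convenient choice is a chain of copies of \(K_{d+1}\) glued by pairs of parallel bridges, or simply a chain of \(L_d\)'s joined through a complete graph \(K_{2d+2}\) region. Concretely, replace each \(L_d\) segment by a segment of larger algebraic "conductance" per vertex, for instance a denser block of bounded order. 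Each replacement shortens the diameter by at least a fixed amount \(\kappa_d>0\) and removes the \(L_d\)-with-bridge structure from \(\Theta(1)\) vertices. So both the diameter deficit \(3n/(d+1)-\operatorname{diam}(G_n)\) and the number of exceptional vertices are \(\Theta(m)\).

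The key estimate is the upper bound on \(\mu(G_n)\). Use the trial function of Proposition~\ref{prop:cubic-sharp} adapted to degree \(d\): the Fiedler profile \(\cos\) of a path of effective length \(n'\). The chain parts follow the bridge increments, the end graphs are constant, and the defect region is placed where the path eigenfunction is flattest. The natural place is near an end, at distance \(\ell\) from it, where increments are \(O(\ell/n^{2}\cdot n^{-1/2})\) in the unit normalisation. An alternative is to put the defect in the centre but reparametrise so the region carries a nearly constant value. Placing the region of \(\Theta(m)\) vertices at the end, its cost in the Rayleigh quotient is like lengthening an end graph. The effective path length loses \(\Theta(m)\) units of length \(d+1\) per copy but gains vertices of weight in the denominator. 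Computing as in Proposition~\ref{prop:cubic-sharp}, the energy is \((d-1)^{-1}\)-scaled path energy over \(n-\Theta(m)\) effective vertices. The \(\ell^2\)-norm gains \(\Theta(m)\cdot(m/n)^2\cdot n^{-1}\) relative mass near the end, where the cosine is near its maximum. Thus the constant-value region behaves like a heavy end. The Rayleigh quotient is then \((d-1)\pi^2/n^2\cdot(1+O(m^3/n^3))+O(n^{-3})\). This is exactly the Dirichlet-versus-Neumann effect for a heavy attached mass: a block of mass \(m\) at the end of a string perturbs the eigenvalue only by \(O((m/n)^3)\) after optimising, because the eigenfunction is flat at the Neumann end. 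Hence \(\varepsilon_n\asymp (m/n)^3+n^{-1}\).

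With \(m=n^{1-\theta}\) and \(\theta\) small, we get \(\varepsilon_n+n^{-1}\asymp n^{-3\theta}\) as long as \(3\theta<1\). Then \(n(\varepsilon_n+n^{-1})^p\asymp n^{1-3\theta p}\), while the deficit is \(\asymp m=n^{1-\theta}\). The ratio is \(n^{\theta(3p-1)}\to\infty\) for \(p>1/3\). This gives both assertions, since each defect vertex is also outside any copy of \(L_d\) with the required boundary properties.

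The main obstacle is making the \(O((m/n)^3)\) eigenvalue perturbation rigorous with an explicit trial vector. I would handle it by taking the end region to be a bounded-degree dense chain (say copies of \(K_{d+1}\) joined by \(d\) parallel-ish matchings), whose internal energy under a slowly varying test function is \(O(m\cdot(m/n)^2 n^{-3})\). The cosine profile is then extended quadratically across it. To confirm the diameter drop, check that a dense chain of \(m\) vertices has diameter at most \((3/(d+1)-\kappa_d)m\).
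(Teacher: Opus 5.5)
Your final construction is correct: an exceptional region of $m=n^{1-\theta}$ vertices placed at one \emph{end} of the bridge chain of $L_d$'s. It uses the same mechanism as the paper. An exceptional mass of order $h=o(n)$ at a Neumann end, where the Fiedler profile is flat, raises $n^2\mu$ only by $O_d((h/n)^3+n^{-1})$. Taking $h\asymp n^{1-\theta}$ then makes the ratio grow like $n^{\theta(3p-1)}$; the paper takes $h=\lfloor m^{3/4}\rfloor$, i.e.\ $\theta=1/4$.

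The difference is the gadget.

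\emph{The paper's gadget.} It attaches one clique $K_h$ by a bridge and gives it the constant value $F(h)$. It then has to expand both $c\int_h^nF'^2$ and $hF(h)^2+\int_h^nF^2$ to third order, to see that the $h^3$ terms leave only a relative error $O((h/n)^3)$. In exchange, exceptionality is free: every vertex of $K_h$ has degree larger than $d$, while every vertex of an $L_d$-copy with the required boundary has degree exactly $d$. The diameter loss is also exactly $3h/(d+1)$.

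\emph{Your gadget.} You use a bounded-degree dense chain and carry the cosine profile straight through it, which avoids the cancellation computation. The $L_d$-chain keeps its coordinate length, the centred norm stays $n/2+O_d(1)$, and the only extra cost is the energy $O_d\bigl(\int_0^m|F'|^2\bigr)=O_d(m^3/n^4)$ of the stiffer region. It also keeps the maximum degree bounded. So your example shows that the exponent $1/3$ cannot be improved even under the hypothesis of Corollary~\ref{cor:bounded-degree-edit-stability}. The paper's example, with maximum degree $h\to\infty$, does not show this.

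The price is that you must verify the diameter drop and exceptionality by hand. Of the gadgets you list, joining consecutive copies of $K_{d+1}$ by two edges leaves vertices of degree $d$, so you would need the boundary-edge condition to exclude them. Joining them by perfect matchings makes every region vertex have degree above $d$, so the paper's degree argument applies. With that choice, $k$ layers contribute diameter about $k$ rather than $3k$.

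Discard the centre placement in your first paragraph and the ``alternative'' of a nearly constant region in the centre. There $|\varphi'|$ is maximal. A stiffer middle segment raises $n^2\mu$ by a relative amount $\Theta(m/n)$. So does a flat one: it sits at a value near zero, contributes nothing to the norm, and forces the rest of the chain to cover the full range over length $n-m$. The ratio is then of order $(m/n)^{1-p}$, which tends to zero for every $p<1$. The end placement is therefore essential, not a convenience.
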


\begin{proof}
Put \(c=d-1\) and \(w=d+1\).  Join a chain of \(m\) copies of
\(L_d\) by bridges, and attach complete end graphs \(K_h\) and
\(K_w\), where \(h\geqslant w\).  Denote the resulting graph by
\(G_{m,h}\).  It has minimum degree \(d\), order
\(n=mw+h+w\), and diameter
\[
 \operatorname{diam}(G_{m,h})=3m+3,
 \qquad
 \frac{3n}{w}-\operatorname{diam}(G_{m,h})=\frac{3h}{w}.
\]
We show that, uniformly when \(h=o(n)\),
\begin{equation}\label{eq:large-cap-trial}
 n^2\mu(G_{m,h})\leqslant
 c\pi^2+O_d\left((h/n)^3+n^{-1}\right).
\end{equation}

Let \(F(s)=\cos(\pi s/n)\).  Give all vertices in the left end graph
the value \(F(h)\).  Starting at coordinate \(h\), give each bridge
coordinate length \(c\), and give the two-terminal interior of
each copy of \(L_d\) coordinate length two.  At each terminal use
the value of \(F\) at its coordinate, and give the other \(c\)
vertices of the copy the average of its terminal values.  The final
bridge ends at coordinate \(n-2\); give every vertex in the right
end graph the value \(F(n-2)\).  Write \(f\) for this graph function.

A bridge has conductance one and coordinate length \(c\), while
the effective conductance between the terminals of \(L_d\) is
\(c/2\), and its coordinate length is two.  Cauchy--Schwarz on
each coordinate interval therefore gives
\[
 \cE_{G_{m,h}}(f)
 \leqslant c\int_h^{n-2}F'(s)^2\,ds
 \leqslant c\int_h^nF'(s)^2\,ds.
\]
Group each copy of \(L_d\) with the bridge immediately preceding
it, giving a coordinate interval of length \(w\). Since \(d\) is
fixed and \(|F'|\leqslant\pi/n\), comparison with \(F\) on
these intervals gives
\begin{align*}
 \sum_v f(v)^2
 &=hF(h)^2+\int_h^nF(s)^2\,ds+O_d(1),\\
 \sum_v f(v)
 &=hF(h)+\int_h^nF(s)\,ds+O_d(1).
\end{align*}
The error is uniform in \(m,h\): each copy has \(w\) vertices,
whose assigned values differ by \(O_d(n^{-1})\) from the values
of \(F\) on the associated interval. The right end graph has
bounded order and satisfies the same estimate.

Writing \(t=h/n\), elementary integration yields
\begin{align*}
 c\int_h^n F'(s)^2\,ds
 &=\frac{c\pi^2}{2n}
   -\frac{c\pi^4h^3}{3n^4}+O_d(h^5n^{-6}),\\
 hF(h)^2+\int_h^n F(s)^2\,ds
 &=\frac n2-\frac{2\pi^2h^3}{3n^2}+O(h^5n^{-4}),\\
 hF(h)+\int_h^n F(s)\,ds
 &=O(h^3n^{-2}).
\end{align*}
Subtracting the mean from \(f\) therefore leaves squared norm
\(n/2+O_d(nt^3+1)\).  The Rayleigh quotient proves
\eqref{eq:large-cap-trial}.

Now take \(h=\lfloor m^{3/4}\rfloor\), so that
\(h\asymp_d n^{3/4}\), and choose
\(\varepsilon_n=C_d((h/n)^3+n^{-1})\) with a sufficiently large
fixed constant.  Then \(\varepsilon_n=O_d(n^{-3/4})\), whereas
the diameter deficit is of order \(n^{3/4}\).  For \(p>1/3\),
\[
 \frac{n^{3/4}}{n(\varepsilon_n+n^{-1})^p}
 \gg_d n^{(3p-1)/4}\longrightarrow\infty.
\]
Finally, every vertex in a copy of \(L_d\) with exactly its two
terminal boundary edges has degree \(d\) in the ambient graph.
All \(h\) vertices in the large end graph have degree greater than \(d\)
for sufficiently large \(m\), so all must be exceptional.  This
proves the second assertion.
\end{proof}

\subsection{Stability at fixed even degree}
\label{sec:fixed-even-stability}

Let $Q_d=K_{d+1}-P_3$, where the two edges of a path on three
vertices are deleted. Call the centre of the missing path its
concentrated terminal, and its two leaves the split terminals. A chain
of these graphs is formed by joining the two split terminals of each
copy to the concentrated terminal of the next copy. After regrouping
at the cutvertices, these are the chains with middle blocks
$M_d=K_1+K_2+K_{d-2}+K_1$ in
\cite[Figure~4 and Table~1]{AbdiGhorbaniDiameter}.
For each sufficiently large $n$, fix any $d$-regular reference
chain $\mathcal M_{d,n}$ of order $n$ with these middle blocks
and end graphs of bounded order.

As above, write $\eta=(\varepsilon+n^{-1})^{1/3}$ for the stability
error.

\Needspace{14\baselineskip}
\begin{theorem}\label{thm:fixed-even-stability}
For every fixed even $d\geqslant6$, there are constants $C_d$ and
$n_d$ with the following property. Let $0\leqslant\varepsilon\leqslant1$
and let $G$ be a connected simple $d$-regular graph of order
$n\geqslant n_d$ with
\[
 n^2\mu(G)\leqslant2(d-2)\pi^2+\varepsilon.
\]
Then all but at most $C_d\eta n$ vertices belong to vertex-disjoint
induced copies of $Q_d$, forming at most $C_d\eta n$ chains.
Each copy has exactly four boundary edges, two at its concentrated
terminal and one at each split terminal. Moreover,
\[
 \frac{3n}{d+1}-C_d\eta n
 \leqslant\operatorname{diam}(G)
 \leqslant3\left\lfloor\frac n{d+1}\right\rfloor-1.
\]
After relabelling the vertices,
\[
 |E(G)\mathbin{\triangle}E(\mathcal M_{d,n})|
 \leqslant C_d\eta n.
\]
\end{theorem}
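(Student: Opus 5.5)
We will follow the template of the cubic and quartic stability proofs, using Proposition~\ref{prop:even-strict-compression} as the substitute for Propositions~\ref{prop:cubic-path} and~\ref{prop:quartic-path}. Take an ordered unit Fiedler vector \(f\), and form the piecewise affine \(g\) on \([A,B]\) together with the step function \(F\). Inequality \eqref{eq:even-step-comparison} gives \(\|g-F\|_2=O_d(n^{-1})\), so \(g\) is centred and normalised up to \(O_d(n^{-1})\). Combining \eqref{eq:even-strict-energy} with the Neumann Poincaré inequality on an interval of length \(n+O_d(1)\) and the hypothesis, we get
\[
 \nu_d\int_{\mathcal B}|g'|^2
 +c\Bigl(\int_A^B|g'|^2-\frac{\pi^2}{(B-A)^2}\int_A^B|g-\overline g|^2\Bigr)
 =O_d(\eta^3n^{-2}).
\]
The continuous analogue of the spectral-gap argument then applies. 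Decompose \(g-\overline g\) along the first Neumann cosine mode \(\phi\) on \([A,B]\) and an orthogonal remainder. The gap to the next eigenvalue is of order \(n^{-2}\), and positivity of the coefficient follows from monotonicity as before. This yields \(\int_A^B|g'-\rho\phi'|^2=O_d(\eta^3n^{-2})\) with \(\rho=1+O(\eta^3)\).

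Next we count the pieces in \(\mathcal B\). Every affine piece has coordinate length bounded above and below, and \(|\phi'|\gtrsim\eta n^{-3/2}\) on the central portion at distance at least \(\eta n\) from the ends. So on each central piece of \(\mathcal B\), \(\int|g'|^2\gtrsim\eta^2n^{-3}\), unless that piece carries a comparable share of \(\int|g'-\rho\phi'|^2\). Summing these bounds shows that \(O_d(\eta n)\) pieces lie in \(\mathcal B\), after adding the \(O_d(\eta n+1)\) pieces near the ends. Proposition~\ref{prop:even-strict-compression} says every piece outside \(\mathcal B\) is canonical, apart from \(O_d(1)\) end pieces. Each piece spans \(O_d(1)\) ranks and the knots stay close to ranks. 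Hence all but \(O_d(\eta n)\) vertex ranks lie in canonical intervals \(S_b\setminus S_a\) with \(b-a=d+1\). These are induced copies of \(Q_d=K_{d+1}-P_3\) whose inner shores are concentrated and split, as the definition of canonical intervals states.

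We then read off the structure. A canonical interval is bounded by size-two cuts with no bypass edges, since a concentrated shore excludes bypasses (proof of Lemma~\ref{lem:even-short-cores}). Therefore each copy has exactly four boundary edges: two at the concentrated terminal and one at each split terminal. Two canonical intervals sharing the cut \(b\) are joined as in the chain definition, because the split right shore of one copy is the concentrated left shore of the next. A break in the chain can occur only at a non-canonical piece, so there are \(O_d(\eta n)\) chains. The diameter lower bound follows because any path from before the first copy to after the last must cross every size-two cut and traverse each copy. Crossing one copy, from its concentrated terminal to a split terminal, costs at least two edges, and each joining cut adds one more, giving \(\operatorname{diam}(G)\geqslant3r-O(\#\text{chains})\geqslant3n/(d+1)-O_d(\eta n)\). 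The upper bound repeats the disjoint closed-neighbourhood argument from Theorem~\ref{thm:minimum-degree-stability}. For the edit distance, we follow the quartic proof. Discard \(O_d(1)\) copies per chain end until the chains fit into \(\mathcal M_{d,n}\) with gaps, map them onto consecutive strings of \(Q_d\)-copies, and extend to a bijection. Every differing edge is then incident with the \(O_d(\eta n)\) uncovered vertices or chain-end terminals, and degrees are \(d\), so the symmetric difference is \(O_d(\eta n)\).

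The main obstacle is the counting step. We must convert the integrated deficit \(\nu_d\int_{\mathcal B}|g'|^2\) into a count of pieces while keeping control of the bounded coordinate displacements. We must also check that pieces arising from long intervals (Lemma~\ref{lem:even-central-allocation}) really belong to \(\mathcal B\) with uniform \(\nu_d\). Proposition~\ref{prop:even-strict-compression} asserts both facts, so the remaining care lies in matching ranks to coordinates. For this we will use the bounded-displacement clause, which lets each rank be charged to \(O_d(1)\) pieces.
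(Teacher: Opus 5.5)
Your proposal follows the paper's proof step for step. It uses the same compression via Proposition~\ref{prop:even-strict-compression} and the same Neumann-mode comparison, giving $|\mathcal B|=O_d(\eta n)$ on the central region. It then reads off canonical intervals, the four boundary edges from the absence of bypasses, the diameter bounds and the edit distance in the same way.

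There is one small gap: the chain structure is asserted rather than proved. Your reason, that ``the split right shore of one copy is the concentrated left shore of the next'', restates the conclusion. Structurally, a copy oriented concentrated-to-split can be followed by one oriented split-to-concentrated. The joining cut then has two split shores, which is not a chain junction.

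What excludes this is the clause $\delta_b=\delta_a$ in the definition of a canonical interval. A concentrated-to-split copy on $[a,b]$ has $\delta_a=-\gamma$, so $\delta_b=-\gamma$. This forces the right shore of cut $b$ to be concentrated, so the next canonical copy has the same orientation. In the other orientation both corrections are $+\gamma$, and the same reasoning applies.

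Alternatively, simplicity forbids both shores of one cut being concentrated. Hence the orientation can switch at most once in a run of consecutive canonical intervals, which still keeps the number of chains $O_d(\eta n)$.

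With this added, the rest of your argument goes through as in the paper.
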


\begin{proof}
Set $c=2(d-2)$, $\omega=\varepsilon+n^{-1}$, and order a unit,
mean-zero Fiedler vector $f$ increasingly.
Apply Proposition~\ref{prop:even-strict-compression}, writing
$L=B-A=n+O_d(1)$ for the interpolation interval length.
The step comparison in that proposition gives
\begin{equation}\label{eq:even-stability-norm}
 \int_A^B|g-\overline g|^2=1+O_d(n^{-1}),\qquad
 \int_A^B g=O_d(n^{-1/2}).
\end{equation}
Indeed, $\max_i|f_i|\leqslant\sqrt{n\mu(G)}=O_d(n^{-1/2})$,
so extending or restricting the domain of the step function by a
bounded amount changes its squared norm by $O_d(n^{-1})$ and its
integral by $O_d(n^{-1/2})$.
The interpolation error has $L^2$ norm $O_d(n^{-1})$.

Let $\varphi(s)=-\sqrt{2/L}\cos(\pi(s-A)/L)$ be the increasing
unit first Neumann eigenfunction on $[A,B]$.
Expansion in the Neumann cosine basis, using
\eqref{eq:even-stability-norm} and the spectral separation between
the first and second nonconstant modes, gives
\begin{align}
 \mu(G)-c\int_A^B|g'|^2&=O_d(\omega n^{-2}),
 \label{eq:even-stability-deficit}\\
 \int_A^B|g'-a\varphi'|^2&=O_d(\omega n^{-2}),
 \qquad a^2=1+O_d(\omega),
 \label{eq:even-stability-mode}
\end{align}
where $a=\int_A^B(g-\overline g)\varphi\geqslant0$.
Indeed, writing $a_j$ for the coefficient of the $j$th normalised
nonconstant Neumann eigenfunction, the excess is
\[
 \int_A^B|g'|^2-\frac{\pi^2}{L^2}
                         \int_A^B|g-\overline g|^2
 =\sum_{j\geqslant2}\frac{(j^2-1)\pi^2}{L^2}a_j^2
 =O_d(\omega n^{-2}),
\]
which gives the two estimates. The sign of $a$ follows
from the monotonicity of $g$ and $\varphi$.
It suffices to consider $\omega$ below a fixed positive constant;
otherwise all conclusions follow by enlarging $C_d$.
In particular we may assume $a\geqslant1/2$.

Let $\mathcal B$ be the union of the strict pieces in
Proposition~\ref{prop:even-strict-compression}.
Its energy deficit and \eqref{eq:even-stability-mode} imply
\[
 \int_{\mathcal B}|\varphi'|^2=O_d(\omega n^{-2}).
\]
On $[A+\eta L,B-\eta L]$, the squared derivative of $\varphi$
is at least a fixed multiple of $\eta^2n^{-3}$, provided
$\eta\leqslant1/4$. Consequently
\begin{equation}\label{eq:even-bad-length}
 |\mathcal B|\leqslant2\eta L+
             O_d(\omega n/\eta^2)=O_d(\eta n).
\end{equation}
The case $\eta>1/4$ is again absorbed into the constant.

Every interpolation piece has coordinate length bounded below and
rank span bounded above by positive constants depending only on
$d$. Therefore \eqref{eq:even-bad-length} bounds both the number
of strict pieces and the number of vertex ranks they cover by
$O_d(\eta n)$. Only a bounded number of noncanonical end pieces may lack a strict
improvement.
It follows that all but $O_d(\eta n)$ vertices lie in canonical
intervals
\[
 q_a=q_b=2,\qquad b-a=d+1,\qquad
 G[S_b\setminus S_a]=Q_d,\qquad \delta_b=\delta_a.
\]
Their vertex sets are disjoint, and the number of intervening
exceptional intervals is $O_d(\eta n)$.

We check that consecutive canonical intervals form the asserted
chains. If a copy of $Q_d$ is oriented from its concentrated terminal to
its split terminals, its
left cut has $\delta=-\gamma$; equality of its two corrections
requires the same value at its right cut. If it is oriented from
split to concentrated, both corrections are $+\gamma$.
Thus adjacent canonical intervals have the same orientation.
Equivalently, changing from concentrated--split to
split--concentrated produces a split--split joining cut and a
strict correction in its neighbours, while the opposite change
would require both shores of one cut to be concentrated, contrary
to simplicity. The two joining edges are therefore exactly those
from the split terminals of one copy to the concentrated terminal of the
next, after reversing a chain if necessary.
There are no edges bypassing a canonical copy of $Q_d$.
Its internal degrees also show that its four boundary edges have
exactly the stated incidences.

Let $r$ be the number of retained copies. The rank count gives
\[
 n-(d+1)r=O_d(\eta n),
\]
and they form $O_d(\eta n)$ chains. A path from before the first retained copy to after the last
must pass through every retained copy in rank order. Within each one it must use at least two edges
between the concentrated and split terminals, and between successive
copies it uses at least one further edge. Hence
\[
 \operatorname{diam}(G)\geqslant3r-1
 \geqslant\frac{3n}{d+1}-O_d(\eta n).
\]
If $r=0$, the lower bound follows from the rank count after
increasing the constant. For the upper bound, the closed
neighbourhoods of every third vertex of a geodesic are disjoint,
and each contains at least $d+1$ vertices.

Finally, $\mathcal M_{d,n}$ contains $n/(d+1)+O_d(1)$ disjoint
copies of $Q_d$. Discard at most $O_d(1)$ copies from the ends of
the retained chains if needed, place each remaining chain on a
consecutive string in $\mathcal M_{d,n}$, and extend the map to a
vertex bijection. All edges within these strings are preserved.
Every potentially different edge is incident with an uncovered
vertex or one of the three outer terminals of a retained chain.
There are $O_d(\eta n)$ such vertices, and both graphs have degree
$d$. This proves the edge-distance bound.
\end{proof}

\subsection{Maximum diameter and minimum algebraic connectivity}
\label{sec:diameter-converse}

Combining the sharp bounds with the stability theorems proves the
diameter conjecture.

\begin{corollary}[Abdi--Ghorbani diameter conjecture]
\label{cor:abdi-ghorbani-diameter}
Fix \(d\geqslant3\). Let \((G_n)\) be a sequence of connected
graphs of order \(n\), each of minimum degree \(d\), or each
\(d\)-regular.
If their algebraic connectivity is asymptotically minimum in the
respective class, then
\[
 \operatorname{diam}(G_n)=(1+o(1))\frac{3n}{d+1}.
\]
\end{corollary}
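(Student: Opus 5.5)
The corollary follows by combining the sharp asymptotics of Theorems~\ref{thm:minimum-degree-bound} and~\ref{thm:fixed-regular-sharp} with the stability theorems of this section; no new estimate is needed.

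\emph{Translating the hypothesis.} Asymptotic minimality means \(\mu(G_n)=(1+o(1))m_n\), where \(m_n\) is the minimum over the class. The two theorems give \(m_n=(d-1+o(1))\pi^2/n^2\) in the minimum-degree class and \(m_n=(c_d+o(1))\pi^2/n^2\) in the regular class. Hence
\[
 n^2\mu(G_n)\leqslant(d-1)\pi^2+\varepsilon_n
 \quad\text{or}\quad
 n^2\mu(G_n)\leqslant c_d\pi^2+\varepsilon_n,
\]
for some sequence \(\varepsilon_n\to0\). We may assume \(\varepsilon_n\in[0,1]\), and then \(\eta_n=(\varepsilon_n+n^{-1})^{1/3}\to0\).

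\emph{Applying stability.} We split by class and degree.
\begin{itemize}
\item In the minimum-degree class (every \(d\geqslant3\)) and in the regular class with \(d\) odd, \(c_d=d-1\), so Theorem~\ref{thm:minimum-degree-stability} applies directly. A \(d\)-regular graph has minimum degree at least \(d\), so the regular odd case is covered.
\item For \(d=3\) or \(d=4\) regular, use Theorem~\ref{thm:parity-stability}, whose constants \(a_3=2=c_3\) and \(a_4=4=c_4\) match.
\item For even \(d\geqslant6\) regular, use Theorem~\ref{thm:fixed-even-stability}.
\end{itemize}
In every case we obtain, for \(n\) large,
\[
 \frac{3n}{d+1}-C\eta_nn\leqslant\operatorname{diam}(G_n)\leqslant\frac{3n}{d+1}.
\]
Since \(\eta_n\to0\), this gives \(\operatorname{diam}(G_n)=(1+o(1))3n/(d+1)\).

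\emph{Main point needing care.} Applying each stability theorem requires checking that its hypotheses hold and that its constant matches \(c_d\). The ranges of \(\varepsilon\) and \(n\) are also restricted.
\begin{itemize}
\item Asymptotic minimality may be defined through a ratio rather than an additive error, but the two forms are equivalent because \(m_n\asymp n^{-2}\).
\item The condition \(n\geqslant n_0(d)\) excludes only finitely many terms, which do not affect the \(o(1)\) statement.
\item The requirement \(\varepsilon\leqslant1\) fails for only finitely many \(n\), again because \(\varepsilon_n\to0\).
\end{itemize}
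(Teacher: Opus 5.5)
Your proposal is correct and follows the paper's own argument: the paper obtains the corollary directly from the sharp asymptotics of Theorems~\ref{thm:minimum-degree-bound} and~\ref{thm:fixed-regular-sharp}, combined with the stability Theorems~\ref{thm:minimum-degree-stability}, \ref{thm:parity-stability}, and~\ref{thm:fixed-even-stability}, using the same case split by class and degree. One point is worth stating explicitly. If ``minimum degree $d$'' is read as $\delta(G_n)=d$ exactly, the minimum over that class is still $(d-1+o(1))\pi^2/n^2$, because the bridge chains of $L_d$ with complete end graphs have minimum degree exactly $d$.
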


The corollary follows from Theorems~\ref{thm:minimum-degree-bound},
\ref{thm:minimum-degree-stability}, \ref{thm:fixed-regular-sharp},
and~\ref{thm:fixed-even-stability}, together with the degree-three
and degree-four stability theorem.

The reverse implication in Corollary~\ref{cor:abdi-ghorbani-diameter}
holds in precisely the low-degree classes left open in
\cite[Section~5, question~(b)]{AbdiGhorbaniDiameter}.

\begin{theorem}\label{thm:diameter-converse}
Let \((G_n)\) be a sequence of connected graphs of order \(n\).
If \(\delta(G_n)\geqslant3\), then
\[
 \operatorname{diam}(G_n)=(1+o(1))\frac{3n}{4}
 \quad\Longleftrightarrow\quad
 \mu(G_n)=(1+o(1))\frac{2\pi^2}{n^2}.
\]
For a sequence of connected quartic graphs,
\[
 \operatorname{diam}(G_n)=(1+o(1))\frac{3n}{5}
 \quad\Longleftrightarrow\quad
 \mu(G_n)=(1+o(1))\frac{4\pi^2}{n^2}.
\]
In particular, the first equivalence holds for cubic graphs.
\end{theorem}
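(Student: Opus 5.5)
The forward implications ($\mu$ small $\Rightarrow$ diameter large) are already covered by Corollary~\ref{cor:abdi-ghorbani-diameter}. In the class $\delta\geqslant3$ we use Theorems~\ref{thm:minimum-degree-bound} and~\ref{thm:minimum-degree-stability} with $d=3$. In the quartic class we use the degree-four case of Theorem~\ref{thm:parity-stability}. Both apply with $\varepsilon=\varepsilon_n\to0$, since $\mu(G_n)=(1+o(1))a\pi^2/n^2$ means $n^2\mu(G_n)\leqslant a\pi^2+\varepsilon_n$. So the work is the converse: nearly maximum diameter forces nearly minimum $\mu$. The lower bounds $\mu\geqslant(1-o(1))2\pi^2/n^2$ and $\mu\geqslant(1-o(1))4\pi^2/n^2$ hold for every graph in the class, by Theorem~\ref{thm:minimum-degree-bound} and Proposition~\ref{prop:quartic-path}. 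It therefore remains to prove the upper bounds
\[
\mu(G_n)\leqslant(1+o(1))\frac{2\pi^2}{n^2}\quad\text{and}\quad \mu(G_n)\leqslant(1+o(1))\frac{4\pi^2}{n^2}
\]
when $\operatorname{diam}(G_n)\geqslant(1-o(1))3n/(d+1)$. The plan is to build a test function from a BFS layering.

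Fix a diametral pair and let $V_0,\dots,V_D$ be the distance layers from one endpoint, with $D=\operatorname{diam}(G_n)$. Edges only join equal or consecutive layers. The key counting fact, generalising the geodesic packing used for the upper diameter bound, is that any three consecutive layers $V_{j-1}\cup V_j\cup V_{j+1}$ contain the closed neighbourhood of a vertex of $V_j$. Hence $|V_{j-1}|+|V_j|+|V_{j+1}|\geqslant d+1$. Summing over disjoint triples gives $n\geqslant(d+1)D/3-O(1)$. Near-equality in diameter therefore forces all but $o(n)$ of the triple sums to equal exactly $d+1=4$ or $5$, with the remainder of the layers being singletons or small.

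I would then show the following local structure. On all but $o(n)$ triples, the layer sizes follow the pattern $1,2,1,\dots$ (diamond-like) for $\delta\geqslant3$, or $1,2,2$ with shared terminals for quartic graphs. The number of edges between consecutive layers is then forced: one edge across each singleton-to-singleton step, and the local edge density inside a triple is determined up to bounded error. In the cubic/min-degree case a triple with sizes summing to $4$ whose middle vertex needs degree $\geqslant3$ forces a copy of $L_3$ between bridges. In the quartic case the $1,2,2,\dots$ pattern of sum $5$ with degree exactly $4$ forces an $M_0$ block. This degree-counting step is where $d\leqslant4$ is essential. For larger $d$ the counterexamples of Abdi and Ghorbani show that layer sizes summing to $d+1$ do not force the extremal blocks.

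Given this structure, the proof finishes with a test function as in Proposition~\ref{prop:cubic-sharp}. Define $F$ to be constant on exceptional stretches and to follow $\cos(\pi s/n)$ along a coordinate in which each bridge has length $c$ and each block interior has its effective-resistance length. Since only $o(n)$ vertices are exceptional, the energy is $(c+o(1))\int|F'|^2$ and the centred norm is $(1/2+o(1))n$, giving the required upper bound. The main obstacle is the structural step. Showing that near-extremal layer sizes force the exact cut pattern for all but $o(n)$ layers, rather than merely the vertex counts, must handle layers of size $2$ that are internally joined or not, and edges inside layers. I would first try a weighted count. Assign each triple the excess $|V_{j-1}|+|V_j|+|V_{j+1}|-(d+1)$ plus the deviation of the cross-edge numbers from the extremal values, and show that this total is $O(n-(d+1)D/3)=o(n)$ by a double-counting of degrees within windows of three layers.
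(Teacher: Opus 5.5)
Your outline matches the paper's proof: breadth-first layers $V_0,\ldots,V_D$ from an end of a diametral path, the bound $a_{i-1}+a_i+a_{i+1}\geqslant d+1$ with $a_i=|V_i|$, rigidity where this is tight, and a cosine test function extended harmonically across periodic blocks and held constant on defective stretches. Your forward implications and lower bounds are fine; Proposition~\ref{prop:quartic-path} is a valid in-paper source for the quartic lower bound. The gap is the structural step you yourself flag. It is only asserted, and the weighted count you propose to close it would not work. On exact triples the edge deviations vanish automatically (see below). In the class $\delta\geqslant3$, however, two consecutive defective layers of size $k$ may be completely joined, giving $\Theta(k^2)$ cross edges against only $O(k)$ layer excess. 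So that total is not $O(h)$, where $h:=3n-(d+1)D+(d+1)$. Nothing about the edges in defective layers is actually needed.

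The missing idea is a one-line observation. If $a_{i-1}+a_i+a_{i+1}=d+1$ exactly, then every $v\in V_i$ has $N[v]\subseteq V_{i-1}\cup V_i\cup V_{i+1}$ and $|N[v]|\geqslant d+1$, so $N[v]$ is the whole triple. On a run of consecutive exact triples, the layers are therefore cliques, consecutive layers are completely joined, and $a_{i-1}=a_{i+2}$. So the graph on the run is determined exactly, with period a composition of $d+1$ into three positive parts.

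For $d=3$ every such period gives diamonds joined by bridges. For $d=4$ you must still exclude the period $(1,1,3)$, which your sketch skips. Its two consecutive singleton layers are joined by a single edge, which would be a bridge, impossible in a $4$-regular graph. This enumeration, not the degree count (which is valid for every $d$), is where $d\leqslant4$ matters. For $d=5$, say, the period $(2,2,2)$ has smaller resistance per period, so nearly maximal diameter no longer forces nearly minimal $\mu$.

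To localise the defects, use the exact identity $h=\sum_{i=1}^{D-1}e_i+2a_0+a_1+a_{D-1}+2a_D$, with $e_i=a_{i-1}+a_i+a_{i+1}-(d+1)$. This controls all consecutive triples rather than one disjoint family. Discard the layers within bounded distance of a defective index or of either end. They contain $O(h)$ vertices, since $a_i\leqslant d+1+e_i$, and they leave $O(h)$ runs, which you trim to whole periods with singleton end layers. Edges join only equal or adjacent layers. Hence, if you index the periods consecutively across runs and make the test function constant on each discarded stretch, those edges contribute zero energy.

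Finally, prescribe $\cos(\pi j/m)$ at the $m$ period endpoints. Each period has $d+1$ vertices and resistance $(d+1)/c$, which completes the bound. In your coordinate version, a block needs length $c$ times its effective resistance, not the resistance itself.
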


For regular degrees at least five, and for prescribed minimum
degrees at least four, the implication from diameter to algebraic
connectivity is false by
\cite[Theorem~5.3(iii)--(iv)]{AbdiGhorbaniDiameter}.
Thus Theorem~\ref{thm:diameter-converse} completes the answer to
that question. Their Theorem~5.3(ii) proves the cubic and quartic
assertions when the diameter has additive error \(O(1)\).
The minimum-degree-three case with this additive error was proved
in \cite[Theorem~5.4]{AbdiGhorbaniMinDegree}.
The following proof allows an arbitrary \(o(n)\) error.

\begin{proof}
The implications from algebraic connectivity to diameter follow
from Theorems~\ref{thm:minimum-degree-stability}
and~\ref{thm:parity-stability}. We prove the converse.
Set \((d,c)=(3,2)\) in the minimum-degree case and
\((d,c)=(4,4)\) in the quartic case, and put \(w=d+1\).
Write \(D=\operatorname{diam}(G)\). Take the breadth-first
layers \(V_0,\ldots,V_D\) from an endpoint of a diametral path,
and set \(a_i=|V_i|\). For \(1\leqslant i\leqslant D-1\),
\[
 e_i=a_{i-1}+a_i+a_{i+1}-w\geqslant0.
\]
The total excess satisfies the exact identity
\begin{equation}\label{eq:bfs-diameter-excess}
 h:=3n-wD+w
   =\sum_{i=1}^{D-1}e_i+2a_0+a_1+a_{D-1}+2a_D.
\end{equation}
Our hypothesis gives \(h=o(n)\).

Discard the layers within distance four of an index with
\(e_i>0\), and of either end. The discarded layers contain
\(O_d(h)\) vertices. Indeed, their number is \(O(h)\),
\(a_i\leqslant w+e_i\) for every interior index, and
\eqref{eq:bfs-diameter-excess} controls both the sum of the
excesses and the end layers. There are \(O(h)\) remaining
intervals of layers. Discarding bounded intervals, and shortening
the others by a bounded number of layers, discards another
\(O_d(h)\) vertices.

On every remaining interval the triples have size exactly \(w\).
A vertex in the middle layer of such a triple has all its
neighbours in that triple and at least \(d\) neighbours. Thus
the layers are cliques, adjacent layers are completely joined,
and \(a_{i+3}=a_i\). For \(d=3\) the period is a permutation
of \((1,1,2)\). For \(d=4\), the possibilities are
\((1,1,3)\) and \((1,2,2)\). The first gives two consecutive
singleton layers, whose unique joining edge is a bridge of
\(G\). An even regular graph has no bridge, so only
\((1,2,2)\) occurs in the quartic case.

Trim each interval to full periods with singleton end layers.
In the minimum-degree-three case, a period is \(K_4-e\) followed by its next
bridge; in the quartic case it is
\(K_1+K_2+K_2+K_1\), with the last singleton shared with
the next period. In either case a period has \(w\) vertices
when its right endpoint is excluded, and resistance \(w/c\)
between its endpoints. Let \(m\) be the total number of
periods, in their order along the layers. Then
\[
 L:=wm=n-O_d(h).
\]

Give the successive period endpoints the values
\(u_j=\cos(\pi j/m)\), \(0\leqslant j\leqslant m\),
and extend harmonically inside each period. Across each
discarded interval use the common value at the endpoints of
the two neighbouring retained intervals; use the first or last
endpoint value on the discarded end intervals. This defines
a function \(f\) on all vertices. Edges join only the same
or adjacent breadth-first layers, so the discarded intervals
contribute no energy. Hence
\[
 \sum_{uv\in E(G)}(f(u)-f(v))^2
  =\frac{c}{w}\sum_{j=1}^{m}(u_j-u_{j-1})^2
  \leqslant\frac{c\pi^2}{2L}.
\]
Each value in a period lies between its two endpoint values.
Comparing with the left endpoint therefore gives a total error
\(O_d(1)\) in both the sum and the squared norm over all
retained periods. The remaining \(O_d(h)\) vertices have
values in \([-1,1]\). The elementary cosine sums now give
\[
 \sum_v f(v)=O_d(h),\qquad
 \sum_v f(v)^2=\frac n2+O_d(h).
\]
Subtracting the mean and applying the variational principle yields
\[
 \mu(G)\leqslant
 \frac{c\pi^2}{n^2}\bigl(1+O_d(h/n)\bigr).
\]
The matching lower bounds follow from
Theorems~\ref{thm:minimum-degree-bound}
and~\ref{thm:fixed-regular-sharp}.
\end{proof}
\section{The maximum relaxation time}\label{sec:reciprocal-cuts}

In this section, we determine the maximum relaxation time among regular
graphs, identify its unique extremisers, and prove quantitative stability.
We first obtain sharp spectral bounds in terms of edge-connectivity when
the degree tends to infinity, together with a characterisation of
asymptotic equality. These bounds and the fixed-degree results of
Section~\ref{sec:path-estimates} reduce the maximisation to the cubic
and quartic cases. We then determine the unique quartic minimiser and
derive the first correction terms for the cubic and quartic extremisers.
These results give Theorem~\ref{thm:max-relaxation} and
Corollary~\ref{cor:global-stability}.

\subsection{A Dirichlet eigenvalue bound}

In this subsection and the next two, let \(G\) be a connected
\(d\)-regular graph of order \(n\), where
\(2\leqslant d\leqslant n-1\), and let \(f\) be a Fiedler vector
with \(f_1\leqslant\cdots\leqslant f_n\). Write
\(y_i=f_{i+1}-f_i\), \(S_i=\{v_1,\ldots,v_i\}\), and
\(q_i=|\partial S_i|\).

For the ordered Fiedler vector \(f\), let
\[
 h_i=-\sum_{a=1}^{i}f_a\quad(0\leqslant i\leqslant n),
 \qquad h_0=h_n=0.
\]
These partial sums are nonnegative.  Indeed, if
\(\sum_{a=1}^{i}f_a>0\) for some \(i<n\), its average is positive and does not
exceed \(f_i\).  Every later entry is then positive, contradicting
\(\sum_{a=1}^{n} f_a=0\).  Moreover, \(f_i=h_{i-1}-h_i\) and
\(y_i=2h_i-h_{i-1}-h_{i+1}\).

Every edge \(v_av_b\) crossing \(S_i\), with \(a\leqslant i<b\), satisfies
\(f_b-f_a=\sum_{t=a}^{b-1}y_t\geqslant y_i\).  Summing the eigenvalue
equation \(\mathsf Lf=\mu(G)f\) over \(S_i\) therefore gives
\begin{equation}
 \mu(G)h_i
 =\sum_{\substack{a\leqslant i<b\\v_av_b\in E(G)}}(f_b-f_a)
 \geqslant q_iy_i.
 \label{eq:prefix-bound}
\end{equation}
Let \(\mathsf R=\operatorname{diag}(q_1^{-1},\ldots,q_{n-1}^{-1})\), and write
\(\mathbf h=(h_1,\ldots,h_{n-1})^{\mathsf T}\).  Multiplying
\eqref{eq:prefix-bound} by \(h_i/q_i\), summing over \(i\), and using discrete
summation by parts gives
\begin{equation}
 \mu(G)\mathbf h^{\mathsf T}\mathsf R\mathbf h
 \geqslant\sum_{i=1}^{n-1}h_iy_i
 =\sum_{i=1}^{n}(h_i-h_{i-1})^2
 =\sum_{i=1}^{n}f_i^2.
 \label{eq:summation-by-parts}
\end{equation}

Let \(\mathsf L_D\) be the \((n-1)\)-by-\((n-1)\) Dirichlet Laplacian on
the path, with diagonal entries \(2\) and adjacent entries \(-1\).
For \(\mathbf u=(u_1,\ldots,u_{n-1})^{\mathsf T}\), set \(u_0=u_n=0\).
The least weighted Dirichlet eigenvalue is
\[
 \lambda_D(\mathsf R)=
 \min_{\mathbf u\in\R^{n-1}\setminus\{\mathbf 0\}}
 \frac{\mathbf u^{\mathsf T}\mathsf L_D\mathbf u}
      {\mathbf u^{\mathsf T}\mathsf R\mathbf u}
 =\min_{\mathbf u\in\R^{n-1}\setminus\{\mathbf 0\}}
  \frac{\sum_{i=1}^{n}(u_i-u_{i-1})^2}
       {\sum_{i=1}^{n-1}u_i^2/q_i}.
\]
The vector \(\mathbf h\) is nonzero because \(f\) is nonzero, and
\(\mathsf R\) is positive definite because \(G\) is connected.
Equation~\eqref{eq:summation-by-parts} now gives
\begin{equation}
 \mu(G)\geqslant
 \frac{\mathbf h^{\mathsf T}\mathsf L_D\mathbf h}
      {\mathbf h^{\mathsf T}\mathsf R\mathbf h}
 \geqslant\lambda_D(\mathsf R).
 \label{eq:weighted-dirichlet}
\end{equation}

Let \(\mathsf B=\mathsf L_D^{-1}\).  Its entries are
\begin{equation}
 B_{ij}=
 \frac{\min\{i,j\}\bigl(n-\max\{i,j\}\bigr)}{n}
 \qquad(1\leqslant i,j<n).
 \label{eq:green}
\end{equation}
The function \((i,j)\mapsto B_{ij}\) is the discrete Dirichlet Green function
for the path with zero boundary values at \(0\) and \(n\); the formula is
verified by taking second differences in either index.
The reciprocal of \(\lambda_D(\mathsf R)\) is the spectral radius of the
nonnegative matrix \(\mathsf R^{1/2}\mathsf B\mathsf R^{1/2}\).  Applying the
Collatz--Wielandt bound
\cite[Theorem~8.1.26]{HornJohnson} with the positive vector
\((q_i^{-1/2})_{i=1}^{n-1}\) gives
\begin{equation}
 \lambda_D(\mathsf R)^{-1}
 \leqslant\max_{1\leqslant i<n}
 \sum_{j=1}^{n-1}\frac{B_{ij}}{q_j}.
 \label{eq:green-row-sum}
\end{equation}

To bound the row sums in \eqref{eq:green-row-sum}, we first consider the
reciprocal cut sum over \(d+1\) consecutive indices.
For integers \(1\leqslant\kappa<d\), let \(H_d=\sum_{t=1}^{d}1/t\), and write
\begin{equation}
 \theta_{d,\kappa}=\max\left\{
 \frac1\kappa+\frac{2dH_d}{(d-\kappa)(d+1)},
 \frac1{\kappa+1}+\frac{4H_d}{d+1}
 \right\}.
 \label{eq:theta-d-kappa}
\end{equation}

\begin{lemma}\label{lem:local-reciprocal-sum}
\begin{samepage}
Let \(1\leqslant\kappa<d\) be integers, and let \(G\) be a connected simple
\(d\)-regular graph of order \(n\) and edge-connectivity at least
\(\kappa\).  Let \(q_1,\ldots,q_{n-1}\) be the cut sizes of a Fiedler
ordering of \(G\), and let \(I\) consist of \(d+1\) consecutive indices in
\(\{1,\ldots,n-1\}\).  Then
\[
 \sum_{j\in I}\frac1{q_j}\leqslant\theta_{d,\kappa}.
\]
\end{samepage}
\end{lemma}

\begin{proof}
Choose \(j_\ast\in I\) so that
\(q_\ast=q_{j_\ast}=\min_{j\in I}q_j\). For \(j\in I\),
Lemma~\ref{lem:interval-cut}, with \(t=|j-j_\ast|\), gives
\[
 q_j\geqslant\max\{q_\ast,t(d+1-t)-q_\ast\}.
\]
Suppose that \(j_\ast\) lies \(a\) positions from the left endpoint of \(I\).
The multiset of distances from \(j_\ast\) is
\[
 \{0,1,\ldots,a\}\mathbin{\uplus}\{1,\ldots,d-a\}.
\]
Since \(t(d+1-t)\) is unchanged when \(t\) is replaced by \(d+1-t\),
the corresponding multiset of these values is precisely that obtained from
\(t=0,1,\ldots,d\).  Consequently,
\begin{equation}
 \sum_{j\in I}\frac1{q_j}
 \leqslant\frac1{q_\ast}+
 \sum_{t=1}^{d}\frac1{\max\{q_\ast,t(d+1-t)-q_\ast\}}.
 \label{eq:local-reciprocal-sum}
\end{equation}

If \(q_\ast=\kappa\), then \(t(d+1-t)\geqslant d\), and hence
\[
 t(d+1-t)-\kappa\geqslant
 \frac{d-\kappa}{d}\,t(d+1-t).
\]
The identity
\[
 \sum_{t=1}^{d}\frac1{t(d+1-t)}
 =\frac{2H_d}{d+1}
\]
shows that the right-hand side of \eqref{eq:local-reciprocal-sum} is at most the first
quantity in \eqref{eq:theta-d-kappa}.  If \(q_\ast\geqslant\kappa+1\), then
\(1/q_\ast\leqslant1/(\kappa+1)\)
and
\[
 \max\{q_\ast,t(d+1-t)-q_\ast\}\geqslant\frac{t(d+1-t)}2.
\]
The second quantity in \eqref{eq:theta-d-kappa} now bounds
\eqref{eq:local-reciprocal-sum}.
\end{proof}

Since \(H_d\leqslant1+\log d\),
\begin{equation}
 \theta_{d,\kappa}=\frac1\kappa
 +O_\kappa\left(\frac{\log d}{d}\right)
 \qquad(d\to\infty).
 \label{eq:theta-kappa-asymptotic}
\end{equation}

Assume that \(G\) has edge-connectivity at least \(\kappa\) and that
\(d<\lfloor n/2\rfloor\).  Put \(\ell=d+1\), so that
\(n\geqslant2\ell\), and partition the cut indices into
\[
 J_-=\{1,\ldots,d\},\qquad
 J_0=\{\ell,\ldots,n-\ell\},\qquad
 J_+=\{n-d,\ldots,n-1\}.
\]
On \(J_0\), let \(\mathbf w=(w_j)_{j\in J_0}\), where
\(w_j=(\theta_{d,\kappa}q_j)^{-1}\).  Lemma~\ref{lem:local-reciprocal-sum} gives
\(\sum_{j\in I}w_j\leqslant1\) for every consecutive interval
\(I\subseteq J_0\) containing at most
\(\ell\) indices.  Indeed, such an interval can be enlarged, if necessary,
to an \(\ell\)-term interval in \(\{1,\ldots,n-1\}\).

For an interval \(J\) of integers, call \(S\subseteq J\)
\emph{\(\ell\)-separated} if \(|s-t|\geqslant\ell\) whenever \(s,t\in S\)
are distinct.

For a finite interval \(J\) of integers and \(S\subseteq J\), write
\(\one_S\in\R^J\) for the indicator vector of \(S\).

\begin{lemma}\label{lem:fractional-packing}
Let \(J\) be a finite interval of integers, and let \(\ell\) be a positive
integer.  Suppose that \(\mathbf w=(w_j)_{j\in J}\) is nonnegative and that
\(\sum_{j\in I}w_j\leqslant1\) for every consecutive interval
\(I\subseteq J\) with \(|I|\leqslant\ell\).  Then \(\mathbf w\) is a convex
combination of the vectors \(\one_S\) for \(\ell\)-separated subsets
\(S\subseteq J\).
\end{lemma}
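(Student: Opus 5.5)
The plan is to reduce the statement to integrality of a polytope. Let \(P\subseteq\R^J\) be the set of nonnegative vectors \(\mathbf w\) with \(\sum_{j\in I}w_j\leqslant1\) for every consecutive interval \(I\subseteq J\) with \(|I|\leqslant\ell\). It suffices to impose the constraints for intervals of length exactly \(\min\{\ell,|J|\}\), including those truncated at the ends of \(J\), since every shorter interval lies in one of these and \(\mathbf w\geqslant0\). The integer points of \(P\) are exactly the indicator vectors \(\one_S\) of \(\ell\)-separated sets. Indeed, a \(0/1\) vector satisfies every window constraint exactly when no two of its support points lie within distance less than \(\ell\). The lemma is therefore equivalent to the assertion that \(P\) is an integral polytope. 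Then \(\mathbf w\), being a point of the bounded polytope \(P\), is a convex combination of its vertices, and these vertices are the vectors \(\one_S\).

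For integrality, the constraint matrix has rows given by indicator vectors of consecutive intervals of \(J\). This is an interval matrix, with the consecutive-ones property in each row, so it is totally unimodular. Appending the identity rows for \(\mathbf w\geqslant0\) preserves total unimodularity, and the right-hand sides are integral. By the Hoffman--Kruskal theorem every vertex of \(P\) is integral, which completes the argument.

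A self-contained alternative avoids citing total unimodularity by an explicit threshold decomposition. Let \(W_j=\sum_{i\leqslant j}w_i\) be the cumulative sums, and for \(\theta\in[0,1)\) put
\[
 S_\theta=\{j\in J: W_{j-1}\leqslant k+\theta<W_j\text{ for some integer }k\geqslant0\}.
\]
Each \(j\) lies in \(S_\theta\) for a set of \(\theta\) of measure exactly \(w_j\), because \(w_j\leqslant1\) and the interval \((W_{j-1},W_j]\) meets \(\mathbb Z+\theta\) exactly when \(\theta\) falls in a set of that length modulo one. Integrating \(\one_{S_\theta}\) over \(\theta\in[0,1)\) then recovers \(\mathbf w\), exhibiting it as a mixture. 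The map \(\theta\mapsto S_\theta\) is piecewise constant with finitely many pieces, so this mixture is a finite convex combination.

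The one point needing care is that each \(S_\theta\) is \(\ell\)-separated. Suppose \(s<t\) both lie in \(S_\theta\) with \(t-s<\ell\). The interval \(\{s+1,\ldots,t\}\) has at most \(\ell\) indices, so \(W_t-W_s\leqslant1\) by hypothesis. Since \(s\in S_\theta\), we have \(W_s>k+\theta\) for the integer \(k\) attached to \(s\), so \(W_s\) already exceeds the grid point \(k+\theta\) and \(W_s-(k+\theta)>0\). The next grid point is \(k+1+\theta\), and reaching it from \(W_s\) requires an increase of \(k+1+\theta-W_s<1\), but the membership \(t\in S_\theta\) places a grid point in \((W_{t-1},W_t]\) beyond \(k+\theta\). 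That grid point must be at least \(k+1+\theta\), so \(W_t\geqslant k+1+\theta\). Combining these gives
\[
 W_t-W_s\geqslant(k+1+\theta)-W_s>(k+1+\theta)-(k+1+\theta)=0,
\]
which does not yet contradict \(W_t-W_s\leqslant1\).

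This strict-versus-nonstrict inequality at the window boundary is the main obstacle. The fix I would try first is to include the left endpoint index \(s\) itself in the window, using \(\{s,\ldots,t\}\) of size \(t-s+1\leqslant\ell\). Then \(W_t-W_{s-1}\leqslant1\), while \(W_{s-1}\leqslant k+\theta\) and \(W_t\geqslant k+1+\theta\) force \(W_t-W_{s-1}\geqslant1\). Equality must therefore hold throughout, which in turn forces \(W_t=k+1+\theta\) and \(W_{s-1}=k+\theta\). This equality case can be excluded by taking a half-open convention, or by noting that it occurs only for finitely many \(\theta\), a measure-zero set that does not affect the integral.
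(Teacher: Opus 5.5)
Your main argument is the paper's proof. The window constraints form an interval (consecutive-ones) matrix, which is totally unimodular, so by Hoffman--Kruskal the bounded polytope has only \(0/1\) vertices, and these are exactly the indicators of \(\ell\)-separated sets.

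Your threshold decomposition is a genuinely different, self-contained route. It replaces the citation of total unimodularity by an explicit rounding, and it even shows that at most \(|J|+1\) sets are needed, since \(\theta\mapsto S_\theta\) changes only at the fractional parts of the \(W_j\). It is correct as it stands, and the ``obstacle'' you identify is not real.

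With your own half-open convention \(W_{j-1}\leqslant k+\theta<W_j\), write \(k_s+\theta\) and \(k_t+\theta\) for the grid points attached to \(s<t\). Since \(k_t+\theta\geqslant W_{t-1}\geqslant W_s>k_s+\theta\), we have \(k_t\geqslant k_s+1\), and therefore
\[
 W_t>k_t+\theta\geqslant k_s+1+\theta\geqslant W_{s-1}+1 .
\]
This inequality is strict. It contradicts the window constraint on \(\{s,\ldots,t\}\), which has size \(t-s+1\leqslant\ell\).

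You lost the strictness by writing \(W_t\geqslant k+1+\theta\) instead of using \(W_t>k_t+\theta\). So no equality case and no measure-zero exclusion is needed. Your first attempt, with the window \(\{s+1,\ldots,t\}\), could not have worked: the window must contain both \(s\) and \(t\).
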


\begin{proof}
Let \(\mathsf M\) be the incidence matrix whose rows are the consecutive intervals
of \(J\) of length at most \(\ell\) and whose columns are the points of
\(J\).  Its rows have consecutive ones, so \(\mathsf M\) is totally unimodular
\cite[Chapter~19, Example~7, p.~279]{Schrijver}.  The Hoffman--Kruskal theorem
\cite[Corollary~19.2a, p.~268]{Schrijver} therefore shows that every vertex of
\(\{\mathbf w:\mathsf M\mathbf w\leqslant\one,\ \mathbf w\geqslant0\}\) is integral.
The singleton constraints give \(0\leqslant w_j\leqslant1\), so every vertex
is the indicator vector of an \(\ell\)-separated set.  Conversely, each such
indicator vector satisfies the interval constraints.  The polytope is bounded
and hence is the convex hull of its vertices.
\end{proof}

Lemma~\ref{lem:fractional-packing} reduces the sum over \(J_0\) in
\eqref{eq:green-row-sum} to the following sum over an \(\ell\)-separated set.

For \(n\geqslant2\), let \(\mathsf B\) be the Dirichlet Green matrix
with entries given by \eqref{eq:green}.

\begin{lemma}\label{lem:green-packing}
Let \(n\) and \(\ell\) be positive integers with \(n\geqslant2\ell\).
Let \(S\subseteq\{\ell,\ldots,n-\ell\}\) be \(\ell\)-separated.  Then, for every
\(1\leqslant i<n\),
\begin{equation}
 \sum_{j\in S}B_{ij}\leqslant\frac{n^2}{8\ell}.
 \label{eq:green-packing}
\end{equation}
\end{lemma}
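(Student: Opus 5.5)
The plan is to fix \(i\) and exploit the concavity of the row \(j\mapsto B_{ij}\), which is piecewise linear: it equals \(j(n-i)/n\) for \(j\leqslant i\) and \(i(n-j)/n\) for \(j\geqslant i\). It vanishes at \(j=0\) and \(j=n\) and peaks at \(j=i\) with value \(i(n-i)/n\leqslant n/4\). The idea is to compare the sum over an \(\ell\)-separated set with an integral: each point of \(S\) is given a window of length \(\ell\), and the sum of \(B_{ij}\) is bounded by \(\ell^{-1}\) times the integral of the piecewise linear function \(b(x)=\min\{x(n-i),i(n-x)\}/n\) over \([0,n]\). That integral equals \(i(n-i)/2\leqslant n^2/8\), which is exactly the claimed bound \(n^2/(8\ell)\).

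The comparison itself would go as follows. Write \(S=\{s_1<\dots<s_k\}\), with \(s_1\geqslant\ell\), \(s_k\leqslant n-\ell\), and \(s_{m+1}-s_m\geqslant\ell\). Points \(s\leqslant i\) receive the window \([s-\ell,s]\), and points \(s\geqslant i\) receive \([s,s+\ell]\). These windows lie in \([0,n]\) and are pairwise disjoint on each side of \(i\), by separation. On the left, \(b\) is increasing, so \(\ell b(s)\leqslant\int_{s-\ell}^{s}b+\ell^2(n-i)/(2n)\). That error term is awkward, so I would instead use the concave-function estimate directly: for a concave \(b\geqslant0\), \(\int_{s-\ell}^{s}b\geqslant \ell\,(b(s-\ell)+b(s))/2\). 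This has the wrong sign for a pure left window, so it is better to count on both sides.

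The cleaner route is to allocate to each \(s\in S\) the \emph{centred} window \([s-\ell/2,s+\ell/2]\). These windows are disjoint by \(\ell\)-separation, and they lie in \([\ell/2,n-\ell/2]\subseteq[0,n]\) because \(S\subseteq\{\ell,\dots,n-\ell\}\). Concavity of \(b\) (Jensen, or the midpoint inequality for concave functions) gives \(\ell\,b(s)\leqslant\ldots\), but here the direction is also wrong, since midpoint concavity gives \(b(s)\geqslant\) the average. So I would argue instead by a rearrangement and telescoping bound. Let \(k_-\) be the number of points of \(S\) that are at most \(i\), and \(k_+\) the number that exceed \(i\). The left points satisfy \(s_{k_- - m}\leqslant i-m\ell\), so, since \(b\) is increasing there, the left sum is at most \(\sum_{m\geqslant0,\,i-m\ell\geqslant\ell}(i-m\ell)(n-i)/n\). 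Similarly the right sum is at most \(\sum_{m\geqslant1}(i+ m'\ell)\)-type terms. One must be careful: the first right point is only constrained to satisfy \(>i\) and \(\geqslant s_{k_-}+\ell\). So I would parametrize by the position \(p=s_{k_-}\leqslant i\) and place the right points at \(p+\ell,p+2\ell,\dots\). This reduces the problem to bounding \(\sum_m b(p+m\ell)\) over an arithmetic progression with step \(\ell\) contained in \([\ell,n-\ell]\).

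The key remaining step is therefore the following: for a concave tent \(b\) on \([0,n]\) with \(b(0)=b(n)=0\), any arithmetic progression with step \(\ell\) inside \([\ell,n-\ell]\) has \(\ell\sum b\leqslant\int_0^n b\). For the bound \(\sum b(p+m\ell)\cdot\ell\leqslant\int_0^n b\), I would note that this is a trapezoid-rule sum. Including the zero endpoints, the progression points together with \(0\) and \(n\) form a partition with mesh at most \(\ell\) in the interior, and the end gaps are at least \(\ell\) by the range restriction. Since a concave function dominates its chords, the trapezoid rule underestimates the integral: \(\int_0^n b\geqslant\sum\) over gaps of (gap)\(\cdot\)(average of endpoint values). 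Each interior point receives weight \((\text{left gap}+\text{right gap})/2\geqslant\ell\), with end gaps \(\geqslant\ell\) and interior gaps \(=\ell\). Hence \(\ell\sum_{s}b(s)\leqslant\int_0^nb=i(n-i)/2\leqslant n^2/8\). This trapezoid argument in fact applies directly to \(S\) itself with gaps \(\geqslant\ell\), without any progression reduction, and I expect this to be the whole proof. The only care needed is checking the end gaps \(s_1-0\geqslant\ell\) and \(n-s_k\geqslant\ell\), and that \(b\geqslant0\), so that dropping nothing loses validity.
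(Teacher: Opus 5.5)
Your final argument is correct, and it takes a different and shorter route than the paper. You fix the row \(i\) and use concavity in the column variable. The function \(b(x)=\min\{x(n-i),i(n-x)\}/n\) is a nonnegative concave tent with \(b(0)=b(n)=0\) and \(b(j)=B_{ij}\). With \(s_0=0\) and \(s_{k+1}=n\), the trapezoid rule on the partition \(0=s_0<s_1<\cdots<s_k<s_{k+1}=n\) underestimates \(\int_0^n b\). Each point \(s_m\) of \(S\) carries weight \((s_{m+1}-s_{m-1})/2\geqslant\ell\): the interior gaps are at least \(\ell\) by separation, and the end gaps by \(S\subseteq\{\ell,\ldots,n-\ell\}\). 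Hence
\[
 \ell\sum_{j\in S}B_{ij}\leqslant\int_0^n b=\frac{i(n-i)}{2}\leqslant\frac{n^2}{8}.
\]

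The paper instead uses concavity in the row variable. It interpolates \(i\mapsto\sum_{j\in S}B_{ij}\) and notes that the maximum is attained at a breakpoint \(x\in S\). It then bounds the partial sums \(\sum_{j\in S,\,j\leqslant x}j\) and \(\sum_{j\in S,\,j>x}(n-j)\) by an explicit packing count, pushing the points as far right, respectively left, as separation allows. This gives \(x(n-x)/(2\ell)\) at the maximiser.

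Your version avoids the reduction to a maximiser and the packing computation. It also yields the pointwise bound \(i(n-i)/(2\ell)\) for every row. It uses only that each row is a nonnegative concave function vanishing at \(0\) and \(n\). The paper's proof is a purely discrete computation tied to the explicit form of \(B\).

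In a write-up, drop the abandoned window and progression attempts. Also say that the partition has gaps \emph{at least} \(\ell\), not ``mesh at most \(\ell\)''.
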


\begin{proof}
Let \(F\) be the piecewise-affine interpolation on \([0,n]\) of
\(F(i)=\sum_{j\in S}B_{ij}\) for \(1\leqslant i<n\), with
\(F(0)=F(n)=0\). Each summand
is a tent function whose slope decreases at \(j\), so \(F\) is concave
and piecewise affine. If \(S\ne\varnothing\), its maximum is attained
at some \(x\in S\): when the maximum is attained on an interval, an
endpoint of a maximal affine segment is a breakpoint in \(S\).
The case \(S=\varnothing\) is immediate.

Fix such an \(x\), and put
\[
 m=|S\cap[\ell,x]|,\qquad
 s=|S\cap(x,n-\ell]|.
\]
The separation and endpoint restrictions imply
\[
 m\leqslant\frac{x}{\ell},\qquad
 s\leqslant\frac{n-x}{\ell}-1.
\]
By the separation condition, the left-hand sum is maximised when the points
are placed as far to the right as possible.  Thus
\begin{equation}
 \sum_{\substack{j\in S\\j\leqslant x}}j
 \leqslant mx-\frac{\ell m(m-1)}2
 \leqslant\frac{x^2}{2\ell}+\frac{x}{2}.
 \label{eq:left-packing}
\end{equation}
Similarly, the right-hand sum is maximised when the points are placed as far
to the left as possible, and
\begin{equation}
 \sum_{\substack{j\in S\\j>x}}(n-j)
 \leqslant s(n-x)-\frac{\ell s(s+1)}2
 \leqslant\frac{(n-x)^2}{2\ell}-\frac{n-x}{2}.
 \label{eq:right-packing}
\end{equation}
The first quadratic is increasing for \(0\leqslant m\leqslant x/\ell\), and
the second for \(0\leqslant s\leqslant(n-x)/\ell-1\).  The stated bounds
follow by substituting the corresponding real upper limits.

Using \eqref{eq:green} to express \(F(x)\) in terms of these two sums, we obtain
\[
 \begin{aligned}
 F(x)
 &=\frac{n-x}{n}
   \sum_{\substack{j\in S\\j\leqslant x}}j
  +\frac{x}{n}
   \sum_{\substack{j\in S\\j>x}}(n-j)\leqslant\frac{x(n-x)}{2\ell}
 \leqslant\frac{n^2}{8\ell}.
 \end{aligned}
\]
This proves \eqref{eq:green-packing}.
\end{proof}

For the boundary indices \(1\leqslant j\leqslant d\), simplicity gives
\(q_j\geqslant j(d-j+1)\), while \eqref{eq:green} gives
\(B_{ij}\leqslant j\).  Therefore
\begin{equation}
 \sum_{j\in J_-}\frac{B_{ij}}{q_j}
 \leqslant\sum_{j=1}^{d}\frac1{d-j+1}=H_d.
 \label{eq:left-endpoint}
\end{equation}
Applying the same argument to complementary shores yields the same
bound on \(J_+\).

On \(J_0\), Lemma~\ref{lem:fractional-packing} expresses \(\mathbf w\) as a convex
combination of indicator vectors of \(\ell\)-separated sets.  Lemma~\ref{lem:green-packing}
then gives
\begin{equation}
 \sum_{j\in J_0}\frac{B_{ij}}{q_j}
 \leqslant\theta_{d,\kappa}\frac{n^2}{8(d+1)}.
 \label{eq:central-green}
\end{equation}
\begin{samepage}
Substituting these bounds for \(J_-\), \(J_0\), and \(J_+\) into
\eqref{eq:green-row-sum}, and using \eqref{eq:weighted-dirichlet}, gives
the following result.
\nopagebreak[4]

\begin{proposition}\label{prop:green-finite}
Let \(1\leqslant\kappa<d\) be integers, and let \(G\) be a connected simple
\(d\)-regular graph of order \(n\) and edge-connectivity at least \(\kappa\).
Suppose that \(d<\lfloor n/2\rfloor\).  Then
\begin{align}
 \mu(G)&\geqslant
 \left(\theta_{d,\kappa}\frac{n^2}{8(d+1)}+2H_d\right)^{-1},\notag\\
 \frac{n^2\mu(G)}d&\geqslant
 \frac{8(d+1)/d}
 {\theta_{d,\kappa}+16(d+1)H_d/n^2}.
 \label{eq:green-bound-normalised}
\end{align}
\end{proposition}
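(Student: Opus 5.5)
The plan is to assemble the three pieces already prepared: the weighted Dirichlet reduction \eqref{eq:weighted-dirichlet}, the Collatz--Wielandt row-sum bound \eqref{eq:green-row-sum}, and the three partial estimates \eqref{eq:left-endpoint}, its mirror image on \(J_+\), and \eqref{eq:central-green}. Fix a Fiedler ordering and an index \(1\leqslant i<n\). The hypothesis \(d<\lfloor n/2\rfloor\) gives \(n\geqslant2\ell\) with \(\ell=d+1\). Hence \(J_-\), \(J_0\), \(J_+\) are disjoint and cover \(\{1,\ldots,n-1\}\), with \(J_0\) possibly a single index when \(n=2\ell\). Split the row sum \(\sum_j B_{ij}/q_j\) accordingly.

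For \(J_-\), combine \(B_{ij}\leqslant j\), which holds since \(\min(i,j)(n-\max(i,j))/n\leqslant j\), with the endpoint bound \(q_j\geqslant j(d-j+1)\) from Lemma~\ref{lem:interval-cut} (take \(i=0\)). This gives at most \(H_d\). For \(J_+\), apply the symmetry \(j\mapsto n-j\): both \(B\) and the complementary-shore bound \(q_j\geqslant(n-j)(d-n+j+1)\) are invariant under it, so this part is also at most \(H_d\).

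For \(J_0\), Lemma~\ref{lem:local-reciprocal-sum} shows that \(w_j=(\theta_{d,\kappa}q_j)^{-1}\) satisfies the interval constraints of Lemma~\ref{lem:fractional-packing} with parameter \(\ell\). Note that any short interval in \(J_0\) extends to \(d+1\) consecutive indices inside \(\{1,\ldots,n-1\}\), since \(n-1\geqslant 2d+1\). So \(\mathbf w=\sum_S\lambda_S\one_S\) is a convex combination over \(\ell\)-separated sets \(S\subseteq J_0=\{\ell,\ldots,n-\ell\}\). By linearity and Lemma~\ref{lem:green-packing},
\[
\sum_{j\in J_0}\frac{B_{ij}}{q_j}=\theta_{d,\kappa}\sum_S\lambda_S\sum_{j\in S}B_{ij}\leqslant\theta_{d,\kappa}\frac{n^2}{8\ell}.
\]

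Adding the three parts bounds every row sum by \(\theta_{d,\kappa}n^2/(8(d+1))+2H_d\). By \eqref{eq:green-row-sum} this bounds \(\lambda_D(\mathsf R)^{-1}\), and \eqref{eq:weighted-dirichlet} then gives the first inequality. The second follows by multiplying through by \(n^2/d\) and dividing numerator and denominator by \(n^2/(8(d+1))\), which turns \(2H_d\) into \(16(d+1)H_d/n^2\).

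Almost all of this is bookkeeping, so I expect no real obstacle. The only points I would check carefully are the following. The Collatz--Wielandt step needs \(\mathsf R^{1/2}\mathsf B\mathsf R^{1/2}\) to be entrywise nonnegative, which holds because \(q_j\geqslant1\). The complementary endpoint bound must hold for every \(j\in J_+\). The extension argument for intervals in \(J_0\) requires \(n\geqslant 2\ell\), which is exactly what \(d<\lfloor n/2\rfloor\) supplies.
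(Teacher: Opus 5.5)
Your proposal is correct and follows the paper's own argument exactly. The paper likewise inserts \eqref{eq:left-endpoint}, its mirror bound on \(J_+\), and \eqref{eq:central-green} (obtained from Lemmas~\ref{lem:fractional-packing} and~\ref{lem:green-packing}) into the Collatz--Wielandt bound \eqref{eq:green-row-sum}, then applies \eqref{eq:weighted-dirichlet}. One small wording point: on \(J_+\) the relevant symmetry is the simultaneous reflection \((i,j)\mapsto(n-i,n-j)\), or you can simply use \(B_{ij}\leqslant n-j\) directly.
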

\end{samepage}

\begin{corollary}\label{cor:edge-connectivity-growing}
Let \(\kappa\) be a positive integer, and let \((G_m)\) be a sequence of
connected simple \(d_m\)-regular graphs of orders \(n_m\).  Suppose that
every \(G_m\) has edge-connectivity at least \(\kappa\), that
\(d_m\to\infty\), and that \(d_m<\lfloor n_m/2\rfloor\).  Then
\[
 \liminf_{m\to\infty}\frac{n_m^2\mu(G_m)}{d_m}\geqslant8\kappa.
\]
\end{corollary}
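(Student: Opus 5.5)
The plan is to apply Proposition~\ref{prop:green-finite} to each \(G_m\) and pass to the limit in \eqref{eq:green-bound-normalised}. First I check the hypotheses of the proposition. Since \(d_m\to\infty\), for all sufficiently large \(m\) we have \(d_m>\kappa\), so \(1\leqslant\kappa<d_m\). The assumption \(d_m<\lfloor n_m/2\rfloor\) is given directly, and each \(G_m\) is connected, simple, \(d_m\)-regular and has edge-connectivity at least \(\kappa\). Hence, for all large \(m\),
\[
 \frac{n_m^2\mu(G_m)}{d_m}\geqslant
 \frac{8(d_m+1)/d_m}{\theta_{d_m,\kappa}+16(d_m+1)H_{d_m}/n_m^2}.
\]

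Next I control the two terms in the denominator. By \eqref{eq:theta-kappa-asymptotic}, \(\theta_{d_m,\kappa}=1/\kappa+O_\kappa(\log d_m/d_m)\), which tends to \(1/\kappa\) because \(d_m\to\infty\) with \(\kappa\) fixed. For the second term, \(d_m<n_m/2\) gives \(n_m>2d_m\). Using \(H_d\leqslant1+\log d\), we get
\[
 \frac{16(d_m+1)H_{d_m}}{n_m^2}\leqslant\frac{16(d_m+1)(1+\log d_m)}{4d_m^2},
\]
and this tends to \(0\). The numerator \(8(d_m+1)/d_m\) tends to \(8\). Therefore the right-hand side tends to \(8/(1/\kappa)=8\kappa\), and taking \(\liminf\) gives the claim.

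There is no genuine obstacle here, since the work is already done in Proposition~\ref{prop:green-finite}. The only points needing care are that the proposition requires \(\kappa<d\), which holds only eventually (harmless for a \(\liminf\)), and that the boundary term \(2H_d\) must become negligible against \(n^2/d\). That second point uses \(n_m\geqslant2d_m\), which gives \(d\log d/n^2=O(\log d/d)\to0\).
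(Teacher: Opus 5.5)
Your proposal is correct and follows the paper's proof: apply Proposition~\ref{prop:green-finite} (eventually, once \(\kappa<d_m\)), bound the boundary term using \(n_m\gtrsim 2d_m\) so that \(16(d_m+1)H_{d_m}/n_m^2=O(\log d_m/d_m)\to0\), and use \(\theta_{d_m,\kappa}\to1/\kappa\). The only cosmetic difference is that the paper uses the slightly sharper \(n_m\geqslant2(d_m+1)\), obtaining the bound \(4H_{d_m}/(d_m+1)\).
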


\begin{proof}
The degree assumption gives \(n_m\geqslant2(d_m+1)\), and hence
\[
 \frac{16(d_m+1)H_{d_m}}{n_m^2}
 \leqslant\frac{4H_{d_m}}{d_m+1}=o(1).
\]
Using \eqref{eq:theta-kappa-asymptotic} in
\eqref{eq:green-bound-normalised} proves the assertion.
\end{proof}

It remains to consider \(d\geqslant\lfloor n/2\rfloor\), for which we use the
complement graph.
The complement \(\overline G\) is \((n-1-d)\)-regular, and
\(\mathsf A(G)=-\mathsf I-\mathsf A(\overline G)\) on the subspace
perpendicular to \(\one\).
Every eigenvalue of
\(\mathsf A(\overline G)\) is at least \(-(n-1-d)\).  The Rayleigh principle on
\(\one^\perp\) therefore gives
\[
 \mu(G)\geqslant2d-n+2.
\]

In particular, if \(d\geqslant\lfloor n/2\rfloor\), then \(\mu(G)\geqslant1\)
and
\begin{equation}
 \frac{n^2\mu(G)}d\geqslant\frac{n^2}{n-1}.
 \label{eq:dense-normalised}
\end{equation}
Thus this quantity tends to infinity along every sequence for which
\(d\to\infty\).

\subsection{Sharp bounds in growing degree}

\begin{theorem}\label{thm:edge-connectivity-intro}
Let \(\kappa\) be a positive integer, and let \((G_m)\) be a sequence of
connected simple \(d_m\)-regular graphs of orders \(n_m\).  Suppose that
\(d_m\to\infty\) and that every \(G_m\) has edge-connectivity at least
\(\kappa\).  Then
\[
 \liminf_{m\to\infty}n_m^2\lambda(G_m)\geqslant8\kappa.
\]
For each fixed \(\kappa\), the constant \(8\kappa\) is best possible.
\end{theorem}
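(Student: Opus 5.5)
The lower bound comes from combining the two regimes already treated, and the sharpness from an explicit family of two nearly complete graphs joined by \(\kappa\) edges. Since \(\lambda(G_m)=\mu(G_m)/d_m\), the quantity in question is \(n_m^2\mu(G_m)/d_m\).

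\emph{Lower bound.} I would pass to a subsequence realising the \(\liminf\) and split it into two further subsequences.
\begin{enumerate}[(i)]
\item Along indices with \(d_m<\lfloor n_m/2\rfloor\), Corollary~\ref{cor:edge-connectivity-growing} applies directly and gives \(\liminf n_m^2\mu(G_m)/d_m\geqslant8\kappa\). Its hypothesis \(\kappa<d_m\) holds eventually because \(d_m\to\infty\).
\item Along indices with \(d_m\geqslant\lfloor n_m/2\rfloor\), the complement bound \eqref{eq:dense-normalised} gives \(n_m^2\mu(G_m)/d_m\geqslant n_m^2/(n_m-1)\). This tends to infinity because \(n_m>d_m\to\infty\).
\end{enumerate}
Together these give the inequality. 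No new estimate is needed; the only care is that \(\kappa<d_m\) holds eventually.

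\emph{Sharpness.} For fixed \(\kappa\) and large even \(d\), I would take two disjoint copies of \(K_{d+1}\). In each copy, delete a matching of \(\kappa\) edges, or a suitable \(\kappa\)-regular spanning-free adjustment when parity requires it. This frees \(2\kappa\) endpoints, each of degree \(d-1\), on each side. I would then add \(\kappa\) cross edges pairing these endpoints so that the graph is \(d\)-regular, or use \(2\kappa\) cross edges with a slight modification. The cleaner route is to delete a matching of size \(\kappa\) from each \(K_{d+1}\) and join the two sides by \(2\kappa\) cross edges. That gives edge-connectivity \(2\kappa\) with \(n=2d+2\), which is the wrong count.

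So the construction has to be arranged more carefully. I would delete from each \(K_{d+1}\) a set of edges so that exactly \(\kappa\) vertices have degree \(d-1\) and all others have degree \(d\). For even \(\kappa\) a matching of size \(\kappa/2\) works. For odd \(\kappa\), I would use a path-type deletion on the clique of size \(d+2\), with the orders adjusted by \(O(1)\). Joining the \(\kappa\) deficient vertices across by a perfect pairing then gives a \(d\)-regular graph of order \(n=2d+O(1)\) with a cut of size exactly \(\kappa\). Its edge-connectivity is at least \(\kappa\), since any other cut inside a near-clique has size about \(d\).

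For the upper bound on \(\mu\), I would use the test function equal to \(+1\) on one side and \(-1\) on the other. It is centred, with energy \(4\kappa\) and squared norm \(n\), so \(\mu\leqslant4\kappa/n\). Hence
\[
n^2\mu/d\leqslant 4\kappa n/d=4\kappa(2d+O(1))/d\to8\kappa .
\]

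\emph{Main obstacle.} The step I expect to be most delicate is the parity bookkeeping in this construction. A graph that is \(d\)-regular on each side needs \(\kappa\equiv d(d+1+r)\pmod 2\) for the side orders, so \(d\) and the side orders must be chosen to match. When \(\kappa\) is odd, \(d\) should be even, and then the side order must be adjusted, which only perturbs \(n\) by \(O(1)\). Every such choice keeps \(n\sim2d\), so the limit \(8\kappa\) is unaffected.
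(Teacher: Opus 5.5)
Your lower bound is exactly the paper's argument: Corollary~\ref{cor:edge-connectivity-growing} on the indices with \(d_m<\lfloor n_m/2\rfloor\), and \eqref{eq:dense-normalised} on the rest. Your even-\(\kappa\) construction also matches Proposition~\ref{prop:edge-connectivity-sharp}: two copies of \(K_{d+1}\), a perfect matching deleted on \(\kappa\) chosen vertices in each, and the chosen sets joined by a perfect matching. The \(\pm1\) test function is likewise the paper's.

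\textbf{The gap is in the odd-\(\kappa\) case.} Your closing claim that for odd \(\kappa\) ``\(d\) should be even'' is backwards, and with \(d\) even the required graphs do not exist. Every edge cut in a \(d\)-regular graph satisfies \(|\partial A|=d|A|-2e(A)\). So for even \(d\) all cuts have even size, and no choice of side order can produce a cut of odd size \(\kappa\). For such graphs, edge-connectivity at least \(\kappa\) in fact forces at least \(\kappa+1\), and the lower bound improves to \(8(\kappa+1)\). Your own congruence \(\kappa\equiv d(d+1+r)\pmod 2\) already forces both \(d\) and \(r\) to be odd. Sharpness for odd \(\kappa\) must therefore be shown along odd degrees, as the paper does.

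\textbf{The repair.} Take \(d\) odd, two copies of \(K_{d+2}\), and a \(\kappa\)-set \(X_i\) in the \(i\)th copy. In each copy delete a spanning subgraph with degree two at the vertices of \(X_i\) and degree one elsewhere. This exists exactly because its degree sum \(\kappa+d+2\) is even. For instance, use a cycle on \(X_i\) plus a perfect matching on the other \(d+2-\kappa\) vertices when \(\kappa\geqslant3\). For \(\kappa=1\), use a path of length two centred at the chosen vertex plus a perfect matching on the remaining \(d-1\) vertices. Your path-type deletion also works: a path with interior \(X_i\) and two unchosen endpoints, plus a perfect matching on the remaining \(d-\kappa\) vertices. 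Then join \(X_1\) to \(X_2\) by a perfect matching, giving a \(d\)-regular graph of order \(2d+4\).

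\textbf{Making ``about \(d\)'' precise.} A cut splitting one half, of order \(N=d+2\), into parts of sizes \(s\leqslant N-s\) contains at least \(s(N-s)-2s\geqslant N-3=d-1\) edges of that half. This uses only that the deleted subgraph has maximum degree two, and \(d-1\) exceeds \(\kappa\) for large \(d\). The only cut splitting neither half is the \(\kappa\)-edge cut. The \(\pm1\) test function then gives \(n^2\mu/d\leqslant4\kappa(2d+4)/d\to8\kappa\).
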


The edge-connectivity bound is sharp for every fixed \(\kappa\).

\begin{proposition}\label{prop:edge-connectivity-sharp}
Let \(\kappa\) be a positive integer. For every sufficiently large
integer \(d\), with \(d\) odd when \(\kappa\) is odd, there is a
connected simple \(d\)-regular graph \(G_{d,\kappa}\) with
edge-connectivity \(\kappa\). These graphs have order \(2d+2\)
for even \(\kappa\) and \(2d+4\) for odd \(\kappa\), and satisfy
\[
 \frac{|V(G_{d,\kappa})|^2\mu(G_{d,\kappa})}{d}
 \longrightarrow8\kappa
 \qquad(d\to\infty).
\]
\end{proposition}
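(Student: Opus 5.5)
We build two nearly complete graphs of order about \(n/2\) joined by exactly \(\kappa\) edges, and bound \(\mu\) from above by a two-valued test vector. Corollary~\ref{cor:edge-connectivity-growing} already gives \(\liminf n^2\mu/d\geqslant8\kappa\), so only the upper bound has to be checked.

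\textbf{Even \(\kappa\).} Take two disjoint copies of \(K_{d+1}\). In each copy, delete a matching of \(\kappa/2\) edges; this is possible for \(d\) large. Each copy now has \(\kappa\) vertices of degree \(d-1\). Pair these deficient vertices across the two copies with \(\kappa\) new edges, choosing the pairing so that no multiple edges arise. The result is \(d\)-regular of order \(n=2d+2\). Its edge-connectivity is \(\kappa\): any cut splitting a side \(K_{d+1}-M\) has size of order \(d\), and the only remaining cut is the one between the sides, of size \(\kappa\).

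\textbf{Odd \(\kappa\).} Now \(d\) is odd. An odd cut in a \(d\)-regular graph needs each side to have an odd number of vertices, since \(d|S|-2e(S)=\kappa\). So take sides of order \(d+2\), which is odd. On \(d+2\) vertices we want a graph with \(\kappa\) vertices of degree \(d-1\) and \(d+2-\kappa\) vertices of degree \(d\). Its complement has \(\kappa\) vertices of degree \(2\) and the rest of degree \(1\). Such a complement exists: for instance, a path whose \(\kappa\) interior vertices have degree \(2\) and whose two endpoints have degree \(1\), together with a perfect matching on the remaining \(d-\kappa\) vertices. The parity works because \(d-\kappa\) is even. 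Join the two sides by a matching on the deficient vertices. This gives order \(2d+4\), and edge-connectivity \(\kappa\) follows as before, because each side misses only a sparse graph from a clique.

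\textbf{Upper bound on \(\mu\).} Use the test vector \(x=+1\) on one side and \(-1\) on the other. It is centred because the sides have equal size, and its Rayleigh quotient is \(4\kappa/n\). So \(\mu\leqslant 4\kappa/n\), giving
\[
 \frac{n^2\mu}{d}\leqslant\frac{4\kappa n}{d}=\frac{4\kappa(2d+O(1))}{d}\longrightarrow8\kappa.
\]
Together with Corollary~\ref{cor:edge-connectivity-growing}, whose hypothesis \(d<\lfloor n/2\rfloor\) holds here since \(n\geqslant2d+2\), this gives the limit \(8\kappa\).

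The main obstacle is the existence of the degree sequences with simple edges in both parities; the construction of the complement handles it. The edge-connectivity claim also needs a short argument: any cut \(S\) that meets a side \(A\) in \(s\) vertices with \(1\leqslant s<|A|\) has at least \(s(|A|-s)-2s>\kappa\) edges when \(d\) is large.
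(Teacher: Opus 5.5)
Your proposal is correct and follows the paper's proof essentially step for step. Both use two near-cliques of order $d+1$ (even $\kappa$) or $d+2$ (odd $\kappa$) joined by a $\kappa$-matching, the $\pm1$ test vector giving $\mu\leqslant4\kappa/n$, and Corollary~\ref{cor:edge-connectivity-growing} for the matching lower limit. Your odd-$\kappa$ complement, a path with $\kappa$ interior vertices plus a perfect matching, is a harmless uniform variant of the paper's cycle-plus-matching ($P_3$-plus-matching when $\kappa=1$).

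The only slip is in the connectivity count: $s(|A|-s)-2s$ is negative for $s$ near $|A|$. Take $s$ to be the smaller part, as the paper does, so that $s(|A|-s-2)\geqslant|A|-3>\kappa$ for large $d$.
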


\begin{proof}
Suppose first that \(\kappa\) is even.  Take two copies of \(K_{d+1}\).  In
each copy choose \(\kappa\) vertices, delete a perfect matching on them, and
join the two chosen sets by a perfect matching.  The resulting graph is
\(d\)-regular and has \(2d+2\) vertices.

Now suppose that \(\kappa\) is odd, and take \(d\) odd.  Begin with two
copies of \(K_{d+2}\), and choose a \(\kappa\)-set \(X_i\) in the \(i\)th
copy, for \(i=1,2\).  If \(\kappa\geqslant3\), delete a cycle on each
\(X_i\) and a perfect matching on its complement.  If \(\kappa=1\), delete
in each copy a path of length two whose middle vertex is the member of
\(X_i\), together with a perfect matching on the remaining \(d-1\) vertices.
Finally, join \(X_1\) and \(X_2\) by a perfect matching.  Each chosen vertex
loses two internal edges and gains one cross
edge, while every other vertex loses one internal edge.  The resulting graph
is \(d\)-regular and has \(2d+4\) vertices.

We verify the edge-connectivity.  In the even construction, the subgraph
induced by either half is \(K_N\) with a matching deleted, where \(N=d+1\).
If a cut splits this half, let \(s\) be the size of the smaller part into which
the half is split.  The number of cut edges within the half is at least
\[
 s(N-s)-s=s(N-s-1)\geqslant N-2=d-1.
\]
In the odd construction, the deleted graph has
maximum degree two and \(N=d+2\).  The corresponding bound is
\[
 s(N-s)-2s=s(N-s-2)\geqslant N-3=d-1.
\]
The last inequalities follow respectively from
\((s-1)(N-s-2)\geqslant0\) and
\((s-1)(N-s-3)\geqslant0\), for the sufficiently large degrees under
consideration.  Once \(d-1\geqslant\kappa\), every cut that separates vertices
within a half has size at least \(\kappa\).  If neither half is split, the cut is
the one between the two halves and has size \(\kappa\).  Thus the
edge-connectivity is exactly \(\kappa\).

Give the two halves the values \(1\) and \(-1\).  This centred test function
has energy \(4\kappa\), and hence
\[
 \mu(G_{d,\kappa})\leqslant
 \frac{4\kappa}{|V(G_{d,\kappa})|}.
\]
Since \(|V(G_{d,\kappa})|/d\to2\), the upper limit is at most
\(8\kappa\).  Corollary~\ref{cor:edge-connectivity-growing} gives the
matching lower limit.
\end{proof}

Corollary~\ref{cor:edge-connectivity-growing} and
\eqref{eq:dense-normalised} give the lower bound in
Theorem~\ref{thm:edge-connectivity-intro}, while
Proposition~\ref{prop:edge-connectivity-sharp} proves sharpness.

We next separate the extremal degrees from all remaining degrees.

\begin{theorem}\label{thm:noncubic-gap}\label{thm:even-nonquartic-gap}
Let \(\varepsilon>0\). Then, for all sufficiently large \(n\), every
connected simple \(d\)-regular graph \(G\) of order \(n\) satisfies
\begin{align*}
 n^2\lambda(G)&\geqslant\frac{4\pi^2}{5}-\varepsilon,
       &&d\ne3,\\
 n^2\lambda(G)&\geqslant\frac{4\pi^2}{3}-\varepsilon,
       &&d\text{ even},\quad d\ne4.
\end{align*}
\end{theorem}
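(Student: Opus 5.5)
The plan is to argue by contradiction and split according to the size of the degree. Suppose the first bound fails. Then there are \(\varepsilon>0\) and a sequence of connected \(d_m\)-regular graphs \(G_m\) of orders \(n_m\to\infty\), with \(d_m\neq3\) and \(n_m^2\lambda(G_m)<4\pi^2/5-\varepsilon\). Pass to a subsequence on which either \(d_m\to\infty\), or \(d_m=d\) is a fixed value. (Note \(d\geqslant2\), and \(d=2\) only for the cycle.)

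\emph{Growing degree.} Connectedness gives edge-connectivity at least one. If \(d_m<\lfloor n_m/2\rfloor\), Corollary~\ref{cor:edge-connectivity-growing} gives \(\liminf n_m^2\lambda(G_m)\geqslant8\). Otherwise \eqref{eq:dense-normalised} makes \(n_m^2\lambda(G_m)\) unbounded. Since \(8>4\pi^2/3>4\pi^2/5\), this case is impossible.

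For the second bound I would sharpen this using parity. Every edge cut in an even-regular graph has even size, so the edge-connectivity is at least two. The bound \(16>4\pi^2/3\) then disposes of even growing degree as well.

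\emph{Fixed degree.} For \(d=2\), \(G=C_n\) and \(n^2\lambda=n^2(1-\cos(2\pi/n))\to2\pi^2\), which exceeds both thresholds. For fixed \(d\geqslant4\), Theorem~\ref{thm:fixed-regular-sharp} gives \(n^2\mu(G)\geqslant(c_d-o(1))\pi^2\), so
\[
 n^2\lambda(G)\geqslant\Bigl(\frac{c_d}{d}-o(1)\Bigr)\pi^2 .
\]
It remains to minimise \(c_d/d\) over the admissible degrees.
\begin{itemize}
 \item For odd \(d\geqslant5\), \(c_d/d=(d-1)/d\geqslant4/5\).
 \item For even \(d\geqslant4\), \(c_d/d=2(d-2)/d\), which is increasing in \(d\). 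It equals \(1\) at \(d=4\) and \(4/3\) at \(d=6\).
\end{itemize}
The minimum over \(d\neq3\) is therefore \(4/5\), attained at \(d=5\). Over even \(d\neq4\) the minimum is \(\min\{4/3,\,2\}=4/3\), where the value \(2\) is the one coming from \(d=2\).

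The main subtlety is uniformity. Theorem~\ref{thm:fixed-regular-sharp} holds for each fixed \(d\) with an \(o(1)\) depending on \(d\), while the statement requires uniformity over all \(d\). The subsequence argument handles this: any counterexample sequence either has unbounded degree, covered by the growing-degree bound, or a bounded degree, in which case it has a constant-degree subsequence and the fixed-degree bound applies.

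A second point to check is that Corollary~\ref{cor:edge-connectivity-growing} requires \(\kappa<d\). This holds for large \(d\), with \(\kappa\in\{1,2\}\).
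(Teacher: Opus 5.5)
Your proposal is correct and follows essentially the same route as the paper: pass to a subsequence along which the degree is fixed or tends to infinity. For bounded degree you use the cycle limit $2\pi^2$ and the fixed-degree constants $c_d/d$, and for growing degree the edge-connectivity bound with $\kappa=1$, or $\kappa=2$ from even cut parity. One small slip: $8>4\pi^2/3$ is false ($4\pi^2/3\approx13.16$), but the first bound only needs $8>4\pi^2/5\approx7.90$, and for the second you correctly use $16>4\pi^2/3$.
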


\begin{proof}[Proof of Theorem~\ref{thm:noncubic-gap}]
Suppose that the first assertion fails along a sequence of orders
\(n_m\to\infty\).  After passing to a subsequence, the degrees are
fixed or tend to infinity.  A fixed degree is two, four, or at least
five.  Cycles give \(n_m^2\lambda(G_m)\to2\pi^2\); quartic graphs
have lower limit at least \(\pi^2\); and
Theorem~\ref{thm:minimum-degree-bound} gives lower limit at least
\((d-1)\pi^2/d\geqslant4\pi^2/5\) for \(d\geqslant5\).
If the degrees tend to infinity,
Theorem~\ref{thm:edge-connectivity-intro}, with \(\kappa=1\), gives
lower limit at least \(8>4\pi^2/5\).  Every case is a contradiction.
The degree-five chains show that the first constant is sharp.

For the second assertion, restrict to even degrees other than four.
A fixed degree is two, giving the cycle limit \(2\pi^2\), or at least
six, when Theorem~\ref{thm:fixed-regular-sharp} gives lower limit
\(2(d-2)\pi^2/d\geqslant4\pi^2/3\).  If the degrees tend to infinity, every cut has
positive even size, and Theorem~\ref{thm:edge-connectivity-intro} with
\(\kappa=2\) gives lower limit at least \(16>4\pi^2/3\).
The same subsequence argument proves the uniform assertion.
The degree-six chains show that this constant is sharp.
\end{proof}

\begin{proof}[Proof of \eqref{eq:intro-AF}]
The explicit bound in Corollary~\ref{cor:cubic-explicit}, together with
\(n^2\mu(P_n)\to\pi^2\), gives the required bound for all sufficiently large
cubic graphs.  For every other degree, apply
Theorem~\ref{thm:noncubic-gap} with
\(\varepsilon'=4\pi^2/5-(1-\varepsilon)2\pi^2/3>0\).
Taking the larger of the two thresholds proves the uniform assertion.
\end{proof}

\subsection{Asymptotic equality}

In the sharp construction, a minimum cut separates two shores of order
\(d+O(1)\).  We show that asymptotic equality forces a unique minimum cut
with shores of order \(d+o(d)\).

\Needspace{16\baselineskip}
\begin{theorem}\label{thm:growing-stability}
Let \(\kappa\) be a positive integer, and let \((G_m)\) be a sequence of
connected simple \(d_m\)-regular graphs of orders \(n_m\), each with
edge-connectivity at least \(\kappa\), where \(d_m\to\infty\).  Then
\[
 n_m^2\lambda(G_m)\longrightarrow8\kappa
\]
if and only if, for all sufficiently large \(m\), the graph \(G_m\) has an
edge cut of size \(\kappa\), with shores \(A_m,B_m\) satisfying
\[
 |A_m|=d_m+o(d_m),\qquad |B_m|=d_m+o(d_m).
\]
\end{theorem}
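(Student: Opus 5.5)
The two directions are handled separately.

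\emph{Sufficiency.} This direction is a test-function computation. Assume $G_m$ has an edge cut of size $\kappa$ with shores $A_m,B_m$ of orders $d_m+o(d_m)$. Take the centred two-valued function equal to $|B_m|$ on $A_m$ and $-|A_m|$ on $B_m$. Its energy is $\kappa n_m^2$ and its squared norm is $|A_m||B_m|n_m$. The Rayleigh principle then gives
\[
 \mu(G_m)\leqslant\frac{\kappa n_m}{|A_m||B_m|}.
\]
Since $n_m=|A_m|+|B_m|=2d_m(1+o(1))$, this yields
\[
 n_m^2\lambda(G_m)\leqslant\frac{\kappa n_m^3}{d_m|A_m||B_m|}=8\kappa(1+o(1)).
\]
Theorem~\ref{thm:edge-connectivity-intro} gives the matching lower limit, so $n_m^2\lambda(G_m)\to8\kappa$.

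\emph{Necessity.} Here we go back through the chain of inequalities in Proposition~\ref{prop:green-finite} and extract structure from near-equality.

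First, \eqref{eq:dense-normalised} shows that $n^2\mu/d\to\infty$ when $d\geqslant\lfloor n/2\rfloor$. So along our sequence $d_m<\lfloor n_m/2\rfloor$, and \eqref{eq:green-bound-normalised} applies. Using \eqref{eq:theta-kappa-asymptotic}, the bound reads
\[
 \frac{n^2\mu}{d}\geqslant\frac{8(d+1)/d}{1/\kappa+o(1)+16(d+1)H_d/n^2}.
\]
If $n_m\geqslant(2+\delta)d_m$ along a subsequence, then the chain
\[
 \mu\,\mathbf h^{\mathsf T}\mathsf R\mathbf h\geqslant\|f\|^2,\qquad \lambda_D\geqslant\frac{8\ell}{\theta n^2},
\]
together with Lemma~\ref{lem:green-packing}, must be nearly tight. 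The bound $F(x)\leqslant x(n-x)/(2\ell)$ is attained only when the $\ell$-separated sets are densely packed. The row-sum bound for the weights $w_j=1/(\theta q_j)$ then forces $w_j$ to be close to a periodic packing with period $\ell$ across a macroscopic range of indices. That would mean many cuts of size about $\kappa$, spaced $d+1$ apart.

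The plan is to exploit the packing more sharply in this situation. The key step is a refinement of Lemma~\ref{lem:local-reciprocal-sum}. When the minimum cut in a window is about $\kappa$, the neighbouring windows have all $q_j\gtrsim d$ except at a single index. Hence a mass of $1/\kappa$ can recur only with gap at least $d+1$. Near-equality in Lemma~\ref{lem:green-packing} (I would record the slack $x(n-x)/(2\ell)-F(x)$ explicitly) then requires the low cuts to sit at positions $x\approx n/2$ with no room for others, unless $n\approx2\ell$. I would make this quantitative through a perturbed Rayleigh argument. If every low cut $q_j<2\kappa$ lies at distance $\geqslant\delta n$ from $n/2$, or if there are two such cuts, the Green row sums improve by a factor $1-c(\delta)$. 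Concretely, $\sum_j B_{ij}/q_j$ evaluated at the maximising $i$ loses a constant fraction, because $B_{ij}$ is close to its peak value $n/4$ only near $j=n/2$, and the $1/q_j$ mass away from low cuts is $O(\log d/d)$ per window.

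Next I conclude $n_m=2d_m+o(d_m)$. Writing the main term as $\sum_{j\in J_0}B_{ij}/q_j\leqslant\sum_{\text{low }j}B_{ij}/\kappa+o(n^2/d)$, the peak value $n/(4\kappa)$ together with the lower bound $n^2\mu/d\to8\kappa$ forces
\[
 \frac{n}{4\kappa}\geqslant\frac{n^2}{8\kappa d}(1-o(1)),
\]
that is, $n\leqslant(2+o(1))d$. Combined with $n\geqslant2(d+1)$, this gives $n=2d+o(d)$. The same inequality forces a cut of size exactly $\kappa$ at an index $j_0=n/2+o(n)$, since all other contributions are $o(n^2/d)$. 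Its shores $S_{j_0}$ and $V\setminus S_{j_0}$ then have orders $d+o(d)$, as required.

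\emph{Main obstacle.} The hardest part is the stability in Lemma~\ref{lem:green-packing} combined with the convex decomposition of Lemma~\ref{lem:fractional-packing}. Near-maximality of $\sum_j B_{ij}w_j$ has to force the weight to concentrate on a single index near $n/2$ carrying $w_j\approx1$, meaning $q_j=\kappa$ exactly because cut sizes are integers. I would try the direct estimate first. In the regime $n\leqslant Cd$, an $\ell$-separated set inside $[\ell,n-\ell]$ has $O(1)$ points, so the packing lemma reduces to bounding a few tent-function values, and this can be done by hand.
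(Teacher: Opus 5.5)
Your sufficiency direction is correct and is essentially the paper's: your test function is a rescaling of the paper's $1/|A_m|,\,-1/|B_m|$ function, and quoting Theorem~\ref{thm:edge-connectivity-intro} for the lower limit is legitimate. The necessity direction, however, has a genuine gap at the step where you derive $n_m\leqslant(2+o(1))d_m$.

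\textbf{Why the step fails.} The only consequence of $n^2\mu/d\to8\kappa$ that your argument actually uses is that the maximal Green row sum $\max_i\sum_jB_{ij}/q_j$ is at least $(1-o(1))n^2/(8\kappa d)$. This does not force a single low cut. Consider a chain of four near-complete graphs of order $d+O(1)$, with consecutive ones joined by $\kappa$ edges. Then $n\approx4d$, and the Fiedler ordering has cuts of size $\kappa$ near $n/4$, $n/2$ and $3n/4$. At $i=n/2$ the row sum is
\[
 \frac1\kappa\Bigl(\frac n8+\frac n4+\frac n8\Bigr)+O(\log d)
 =(1+o(1))\frac{n^2}{8\kappa d}.
\]
So Lemma~\ref{lem:green-packing} is asymptotically sharp here. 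The same holds for any chain with an even number of pieces, since densely packed $\kappa$-cuts at spacing $d+1$ are exactly its equality case, as you yourself observed.

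Consequently:
\begin{itemize}
\item your claimed improvement by a factor $1-c(\delta)$ when there are two or more low cuts is false;
\item the bound $\sum_{\text{low }j}B_{ij}\leqslant n/4$ behind ``the peak value $n/(4\kappa)$'' fails, since here the sum is $n/2$ although $n\approx4d$.
\end{itemize}
These chains have $n^2\lambda\to16(2-\sqrt2)\kappa>8\kappa$. All of the slack lies in the Collatz--Wielandt step \eqref{eq:green-row-sum}, which your argument never exploits.

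There is also a separate problem with your remainder term. A window containing a cut of size $q$ between $2\kappa$ and a fixed constant carries reciprocal mass at least $1/q$, not $O(\log d/d)$, by Lemma~\ref{lem:local-reciprocal-sum}. So the remainder is not $o(n^2/d)$ as you claim.

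\textbf{The missing idea.} You need to use near-equality in $\lambda_D(\mathsf R)^{-1}\leqslant\max_i\sum_jB_{ij}/q_j$ itself. That is, the Green potential must be nearly maximal on essentially all of the reciprocal-cut mass, not merely at one row. The paper does this as follows.
\begin{itemize}
\item It passes to a weak limit $\nu$ of $\nu_m=(d/n)\sum_iq_i^{-1}\delta_{i/n}$.
\item It obtains simultaneously $\Lambda(\nu)=8\kappa$ (via the normalised minimisers $U_m$) and $\sup_x\int b(x,y)\,d\nu(y)\leqslant1/(8\kappa)$.
\item Equality in the resulting Schur-test bound forces the Perron eigenfunction to be constant on $\operatorname{supp}\nu$. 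The concave potential then equals $1/(8\kappa)$ there, which leaves at most two atoms.
\item The case $\sigma=\lim d/n=0$ (long chains) is excluded by $\nu\leqslant\kappa^{-1}dx$ together with $\pi^2>8$.
\item The bounds $\operatorname{supp}\nu\subseteq[\sigma,1-\sigma]$ and $\nu(I)\leqslant\sigma/\kappa$ exclude two atoms and force $\sigma=1/2$ and $\nu=\delta_{1/2}/(2\kappa)$.
\end{itemize}
Once $n=2d+o(d)$ is known this way, your final integrality step, producing a cut of size exactly $\kappa$ near $n/2$ with shores of order $d+o(d)$, does work.
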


\begin{proof}[Proof of Theorem~\ref{thm:growing-stability}]
Suppose first that
\[
 \frac{n_m^2\mu(G_m)}{d_m}\longrightarrow8\kappa.
\]
Equation~\eqref{eq:dense-normalised} shows that
\(d_m<\lfloor n_m/2\rfloor\) for all sufficiently large \(m\).  For the
ordered cuts of \(G_m\), let \(\mathsf R_m\) be the diagonal matrix with
entries \(1/q_i\), and write
\(\Lambda_m=(n_m^2/d_m)\lambda_D(\mathsf R_m)\).
The proof of Proposition~\ref{prop:green-finite} and
\eqref{eq:weighted-dirichlet} give
\[
 8\kappa-o(1)\leqslant\Lambda_m
 \leqslant\frac{n_m^2\mu(G_m)}{d_m}.
\]
Hence
\begin{equation}
 \Lambda_m\longrightarrow8\kappa.
 \label{eq:weighted-equality-limit}
\end{equation}

To obtain a limiting weighted Dirichlet problem on \([0,1]\), consider
an arbitrary subsequence along which \(d_m/n_m\to\sigma\in[0,1/2]\).
Writing \(\delta_x\) for the unit point mass at \(x\in[0,1]\), define
\[
 \nu_m=\frac{d_m}{n_m}
 \sum_{i=1}^{n_m-1}\frac1{q_i}\,\delta_{i/n_m}.
\]
Lemma~\ref{lem:local-reciprocal-sum} shows that these measures have uniformly
bounded total mass.  Passing to a further subsequence, we may suppose that
\(\nu_m\) converges weakly to a finite measure \(\nu\).

Given \(u_0=u_{n_m}=0\), let \(U\) be the piecewise-affine function satisfying
\(U(i/n_m)=u_i\).  Then
\[
 \int_0^1|U'|^2
 =n_m\sum_{i=1}^{n_m}(u_i-u_{i-1})^2,
 \qquad
 \int_0^1U^2\,d\nu_m
 =\frac{d_m}{n_m}\sum_{i=1}^{n_m-1}\frac{u_i^2}{q_i}.
\]
Let \(H_0^1(0,1)\) be the space of absolutely continuous functions
on \([0,1]\) that vanish at both endpoints and have square-integrable
derivative. Thus \(\Lambda_m\) is the minimum of the quotient of
these integrals over nonzero functions in \(H_0^1(0,1)\) that are
affine on each grid interval.
Choose minimisers
\(U_m\) with \(\int U_m^2\,d\nu_m=1\).  Equation
\eqref{eq:weighted-equality-limit}, boundedness in \(H_0^1(0,1)\), and the
compact embedding into \(C[0,1]\) give, after passage to a subsequence,
\[
 U_m\longrightarrow U\quad\text{uniformly},
 \qquad U_m\rightharpoonup U\quad\text{in }H_0^1(0,1),
\]
and
\begin{equation}
 \int U^2\,d\nu=1,
 \qquad
 \int_0^1|U'|^2\leqslant8\kappa.
 \label{eq:limit-test-function}
\end{equation}

The Green-function bound also passes to the limit.  The Dirichlet Green
function on \([0,1]\) is
\[
 b(x,y)=\min\{x,y\}\bigl(1-\max\{x,y\}\bigr),
\]
and \(B_{ij}=n_m b(i/n_m,j/n_m)\).  The row-sum bound used in
Proposition~\ref{prop:green-finite} gives
\[
 \sup_{x\in[0,1]}\int b(x,y)\,d\nu_m(y)
 \leqslant
 \frac{d_m\theta_{d_m,\kappa}}{8(d_m+1)}
 +\frac{2d_mH_{d_m}}{n_m^2}
 =\frac1{8\kappa}+o(1).
\]
The potentials on the left are uniformly Lipschitz, so weak convergence
gives
\begin{equation}
 \sup_{x\in[0,1]}\int b(x,y)\,d\nu(y)\leqslant\frac1{8\kappa}.
 \label{eq:limit-green-bound}
\end{equation}

We show that \(\sigma>0\).  Suppose that \(\sigma=0\).  The grid points
in an open interval \(I\subseteq[0,1]\) can be covered by consecutive blocks
of at most \(d_m+1\) indices.  Lemma~\ref{lem:local-reciprocal-sum} and weak
convergence then give
\(\nu(I)\leqslant|I|/\kappa\).
Hence \(\nu\leqslant\kappa^{-1}dx\), and the Dirichlet Poincar\'e inequality
would imply
\[
 \int_0^1|u'|^2\geqslant\kappa\pi^2\int_0^1u^2\,d\nu
 \qquad(u\in H_0^1(0,1)).
\]
Taking \(u=U\) contradicts \eqref{eq:limit-test-function}, since \(\pi^2>8\).

For \(i\leqslant d_m\), simplicity gives
\(q_i\geqslant i(d_m-i+1)\), and the same bound holds at the other end
after taking complementary shores.  Each of the intervals
\([0,d_m/n_m]\) and \([1-d_m/n_m,1]\) therefore has \(\nu_m\)-measure at most
\(2d_mH_{d_m}/\bigl(n_m(d_m+1)\bigr)=o(1)\).  It follows that
\begin{equation}
 \operatorname{supp}\nu\subseteq[\sigma,1-\sigma].
 \label{eq:limit-measure-support}
\end{equation}
Moreover, if an interval \(I\) has length less than \(\sigma\), then a
slightly larger open interval contains at most \(d_m+1\) consecutive grid
points for all sufficiently large \(m\).
Lemma~\ref{lem:local-reciprocal-sum} and weak convergence therefore give
\begin{equation}
 \nu(I)\leqslant\frac{\sigma}{\kappa}.
 \label{eq:local-measure-bound}
\end{equation}
In particular, every atom of \(\nu\) has mass at most \(\sigma/\kappa\).

To determine \(\nu\), consider the least weighted Dirichlet eigenvalue
\[
 \Lambda(\nu)=
 \inf_{\substack{u\in H_0^1(0,1)\\ \int u^2\,d\nu>0}}
 \frac{\int_0^1 |u'(x)|^2\,dx}{\int_0^1 u(x)^2\,d\nu(x)}.
\]
\nopagebreak[4]
The restriction map from \(H_0^1(0,1)\), equipped with
the Dirichlet inner product, to \(L^2(\nu)\) has squared norm
\(\Lambda(\nu)^{-1}\).  Its composition with the adjoint is the integral
operator
\[
 \mathsf T_\nu\phi(x)=\int b(x,y)\phi(y)\,d\nu(y).
\]
By symmetry and \(2|ab|\leqslant a^2+b^2\), every real
\(\phi\in L^2(\nu)\) satisfies
\[
 |\langle\phi,\mathsf T_\nu\phi\rangle|
 \leqslant\int\phi(x)^2\left(\int b(x,y)\,d\nu(y)\right)d\nu(x)
 \leqslant\frac{\|\phi\|_{L^2(\nu)}^2}{8\kappa},
\]
where the last inequality is \eqref{eq:limit-green-bound}.
Thus \(\|\mathsf T_\nu\|\leqslant1/(8\kappa)\), and hence
\(\Lambda(\nu)\geqslant8\kappa\).  Equation
\eqref{eq:limit-test-function} gives the reverse inequality.  Consequently,
\begin{equation}
 \Lambda(\nu)=8\kappa.
 \label{eq:limit-weighted-equality}
\end{equation}

We now use equality in the Green-function bound to show that \(\nu\) is
supported on at most two points.
Let \(\phi\) be a nonnegative eigenfunction of \(\mathsf T_\nu\) corresponding
to its spectral radius \(1/(8\kappa)\). The identity
\(\phi=8\kappa\mathsf T_\nu\phi\) gives a continuous representative, which
we normalise by \(\max_{[0,1]}\phi=1\). At a maximum point \(x_0\), equations
\eqref{eq:limit-green-bound} and
\eqref{eq:limit-weighted-equality} give
\[
 \frac1{8\kappa}
 =\mathsf T_\nu\phi(x_0)
 \leqslant\mathsf T_\nu\one(x_0)
 \leqslant\frac1{8\kappa}.
\]
Since the Green function is positive on
\((0,1)\times(0,1)\), equality forces \(\phi=1\) almost everywhere with
respect to \(\nu\).  The potential
\[
 P(x)=\int b(x,y)\,d\nu(y)
\]
therefore equals \(1/(8\kappa)\) on \(\operatorname{supp}\nu\).  It is
concave, vanishes at \(0,1\), and satisfies \(-P''=\nu\).  Thus
\(\operatorname{supp}\nu\) is contained in the set on which \(P\) attains its
maximum.  This set is a closed interval, and its interior has zero
\(\nu\)-measure.  Hence
\(\nu\) is supported on at most the two endpoints.

If \(\nu=c\delta_x\), then \(cx(1-x)=1/(8\kappa)\), so
\(c\geqslant1/(2\kappa)\).  On the other hand,
\eqref{eq:local-measure-bound} gives
\(c\leqslant\sigma/\kappa\leqslant1/(2\kappa)\).  Hence
\(\sigma=x=1/2\) and \(c=1/(2\kappa)\).

Suppose instead that \(\nu=c_a\delta_a+c_b\delta_b\), where \(a<b\).
Since \(P\) is constant on \([a,b]\),
\begin{equation}
 ac_a=(1-b)c_b=\frac1{8\kappa}.
 \label{eq:two-atom-balance}
\end{equation}
If \(b-a<\sigma\), then \eqref{eq:local-measure-bound} gives
\(c_a+c_b\leqslant\sigma/\kappa\), whereas
\eqref{eq:limit-measure-support} and \eqref{eq:two-atom-balance} give
\[
 c_a+c_b\geqslant\frac1{4\kappa(1-\sigma)}
 >\frac{\sigma}{\kappa}
\]
for \(\sigma<1/2\).  If \(b-a\geqslant\sigma\), the bound on each atom and
\eqref{eq:two-atom-balance} give
\[
 a\geqslant\frac1{8\sigma},
 \qquad1-b\geqslant\frac1{8\sigma}.
\]
It follows that \(\sigma\leqslant1-1/(4\sigma)\), which is impossible for
\(0<\sigma<1/2\).  When \(\sigma=1/2\),
\eqref{eq:limit-measure-support} already confines the support to \(\{1/2\}\).
We have proved
\begin{equation}
 \frac{d_m}{n_m}\longrightarrow\frac12,
 \qquad
 \nu_m\Longrightarrow\frac1{2\kappa}\delta_{1/2}.
 \label{eq:measure-rigidity}
\end{equation}

Since the cut sizes are integers, the convergence in
\eqref{eq:measure-rigidity} forces a cut of size \(\kappa\) near the middle
of the ordering.  To see this, fix \(0<\eta<1/4\).  Then
\[
 \sum_{|i/n_m-1/2|\leqslant\eta}\frac1{q_i}
 \longrightarrow\frac1\kappa.
\]
The indices in this sum span fewer than \(d_m\) positions.  If all their cut
sizes were at least \(\kappa+1\), applying Lemma~\ref{lem:interval-cut}
with an index at which the cut size is minimum would give
\[
 \sum_{|i/n_m-1/2|\leqslant\eta}\frac1{q_i}
 \leqslant\frac1{\kappa+1}
 +4\sum_{t=1}^{d_m}\frac1{t(d_m+1-t)}
 =\frac1{\kappa+1}+o(1),
\]
a contradiction.  Choosing \(\eta=\eta_m\downarrow0\) sufficiently slowly,
we obtain an index
\(i_m=n_m/2+o(n_m)\) for which \(q_{i_m}=\kappa\).  Its two shores
\(A_m=S_{i_m}\) and \(B_m=V(G_m)\setminus A_m\) satisfy
\begin{equation}
 |A_m|=d_m+o(d_m),
 \qquad |B_m|=d_m+o(d_m).
 \label{eq:balanced-minimum-cut}
\end{equation}
Both induced subgraphs are connected: if, for example, \(G_m[A_m]\) had two
components, their disjoint edge boundaries would each have size at least
\(\kappa\), contrary to \(|\partial A_m|=\kappa\).

This minimum cut is unique up to complementation.  Indeed, let \(A\)
and \(C\) be shores of two cuts of size \(\kappa\).  Every nonempty set among
\[
 A\cap C,\qquad A\setminus C,\qquad C\setminus A,
 \qquad V(G_m)\setminus(A\cup C)
\]
has boundary at most \(2\kappa\).  For a set \(X\) of size at most \(d_m\),
simplicity gives
\[
 |\partial X|\geqslant |X|(d_m-|X|+1)\geqslant d_m>2\kappa.
\]
Thus every nonempty set in the preceding display has at least \(d_m+1\)
vertices.  If the two cuts were distinct up to complementation, at least
three of these sets would be nonempty, contradicting \(n_m/d_m\to2\).

Finally, degree summation across the cut gives
\(d_m|A_m|=2e(A_m)+\kappa\).  Together with
\eqref{eq:balanced-minimum-cut}, this yields
\[
 \binom{|A_m|}{2}-e(A_m)
 =\frac{|A_m|(|A_m|-1-d_m)+\kappa}{2}=o(d_m^2),
\]
and the same calculation applies to \(B_m\).  When \(\kappa=1\), the parity
of the displayed degree sum forces \(d_m\) to be odd.

Every subsequence has a further subsequence to which the preceding argument
applies.  Hence these conclusions hold along the original sequence.

Conversely, suppose that a cut of size \(\kappa\) has shores \(A_m,B_m\), where
\(|A_m|=d_m+o(d_m)\) and \(|B_m|=d_m+o(d_m)\).  Give its two shores the
constant values \(1/|A_m|\) and \(-1/|B_m|\).  The resulting function is
centred, and its Rayleigh quotient is
\(\kappa(1/|A_m|+1/|B_m|)\).  It follows that
\[
 \limsup_{m\to\infty}\frac{n_m^2\mu(G_m)}{d_m}
 \leqslant\lim_{m\to\infty}
 \frac{\kappa n_m^3}{|A_m||B_m|d_m}=8\kappa.
\]
For all sufficiently large \(m\), we have \(\kappa<d_m\).  A shore of size
at most \(d_m\) would have at least \(d_m\) boundary edges, so both shores
have at least \(d_m+1\) vertices.  Thus
\(d_m<\lfloor n_m/2\rfloor\), so
Corollary~\ref{cor:edge-connectivity-growing} gives the matching lower bound.
\end{proof}

\subsection{The quartic minimiser}\label{sec:quartic-extremisers}

Abdi and Ghorbani's structural theorem reduces the quartic problem
to a finite list of chains at each order. Comparing their boundary
phases proves the uniqueness conjecture in
\cite[Conjecture~1.5]{AbdiGhorbaniQuartic}, also posed in
\cite[Conjecture~3.16]{AbdiGhorbaniImrich}.

\subsubsection{The candidate chains}

We use the blocks \(M_0,D_0,\ldots,D_4\) of
\cite[Figures~3 and~4]{AbdiGhorbaniQuartic}.  Their edge sets are given in
Table~\ref{tab:quartic-blocks}.  The block \(M_0\) has six vertices and two
terminals of degree two.  The block \(D_i\) has \(6+i\) vertices and one
terminal of degree two.  All other vertices have degree four.
The vertices of \(M_0\) are \(0,\ldots,5\), and those of
\(D_i\) are \(0,\ldots,5+i\); in the table, \(ab\) denotes the
unordered edge \(\{a,b\}\).
Let \(H_m^{i,j}\) be obtained from a copy of \(D_i\), \(m\) copies of
\(M_0\), and a copy of \(D_j\), in this order, by identifying consecutive
terminals.  Here \(m\geqslant0\); for \(m=0\), the terminals of the two end
blocks are identified.  These are connected simple quartic graphs, and
\begin{equation}
 |V(H_m^{i,j})|=5m+11+i+j.
 \label{eq:quartic-order}
\end{equation}
Reversing the chain interchanges \(i\) and \(j\), so we take \(i\leqslant j\).

By \cite[Theorem~3.2]{AbdiGhorbaniQuartic}, every connected quartic graph of
order \(n\geqslant11\) with least algebraic connectivity is one of these
chains.  Thus the possible end pairs are precisely those satisfying
\begin{equation}
 0\leqslant i\leqslant j\leqslant4,\qquad
 i+j\equiv n-11\pmod5,\qquad
 m=\frac{n-11-i-j}{5}\geqslant0.
 \label{eq:quartic-candidates}
\end{equation}
Write \(n-11=5m+r\), where \(0\leqslant r\leqslant4\).
The graph \(\mathcal G_n\) is \(H_m^{a_r,b_r}\), where
\begin{equation}
 ((a_r,b_r))_{r=0}^4=((0,0),(0,1),(1,1),(1,2),(0,4)).
 \label{eq:quartic-end-pairs}
\end{equation}
This is the graph specified in
\cite[Definition~1.4]{AbdiGhorbaniQuartic}.

\begin{table}[!ht]
\centering
\caption{The quartic blocks.}
\label{tab:quartic-blocks}
\small
\begin{tabular}{ccl}
\hline
Block&Terminals&Edges\\
\hline
\(M_0\)&\(\{0,5\}\)&\(01,02,12,13,14,23,24,34,35,45\)\\[3pt]
\(D_0\)&\(\{5\}\)&\(\begin{aligned}&01,02,03,04,12,13,14,23,24,35,\\&45\end{aligned}\)\\[3pt]
\(D_1\)&\(\{6\}\)&\(\begin{aligned}&01,02,03,04,12,13,14,23,25,35,\\&45,46,56\end{aligned}\)\\[3pt]
\(D_2\)&\(\{7\}\)&\(\begin{aligned}&01,02,03,04,12,13,14,23,25,36,\\&45,46,56,57,67\end{aligned}\)\\[3pt]
\(D_3\)&\(\{8\}\)&\(\begin{aligned}&01,02,03,04,12,13,14,23,25,35,\\&46,47,56,57,67,68,78\end{aligned}\)\\[3pt]
\(D_4\)&\(\{9\}\)&\(\begin{aligned}&01,02,03,04,12,13,14,23,24,36,\\&45,56,57,58,67,68,78,79,89\end{aligned}\)\\[3pt]
\hline
\end{tabular}
\end{table}

\subsubsection{Schur complements and boundary phases}

In each block, assign mass \(1/2\) to every terminal and mass one to all
other vertices.  The resulting diagonal matrix is denoted by \(W\).
Under identification of terminals, both energies and masses add to those
of the full graph. Let \(T\) and \(J\) be the terminal and internal
vertex sets of a block, respectively, and write
\(\mathsf M=\mathsf L-xW\), where \(\mathsf L\) is its Laplacian.
When \(\mathsf M_{JJ}\) is invertible, eliminating the internal
coordinates gives the Schur complement
\(\mathsf M_{TT}-\mathsf M_{TJ}\mathsf M_{JJ}^{-1}\mathsf M_{JT}\)
on the terminals. Put
\begin{equation}
\begin{aligned}
 A_0&=x^2-6x+2, & A_1&=x^2-4x+1,\\
 A_2&=x^4-13x^3+54x^2-76x+14,\\
 A_3&=(2-x)(x^2-6x+1),\\
 A_4&=x^4-12x^3+42x^2-37x+4.
\end{aligned}
\label{eq:quartic-denominators}
\end{equation}
The Schur complement for \(D_i\) is the scalar \(R_i\), where
\begin{equation}
\begin{aligned}
 R_0&=-\frac{x(x^2-10x+22)}{2A_0},\qquad
 R_1=-\frac{x(x^2-8x+13)}{2A_1},\\
 R_2&=-\frac{x(x-5)(x^3-12x^2+42x-42)}{2A_2},\\
 R_3&=\frac{x(x^3-12x^2+41x-34)}{2A_3},\\
 R_4&=-\frac{x(x^2-9x+19)(x^2-7x+4)}{2A_4}.
\end{aligned}
\label{eq:quartic-responses}
\end{equation}
For the middle block, it is
\begin{equation}
\begin{gathered}
 K_M=\begin{pmatrix}A&-B\\-B&A\end{pmatrix},\\
 A=-\frac{x^3-10x^2+25x-8}{2(x-5)(x-1)},\qquad
 B=\frac4{(x-5)(x-1)}.
\end{gathered}
 \label{eq:quartic-middle}
\end{equation}
These identities follow by elimination from the edge sets in
Table~\ref{tab:quartic-blocks}.

We work on the interval \(0<x\leqslant u:=1/10\).
The internal Dirichlet eigenvalues of \(M_0\) are \(1,5,5,5\).
For an end block, let \(\mathsf L_i^D\) be the matrix obtained by deleting
the terminal row and column of its Laplacian.  It is checked directly that
\(\mathsf L_i^D-u\mathsf I\) is positive definite for every \(i\).
Consequently every internal matrix used in the elimination is invertible
throughout this interval.  In particular, no eigenvalues of the full graph
in this interval are lost by passing to the terminal equations.

Define
\begin{equation}
\begin{aligned}
 Q&=\frac AB=1-\frac{25}{8}x+\frac54x^2-\frac18x^3,\\
 S&=1-Q^2=\frac{x(5-x)^2(1-x)(x^2-9x+16)}{64},\\
 T_i&=-\frac{R_i}{B},\qquad
 \theta=\arccos Q,\qquad
 \phi_i=\arctan\frac{T_i}{\sqrt S}.
\end{aligned}
\label{eq:quartic-phases}
\end{equation}
Here \(B>0\), \(0<Q<1\), and \(S>0\).
If \(v_i\) is the incidence vector from the terminal to the internal
vertices of \(D_i\), then
\[
 R_i(x)=2-\frac x2-v_i^{\mathsf T}(\mathsf L_i^D-x\mathsf I)^{-1}v_i,
 \qquad
 R_i'(x)=-\frac12-v_i^{\mathsf T}(\mathsf L_i^D-x\mathsf I)^{-2}v_i<0.
\]
Since \(R_i(0)=0\), it follows that \(T_i>0\) and
\(0<\theta,\phi_i<\pi/2\).  All these phases extend continuously to zero
with value zero.

For \(m\geqslant1\), \(0\leqslant i\leqslant j\leqslant4\), and
\(x\in(0,u]\), define the phase sum
\[
 \Psi_m^{i,j}(x)=m\theta(x)+\phi_i(x)+\phi_j(x).
\]

\begin{lemma}\label{lem:quartic-phase-equation}
Let \(m,i,j\) be integers with \(m\geqslant1\) and
\(0\leqslant i\leqslant j\leqslant4\), and let \(x\in(0,u]\).
Then \(x\) is a Laplacian eigenvalue of \(H_m^{i,j}\) if and only if
\[
 \Psi_m^{i,j}(x)\in\pi\mathbb Z.
\]
If \(\mu(H_m^{i,j})\leqslant u\), then
\begin{equation}
 \Psi_m^{i,j}\bigl(\mu(H_m^{i,j})\bigr)=\pi.
 \label{eq:quartic-first-phase}
\end{equation}
\end{lemma}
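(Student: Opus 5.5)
We will prove Lemma~\ref{lem:quartic-phase-equation} with a transfer-matrix argument along the chain, using the terminal Schur complements. Fix \(x\in(0,u]\). All internal matrices are invertible on this interval, so \(x\) is an eigenvalue of \(\mathsf L=\mathsf L(H_m^{i,j})\) exactly when the reduced system on the \(m+1\) shared terminals \(t_0,\ldots,t_m\) has a nonzero solution. The masses are chosen so that \(W\) sums to the identity on the full graph. Hence \((\mathsf L-x\mathsf I)\psi=0\) reduces, after eliminating interiors, to a system on the terminal values \(\psi_k\):
\[
 (R_i+A)\psi_0-B\psi_1=0,\qquad
 -B\psi_{k-1}+2A\psi_k-B\psi_{k+1}=0\ (0<k<m),
\]
\[
 -B\psi_{m-1}+(A+R_j)\psi_m=0.
\]
The converse holds too: any nonzero solution extends uniquely to the interiors, giving an eigenvector.

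Divide the system by \(B>0\). The interior recurrence becomes \(\psi_{k+1}=2Q\psi_k-\psi_{k-1}\). Since \(0<Q<1\), we have \(Q=\cos\theta\), so \(\psi_k=\cos(k\theta-\alpha)\) up to scaling, for some phase \(\alpha\). Write \(T=-R/B\), as in \eqref{eq:quartic-phases}.

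The left boundary equation reads \((Q-T_i)\psi_0=\psi_1\). Substituting gives
\[
 \cos\theta\cos\alpha-T_i\cos\alpha=\cos(\theta-\alpha),
\]
that is, \(-T_i\cos\alpha=\sin\theta\sin\alpha\). Using \(\sin\theta=\sqrt S\), this gives \(\tan\alpha=-T_i/\sqrt S\), so \(\alpha=-\phi_i\) modulo \(\pi\).

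Symmetrically, the right boundary condition, read from \(t_m\) backwards, gives \(m\theta-\alpha\equiv\pi-\phi_j\pmod\pi\). Combining the two yields \(m\theta+\phi_i+\phi_j\in\pi\mathbb Z\). Every step is reversible, since the solution space of the recurrence is two-dimensional and fixed by the phase \(\alpha\). Hence a nonzero solution exists exactly when \(\Psi_m^{i,j}(x)\in\pi\mathbb Z\).

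For \(m=1\) there is no interior equation. The two boundary equations still combine into the same condition, so we check that case separately.

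For \eqref{eq:quartic-first-phase}, we use the bounds \(0<\theta,\phi_i<\pi/2\) together with continuity of the phases at \(0\) with value \(0\). We also need monotonicity: \(\Psi\) is strictly increasing on \((0,u]\). For \(\phi_i\), \(T_i=-R_i/B\) with \(R_i'<0\) and \(R_i\) decreasing from zero. For \(\theta\), \(Q\) is decreasing, since \(Q'=-25/8+\tfrac52x-\tfrac38x^2<0\) on \((0,u]\). What remains is to handle \(S\) and \(B\) inside \(\phi_i\).

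Granting that \(\Psi\) increases from \(0^+\), the first \(x\) at which \(\Psi\in\pi\mathbb Z\) is where \(\Psi=\pi\). The eigenvalues of \(\mathsf L\) in \((0,u]\) are exactly the zeros of \(\sin\Psi\) there. So if \(\mu\leqslant u\), then \(\mu\) is the smallest such zero, and \(\Psi(\mu)=\pi\).

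The main obstacle is monotonicity of \(\phi_i=\arctan(T_i/\sqrt S)\). Both \(T_i\) and \(\sqrt S\) increase, so I would first try to show that \(T_i^2/S\) is increasing. Near \(0\), \(T_i\sim c_ix\) and \(\sqrt S\sim\tfrac54\sqrt x\), so the ratio behaves like \(\sqrt x\). For the full interval I would check that the rational function \(T_i^2/S\) has positive derivative on \((0,1/10]\), clearing denominators with \(A_i\) and \((x-5)(x-1)\) and verifying positivity of the resulting polynomial for each \(i\).

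If some case fails, the fallback avoids monotonicity altogether. Each zero of \(\sin\Psi\) is an eigenvalue, and \(\Psi\) is continuous with \(\Psi(0^+)=0\), so no eigenvalue smaller than \(\mu\) can occur. Hence \(\Psi<\pi\) on \((0,\mu)\), because \(\Psi\) first reaches \(\pi\mathbb Z\setminus\{0\}\) at \(\pi\) and cannot vanish again without producing a smaller eigenvalue. This forces \(\Psi(\mu)=\pi\), provided we exclude \(\Psi\) returning to \(0\) with \(\Psi>0\) throughout. That exclusion is immediate because all three phases are positive on \((0,u]\), so \(\Psi>0\) there.
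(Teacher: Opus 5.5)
Your proposal is correct and follows the paper's proof: the Schur-complement reduction to the terminal recurrence, the solution \(\psi_k=C\cos(k\theta+\phi_i)\) forced by the left boundary equation, and the right boundary equation reducing to \(\sin\Psi_m^{i,j}(x)=0\), with the case \(m=1\) needing no separate treatment. For \eqref{eq:quartic-first-phase}, your fallback is exactly the paper's argument: \(\Psi_m^{i,j}>0\) on \((0,u]\), \(\Psi_m^{i,j}(0^+)=0\), and the intermediate value theorem would produce a smaller eigenvalue if \(\Psi_m^{i,j}(\mu)\geqslant2\pi\), so the monotonicity of \(\phi_i\) that you set out to check is not needed.
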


\begin{proof}
Number the successive identified terminals \(0,\ldots,m\).
The Schur complement equations are
\[
\begin{gathered}
 z_1=(Q-T_i)z_0,\qquad z_{m-1}=(Q-T_j)z_m,\\
 z_{k-1}+z_{k+1}=2Qz_k\quad(1\leqslant k<m).
\end{gathered}
\]
A nonzero solution has \(z_0\ne0\), and the left boundary equation and recurrence give
\(z_k=C\cos(k\theta+\phi_i)\).  Substituting in the right boundary
equation gives \(\sin(m\theta+\phi_i+\phi_j)=0\).
Conversely, this condition gives a nonzero terminal vector, which extends
uniquely to an eigenvector of the full graph by the invertible internal
matrices.

The phase sum is positive on \((0,u]\) and tends to zero as
\(x\downarrow0\).
If its value at the first positive eigenvalue were at least \(2\pi\),
continuity would give an earlier point with phase sum \(\pi\), and hence
a smaller positive eigenvalue.  This proves
\eqref{eq:quartic-first-phase}.
\end{proof}

\subsubsection{Exact comparison of the end pairs}

\begin{theorem}\label{thm:quartic-unique}
Let \(G\) be a connected simple quartic graph of order \(n\geqslant11\).
Then
\[
 \mu(G)\geqslant\mu(\mathcal G_n),
\]
with equality if and only if \(G\) is isomorphic to \(\mathcal G_n\).
\end{theorem}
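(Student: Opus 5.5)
By \cite[Theorem~3.2]{AbdiGhorbaniQuartic}, every minimiser at order \(n\geqslant11\) is one of the chains \(H_m^{i,j}\) satisfying \eqref{eq:quartic-candidates}. So it suffices to show two things. First, among the admissible pairs \((i,j)\) for a given \(n\), the pair \((a_r,b_r)\) of \eqref{eq:quartic-end-pairs} gives strictly the smallest algebraic connectivity. Second, distinct admissible chains are nonisomorphic. The second point is immediate from the end blocks: the end subgraphs \(D_i\) have different orders, so the unordered pair \(\{i,j\}\) is an isomorphism invariant. For \(r\) fixed, the admissible pairs are those with \(i+j\in\{r,r+5\}\) and \(0\leqslant i\leqslant j\leqslant4\), and the corresponding values of \(m\) differ by one. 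For example, \(r=0\) compares \((0,0)\) at \(m\) with \((1,4),(2,3)\) at \(m-1\); \(r=4\) compares \((0,4),(1,3),(2,2)\) at \(m\) with \((4,4)\) at \(m-1\). When \(m\) is small, some pairs drop out or the eigenvalue exceeds \(u\), and these finitely many orders will be checked directly by computing \(\mu\) numerically with rigorous enclosures.

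For the main range I will use Lemma~\ref{lem:quartic-phase-equation}. By \eqref{eq:quartic-known}, for large \(n\) every candidate has \(\mu\leqslant u\), so \(\mu(H)\) is the unique first root of \(\Psi=\pi\) on \((0,u]\). If \(\Psi\) is increasing there, then a larger phase sum at every \(x\) means a smaller \(\mu\). Monotonicity of \(\theta\) is clear, since \(Q\) decreases. For \(\phi_i\), the derivative formula for \(R_i\) and explicit rational expressions should give \(\phi_i'>0\). To compare two candidates, I therefore need a pointwise inequality on \((0,u]\). For candidates with the same \(m\), the inequality to prove is
\[
\phi_{a}+\phi_{b}>\phi_{i}+\phi_{j}.
\]
For candidates whose \(m\) differ by one, the inequality is
\[
\theta+\phi_a+\phi_b>\phi_i+\phi_j
\]
(or the reverse orientation). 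This holds provided it holds at the relevant point \(x=\mu\), which lies in \((0,u]\). Since \(\theta,\phi_i\to0\) as \(x\to0\), I will expand in \(\sqrt x\). We have \(\theta\sim\sqrt{S}\sim\frac54\sqrt x\), and \(\phi_i\sim T_i/\sqrt S\). Here \(T_i=-R_i/B\) with \(R_i\approx -\,(\text{mass})\,x/?\), so to leading order \(\phi_i\approx c_i\sqrt x\) with \(c_i\) proportional to the effective mass of \(D_i\). The leading terms then reduce to comparing \(i+j\) (plus one block's worth for the \(m\)-shift), and these are typically equal. This is exactly why the end pair is not determined by the leading order. The decision therefore comes from the next coefficient, of order \(x^{3/2}\), which encodes second moments of the end blocks.

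The plan is to compute the Puiseux expansions of \(\phi_i\) and \(\theta\) to order \(x^{3/2}\) from \eqref{eq:quartic-responses} and \eqref{eq:quartic-phases}, and to identify the strict winner in each residue class. Then, to make the comparison valid on all of \((0,u]\) rather than only near zero, I will clear denominators. The inequality \(\tan(\alpha)>\tan(\beta)\) after using the addition formula becomes a polynomial inequality in \(x\) and \(\sqrt S\). This can be certified on \([0,u]\) by Sturm sequences or by interval arithmetic, after factoring out the leading power of \(x\). The main obstacle is this step: the differences vanish to third order at zero, so a crude interval bound fails near \(x=0\). I would first divide by the known power \(x^{3/2}\) (or \(x^3\) after squaring) and then verify positivity of the quotient polynomial on \([0,u]\) by Sturm's theorem. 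Finally, the bounded list of orders where \(m\) is too small, or where \(\mu>u\) is possible, is handled by direct computation, combining with \cite[Theorem~3.2]{AbdiGhorbaniQuartic} to conclude \(\mu(G)\geqslant\mu(\mathcal G_n)\) with equality only for \(G\cong\mathcal G_n\).
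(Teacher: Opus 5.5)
Your overall route is the paper's. You reduce by Abdi--Ghorbani's structural theorem to the chains $H_m^{i,j}$ and use the terminal phase equation $\Psi_m^{i,j}(\mu)=\pi$. You then show that the phase sum of $\mathcal G_n$ exceeds each competitor's pointwise on $(0,u]$, by certifying a cleared rational expression after removing its zero at $x=0$, and you settle small orders by computation. Your heuristic is also right: $\phi_i=\frac{11+2i}{10}\theta+O(\theta^3)$, so the linear terms cancel and the $\theta^3$ coefficients decide.

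The certification step, however, fails as you state it, because the phases do not stay below $\pi/2$ on $(0,u]$. One checks $T_i(u)>Q(u)$ for every $i$, and since $\cot\theta=Q/\sqrt S$ this gives $\theta(u)+\phi_i(u)>\pi/2$. Numerically, $\phi_0+\phi_4$, $\phi_1+\phi_3$, $2\phi_1$ and $\phi_0+\phi_2$ also exceed $\pi/2$ at $u$. Write $\alpha=k\theta+\phi_a+\phi_b$ and $\beta=\phi_i+\phi_j$. Since $\alpha>\beta$, $\alpha$ reaches $\pi/2$ while $\beta$ is still below it, so there is a subinterval on which $\tan\alpha<0<\tan\beta$. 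There $\tan\alpha>\tan\beta$ is false even though $\alpha>\beta$, and clearing denominators that are positive multiples of $\cos\alpha,\cos\beta$ does not preserve the inequality. What you should certify instead is $\sin(\alpha-\beta)>0$. Its cleared form is the imaginary part of $(Q+i\sqrt S)^k(\sqrt S+iT_a)(\sqrt S+iT_b)(\sqrt S-iT_i)(\sqrt S-iT_j)$, which is $\sqrt S\,\mathcal F$ in the paper's notation. Positivity of $\mathcal F$ on $(0,u]$, together with continuity and $\alpha-\beta\to0$ as $x\downarrow0$, then forces $\alpha-\beta\in(0,\pi)$; this is Lemma~\ref{lem:quartic-phase-comparison}.

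Your threshold also needs to be made effective. The asymptotic formula for the quartic minimum bounds only the minimum, not every candidate. Its $o(1)$ also gives no explicit order beyond which $\mu\leqslant u$, so your ``finitely many orders'' are not yet specified. The paper uses the same inequality $T_i(u)>Q(u)$: once $\mathcal G_n$ has $m\geqslant2$ middle blocks, $\Psi^{a,b}_m(u)\geqslant(\theta+\phi_a)(u)+(\theta+\phi_b)(u)>\pi$, so $\mu(\mathcal G_n)<u$ for $n\geqslant21$. A competitor that is a minimiser then automatically has $\mu\leqslant\mu(\mathcal G_n)<u$ and at least one middle block. So no bound on the other candidates is needed, and the residual check is exactly $11\leqslant n\leqslant20$; the paper uses integer characteristic polynomials there, and rigorous enclosures would also do.

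Several of your steps are unnecessary. Monotonicity of $\phi_i$ is not needed: if $\Psi^{a,b}_m(\mu_H)>\pi$ at a competitor's eigenvalue $\mu_H$, the intermediate value theorem from $\Psi^{a,b}_m(0^+)=0$ already gives an eigenvalue of $\mathcal G_n$ below $\mu_H$. The separate nonisomorphism argument is likewise superfluous, since strict inequality of $\mu$ already distinguishes the candidates.

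Two slips. First, $(4,4)$ has $i+j=8$ and competes with $(1,2)$ in residue $3$; residue $4$ has only the $k=0$ comparisons of $(1,3)$ and $(2,2)$ against $(0,4)$. Second, $\theta\sim\frac52\sqrt x$, not $\frac54\sqrt x$.
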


For \(r\in\{0,1,2,3,4\}\), write \((a,b)=(a_r,b_r)\).
For integers \(0\leqslant i\leqslant j\leqslant4\) with
\(i+j\equiv r\pmod5\) and \((i,j)\ne(a,b)\), put
\(k=(i+j-a-b)/5\in\{0,1\}\) and define
\[
 \Delta(x)=k\theta(x)+\phi_a(x)+\phi_b(x)-\phi_i(x)-\phi_j(x).
\]

\begin{lemma}\label{lem:quartic-phase-comparison}
Let \(r\in\{0,1,2,3,4\}\), and let \(i,j\) be integers with
\(0\leqslant i\leqslant j\leqslant4\), \(i+j\equiv r\pmod5\),
and \((i,j)\ne(a_r,b_r)\). Then, for \(0<x\leqslant u\),
\begin{equation}
 \Delta(x)>0.
 \label{eq:quartic-phase-comparison}
\end{equation}
\end{lemma}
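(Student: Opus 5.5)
We prove the inequality by a finite case analysis. For each \(r\in\{0,\ldots,4\}\), the admissible pairs \((i,j)\) are few. For \(r=0\) they are \((1,4),(2,3)\) with \(k=1\), against \((a,b)=(0,0)\). For \(r=1\) they are \((2,4),(3,3)\), again with \(k=1\). For \(r=2\) they are \((0,2)\) with \(k=0\) and \((3,4)\) with \(k=1\). For \(r=3\) they are \((0,3)\) and \((4,4)\). For \(r=4\) they are \((1,3),(2,2)\) with \(k=0\). This gives at most ten comparisons.

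The phases are rational-trigonometric in \(x\), so the plan is to reduce \(\Delta>0\) to sign conditions on explicit functions. We use the monotone substitution \(\tan\) on sums of angles in \((0,\pi)\). Since all phases lie in \((0,\pi/2)\), the strategy is as follows.

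\emph{Case \(k=0\).} Then \(\Delta>0\) is equivalent to \(\phi_a+\phi_b>\phi_i+\phi_j\). Both sides lie in \((0,\pi)\), and \(\cot\) is decreasing there. By the addition formula with \(t_\ell=T_\ell/\sqrt S\), it therefore suffices to compare
\[
 \frac{1-t_at_b}{t_a+t_b}\quad\text{and}\quad\frac{1-t_it_j}{t_i+t_j}.
\]
Clearing the positive denominators, this reduces to
\[
 (S-T_aT_b)(T_i+T_j)<(S-T_iT_j)(T_a+T_b).
\]
This is a polynomial inequality in \(x\) after multiplying through by the denominators \(A_\ell\) and \(B\). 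Their signs on \((0,u]\) are fixed, since \(\mathsf L^D_\ell-x\mathsf I\) is positive definite there.

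\emph{Case \(k=1\).} Here we must compare \(\theta+\phi_a+\phi_b\) with \(\phi_i+\phi_j\). We write \(\theta=\arctan(\sqrt S/Q)\). Then \(\cos\theta=Q\) and \(\sin\theta=\sqrt S\), so the left side is the argument of the complex number
\[
 (Q+i\sqrt S)(\sqrt S+iT_a)(\sqrt S+iT_b),
\]
and the right side is the argument of \((\sqrt S+iT_i)(\sqrt S+iT_j)\), up to the positive normalisations. Both arguments lie in \((0,3\pi/2)\). Near \(0\) they are small, and by continuity and the bound \(x\le 1/10\) they stay below \(\pi\). So comparing them reduces to the sign of the imaginary part of \(z_L\bar z_R\), where \(z_L,z_R\) are the two products above. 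Each term of that imaginary part carries a factor \(\sqrt S\) or \(S\). Dividing by \(\sqrt S>0\) leaves a polynomial (rational) expression in \(x\), \(S\) and the \(T_\ell\). To see that the arguments stay below \(\pi\), we check that the relevant real parts stay positive, or else bound each phase by \(\pi/4\) on \((0,u]\) using \(T_\ell\le\sqrt S\) there.

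Having reduced every comparison to positivity of an explicit polynomial \(P(x)\) on \((0,1/10]\), we next expand each \(P\) around \(x=0\). Each \(P\) has a root at \(0\), because all phases vanish there. We identify the lowest-order term and check that it is positive. The behaviour \(T_\ell\sim c_\ell x\) and \(\sqrt S\sim\frac{5}{8}\sqrt{x}\) suggests the dominant term comes from \(\phi_\ell\approx c_\ell\sqrt x\cdot\frac85\), with \(c_\ell\) determined by \(R_\ell'(0)\), that is, by the masses of the end blocks. Heuristically, \(k\theta+\phi_a+\phi_b-\phi_i-\phi_j\) at order \(\sqrt x\) measures an effective-length difference. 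The chosen pairs \((a_r,b_r)\) should minimise \(i+j\) while accounting for \(k\). So the leading coefficient ought to be positive, possibly vanishing, with the decision made at order \(x^{3/2}\) by resistance-type corrections.

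After the leading term, we factor out the appropriate power of \(x\). On the quotient we then certify positivity on \([0,1/10]\), either by showing all remaining coefficients are dominated, using \(x\le 1/10\), or by Sturm sequences or interval arithmetic.

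The main obstacle is the cases where the leading orders cancel. There the sign is decided by a higher-order term, and the margin on \((0,1/10]\) may be thin. We would first compute exact Puiseux expansions of \(\phi_\ell\) to order \(x^{5/2}\) from \eqref{eq:quartic-responses} to locate the first nonvanishing term. We would then certify the remainder by an explicit polynomial bound.
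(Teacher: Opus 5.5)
Your proposal follows the paper's proof: for $k=0$ your cotangent comparison is exactly $\mathcal F=Y_wX_l-X_wY_l>0$, and for $k=1$ one has $\operatorname{Im}(z_L\bar z_R)=\sqrt S\,\mathcal F$ with the paper's $\mathcal F$, after which you, like the paper, verify positivity of this rational function on $(0,u]$ in the same ten cases.

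Your justification that the arguments stay below $\pi$ is wrong as written. Near $u$ the phases exceed $\pi/4$ (e.g.\ $T_0(u)\approx0.82>\sqrt{S(u)}\approx0.71$), and the relevant real parts, including $\operatorname{Re}z_L$, are negative there, so neither of your two proposed checks works. The bound is not needed, however. Since $\phi_i+\phi_j\in(0,\pi)$ and $\theta+\phi_a+\phi_b\in(0,3\pi/2)$, you have $\Delta\in(-\pi,3\pi/2)$, and on that interval $\sin\Delta>0$ already forces $\Delta\in(0,\pi)$; the paper reaches the same conclusion by continuity from $\Delta(0^+)=0$.
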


\begin{proof}
Put
\[
 X_w=S-T_aT_b,\quad Y_w=T_a+T_b,\qquad
 X_l=S-T_iT_j,\quad Y_l=T_i+T_j,
\]
and define
\begin{equation}
 \mathcal F=
 \begin{cases}
 Y_wX_l-X_wY_l,&k=0,\\
 (QY_w+X_w)X_l-(QX_w-SY_w)Y_l,&k=1.
 \end{cases}
 \label{eq:quartic-phase-numerator}
\end{equation}
The addition formula for the sine gives
\begin{equation}
 \sin\Delta=
 \frac{\sqrt S\,\mathcal F}
 {\sqrt{(S+T_a^2)(S+T_b^2)(S+T_i^2)(S+T_j^2)}}.
 \label{eq:quartic-phase-sine}
\end{equation}
Substituting \eqref{eq:quartic-responses} in
\eqref{eq:quartic-phase-numerator}, it is checked directly that
\(\mathcal F>0\) on \((0,u]\) for each of the ten possible comparisons.
For example, for \((a,b)=(0,4)\) and \((i,j)=(1,3)\), the expression is
\[
 \mathcal F=
 \frac{x^3(5-x)^4(1-x)^2(2+7x-x^2)}{4A_3A_0A_1A_4}>0.
\]
Here all five polynomials \(A_i\) are positive on \([0,u]\).
The other comparisons reduce in the same way to the positivity of
polynomials of degree at most six on this interval.
Thus \(\sin\Delta>0\) throughout \((0,u]\).
Since \(\Delta\) is continuous and tends to zero as \(x\downarrow0\),
it lies in \((0,\pi)\) throughout that interval.  This proves the lemma.
\end{proof}

\begin{proof}[Proof of Theorem~\ref{thm:quartic-unique}]
First let \(n\geqslant21\).  Direct substitution at \(u=1/10\) gives
\begin{equation}
 T_i(u)>Q(u)\qquad(0\leqslant i\leqslant4).
 \label{eq:quartic-endpoint-values}
\end{equation}
Since
\(\cot\theta=Q/\sqrt S\), it follows that
\(\theta(u)+\phi_i(u)>\pi/2\) for each \(i\).
The graph \(\mathcal G_n\) has at least two middle blocks.  Hence its
phase sum is greater than \(\pi\) at \(u\) and zero at zero.
Lemma~\ref{lem:quartic-phase-equation} and continuity imply
\(\mu(\mathcal G_n)<u\).

Suppose a different candidate \(H_{m'}^{i,j}\) is a quartic minimiser of
order \(n\), and write \(\mu=\mu(H_{m'}^{i,j})\).
Then \(\mu\leqslant\mu(\mathcal G_n)<u\).  Every candidate at these
orders has at least one middle block, so
\(\Psi_{m'}^{i,j}(\mu)=\pi\) by
Lemma~\ref{lem:quartic-phase-equation}.
If \(\mathcal G_n=H_m^{a,b}\), the order identity gives
\(m-m'=(i+j-a-b)/5\).  Lemma~\ref{lem:quartic-phase-comparison}
therefore yields \(\Psi_m^{a,b}(\mu)>\pi\).
By continuity, this phase sum equals \(\pi\) at some
\(\nu\in(0,\mu)\).  The phase equation gives
\(\mu(\mathcal G_n)\leqslant\nu<\mu\), a contradiction.

For \(11\leqslant n\leqslant20\),
\eqref{eq:quartic-candidates} gives twenty-four graphs in total.
It is checked directly from their integer characteristic polynomials
that \(\mathcal G_n\) has strictly smaller algebraic connectivity than
every other candidate of the same order.  At orders eleven and twelve
there is only one candidate.
The structural theorem therefore proves uniqueness, up to isomorphism,
at every order \(n\geqslant11\).
\end{proof}

\subsection{Refined asymptotics}

\subsubsection{Quartic correction terms}

Write \(r\in\{0,1,2,3,4\}\) for the residue of \(n-11\) modulo five,
and put \((s_0,s_1,s_2,s_3,s_4)=(392,574,756,1153,1225)\).

\begin{proposition}\label{prop:quartic-expansion}
As \(n\to\infty\),
\begin{align}
 \mu(\mathcal G_n)
 &=\frac{4\pi^2}{n^2}-\frac{29\pi^4}{15n^4}
   +\frac{8s_r\pi^4}{35n^5}+O(n^{-6}),
 \label{eq:quartic-mu-expansion}\\
 \tau(\mathcal G_n)
 &=\frac{n^2}{\pi^2}+\frac{29}{60}
   -\frac{2s_r}{35n}+O(n^{-2}).
 \label{eq:quartic-tau-expansion}
\end{align}
The error terms are uniform over the five residue classes.
\end{proposition}

\begin{proof}
Inverting \(Q(x)=\cos\theta\) at zero and expanding the responses in
\eqref{eq:quartic-phases} gives
\begin{align}
 x&=\frac4{25}\theta^2-\frac{29}{9375}\theta^4+O(\theta^6),
 \label{eq:quartic-dispersion}\\
 \phi_i&=\frac{11+2i}{10}\theta
       -\frac{\gamma_i}{4375}\theta^3+O(\theta^5),
 \label{eq:quartic-phase-expansion}
\end{align}
where \((\gamma_0,\gamma_1,\gamma_2,\gamma_3,\gamma_4)
=(196,378,775,1197,1029)\). For a fixed end pair, the first positive
solution of the phase equation satisfies \(\theta=O(m^{-1})\) as
\(m\to\infty\), so the preceding expansions apply. Together with
\eqref{eq:quartic-order}, the phase equation then gives
\[
 \frac n5\theta-\frac{\gamma_i+\gamma_j}{4375}\theta^3
 +O(\theta^5)=\pi.
\]
Writing \(\gamma=\gamma_i+\gamma_j\), we obtain
\[
 \theta=\frac{5\pi}{n}+\frac{\gamma\pi^3}{7n^4}+O(n^{-6}),
 \qquad
 \mu(H_m^{i,j})=\frac{4\pi^2}{n^2}-\frac{29\pi^4}{15n^4}
      +\frac{8\gamma\pi^4}{35n^5}+O(n^{-6}).
\]
For the five end pairs defining \(\mathcal G_n\), the sums \(\gamma\)
are exactly \(s_r\).  Taking the reciprocal and using
\(\tau(\mathcal G_n)=4/\mu(\mathcal G_n)\) proves the proposition.
\end{proof}

\subsubsection{Cubic correction terms}
\label{cubic-refine:sec}

The same one-dimensional reduction gives the first correction terms for the
cubic extremisers.  Recall that \(X_n\) denotes the unique cubic graph of
least algebraic connectivity.

For even \(n\), put \(b_n=72\) when \(n\equiv0\pmod4\) and
\(b_n=27\) when \(n\equiv2\pmod4\).

\begin{proposition}\label{cubic-refine:prop}
As \(n\to\infty\) through even integers,
\begin{equation}\label{cubic-refine:tau}
 \tau(X_n)=\frac{3n^2}{2\pi^2}+\frac12-\frac{b_n}{n}
 +O(n^{-2}).
\end{equation}
\end{proposition}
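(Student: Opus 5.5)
The plan is to mirror the quartic argument of Proposition~\ref{prop:quartic-expansion}: a transfer-matrix (Schur complement) reduction to a phase equation, followed by small-\(\theta\) expansions. Split \(X_n=N_m^{r,s}\) into units consisting of a diamond together with the bridge that follows it. Give each diamond terminal and each bridge endpoint its natural mass, so that energies and masses add. Fix the spectral parameter \(x\) and write \(\mathsf M=\mathsf L-x\mathsf I\). Eliminating the two internal vertices of each diamond gives a \(2\times2\) terminal matrix with rational entries. Composing it with the bridge relation gives a unimodular \(2\times2\) transfer matrix along the chain, whose half-trace \(Q(x)\) we compute. Define \(\theta=\arccos Q\); on a small interval \((0,u]\), as in the quartic case, \(0<Q<1\).

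Next, eliminate the end graph \(C_r\) (\(r=5,7\)) relative to its distinguished vertex and the attaching edge. This gives a scalar boundary response \(R_r(x)\), which we convert into a phase \(\phi_r=\arctan(T_r/\sqrt S)\), with \(S=1-Q^2\) and \(T_r\) the response normalised by the off-diagonal transfer entry. The monotonicity argument of Lemma~\ref{lem:quartic-phase-equation} applies unchanged: internal Dirichlet matrices are positive definite near zero and \(R_r'<0\). It yields \(m\theta(\mu)+\phi_r(\mu)+\phi_s(\mu)=\pi\) at \(\mu=\mu(X_n)\), since Proposition~\ref{prop:cubic-sharp} gives \(\mu(X_n)\to0\).

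Then expand at \(\theta\to0\):
\[
 x=\alpha_2\theta^2+\alpha_4\theta^4+O(\theta^6),\qquad
 \phi_r=\beta_r\theta+\gamma_r\theta^3+O(\theta^5).
\]
Because the unit period has four vertices and the effective conductance is \(2\), we expect \(x=\theta^2/8+O(\theta^4)\). The linear coefficients \(\beta_r\) must make the phase equation read \(\tfrac n4\theta+(\gamma_r+\gamma_s)\theta^3+O(\theta^5)=\pi\). Two checks constrain this step: \(n=4m+r+s\), and the absence of an \(n^{-1}\) term in \(\tau\). Solving gives \(\theta=4\pi/n-64\pi^3(\gamma_r+\gamma_s)/n^4+O(n^{-6})\). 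Substituting into the dispersion relation and inverting via \(\tau=3/\mu\) produces
\[
 \tau=\frac{3n^2}{2\pi^2}+\text{const}-\frac{b}{n}+O(n^{-2}).
\]
The constant comes from \(\alpha_4/\alpha_2^2\), so it is independent of the end graphs and should equal \(1/2\). The \(1/n\) coefficient is proportional to \(\gamma_r+\gamma_s\), evaluated for \((5,5)\) when \(n\equiv2\) and for \((5,7)\) when \(n\equiv0\pmod4\). Matching these two values against \(27\) and \(72\) is the final check.

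The main obstacle is the exact elimination for \(C_5\) and especially \(C_7\). We need their responses to third order in \(\theta\), and the error terms must be uniform. For \(C_7\) I would first reduce by symmetry where possible, then compute \(R_7(x)\) as \(\deg-x-v^{\mathsf T}(\mathsf L^D-x\mathsf I)^{-1}v\) via a Neumann series in \(x\). This requires only \((\mathsf L^D)^{-1}\) and its square applied to \(v\), rather than the full rational function.
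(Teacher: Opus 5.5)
Your route is essentially the paper's. You reduce \(X_n=N_m^{r,s}\) to a three-term recurrence with \(\cos\theta=Q(\mu)\), turn the two end conditions into phases, use that the total phase equals \(\pi\) at \(\mu(X_n)\), and expand in \(\theta\). The paper eliminates the terminals and works with the common value \(t_i\) on the two middle vertices of each diamond, rather than with a terminal transfer matrix. The dispersion relation is the same either way, \(Q(x)=1-4x+2x^2-\tfrac14x^3\). Hence \(x=\theta^2/8-\theta^4/384+O(\theta^6)\), and \(-3\alpha_4/\alpha_2^2=1/2\), as you predict. (Minor: the period has resistance \(2\), not conductance \(2\): terminal resistance \(1\) for the diamond plus \(1\) for the bridge.)

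The one step you carry out explicitly is off by a factor of four, and it is exactly the step that produces \(b_n\). From \(\tfrac n4\theta+\gamma\theta^3+O(\theta^5)=\pi\), with \(\gamma=\gamma_r+\gamma_s\), one gets
\[
 \theta=\frac{4\pi}{n}-\frac{256\gamma\pi^3}{n^4}+O(n^{-6}),
 \qquad
 \tau=\frac{3n^2}{2\pi^2}+\frac12+\frac{192\gamma}{n}+O(n^{-2}),
\]
because \((4\pi/n)^3=64\pi^3/n^3\) is then multiplied by \(4/n\). With your \(64\) you would get \(48\gamma/n\), and your final comparison with \(27\) and \(72\) would fail.

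Two further points. First, the linear phase coefficients cannot be fixed by appeal to the form of the target expansion. A nonzero net linear offset would show up as a term of order \(n\) in \(\tau\), not \(n^{-1}\). They must come out of the elimination. In the paper's normalisation \((m-1)\theta+\delta_r+\delta_s=\pi\) one gets \(\delta_r=\tfrac{r+2}{4}\theta+O(\theta^3)\), which gives exactly \(\tfrac n4\theta\). Second, the cubic end coefficients carry the whole content of the proposition and still have to be computed. In the paper's normalisation they are
\[
 \delta_5=\tfrac74\theta-\tfrac{9}{128}\theta^3+O(\theta^5),
 \qquad
 \delta_7=\tfrac94\theta-\tfrac{39}{128}\theta^3+O(\theta^5).
\]
So \(\gamma=-9/64\) for \((5,5)\) and \(\gamma=-3/8\) for \((5,7)\); the total cubic coefficient does not depend on how you split the ends. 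This gives \(192\cdot\tfrac9{64}=27\) and \(192\cdot\tfrac38=72\).
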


\begin{proof}
Write \(X_n=N_m^{r,s}\), where \((r,s)=(5,5)\) or \((5,7)\) and
\(n=4m+r+s\), and put \(\mu=\mu(X_n)\). A Fiedler vector is constant
on the two middle vertices of each diamond
\cite[Lemma~2.1]{AbdiGhorbaniImrich}; denote this common value in the
\(i\)th diamond by \(t_i\), and the values at its left and right
terminals by \(a_i,b_i\). The eigenvalue equations are
\[
 (2-\mu)t_i=a_i+b_i,\qquad
 (3-\mu)b_i=2t_i+a_{i+1},\qquad
 (3-\mu)a_{i+1}=b_i+2t_{i+1}.
\]
Eliminating the terminal values gives
\begin{equation}\label{cubic-refine:recurrence}
 t_{i-1}+t_{i+1}=2Q(\mu)t_i,
 \qquad Q(\mu)=1-4\mu+2\mu^2-\frac14\mu^3,
 \qquad 2\leqslant i\leqslant m-1.
\end{equation}
Since \(\mu=(1+o(1))2\pi^2/n^2\), there is a unique small positive
\(\theta\) with \(Q(\mu)=\cos\theta\), and
\begin{equation}\label{cubic-refine:dispersion}
 \mu=\frac{\theta^2}{8}-\frac{\theta^4}{384}+O(\theta^6).
\end{equation}

To extend \eqref{cubic-refine:recurrence} to \(i=1,m\), let
\(L_j\) be the Laplacian of \(C_j\) and \(e_j\) the coordinate vector
of its distinguished vertex, for \(j\in\{5,7\}\), and set
\[
 q_j(x)=e_j^{\mathsf T}(L_j+e_je_j^{\mathsf T}-x\mathsf I)^{-1}e_j,
 \qquad
 h_j(x)=\frac{(3-x)^2-1}{3-x-q_j(x)}-(3-x).
\]
The matrix defining \(q_j\) is invertible for \(x\) sufficiently close
to zero.  Eliminating the end graph and the adjacent terminal gives
\begin{equation}\label{cubic-refine:ends}
 t_0=h_r(\mu)t_1,\qquad t_{m+1}=h_s(\mu)t_m.
\end{equation}
Direct calculation gives
\begin{align*}
 h_5(x)&=\frac{-2}{x^3-9x^2+20x-2},\\
 h_7(x)&=\frac{-2(x^2-5x+2)}
 {x^5-13x^4+59x^3-108x^2+66x-4}.
\end{align*}

For \(j\in\{5,7\}\), define the small real phase
\[
 \delta_j(\theta)=
 \arctan\frac{h_j(\mu)-\cos\theta}{\sin\theta}.
\]
Then the left condition in \eqref{cubic-refine:ends} gives
\(t_i=c\cos((i-1)\theta+\delta_r(\theta))\) for a nonzero constant
\(c\).  The right condition gives
\((m-1)\theta+\delta_r(\theta)+\delta_s(\theta)\in\pi\mathbb Z\).
The asymptotic value of \(\mu\) and \eqref{cubic-refine:dispersion}
show that the left-hand side tends to \(\pi\); hence, for all
sufficiently large \(n\),
\begin{equation}\label{cubic-refine:phase}
 (m-1)\theta+\delta_r(\theta)+\delta_s(\theta)=\pi.
\end{equation}
Expansion of the rational functions above yields
\[
 \delta_5(\theta)=\frac74\theta-\frac9{128}\theta^3+O(\theta^5),
 \qquad
 \delta_7(\theta)=\frac94\theta-\frac{39}{128}\theta^3+O(\theta^5).
\]
Thus \eqref{cubic-refine:phase} becomes
\[
 \frac n4\theta-a_n\theta^3+O(\theta^5)=\pi,
 \qquad
 a_n=\begin{cases}
 3/8,&n\equiv0\pmod4,\\
 9/64,&n\equiv2\pmod4,
 \end{cases}
\]
and consequently
\[
 \theta=\frac{4\pi}{n}+\frac{256a_n\pi^3}{n^4}+O(n^{-6}).
\]
Finally, \eqref{cubic-refine:dispersion} gives
\[
 \tau(X_n)=\frac3\mu=\frac{24}{\theta^2}+\frac12+O(\theta^2)
 =\frac{3n^2}{2\pi^2}+\frac12-\frac{192a_n}{n}+O(n^{-2}),
\]
which proves the proposition.
\end{proof}

\subsection{Extremal graphs and stability}

The degree separation from Theorem~\ref{thm:noncubic-gap} now
reduces the maximisation of relaxation time to the cubic and quartic
cases. We use the coefficients \(b_n\) and \(s_r\) from
Propositions~\ref{cubic-refine:prop} and~\ref{prop:quartic-expansion},
respectively.

Write \(r\in\{0,1,2,3,4\}\) for the residue of \(n-11\) modulo five.

\begin{theorem}\label{thm:max-relaxation}\label{thm:refined-relaxation}
\label{cor:eventual-extremiser}
For every sufficiently large \(n\), the maximum \(\tau_{\max}(n)\) is attained
uniquely, up to isomorphism, by \(X_n\) if \(n\) is even and by
\(\mathcal G_n\) if \(n\) is odd.  Moreover,
\begin{equation}
 \tau_{\max}(n)=
 \begin{cases}
 \displaystyle\frac{3n^2}{2\pi^2}+\frac12-\frac{b_n}{n}+O(n^{-2}),
       & n\text{ even},\\[6pt]
 \displaystyle\frac{n^2}{\pi^2}+\frac{29}{60}-\frac{2s_r}{35n}+O(n^{-2}),
       & n\text{ odd},
 \end{cases}.
 \label{eq:max-relaxation}
\end{equation}
\end{theorem}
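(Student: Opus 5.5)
Since $\tau(G)=1/\lambda(G)$, maximising $\tau$ over connected regular graphs of order $n$ is the same as minimising $n^2\lambda(G)$ over all degrees. First I separate the degrees. For even $n$ there are cubic graphs, and $X_n$ satisfies $n^2\lambda(X_n)\to 2\pi^2/3$ by Proposition~\ref{prop:cubic-sharp}. Theorem~\ref{thm:noncubic-gap} gives $n^2\lambda(G)\geqslant 4\pi^2/5-\varepsilon$ for every non-cubic $G$, and $4\pi^2/5>2\pi^2/3$. Choosing $\varepsilon$ small then shows that, for large even $n$, every maximiser is cubic.

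For odd $n$ no cubic graph of order $n$ exists, and indeed no regular graph of odd degree. So $d$ is even. Cycles give $n^2\lambda\to 2\pi^2$. Theorem~\ref{thm:noncubic-gap} gives $n^2\lambda\geqslant 4\pi^2/3-\varepsilon$ for even $d\ne4$. Quartic graphs have $n^2\lambda(\mathcal G_n)=n^2\mu(\mathcal G_n)/4\to\pi^2$ by \eqref{eq:quartic-known} and Proposition~\ref{prop:quartic-expansion}. Since $\pi^2<4\pi^2/3$, every maximiser at large odd $n$ is quartic.

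Second, within the surviving degree I use the exact results. For even $n$, a cubic maximiser of $\tau=3/\mu$ is a cubic minimiser of $\mu$. By the Brand--Guiduli--Imrich theorem \cite[Theorem~1]{BrandGuiduliImrich}, this minimiser is uniquely $X_n$ for large even $n$, as recalled in Section~\ref{sec:cubic-sharpness}. For odd $n$, Theorem~\ref{thm:quartic-unique} shows that $\mathcal G_n$ is the unique quartic minimiser of $\mu$, hence the unique maximiser of $\tau=4/\mu$. Uniqueness across degrees is automatic, because the first step excluded all other degrees strictly for large $n$: the gaps are constants, not $o(1)$, so no tie can occur.

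Finally, the expansions \eqref{eq:max-relaxation} are $\tau(X_n)$ from Proposition~\ref{cubic-refine:prop} and $\tau(\mathcal G_n)$ from Proposition~\ref{prop:quartic-expansion}. These are uniform over the residue classes modulo $4$ and modulo $5$ respectively, so the $O(n^{-2})$ errors are uniform and the statement follows.

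The main point requiring care is the degree separation. It must be uniform over all degrees $d$ that may depend on $n$, including degrees comparable to $n$. Theorem~\ref{thm:noncubic-gap} is stated exactly in this uniform form: its subsequence argument covers fixed $d$, $d\to\infty$ through Corollary~\ref{cor:edge-connectivity-growing}, and $d\geqslant\lfloor n/2\rfloor$ through \eqref{eq:dense-normalised}. I would therefore just check that the thresholds combine into a single $n_0$. I would also check that the cubic uniqueness cited from \cite{BrandGuiduliImrich} applies at all sufficiently large even orders, and not only in an asymptotic sense.
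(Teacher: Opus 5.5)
Your proposal is correct and is essentially the paper's own argument: Theorem~\ref{thm:noncubic-gap}, compared with Proposition~\ref{prop:cubic-sharp} at even orders and with the quartic asymptotic \eqref{eq:quartic-known} at odd orders, fixes the degree; Brand--Guiduli--Imrich \cite{BrandGuiduliImrich} and Theorem~\ref{thm:quartic-unique} then give uniqueness within that degree; and Propositions~\ref{cubic-refine:prop} and~\ref{prop:quartic-expansion} supply the expansions. The uniformity over degrees growing with $n$ that you flag is already built into Theorem~\ref{thm:noncubic-gap}, so nothing further is needed there.
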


\begin{proof}[Proof of Theorem~\ref{thm:max-relaxation}]
Proposition~\ref{prop:cubic-sharp} gives
\[
 n^2\lambda(X_n)\longrightarrow2\pi^2/3.
\]
Since \(4/5>2/3\), the first
part of Theorem~\ref{thm:noncubic-gap} excludes every other degree for
all sufficiently large even orders.  Uniqueness among cubic graphs
is the theorem of Brand, Guiduli, and Imrich
\cite[Theorem~1]{BrandGuiduliImrich}.

At odd orders the degree is even.  The quartic asymptotic
\eqref{eq:quartic-known} and the second part of
Theorem~\ref{thm:noncubic-gap}, with \(4/3>1\), exclude every
degree other than four for all sufficiently large odd orders.
Theorem~\ref{thm:quartic-unique} gives uniqueness in degree four.
The two expansions now follow from Propositions~\ref{cubic-refine:prop}
and~\ref{prop:quartic-expansion}.
\end{proof}

\begin{corollary}\label{cor:global-stability}
There are absolute constants \(C,\delta_0>0\) such that the following
holds for all sufficiently large \(n\). Let
\(0\leqslant\delta\leqslant\delta_0\), and let \(G\) be a connected
simple \(d\)-regular graph of order \(n\) satisfying
\[
 \tau(G)\geqslant(1-\delta)\tau_{\max}(n).
\]
Then \(d=3\) for even \(n\) and \(d=4\) for odd \(n\).
After relabelling the vertices,
\begin{align*}
 |E(G)\mathbin{\triangle}E(X_n)|
 &\leqslant C(\delta+n^{-1})^{1/3}n,
 &&n\text{ even},\\
 |E(G)\mathbin{\triangle}E(\mathcal G_n)|
 &\leqslant C(\delta+n^{-1})^{1/3}n,
 &&n\text{ odd},\\
 \frac{3n}{d+1}-C(\delta+n^{-1})^{1/3}n
 &\leqslant\operatorname{diam}(G)\leqslant\frac{3n}{d+1}.
\end{align*}
\end{corollary}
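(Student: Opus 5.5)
We reduce Corollary~\ref{cor:global-stability} to the fixed-degree stability theorem, Theorem~\ref{thm:parity-stability}, by first pinning down the degree and then translating the relaxation-time hypothesis into the eigenvalue hypothesis of that theorem.

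\emph{Fixing the degree.} By Theorem~\ref{thm:max-relaxation}, \(n^2/\tau_{\max}(n)\to 2\pi^2/3\) along even \(n\) and \(\to\pi^2\) along odd \(n\). The hypothesis gives
\[
 n^2\lambda(G)\leqslant\frac{n^2}{(1-\delta)\tau_{\max}(n)}.
\]
Take \(\delta_0\) small enough that \((1-\delta_0)^{-1}(2\pi^2/3)\) is below \(4\pi^2/5\) with room to spare, and that \((1-\delta_0)^{-1}\pi^2\) is below \(4\pi^2/3\) with room to spare. Then Theorem~\ref{thm:noncubic-gap}, applied with a fixed small \(\varepsilon\), excludes every \(d\neq3\) at even orders, for \(n\) large. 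At odd orders the degree is necessarily even, and the same theorem excludes every even \(d\neq4\). This step is routine.

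\emph{Converting to the eigenvalue hypothesis.} Put \(a_3=2\) and \(a_4=4\). Since \(\mu=d/\tau\), the expansions in Theorem~\ref{thm:max-relaxation} give
\[
 n^2\mu(G)\leqslant\frac{d\,n^2}{(1-\delta)\tau_{\max}(n)}
 =a_d\pi^2\bigl(1+O(\delta)+O(n^{-2})\bigr).
\]
Hence \(n^2\mu(G)\leqslant a_d\pi^2+\varepsilon\) with \(\varepsilon=C_1(\delta+n^{-2})\leqslant C_1(\delta+n^{-1})\), for an absolute constant \(C_1\). Shrinking \(\delta_0\) if necessary ensures \(\varepsilon\leqslant1\).

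\emph{Applying stability.} Theorem~\ref{thm:parity-stability} then yields the edit-distance and diameter bounds with error \(C(\varepsilon+n^{-1})^{1/3}n\). This is at most \(C'(\delta+n^{-1})^{1/3}n\), because \((C_1 x)^{1/3}\leqslant C_1^{1/3}x^{1/3}\). The reference graphs there are \(Z_{3,n}=X_n\) and \(Z_{4,n}=\mathcal G_n\), exactly the graphs in the statement. The diameter upper bound \(3n/(d+1)\) is also supplied by that theorem.

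\emph{Main obstacle.} The only delicate point is that the thresholds must be uniform and absolute. The degree exclusion uses Theorem~\ref{thm:noncubic-gap}, which holds for each fixed \(\varepsilon\) once \(n\) is large, so one fixed choice of \(\varepsilon\) tied to \(\delta_0\) suffices. The constants in Theorem~\ref{thm:parity-stability} are absolute for \(d\in\{3,4\}\). Consequently \(C\), \(\delta_0\) and the threshold on \(n\) are all absolute.
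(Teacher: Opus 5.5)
Your proposal is correct and follows the paper's proof essentially step for step. Like the paper, you fix the degree using the two strict gaps of Theorem~\ref{thm:noncubic-gap} with $\delta_0$ small, convert the hypothesis via Theorem~\ref{thm:max-relaxation} into $n^2\mu(G)\leqslant a_d\pi^2+K(\delta+n^{-2})$, and then apply Theorem~\ref{thm:parity-stability} with this $\varepsilon$. One small point: the final step really rests on $\varepsilon+n^{-1}\leqslant(C_1+1)(\delta+n^{-1})$, not on the cube-root identity you cite, but after enlarging $C$ the conclusion is the same.
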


\begin{proof}[Proof of Corollary~\ref{cor:global-stability}]
Write \(c_n=2\pi^2/3\) for even \(n\), and \(c_n=\pi^2\) for odd
\(n\).  Theorem~\ref{thm:max-relaxation} and the hypothesis give,
uniformly for \(0\leqslant\delta\leqslant1/2\),
\[
 n^2\lambda(G)\leqslant\frac{c_n}{1-\delta}+O(n^{-2}).
\]
Choose \(\delta_0>0\) sufficiently small. The two strict inequalities
in Theorem~\ref{thm:noncubic-gap} then force \(d=3\) for even \(n\)
and \(d=4\) for odd \(n\), for all sufficiently large \(n\), with the
threshold independent of \(\delta\). It follows that
\[
 n^2\mu(G)\leqslant a_d\pi^2+K(\delta+n^{-2})
\]
for an absolute constant \(K\).  Decreasing \(\delta_0\) and increasing
the threshold on \(n\), the last term is at most one.
Apply Theorem~\ref{thm:parity-stability} with
\(\varepsilon=K(\delta+n^{-2})\) and enlarge \(C\).
\end{proof}

\section{The structure of minimisers}
\label{sec:exact-canonical-middle}

In this section, we prove Theorem~\ref{thm:guiduli-mohar-pathlike}.
The stability theorems identify the repeating subgraphs on all but
\(o(n)\) vertices of a minimiser. We first show that the remaining
vertices lie in two end subgraphs of bounded total order. We then
exclude branching in the block-cut tree, proving that every minimiser
is path-like. The minimum-degree and odd-regular cases use chains
joined by bridges; the even-regular case uses chains whose consecutive
blocks share a cut vertex.

\Needspace{12\baselineskip}
\begin{theorem}\label{thm:bounded-end-canonical-middle}
\label{thm:even-bounded-canonical-middle}
Let \(d\geqslant3\) be a fixed integer. Then, for all sufficiently large \(n\),
every minimiser of algebraic connectivity in \(\mathcal D_{n,d}\)
or \(\mathcal R_{n,d}\) consists of a canonical middle and two
connected end graphs containing \(O_d(1)\) vertices in total.
The middle is
\begin{enumerate}[(i)]
\item a bridge chain of \(L_d\) in \(\mathcal D_{n,d}\), and in
\(\mathcal R_{n,d}\) when \(d\) is odd;
\item a chain of \(M_d\), identified at consecutive terminals, in
\(\mathcal R_{n,d}\) when \(d\) is even.
\end{enumerate}
\end{theorem}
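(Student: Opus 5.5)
Our starting point is the stability results. A minimiser \(G\) satisfies \(n^2\mu(G)\leqslant c\pi^2+O_d(n^{-1})\) with \(\varepsilon=O_d(n^{-1})\), by comparison with the sharp chain constructions of Theorems~\ref{thm:minimum-degree-bound} and~\ref{thm:fixed-regular-sharp}. Here \(c=d-1\) in \(\mathcal D_{n,d}\) and in odd regular degree, and \(c=2(d-2)\) in even regular degree. Theorems~\ref{thm:minimum-degree-stability}, \ref{thm:parity-stability} and~\ref{thm:fixed-even-stability} then give only \(O_d(n^{2/3})\) exceptional vertices and \(O_d(n^{2/3})\) chains. The proof therefore has to upgrade this sublinear bound to \(O_d(1)\) total exceptional vertices and a single chain. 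To do so we will sharpen the energy accounting, replacing the crude Neumann comparison by an exact comparison with the minimiser's own eigenvalue.

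First we refine the deficit bound. We use the exact second-order expansion of the sharp construction, in the style of Propositions~\ref{prop:quartic-expansion} and~\ref{cubic-refine:prop}. This gives \(\mu(G)\leqslant c\pi^2/n^2-\kappa_d/n^4+O_d(n^{-5})\) for a constant \(\kappa_d>0\). In particular, \(\varepsilon\) may be taken to be \(O_d(n^{-2})\) rather than \(O_d(n^{-1})\). More importantly, each exceptional piece carries a strict coefficient improvement \(\nu_d\) on an interval of bounded length. An exceptional piece at coordinate position \(s\) costs energy of order \(\nu_d(\varphi'(s))^2\asymp n^{-3}\sin^2(\pi s/n)\). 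The length saved by a bounded end graph gains only \(O(n^{-3})\) per unit of length, and only at the two extremities, where \(\varphi'\) is small. The plan is to make this precise by writing the energy deficit \eqref{eq:even-strict-energy} with the path-mode estimate \eqref{eq:even-stability-mode}, and its analogues in degrees \(3,4\) and for minimum degree. From this we will deduce that the total \(\varphi'\)-weighted length of strict pieces is \(O_d(n^{-2})\cdot n^{3}\), that is, bounded. Strict pieces at distance \(s\) from the ends then number at most \(O_d(1)\cdot(n/s)^2\) per dyadic scale. This alone does not give \(O_d(1)\), so we need a local rearrangement argument next.

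The key step is surgery. Suppose an exceptional piece, or a break between two chains, sits at distance \(s\gg1\) from both ends. We cut out the exceptional bounded subgraph and reattach it at one end, splicing the chain back together. This uses the exact local structure \(L_d\) or \(Q_d\) identified by stability, so that degrees are preserved and the class membership is kept. We then compare Rayleigh quotients using the true Fiedler vector transported along the chain. In the middle region the Fiedler vector is, up to \(O(n^{-1})\) relative error, a discrete cosine with phase dictated by the block recurrences (compare Lemma~\ref{lem:quartic-phase-equation}). Moving a block from position \(s\) to the end therefore changes the quotient by \(-\beta_d\sin^2(\pi s/n)n^{-3}+O_d(n^{-4})\) with \(\beta_d>0\), and this is a strict decrease once \(s\geqslant C_d\). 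This contradicts minimality. Hence all exceptional vertices lie within \(O_d(1)\) of the two ends, which gives the \(O_d(1)\) bound and a single middle chain. The end graphs are connected because the chain terminals are cut vertices or carry bridges, so the complement of the middle splits into exactly two pieces.

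We expect the surgery to be the main obstacle, because the trial vector must be controlled to second order. The strict gain from the removal is only of order \(n^{-3}\), so it must dominate both the re-centring error and the perturbation at the reattachment point. We would first try the Schur-complement and phase formalism of Lemma~\ref{lem:quartic-phase-equation}. For general \(d\), this means computing terminal transfer matrices of \(L_d\) and \(Q_d\) and showing that an exceptional block has phase defect \(\phi>\) (its length)\(\times\theta_0\), the same strict inequality as in Lemma~\ref{lem:quartic-phase-comparison}. That inequality should follow from the strict resistance deficits \(\nu_d\) in Lemma~\ref{lem:even-short-cores} and Lemma~\ref{lem:general-low-cut}. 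The final identification of the middle as a bridge chain of \(L_d\) or a chain of \(M_d\) then follows by regrouping the \(Q_d\)-chains at their cut vertices, as described before Theorem~\ref{thm:fixed-even-stability}.
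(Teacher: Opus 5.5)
\textbf{The end graphs are never bounded.} Your surgery relocates exceptional pieces that sit between canonical blocks. The conclusion ``all exceptional vertices lie within \(O_d(1)\) of the two ends, which gives the \(O_d(1)\) bound'' does not follow from it. An end graph can contain an unbounded number of noncanonical vertices with no canonical block inside it to move past, and moving pieces ``to the end'' only enlarges it.

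This is a real obstruction, not a technicality. By the computation in Proposition~\ref{prop:stability-exponent-sharp}, a complete cap \(K_h\) gives \(n^2\mu\leqslant c\pi^2+O_d((h/n)^3+n^{-1})\). For \(h\asymp n^{2/3}\) this is within the \(O(n^{-1})\) accuracy of any energy comparison, so no deficit or stability estimate can exclude it.

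The paper handles this with a separate end-graph comparison. An end graph \(Q\) of order \(q\) enters the eigenvalue equation only through its Dirichlet-to-Neumann response \(r_Q(x)=-xq-x^2\mathcal T(Q)+O(x^3q^5)\). So at \(x=\mu(G)\) the decisive quantity is the rooted torsion. Lemmas~\ref{lem:cap-path-torsion-deficit} and~\ref{lem:even-rooted-torsion} give \(\mathcal T(Q)\leqslant q^3/(3c)-\zeta_d b(Q)^3+O_d(q^2)\). A chain cap of the same order and root degree has torsion \(q^3/(3c)+O_d(q^2)\); in the regular classes the caps must be regular, via Lemma~\ref{lem:odd-regular-comparison-caps} and the caps \(C_{q,r}\). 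Replacing \(Q\) by such a cap therefore strictly lowers the response. The inertia argument of Corollary~\ref{cor:reduced-cap-replacement}, together with nonvanishing of the Fiedler vector at the attachment, then gives a strictly better competitor unless \(b(Q)\), and hence the total exceptional order \(H\), is bounded.

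\textbf{The localisation step does not reach the scale you claim.} This is exactly why the torsion step cannot be skipped.
\begin{enumerate}[(i)]
\item A gain \(\beta_d\sin^2(\pi s/n)n^{-3}\asymp s^2n^{-5}\) beats an error \(O_d(n^{-4})\) only when \(s\gg n^{1/2}\). So ``strict decrease once \(s\geqslant C_d\)'' does not follow even from your own estimate.
\item Neither the moved piece nor \(H\) is bounded a priori. The other pieces perturb the transported Fiedler vector by a relative amount of order \(H/n\), not \(n^{-1}\).
\item In the exact transfer computation of Lemma~\ref{lem:many-module-evacuation}, the difference of characteristic functions is \((-t)k\Delta_Qc^{-1}\sin^2\phi+O_d(k^2hH\sin\phi+k^3hH^2)\), uniformly in the piece order \(h\). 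The linear defect \(\Delta_Q\geqslant\kappa_dh\) for pieces of unbounded order comes from Lemma~\ref{lem:port-path-deficit} or Theorem~\ref{thm:even-adjusted-resistance}.
\item The factors \(\sin\phi\) in the error are what make any threshold possible, and the threshold they give is \(C_dH\) blocks from an end, not \(C_d\).
\end{enumerate}
So localisation alone only confines everything to end regions of order \(O_d(H)\). The torsion comparison is what forces \(H=O_d(1)\), and only then does localisation give bounded end graphs. Your second-order refinement of the upper bound on \(\mu\) does not help here, since the lower-bound comparison already carries \(O(n^{-3})\) errors from the interval length and normalisation.
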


\subsection{Minimum degree and odd regular degree}

Throughout this subsection, \(d\geqslant3\) is fixed, and
\(c=d-1\), \(w=d+1\).  The canonical graph is \(L_d=K_{d+1}-e\),
with the endpoints of the missing edge as terminals. An end graph is
a connected graph with one distinguished terminal of degree at least
\(d-1\), all other degrees being at least \(d\). A two-terminal
graph is connected and has two distinguished terminals, each of
degree at least \(d-1\), with all other degrees at least \(d\).
We also allow coinciding terminals, in which case their common vertex
has degree at least \(d-2\).  Joining consecutive pieces by one
edge at their specified terminals gives a graph of minimum degree at
least \(d\).

The bridge tree is obtained by contracting each component left after
deleting all bridges, and retaining the bridges between components.
We call a component canonical if it is \(L_d\), has degree two in
the bridge tree, and its two boundary edges meet the deficient terminals.  For an end graph or two-terminal graph this definition is
applied after adding its one or two exterior attachment edges.
Canonical components are pairwise disjoint. For a vertex
\(v\), let \(\mathbf e_v\) denote its coordinate vector, and
write \(L_Q=\mathsf L(Q)\) for the Laplacian of \(Q\).
For a connected graph \(Q\) with distinguished vertex \(s\), set
\[
 D_Q=L_Q[V(Q)\setminus\{s\}],\qquad
 \mathcal T(Q)=\one^{\mathsf T}D_Q^{-1}\one.
\]
The quantity \(\mathcal T(Q)\) is its rooted torsion. Eliminating
all vertices except \(s\) from \(L_Q-x\mathsf I\) gives a scalar
Schur complement \(r_Q(x)\), its Dirichlet-to-Neumann response.

For a two-terminal graph \(Q\) with terminals \(s,t\), the
currents \(I,J\) and vertex values satisfy
\((L_Q-x\mathsf I)f=I\mathbf e_s-J\mathbf e_t\).
The transfer matrix maps \((f(s),I)\) to \((f(t),J)\).
If the following bridge is included, the outgoing value is
\(f_{\rm next}=f(t)-J\).

For $x>0$, put $k=\sqrt{cx}$. For a two-terminal graph $Q$ with
terminals $s,t$, write $\ell_Q=1+R_Q(s,t)$, taking $R_Q(s,s)=0$,
and denote by $\widetilde T_Q$ its transfer matrix, including the
following bridge, in coordinates given by the vertex value and
entering current divided by $k$.

\Needspace{22\baselineskip}
\begin{lemma}\label{lem:uniform-defect-transfer}
Let \(Q\) be a two-terminal graph of order \(h\). Then
\(\widetilde T_Q\) has determinant one, is analytic in \(k\) near
zero, and satisfies
\[
 \widetilde T_Q
 =\mathsf I+k
   \begin{pmatrix}0&-\ell_Q\\h/c&0\end{pmatrix}
       +O_d(k^2h^2).
\]
The diagonal and off-diagonal parts of the remainder are
\(O_d(k^2h^2)\) and \(O_d(k^3h^3)\), respectively, uniformly for
complex \(k\) with \(|k|h=o(1)\).

For an end graph \(Q\) of order \(h\) rooted at \(s\), its
Dirichlet-to-Neumann response satisfies
\[
 r_Q(x)=-xh-x^2\mathcal T(Q)+O(x^3h^5)
 \qquad (|x|h^2=o(1)).
\]
\end{lemma}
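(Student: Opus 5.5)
I will treat the transfer matrix and the Dirichlet-to-Neumann response through the resolvent of \(L_Q-x\mathsf I\) with one terminal grounded, expanding in powers of \(x\). The key identity is that the transfer matrix of the graph alone comes from the \(2\times2\) Schur complement on \(\{s,t\}\). For \(x\) small this complement is symmetric, so the map \((f(s),I)\mapsto(f(t),J)\) has determinant one. This follows from the Wronskian identity: for two solutions \(f,g\) of \((L_Q-x\mathsf I)f=I\mathbf e_s-J\mathbf e_t\), symmetry of \(L_Q-x\mathsf I\) gives \(f(s)I_g-g(s)I_f=f(t)J_g-g(t)J_f\). The bridge acts by \((f,J)\mapsto(f-J,J)\), which is unimodular. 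Rescaling the current by \(k\) conjugates by \(\operatorname{diag}(1,k)\) and preserves the determinant. Analyticity in \(k\) holds because the entries are rational in \(x=k^2/c\) with poles only at Dirichlet eigenvalues of \(Q\) with a terminal deleted. Those eigenvalues are at least of order \(h^{-2}\), by the path-type Poincar\'e bound on a connected graph of order \(h\).

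For the expansion I will solve the terminal-value problem perturbatively in \(x\). Start from \(f(s)=1\), \(I=0\) and \(x=0\). The solution is the constant one with \(J=0\) and \(f(t)=1\). At first order, \(J\) changes by \(-xh\) since summing the equation gives \(I-J=-x\sum f\); after dividing by \(k\) this is \(kh/c\). Next take \(f(s)=0\) and \(I=k\) at \(x=0\). Then \(f\) is \(k\) times the unit-current potential, so \(f(t)=-kR_Q(s,t)\), and the bridge subtracts \(J=k\), giving \(-k\ell_Q\). The diagonal entries are \(1\) at order zero.

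For the remainder I will use the Neumann series of the grounded resolvent. The operator norm of \(D^{-1}\) (terminal deleted) is \(O(h^2)\), and resistances are \(O(h)\). So each power of \(x\) costs a factor \(h^2\), which gives the size \(O_d(k^2h^2)\). For the parity claim I use the fact that entries are series in \(k^2\) after the \(\operatorname{diag}(1,k)\) scaling: diagonal entries are even in \(k\) and off-diagonal entries are odd. The first off-diagonal correction is therefore order \(k^3\), with coefficient bounded by \(h^3\) through a product of a resistance bound \(O(h)\) and a torsion-type bound \(O(h^2)\). I expect this to be the main obstacle, because the off-diagonal remainder must be tracked as \(x\) times quantities of total size \(h^3\) and not \(h^4\). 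I would try writing the order-\(x\) correction of \(f(t)\) and \(J\) explicitly as \(x\sum_v G(\cdot,v)f_0(v)\), where \(G\) is the grounded Green function with entries \(O(h)\) and \(f_0\) the zeroth-order solution, and then bound the geometric tail.

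For the end graph I will expand the response directly. We have \(r_Q(x)=(L_Q-x\mathsf I)_{ss}-b^{\mathsf T}(D_Q-x\mathsf I)^{-1}b\), where \(b\) is the incidence vector. Expanding \((D_Q-x)^{-1}=D_Q^{-1}+xD_Q^{-2}+x^2D_Q^{-3}+\cdots\) and using \(D_Q^{-1}b=\one\) (harmonic extension of the constant) gives the constant term zero. The \(x\) term is \(-1-\one^{\mathsf T}\one=-h\). The \(x^2\) term is \(-\one^{\mathsf T}D_Q^{-1}\one=-\mathcal T(Q)\). The tail is \(x^3\one^{\mathsf T}D_Q^{-2}\one\cdots\), bounded by \(|x|^3h\cdot\|D_Q^{-1}\|^2=O(x^3h^5)\) while \(|x|h^2=o(1)\).
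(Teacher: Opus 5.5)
Your proposal is correct and follows essentially the paper's proof. Both ground at $s$, control $W=(D_Q-x\mathsf I)^{-1}$ by its Neumann series, obtain the first-order entries from current conservation and the unit-current potential, and expand the exact elimination $r_Q(x)=-xh-x^2\one^{\mathsf T}W\one$. The only real difference is the determinant: you get $\det\widetilde T_Q=1$ from the Wronskian identity, whereas the paper reads it off the closed formulas $J=(I+x(h+xA)z)/(1+xb)$ and $f_{\rm next}=(1+xb)z-(1+r)J$, where $A=\one^{\mathsf T}W\one$, $b=\one^{\mathsf T}W\mathbf e_t$ and $r=\mathbf e_t^{\mathsf T}W\mathbf e_t$.

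Those formulas also expose three small slips in your sketch. First, the transfer can have poles where $1+xb=0$, and these need not be Dirichlet eigenvalues; they are still excluded in the stated range, since $|xb|\leqslant2|x|h^2$. Second, the first-order change in $J$ is $+xh$, not $-xh$. Third, the statement includes coinciding terminals, where the transfer is $\bigl(\begin{smallmatrix}1+r_Q&-1\\-r_Q&1\end{smallmatrix}\bigr)$ with $\ell_Q=1$, and your argument adapts to this case directly.
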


\begin{proof}
For distinct terminals, prescribe \(z=f(s)\), let \(I\) enter the
two-terminal graph and \(J\) leave it, so that
\[
 (L_Q-x\mathsf I)f=I\mathbf e_s-J\mathbf e_t.
\]
Ground at \(s\) and write
\[
 W(x)=(D_Q-x\mathsf I)^{-1},\quad
 A=\one^{\mathsf T}W(x)\one,\quad
 b=\one^{\mathsf T}W(x)\mathbf e_t,\quad
 r=\mathbf e_t^{\mathsf T}W(x)\mathbf e_t.
\]
The entries of \(D_Q^{-1}\) are nonnegative and at most \(h-1\):
its diagonal entries are resistances to \(s\), bounded by graph
distance, and Cauchy--Schwarz bounds the other entries.  The
Neumann series therefore gives, for \(|x|h^2\leqslant1/2\),
\[
 |W(x)_{uv}|\leqslant2h,\qquad
 |W(x)_{uv}-W(0)_{uv}|\leqslant2|x|h^3.
\]
Writing \(f=z\one+g\), with \(g(s)=0\), gives
\[
 \begin{aligned}
 g&=W(x)(xz\one-J\mathbf e_t), &
 J&=\frac{I+x(h+xA)z}{1+xb},\\
 f_{\rm next}&=(1+xb)z-(1+r)J.
 \end{aligned}
\]
These formulas define a transfer of determinant one with no pole in the
stated range. Since \(A=O(h^3)\), \(b=O(h^2)\), and
\(r=R_Q(s,t)+O(|x|h^3)\), they also give the asserted diagonal
and off-diagonal bounds after scaling the current.

For an end graph, eliminating all vertices other than \(s\) gives
\[
 r_Q(x)=-xh-x^2\one^{\mathsf T}
                   (D_Q-x\mathsf I)^{-1}\one.
\]
The same bounds prove its expansion.  If the two terminals coincide,
the unscaled transfer is
\[
 \begin{pmatrix}1+r_Q(x)&-1\\-r_Q(x)&1\end{pmatrix},
\]
which gives the assertion with \(\ell_Q=1\).
\end{proof}

For the canonical graph write \(P=\widetilde T_{L_d}\).  Its terminal
resistance is \(2/c\), and its transfer has
\[
 \operatorname{tr}P=2-\frac{w^2}{c}x+O_d(x^2).
\]
Thus there is a phase \(\theta>0\), for small \(x>0\), such that
\[
 \operatorname{tr}P=2\cos\theta,\qquad
 x=\frac{c}{w^2}\theta^2+O_d(\theta^4),\qquad
 k=\frac c w\theta+O_d(\theta^3).
\]
We use the notation
\[
 \mathcal R(\phi)=
 \begin{pmatrix}\cos\phi&-\sin\phi\\
                 \sin\phi&\cos\phi\end{pmatrix}.
\]
For two end graphs \(A,B\), the left boundary column and right boundary
row in the scaled coordinates are
\[
 \begin{pmatrix}1\\-r_A(x)/(k(1+r_A(x)))\end{pmatrix},
 \qquad
 \begin{pmatrix}-r_B(x)/k&1\end{pmatrix}.
\]
The first expression includes the bridge from \(A\) to the first
two-terminal graph; the final bridge is already included in the last two-terminal graph
transfer.  Hence each bridge and each vertex mass is counted once.

For an end graph \(Q\), let \(b(Q)\) be the minimum number of vertices
outside the canonical components on a path in its bridge tree starting
at the component containing the root, where the minimum is over all
such paths.

\begin{lemma}\label{lem:cap-path-torsion-deficit}
There is \(\zeta_d>0\) such that every end graph \(Q\) of order \(q\)
satisfies
\[
 \mathcal T(Q)\leqslant
       \frac{q^3}{3c}-\zeta_d b(Q)^3+O_d(q^2).
\]
For every sufficiently large \(q\), there is an end graph \(C_q\) of
order \(q\), consisting of a bounded complete end graph and a bridge
chain of \(L_d\), with
\[
 \mathcal T(C_q)=\frac{q^3}{3c}+O_d(q^2).
\]
\end{lemma}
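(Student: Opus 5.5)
We bound the rooted torsion variationally. Since \(D_Q\) is the grounded Laplacian with ground at \(s\), its quadratic form satisfies
\[
 \mathcal T(Q)=\one^{\mathsf T}D_Q^{-1}\one
 =\max_{g:\,g(s)=0}\bigl(2\textstyle\sum_v g(v)-\cE_Q(g)\bigr),
\]
and the maximiser is the torsion function \(u=D_Q^{-1}\one\). This function is nonnegative and increases away from the root. It is the potential of a unit source at every vertex, draining at \(s\). Thomson's principle gives the dual form \(\mathcal T(Q)=\min_\theta\sum_e\theta_e^2\), where \(\theta\) ranges over flows in which every vertex other than \(s\) injects one unit and \(s\) absorbs \(q-1\). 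Thus an upper bound on \(\mathcal T(Q)\) only requires a good flow, while the construction \(C_q\) requires a good test function.

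For the upper bound, order the vertices by \(u\) (a Fiedler-type ordering with \(s\) first) and set \(S_i\) equal to the \(i\) vertices of largest \(u\). The flow across the cut \(\partial S_i\) carries total current \(i\). Summing the equation \(D_Qu=\one\) over \(S_i\) gives, exactly as in \eqref{eq:prefix-bound}, \(i\geqslant q_iy_i\) with the increments \(y_i\) of \(u\). Hence
\[
 \mathcal T(Q)=\sum_v u(v)=\sum_i i\,y_i .
\]
This reduces the problem to the cut-kernel comparison of Section~\ref{sec:path-estimates}, applied with the Dirichlet condition at the root in place of the Neumann one. We retain the principal blocks of Lemma~\ref{lem:general-low-cut} and the strict blocks from the proof of Theorem~\ref{thm:minimum-degree-stability}. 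Cauchy--Schwarz in \(\mathbf y^{\mathsf T}\mathsf K\mathbf y=\sum_i iy_i\) then gives
\[
 \mathcal T\leqslant \sum_{\text{blocks }W}\Bigl(\sum_{i\in W} i\Bigr)^2 r_W,
\]
where \(r_W\) is the block resistance bound. Canonical blocks have \(r_W=|W|/c\). Every strict block has \(r_W\leqslant(1-\gamma_d)|W|/c\), and these strict blocks account for all indices outside the canonical components. This yields \(\mathcal T\leqslant\sum_i i^2/c-\gamma'_d\sum_{i\in\mathcal S}i^2+O_d(q^2)\), where the error comes from block-length discretisation.

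The key step, and the main obstacle, is to convert \(\sum_{i\in\mathcal S}i^2\) into \(b(Q)^3\). The ordering by \(u\) need not follow a single path of the bridge tree, and \(Q\) may branch. I would handle this by following the path in the bridge tree from the root along the direction of maximal \(u\). Along this path, each noncanonical vertex should contribute a cut index whose label is at least its rank along the path. The minimality in the definition of \(b(Q)\) makes at least \(b(Q)\) such indices unavoidable, and a sum of \(b\) distinct squares is at least \(b^3/3\). If branching breaks the monotone correspondence, I would instead bound the torsion of a branched tree of blocks by splitting the flow into subtrees. For subtree orders \(q_1+q_2=q\), convexity gives \(q_1^3+q_2^3\leqslant q^3-3q_1q_2q\), so branching costs at least as much as a defect.

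For the construction, take a complete end graph \(K_{d+1}\) at the far end and a bridge chain of \(L_d\) rooted at \(s\). Use the test function \(g(v)=(2\rho q-\rho^2)/(2c)\), linearly interpolated in the resistance coordinate \(\rho\), where each bridge has resistance \(1\) and each \(L_d\) has terminal resistance \(2/c\). The resistance coordinate then grows at rate \(w/\,\cdots\) matching the mass density, so the computation is the same as for the continuum torsion \(\int_0^q(q-\rho)^2\,d\rho/c=q^3/(3c)\), with \(O_d(q^2)\) discretisation error. The matching lower bound for \(\mathcal T(C_q)\) follows from the variational formula, and the upper bound from the first part.
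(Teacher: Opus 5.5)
You set things up the way the paper does. You order by the torsion function $D_Q^{-1}\one$, use that the current across each cut equals the number of vertices beyond it (so $\mathcal T(Q)=\sum_i i\,y_i$ in your indexing), take the block partition from the proof of Theorem~\ref{thm:minimum-degree-stability}, and bound blockwise. Your Cauchy--Schwarz step is a valid substitute for the paper's dual vector $v\geqslant0$ with $Av\geqslant\one$, provided the display reads $\mathcal T\leqslant\sum_W(\max_{i\in W}i)^2r_W$. As written, with $(\sum_{i\in W}i)^2$, it is off by a factor $|W|^2$, although your next line uses the correct form.

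\textbf{The gap.} It is exactly the step you flag: turning $\sum_{i\in\mathcal S}i^2$ into $b(Q)^3$. Your claim that strict blocks account for all indices outside the canonical components is false. The partition also contains non-strict neutral blocks (runs of cuts of size exactly $c$ with zero adjacent kernel entry, whose coefficient is exactly $c$). It also contains ordinary bridge blocks not followed at distance $w$ by another bridge index, and their vertices lie in no canonical component. Your proposed remedies are not arguments either: a map sending each noncanonical vertex to a cut index of controlled label, or a subtree-splitting convexity estimate. Both are motivated by the worry that branching of $Q$ obstructs the comparison, and that worry is unfounded.

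\textbf{The missing argument.} It is short.
\begin{itemize}
\item By the charging in the stability proof, the neutral indices number at most $4|\mathcal S|$. Hence the number $u$ of indices outside ordinary bridge blocks satisfies $u\leqslant5|\mathcal S|+O_d(1)$. The root's degree defect only weakens the endpoint bounds to $q_i\geqslant i(d-i+1)-1\geqslant d-1$, and the possible extra neutral singleton there is charged to its high-cut neighbour.
\item With $k$ ordinary bridge indices, $wk=q-1-u+O_d(1)$, and at most $u$ consecutive pairs are more than $w$ apart. Each pair exactly $w$ apart bounds a canonical component, since the complete alternative $K_{d+1}$ was declared strict. So at least $k-1-u$ canonical components are found.
\item The shores containing $s$ of these bridge cuts are nested initial segments of the ordering. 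Hence all these components lie on one rooted path of the bridge tree, however $Q$ branches.
\item Since $b(Q)$ is a minimum over rooted paths, $b(Q)\leqslant q-w(k-1-u)\leqslant(d+2)u+O_d(1)$. Hence $|\mathcal S|\geqslant b(Q)/(5(d+2))-O_d(1)$.
\item You need no control of where the strict indices sit. Any $|\mathcal S|$ distinct positive labels satisfy $\sum_{i\in\mathcal S}i^2\geqslant\sum_{j=1}^{|\mathcal S|}j^2\geqslant|\mathcal S|^3/3$, which gives the deficit $\zeta_d b(Q)^3-O_d(q^2)$.
\end{itemize}

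\textbf{The construction.} Two corrections are needed. First, use the complete end graph $K_{d+1+r}$ with $0\leqslant r\leqslant d$ to reach every large order. Second, read your test function in the vertex-count coordinate, which increases by $w$ per period (as your integral over $[0,q]$ shows), not in the resistance coordinate. With these fixes, the variational lower bound together with the first part gives $\mathcal T(C_q)=q^3/(3c)+O_d(q^2)$. This is a valid alternative to the paper's Green-kernel summation.
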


\begin{proof}
Let \(s\) be the root of \(Q\). Set \(f(s)=0\) and
\(f=D_Q^{-1}\one\) elsewhere. Order the vertices increasingly,
starting at \(s\), and write \(\mathbf y=(y_1,\ldots,y_{q-1})^{\mathsf T}\)
for the nonnegative increments. Let \(\mathsf K\) be the cut matrix
of this ordering, and put \(\mathbf b=(q-i)_{i=1}^{q-1}\).
Since \(L_Qf=\one-q\mathbf e_s\), we have
\[
 \mathsf K\mathbf y=\mathbf b,\qquad b_i=q-i.
\]
Apply the local partition in the proof of
Theorem~\ref{thm:minimum-degree-stability}.  The degree count for
an interval after an initial cut never involves \(s\).  At the
initial end, the only altered degree condition gives
\[
 q_i\geqslant i(d-i+1)-1\geqslant d-1
       \qquad(1\leqslant i\leqslant d).
\]
If \(q_1=d-1\), then \(q_2\geqslant2d-3\geqslant d\);
thus the possible initial neutral singleton can be associated with its
high-cut neighbour. Equivalently, one may omit a bounded initial set.
The remaining local partition and counting argument are unchanged.

Let \(\mathcal S\) be the strict indices, and let \(u\) be the
number of indices outside ordinary bridge blocks. The counting
argument gives \(u\leqslant5|\mathcal S|+O_d(1)\).  If there are
\(k\) ordinary bridge blocks, then \(wk=q-1-u+O_d(1)\).
Consecutive ordinary bridge indices separated by \(w\) give
canonical components, with the complete-core alternative already
declared strict.  There are at least \(k-1-u\) such components.
Their bridge cuts are nested initial cuts containing \(s\), so
these edges lie on one rooted path in the bridge tree.  Hence
\[
 b(Q)\leqslant(d+2)u+O_d(1),\qquad
 |\mathcal S|\geqslant
          \frac{b(Q)}{5(d+2)}-O_d(1).
\]

We use the block coefficients with the weights \(b_i\). On a block
of length \(r\), let \(z\) minimise \(z^{\mathsf T}Az\) over
\(z\geqslant0\), \(\sum z_i=1\), where \(A=\mathsf K_{BB}\).
If the block coefficient is \(\alpha\), the minimum \(\lambda\)
is at least \(\alpha/r\), and the first-order conditions give
\(Az\geqslant\lambda\one\). Thus \(v=z/\lambda\) satisfies
\(v\geqslant0\), \(Av\geqslant\one\), and
\(\sum v_i\leqslant r/\alpha\).  Since \(\mathsf K\geqslant0\)
entrywise and \(\mathsf K\mathbf y=\mathbf b\),
\[
 \sum_{i\in B}y_i
 \leqslant v^{\mathsf T}A\mathbf y_B
 \leqslant\frac r\alpha\max_{i\in B}b_i.
\]
Use \(\alpha=c+\gamma_d\) on strict blocks and \(\alpha=c\)
on the others.  Bounded block lengths give
\[
 \mathcal T(Q)=\sum_i b_i y_i
 \leqslant\frac1c\sum_i b_i^2
 -\frac{\gamma_d}{c(c+\gamma_d)}
             \sum_{i\in\mathcal S}b_i^2+O_d(q^2).
\]
Indeed, replacing a block maximum by its individual weights
changes the sum by \(O_d(q^2)\).  The least sum of squared weights
over \(s\) indices is \(\sum_{j=1}^s j^2\).  Together with the
lower bound on \(|\mathcal S|\), this proves the first assertion.

Write \(q=mw+w+r\), with \(0\leqslant r\leqslant d\), and join
\(K_{w+r}\) to \(m\) copies of \(L_d\), using the outer deficient
terminal as the attachment.  This defines \(C_q\).
For vertices in its \(i\)-th and \(j\)-th copies counted from the
attachment, the grounded Green kernel is
\[
 (D_{C_q}^{-1})_{uv}
 =\tfrac12\bigl(R(s,u)+R(s,v)-R(u,v)\bigr)
 =\frac w c\min(i,j)+O_d(1).
\]
Summing over the \(w\) vertices in each copy, with the bounded
end graph contributing \(O_d(q^2)\), gives
\[
 \mathcal T(C_q)
       =\frac{w^3}{c}\frac{m^3}{3}+O_d(q^2)
       =\frac{q^3}{3c}+O_d(q^2).
\]
\end{proof}

\begin{corollary}\label{cor:reduced-cap-replacement}
Fix \(\rho>0\). Let \((G_j)\) be a sequence of connected graphs,
each containing an end graph \(Q_j\) of order \(q_j\), attached to
\(G_j-V(Q_j)\) by a single edge \(s_ju_j\), where \(s_j\) is its root.
Suppose that \(q_j\to\infty\), \(b(Q_j)\geqslant\rho q_j\),
\(\mu(G_j)q_j^2\to0\), and a Fiedler vector of \(G_j\) is nonzero
at \(u_j\). Then replacing \(Q_j\) by \(C_{q_j}\) strictly decreases
algebraic connectivity for all sufficiently large \(j\).
\end{corollary}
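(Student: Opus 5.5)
The comparison goes through the end-graph response. Since \(Q_j\) hangs off \(G_j\) by the single edge \(s_ju_j\), eliminating the vertices of \(Q_j\) from \((\mathsf L(G_j)-x\mathsf I)f=0\) replaces \(Q_j\) by a scalar boundary condition at \(u_j\). That condition depends on \(Q_j\) only through its Dirichlet-to-Neumann response \(r_{Q_j}(x)\), combined with the bridge \(s_ju_j\).

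Concretely, with \(s=s_j\) and \(u=u_j\), the bridge and the equation at \(s\) give \(r_Q(x)f(s)=f(u)-f(s)\). Hence \(f(s)=f(u)/(1+r_Q(x))\), and the current entering \(u\) from the end graph is
\[
 \frac{r_Q(x)}{1+r_Q(x)}\,f(u)=\sigma_Q(x)f(u).
\]
So \(G_j\) and its modification \(G_j'\) (with \(C_{q_j}\) in place of \(Q_j\)) share the rest graph \(H=G_j-V(Q_j)\). They differ only in a diagonal term \(-\sigma(x)\) added at \(u\) in \(\mathsf L(H)-x\mathsf I\). By Lemma~\ref{lem:uniform-defect-transfer}, in the regime \(xq^2=o(1)\) we have
\[
 \sigma_Q(x)=-xq-x^2\bigl(\mathcal T(Q)-q^2\bigr)+O(x^3q^5),
\]
where the \(q^2\) comes from expanding \(1/(1+r_Q)\). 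Thus the order term \(-xq\) is the same for \(Q_j\) and \(C_{q_j}\), since both have order \(q_j\). The difference is carried by the torsion.

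By Lemma~\ref{lem:cap-path-torsion-deficit}, \(\mathcal T(C_q)-\mathcal T(Q)\geqslant\zeta_d b(Q)^3-O_d(q^2)\geqslant\zeta_d\rho^3q^3/2\) for large \(q\). Hence \(\sigma_{C_q}(x)<\sigma_{Q}(x)\) by at least \(cx^2q^3\), and this dominates the remainder \(O(x^3q^5)\) once \(xq^2\) is small.

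To turn this into an eigenvalue comparison, I would use the Rayleigh quotient. Extend \(f\) from \(H\) across \(C_q\) by the solution of the internal equations at \(x=\mu=\mu(G_j)\), which is legitimate because \(D_{C_q}-\mu\mathsf I\) is invertible for \(\mu q^2\) small. Eliminating internal variables preserves the quadratic form \(\langle(\mathsf L-\mu)g,g\rangle\) on the Schur-complement subspace. Therefore
\[
 \langle(\mathsf L(G')-\mu)g,g\rangle=\langle(\mathsf L(G)-\mu)f,f\rangle+\bigl(\sigma_Q(\mu)-\sigma_{C_q}(\mu)\bigr)f(u)^2,
\]
and the correction term has the favourable sign: the left side equals \(-(\sigma_Q-\sigma_{C_q})f(u)^2<0\), because \(f\) is an eigenvector. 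Here \(f(u)\ne0\) is used. (The sign conventions need care, since the quadratic form counts \(-\sigma\) on the diagonal.)

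The mean-zero constraint must still be handled. I would argue via the Schur-reduced \(x\)-dependent problem: the reduced operator \(\mathsf L(H)-x-\sigma(x)\mathbf e_u\mathbf e_u^{\mathsf T}\) has \(x=0\) as the kernel eigenvalue with constant vector, and its eigenvalues are monotone in \(x\), since \(\sigma'\) has a fixed sign via \(r_Q'<0\). Equivalently, \(\mu\) is the second zero of \(x\mapsto\lambda_2(x)\). Lowering the boundary term strictly at \(x=\mu\) makes \(\lambda_2(\mu)<0\) for \(G'\), which forces a zero of \(\lambda_2\) in \((0,\mu)\). That gives \(\mu(G')<\mu(G)\), using continuity and \(\lambda_2\to\) positive as \(x\downarrow0\); this limit is also where nonzeroness of the constant mode is needed.

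\textbf{Main obstacle.} The hardest step is making this interlacing/monotonicity argument rigorous uniformly in \(j\). Both \(\sigma\) functions must stay pole-free on \((0,\mu]\), which follows from \(\mu q^2\to0\) and the resolvent bounds in Lemma~\ref{lem:uniform-defect-transfer}. Beyond that, the \(O(x^3q^5)\) error must be absorbed using \(\mu q^2\to0\) together with \(b\geqslant\rho q\).
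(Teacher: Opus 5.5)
Your proposal is correct and follows essentially the paper's route. You Schur-reduce onto \(V(G_j)\setminus V(Q_j)\), and the replacement subtracts a positive multiple of \(\mathbf e_u\mathbf e_u^{\mathsf T}\), because the torsion deficit gives \(r_{C_q}(\mu)<r_Q(\mu)\) and \(r\mapsto r/(1+r)\) is increasing. Your expansion should read \(-x^2(\mathcal T(Q)+q^2)\) rather than \(-x^2(\mathcal T(Q)-q^2)\), but the slip is harmless since the \(q^2\) term cancels.

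The one step you assert without proof is that \(\lambda_2\) of the reduced operator becomes strictly negative at \(x=\mu\). This is exactly the paper's final argument: the new form is negative definite on the span of the old negative eigenvector and the null vector \(f|_{V(G_j)\setminus V(Q_j)}\), since the cross term vanishes and \(f(u)\neq0\). Haynsworth inertia then gives two negative eigenvalues of \(\mathsf L(G')-\mu\mathsf I\) directly, which makes your continuity-in-\(x\) detour, and any uniformity in \(j\), unnecessary.
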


\begin{proof}
Suppress the index \(j\). The torsion difference is at least a
positive constant times \(q^3\).  At \(x=\mu(G)\), the uniform response expansion gives
\(r_{C_q}(x)<r_Q(x)\), since its error \(O(x^3q^5)\) is
\(o(x^2q^3)\).  Including the attachment bridge changes the
response to \(r/(1+r)\), which is increasing here.
The eliminated matrices are positive definite: first eliminate
\(D_Q-x\mathsf I\), and use \(1+r_Q(x)>0\).

The Schur complement of \(L_G-x\mathsf I\) has one negative
eigenvalue and a null vector nonzero at \(u\).  The replacement
subtracts a positive multiple of
\(\mathbf e_u\mathbf e_u^{\mathsf T}\).  On the span of a
negative eigenvector and this null vector, the resulting form
is negative definite.  Inertia therefore gives at least two
negative eigenvalues after replacement, proving the strict
decrease.
\end{proof}

For a two-terminal graph $Q$ of order $q$ with terminals $s,t$, put
$\Delta_Q=q-c(1+R_Q(s,t))$.

\begin{lemma}\label{lem:port-path-deficit}
Let \(Q\) be a two-terminal graph of order \(q\), with terminals
\(s,t\), satisfying the degree conditions obtained by attaching one
exterior bridge at each terminal. The terminals may coincide; at a
common terminal the internal degree condition is \(d-2\).
Suppose that the path between its terminals in the bridge tree
contains no canonical component. Then
\[
 \Delta_Q\geqslant\kappa_d q
\]
for a constant \(\kappa_d>0\).
\end{lemma}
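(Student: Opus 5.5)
We will prove the bound by bounding the terminal resistance \(R_Q(s,t)\) from above with the cut-kernel machinery of Section~\ref{sec:path-estimates}, in the form already used for the rooted torsion in Lemma~\ref{lem:cap-path-torsion-deficit}. Let \(p\) be the unit potential from \(s\) to \(t\) (grounded at \(s\)), and order the vertices of \(Q\) increasingly by \(p\). Then \(L_Qp=R^{-1}(\mathbf e_t-\mathbf e_s)\) after normalisation, so the increments \(\mathbf y\) satisfy \(\mathsf K\mathbf y=\one\cdot(\text{const})\). Equivalently, \(R_Q(s,t)=\max\{(\sum y_i)^2/\mathbf y^{\mathsf T}\mathsf K\mathbf y\}\) over nonnegative increments. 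Every cut in this ordering separates \(s\) from \(t\). After the two exterior bridges are added, the cut sizes are those of a minimum-degree-\(d\) graph, with a cut of size one at each end. Lemma~\ref{lem:interval-cut} and the low-cut estimates of Lemma~\ref{lem:general-low-cut} therefore apply after each cut is shifted by the exterior contributions; the only degree defects are at \(s\) and \(t\), which affect \(O_d(1)\) indices.

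The first step applies the block partition from the proof of Theorem~\ref{thm:minimum-degree-stability} to the \(q+1\) increment indices of \(Q\) together with its two attachment bridges. Its outcome is ordinary bridge blocks of length \(w\) with coefficient \(c\), strict blocks with coefficient \(c+\gamma_d\), and neutral blocks, charged to strict ones as in \eqref{eq:neutral-block-charging}. Cauchy--Schwarz over the blocks, as in Lemma~\ref{lem:path-transfer}, then gives
\[
 1+R_Q(s,t)\leqslant\sum_{\text{blocks}}\frac{r_B}{\alpha_B}
 \leqslant\frac{q}{c}-\frac{\gamma_d}{c(c+\gamma_d)}|\mathcal S|+O_d(1),
\]
where the bridge contributes the extra \(1\). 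Hence \(\Delta_Q\geqslant c'_d|\mathcal S|-O_d(1)\).

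The second step shows that \(|\mathcal S|\) is linear in \(q\). By the counting in Theorem~\ref{thm:minimum-degree-stability}, the indices outside ordinary bridge blocks number at most \(5|\mathcal S|+O_d(1)\). Two consecutive ordinary bridge indices at distance \(w\) produce an induced \(L_d\) with bridge boundary at its terminals, and it is not the complete alternative, since that block would have been declared strict. Every such bridge cut separates \(s\) from \(t\), so these copies are canonical components lying on the \(s\)--\(t\) path of the bridge tree. The hypothesis excludes these, so consecutive ordinary bridge indices always have a gap greater than \(w\). Each gap contains at least one non-ordinary index, and this gives \(k\leqslant u+1\), whence \(q\leqslant(w+1)u+O_d(1)\) and \(|\mathcal S|\geqslant q/(5(w+1))-O_d(1)\). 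This produces \(\Delta_Q\geqslant\kappa_dq-O_d(1)\).

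The main obstacle is removing the additive \(O_d(1)\), so that the bound holds for every \(q\), including small \(q\) and coinciding terminals. For this we will show directly that \(\Delta_Q>0\) whenever the hypothesis holds. Any finite set of small cases with \(q\leqslant q_0(d)\) then has \(\Delta_Q\geqslant\min\Delta/q_0\cdot q\), because only finitely many graphs occur. Positivity should follow from strictness of at least one block, which is forced because a non-strict configuration would be exactly a bridge chain of \(L_d\), that is, a canonical component. The coinciding-terminal case gives \(\Delta_Q=q-c\). That is positive because the common vertex has degree at least \(d-2\) inside \(Q\) and its neighbours have degree at least \(d\), forcing \(q\geqslant d\); the boundary case \(q=d=c+1\) still gives \(1\). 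The delicate point is checking the end blocks where the degree defect at \(s,t\) weakens Lemma~\ref{lem:interval-cut}; we handle it by including the exterior bridges in the ordering, so that the standard profile applies unchanged.
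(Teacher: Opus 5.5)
Your proposal is correct, but it takes a different route from the paper. The paper never analyses the potential of $Q$. For distinct terminals it repeats $Q$ in a bridge chain $G_m$ with bounded end graphs, and a cosine trial function gives $\limsup N^2\mu(G_m)\leqslant q\pi^2/(1+R_Q(s,t))$. It then applies Theorem~\ref{thm:minimum-degree-stability}, together with the fact from its proof that the recovered canonical components lie on one bridge-tree path. Such a path meets canonical components in at most two copies of $Q$, so a spectral excess $\xi_d$ independent of $Q$ is forced, and $\kappa_d=\xi_d/(c\pi^2+\xi_d)$.

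You instead run the block partition of that stability proof directly on the unit-current potential ordering of $Q$ with its exterior bridges. You use $\mathsf K\mathbf y=\one$ to get $Y_B\leqslant r_B/\alpha_B$ block by block, as in the hitting-time proof and Lemma~\ref{lem:cap-path-torsion-deficit}. This is the minimum-degree counterpart of how the paper proves the even-degree bound, Theorem~\ref{thm:even-adjusted-resistance}. Your route is self-contained and yields an explicit $\kappa_d$. It also avoids the limiting argument and the single-path property, which is only implicit in the stability proof. The paper's route is much shorter once stability is available.

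Your bookkeeping can be made exact, which makes the finiteness argument for small $q$ unnecessary.
\begin{enumerate}[(i)]
\item With both exterior bridges in the ordering, the total drop is $2+R_Q(s,t)$. The final bridge sits at the last increment index, so it cannot start a bridge block of length $d+1$. Treat it as a singleton contributing exactly $1$.
\item For the last $d$ internal indices, the complementary endpoint bound gives $q_i\geqslant(q-i)(d+1-q+i)-1\geqslant c$. So every other block fits and has coefficient at least $c$. A neutral singleton just before the final bridge is absorbed as in the stability proof, with $t$ as the intervening vertex.
\item Hence $\Delta_Q\geqslant\gamma_d|\mathcal S|/(c+\gamma_d)$ holds with no additive error.
\item If $\mathcal S=\varnothing$, there are no neutral blocks either, since each is charged to a strict block. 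So every block is an ordinary bridge block of length $w$, provided you apply the complete-core rule also when the partner is the final bridge. Then $Q$ is a bridge chain of $L_d$, and the hypothesis fails.
\item Therefore $|\mathcal S|\geqslant\max\{1,(q-O_d(1))/(5(w+1))\}$, which is at least a fixed multiple of $q$.
\end{enumerate}
For coinciding terminals, every non-root vertex has at least $d$ neighbours. So $q\geqslant d+1$, not merely $q\geqslant d$, and $q-c\geqslant 2q/(d+1)$ directly.
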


\begin{proof}
For distinct terminals, let \(G_m\) consist of \(m\) copies
of \(Q\) joined by bridges, with two end graphs of bounded order,
and put \(N=|V(G_m)|\) and \(\mu=\mu(G_m)\). Assign cosine values at the successive
terminals and interpolate harmonically inside each copy. The
Rayleigh quotient gives
\[
 \limsup N^2\mu\leqslant \frac{q\pi^2}{1+R_Q(s,t)}.
\]
The quantitative stability proof recovers canonical
components lying on a single path in the bridge tree:
all bridge cuts used in its reconstruction are nested.
In the repeated graph, however, any bridge-tree path can
meet canonical components in at most two copies of \(Q\).
Between its first and last copies it follows the terminal-to-terminal
path, which contains none.  A sufficiently small fixed
spectral excess is therefore impossible.  More precisely, the quantitative stability constant supplies
\(\xi_d>0\), independent of \(Q\), such that
\[
 \frac{q\pi^2}{1+R_Q(s,t)}\geqslant c\pi^2+\xi_d.
\]
Otherwise the recovered canonical components would cover more
than half the vertices for all sufficiently many repetitions,
whereas any single path covers only \(O(q)\) of them.
Rearranging gives the assertion with
\(\kappa_d=\xi_d/(c\pi^2+\xi_d)\).  If the terminals coincide, \(R_Q(s,t)=0\).
The degree condition implies \(q\geqslant d+1\), and hence
\(\Delta_Q=q-c\geqslant2q/(d+1)\).
\end{proof}

For a chain formed from copies of $L_d$, two end graphs, and intervening
two-terminal graphs, let $H$ be the total number of vertices in the
end graphs and the intervening graphs.

\begin{lemma}\label{lem:many-module-evacuation}
Let \(d\geqslant3\) be fixed. Let \(G\) consist of \(m\) copies of
\(L_d\), two end graphs, and any number of two-terminal graphs
whose terminal-to-terminal paths in the bridge tree contain no
canonical component, arranged in a line with the end graphs at its ends
and consecutive pieces joined by bridges. Suppose that \(H=o(m)\).

Then there is a constant \(C_d>0\) such that, for all sufficiently
large \(m\), any two-terminal graph with at least \(C_dH\) copies
of \(L_d\) on each side can be moved next to one of the end graphs
so as to strictly decrease algebraic connectivity. This move preserves
every vertex degree.
\end{lemma}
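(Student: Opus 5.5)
Throughout, the chain is treated as a one-dimensional scattering problem in the scaled transfer-matrix coordinates of Lemma~\ref{lem:uniform-defect-transfer}. Put $x=\mu(G)$. Along a run of canonical copies, the product $P^j$ is conjugate to the rotation $\mathcal R(j\theta)$, with $\theta\asymp\sqrt x\asymp 1/m$ because $n^2\mu(G)$ stays bounded. A defect $Q$ of order $q$ has transfer $\widetilde T_Q=\mathsf I+kN_Q+O_d(k^2q^2)$, where $N_Q=\begin{pmatrix}0&-\ell_Q\\q/c&0\end{pmatrix}$. By Lemma~\ref{lem:port-path-deficit}, $\Delta_Q=q-c\ell_Q\geqslant\kappa_dq$. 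So $N_Q$ splits into a rotation generator, namely the part that a segment of canonical copies of equal length would give, plus a nonzero ``mismatch'' part. Because $\Delta_Q>0$, this mismatch is, to first order, a symplectic shear in the $(k\,\text{current})$ direction.

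The first step writes the eigenvalue condition as a phase equation of the kind in Lemma~\ref{lem:quartic-phase-equation}. Let $Q$ sit after $p$ canonical copies on the left and before $p'$ on the right, with $p,p'\geqslant C_dH$. Conjugating by $\mathcal R(p\theta)$ shows that the first-order effect of $Q$ on the secular function is $k\,\Delta_Q\,(\text{const})\sin^2(\text{phase at }Q)$. Here the phase at $Q$ is $\approx \pi p/(p+p'+\ldots)$, up to $O(H/m)$ corrections, and the constant has a definite sign. The other defects and the end graphs enter through their own phases; their orders sum to $H=o(m)$, so the approximation is uniform. The same computation fixes the sign: a defect placed at phase $\psi$ raises the smallest root of the secular equation by an amount proportional to $\Delta_Q\theta^2\sin^2\psi/m$ relative to a canonical segment of equal order, in the spirit of Lemma~\ref{lem:cap-path-torsion-deficit}. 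Equivalently, the Fiedler vector is nearly a cosine with amplitude $\cos\psi$ and current $\sin\psi$, and a defect with resistance deficit costs energy $\propto\Delta_Q\sin^2\psi$, which the Rayleigh quotient can see.

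The second step does the comparison. Move $Q$ next to an end graph, say the one nearer in phase, and shift the intervening $p$ copies over by one slot. This is a relabelling of bridges, so all vertex degrees are preserved. The moved defect sits at phase $O(H/m)$, hence $\sin^2\psi'=O(H^2/m^2)$. At its original position, $\sin^2\psi\gtrsim (C_dH/m)^2$. Choosing $C_d$ large, the gain $\Delta_Q(\sin^2\psi-\sin^2\psi')\gtrsim\kappa_d q\,C_d^2H^2/m^2$ dominates all second-order terms. Those terms are $O(k^2q^2)$ per defect, together with the coupling between defects and the end responses from Lemma~\ref{lem:uniform-defect-transfer}. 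The resulting bound is $O_d(qH^2\theta^2\cdot H/m)$-type errors, which are smaller than the gain for large $m$ when $H=o(m)$.

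To turn this into a strict decrease of $\mu$, I use a test function. Take the Fiedler vector of $G$, transport it across the rearrangement by reusing its values on the unchanged pieces, and correct the result to a mean-zero function. Its Rayleigh quotient in the new graph is then lower by the first-order gain, and this proves $\mu$ decreases. Alternatively, one can argue through the phase monotonicity with continuity, as in the proof of Theorem~\ref{thm:quartic-unique}.

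The main obstacle is making the first-order phase shift dominant, uniformly over arbitrary defects. The difficulty is that $q$ may be comparable to $H$, and since $kq$ is only $o(1)$, the remainder $O(k^2q^2)$ is not automatically smaller than $k\Delta_Q\sin^2\psi$ when $\psi$ is small. My first attempt is to use the finer structure in Lemma~\ref{lem:uniform-defect-transfer}: the off-diagonal remainder is $O(k^3q^3)$ and the diagonal remainder is $O(k^2q^2)$. Written in the rotating frame, the diagonal part contributes only through $\sin\psi\cos\psi$. This makes it $O(k^2q^2\sin\psi)$, which is beaten once $\sin\psi\gtrsim C_dkH$, exactly the separation $C_dH$ that the lemma assumes.
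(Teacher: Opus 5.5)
Your outline matches the paper's. You conjugate the canonical transfer to $\mathcal R(\theta)$ and note that only the mismatch part of $kA$, proportional to $\Delta_Q\geqslant\kappa_d q$, fails to commute with the rotations, so the first-order effect of $Q$ carries the weight $\sin^2\phi$. You then move $Q$ to the nearer end.

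\textbf{Error control.} You correctly identify this as the crux, but you only partly resolve it. Your $\sin\psi\cos\psi$ observation handles $Q$'s own diagonal remainder, and for that $\sin\psi\gtrsim kq$ would already suffice. The threshold $j\geqslant C_dH$ is forced instead by the cross terms between $Q$ and the other defects and end graphs. These are not of the size $O_d(qH^2\theta^2\cdot H/m)$ you assert, a factor $H/m$ below the gain.

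The other pieces perturb the boundary vector $\ell_0$, the covector $r_0$, and the product $B$ of the pieces preceding $Q$ by $O_d(kH)$ in norm. They also make your reference ``canonical segment of equal order'' depend on the position of $Q$, since $\mathcal R(hk/c)$ commutes with $\mathcal R(\phi)$ but not with $B$. Bounded by norms, these terms contribute $O_d(k^2qH)$ to the characteristic function. At $j\asymp C_dH$ this exceeds the gain $k\Delta_Q\sin^2\phi\asymp C_d^2k^3qH^2$ by the factor $(C_d^2kH)^{-1}\to\infty$. Norm bounds alone therefore give the lemma only for $j\gg\sqrt{Hm}$.

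The paper closes this as follows:
\begin{enumerate}[(i)]
\item It writes the difference of the two characteristic functions as $r_0[T,B]\ell_0$ and uses $[A,\mathcal R(\phi)]=(\Delta_Q/c)\sin\phi\,\operatorname{diag}(1,-1)$.
\item It splits $B-\mathcal R(\phi)$ into a diagonal part $O_d(kH\sin\phi+k^2H^2)$, which needs $\phi\leqslant3\pi/4$, and an off-diagonal part $O_d(kH)$.
\item It evaluates at the first root of the rearranged graph. There $r_0=t(-v_2,v_1)$ with $v=BT\ell_0$ exactly, and this cancels the leading diagonal errors.
\end{enumerate}
The remaining errors are $O_d(k^2qH\sin\phi+k^3qH^2)$, smaller than the gain only by a factor $O(C_d^{-1})$. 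So the mechanism is the size of $C_d$, not $H=o(m)$.

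\textbf{Strict decrease.} Reusing the Fiedler values after the move does not give a trial function whose energy is controlled at the order of the gain. The pieces between $Q$ and the end are displaced, not unchanged. The rewired bridges join vertices whose old values differ by order $1-\cos\phi$ relative to the amplitude. And $Q$ brings its old current, of order $\sin\phi$, to a position where the eigenfunction carries almost none. A Rayleigh-quotient proof would need a new trial function accurate to second order, which you do not construct.

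The alternative you mention in passing is the workable one, but it rests on uniform root localisation that your proposal takes for granted. Both characteristic functions satisfy $F(M)=\sin M+O_d(HM/m)$ and $F'(M)=\cos M+O_d(H/m)$ on bounded $M$-intervals, uniformly in the number of defects; this comes from the grounded Neumann series on a complex disc plus Cauchy's estimate. It yields:
\begin{enumerate}[(i)]
\item $m\theta=\pi+O_d(H/m)$, $k\asymp_d m^{-1}$, and $t=-1+O_d(kH)$;
\item detection of every eigenvalue in this range by the transfer equation;
\item a simple first root near which $F$ decreases.
\end{enumerate}
Positivity of the old characteristic function at the first root of the new one then forces a strictly smaller algebraic connectivity.
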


\begin{proof}
We use the scaled transfers from the preceding lemmas, with
\(k=\sqrt{cx}\) and phase of the canonical graph \(\theta\).
Conjugate the standard transfer exactly to a rotation.
Indeed, put
\[
 J_\theta=\frac{P-\cos\theta\mathsf I}{\sin\theta}
   =\begin{pmatrix}u&v\\w&-u\end{pmatrix},
 \qquad
 S=(\mathbf e_1,J_\theta\mathbf e_1)
   =\begin{pmatrix}1&u\\0&w\end{pmatrix}.
\]
Since \(J_\theta^2=-\mathsf I\), \(u=O(k)\),
\(v=-1+O(k^2)\), and \(w=1+O(k^2)\), we have
\(S^{-1}PS=\mathcal R(\theta)\).
In this basis the transfer of a two-terminal graph of order \(h\) is
\[
 T=\mathsf I+kA+D+E,\qquad
 A=\begin{pmatrix}0&-\ell\\h/c&0\end{pmatrix},
 \quad \ell=1+R(s,t),
\]
where \(D\) is diagonal with norm \(O_d(k^2h^2)\), and
\(E\) is off-diagonal with norm \(O_d(k^3h^3)\).
These bounds follow from the uniform grounded-resolvent
formula.  Conjugation preserves them because the first
correction in \(S\) is off-diagonal and its diagonal
correction is \(O(k^2)\). For coinciding terminals, the formula in
Lemma~\ref{lem:uniform-defect-transfer} gives the same bounds with
\(\ell=1\).

All products considered below have norm bounded by
\(\exp(O_d(kH))\). At the first positive Laplacian eigenvalue
of the corresponding graph,
\[
 m\theta=\pi+O_d(H/m),\qquad k\asymp_d m^{-1}.
\]
To see this uniformly, put \(M=m\theta\), and write \(F(M)=0\)
for the scalar eigenvalue equation. Its left-hand side satisfies
\[
 F(M)=\sin M+O_d(HM/m),\qquad
 F'(M)=\cos M+O_d(H/m)
\]
on every fixed bounded \(M\)-interval.  The first error
vanishes at zero, so \(F\) is positive before the
neighbourhood of \(\pi\) and
strictly decreasing across that neighbourhood.  The grounded resolvents have no pole there since \(xH^2=o(1)\),
so the transfer equation detects every eigenvalue in this interval.
These derivative estimates are uniform when the number of two-terminal graphs varies.  On a fixed
complex \(M\)-disc, the grounded Neumann series converge
uniformly, since \(|x|H^2=o(1)\).  The sum of the norms of
the perturbations of all transfer matrices is \(O_d(H/m)\), and the same
bound holds for the perturbations of the two boundary vectors.
Product expansion gives \(F(M)-\sin M=O_d(H/m)\) on a
slightly larger disc.  Cauchy's estimate gives the asserted
derivative bound, and \(F(0)=0\) gives the factor \(M\)
in the first error.

Choose a two-terminal graph of order \(h\), preceded by \(j\)
canonical graphs. By reversal we may assume \(j\leqslant m/2\).
Put \(\phi=j\theta\), and let \(B\) be the complete transfer of
the preceding pieces. Let \(\ell_0\) be the left boundary column,
normalised to have first coordinate one, with second coordinate
\(O_d(kH)\), and let \(r_0\) be the row obtained from the pieces
following the chosen graph and the right end graph.  The two characteristic expressions are
\[
 F_{\rm old}=r_0TB\ell_0,\qquad
 F_{\rm end}=r_0BT\ell_0.
\]
Writing \([T,B]=TB-BT\) for the commutator, their difference is
\[
 F_{\rm old}-F_{\rm end}=r_0[T,B]\ell_0.
\]

At the first eigenvalue of the graph with the chosen graph at the end,
a first-order product expansion gives
\(B=\mathcal R(\phi)+U_{\rm diag}+U_{\rm off}\), where
\[
 \|U_{\rm diag}\|
 =O_d(kH\sin\phi+k^2H^2),\qquad
 \|U_{\rm off}\|=O_d(kH).
\]
Indeed, the first-order terms have the form
\(k\mathcal R(\phi-t)A_i\mathcal R(t)\), with
\(0\leqslant t\leqslant\phi\leqslant3\pi/4\).
Their diagonal entries are \(O_d(kh_i\sin\phi)\).
All terms involving two perturbations, including local
diagonal remainders, total \(O_d(k^2H^2)\).
The estimates are uniform in the number of two-terminal graphs.

The commutator of two off-diagonal matrices is diagonal,
and
\[
 [A,\mathcal R(\phi)]
   =\frac{\Delta_Q}{c}\sin\phi
          \begin{pmatrix}1&0\\0&-1\end{pmatrix}.
\]
Consequently
\[
 [T,B]=
 \frac{k\Delta_Q}{c}\sin\phi
          \begin{pmatrix}1&0\\0&-1\end{pmatrix}
 +V_{\rm diag}+V_{\rm off},
\]
with
\[
 \|V_{\rm diag}\|=O_d(k^2hH),\qquad
 \|V_{\rm off}\|=
 O_d(k^2hH\sin\phi+k^3hH^2).
\]

Put \(v=BT\ell_0\).  Its coordinates satisfy
\[
 v_1=\cos\phi+O_d(kH\sin\phi+k^2H^2),\qquad
 v_2=\sin\phi+O_d(kH).
\]
Since \(F_{\rm end}=r_0v=0\), we have
\(r_0=t(-v_2,v_1)\).  The total phase relation above and
the product estimates show \(t=-1+O_d(kH)\).
The relation \(r_0v=0\) cancels the leading diagonal error terms.
Substitution gives
\[
 F_{\rm old}-F_{\rm end}
 =\frac{(-t)k\Delta_Q}{c}\sin^2\phi
 +O_d(k^2hH\sin\phi+k^3hH^2).
\]
By Lemma~\ref{lem:port-path-deficit},
\(\Delta_Q\geqslant\kappa_dh\).  The expression is therefore
positive whenever \(\sin\phi\geqslant C'_dkH\), with
\(C'_d\) sufficiently large.  Since \(j\leqslant m/2\),
the condition \(j\geqslant C_dH\) implies this inequality.
The characteristic expression for the original graph is
strictly decreasing near its first root, so positivity at
the first eigenvalue of the rearranged graph proves the strict
comparison.
\end{proof}

\begin{lemma}\label{lem:odd-regular-comparison-caps}
Fix an odd integer \(d\geqslant3\).  For every sufficiently
large odd integer \(q\), there is a connected rooted graph
\(C_q^{\rm reg}\) of order \(q\), whose root has degree \(d-1\)
and whose other vertices have degree \(d\), such that
\[
 \mathcal T(C_q^{\rm reg})
       =\frac{q^3}{3(d-1)}+O_d(q^2).
\]
It consists of a bounded end graph and a bridge chain of
\(L_d\)'s.
\end{lemma}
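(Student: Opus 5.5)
The plan is to mirror the construction of \(C_q\) in Lemma~\ref{lem:cap-path-torsion-deficit}, replacing the complete end graph by a bounded end graph with the right degrees. Put \(c=d-1\) and \(w=d+1\); note that \(w\) is even. The root \(s\) will be the outer deficient terminal of a bridge chain of \(m\) copies of \(L_d\). It receives no exterior edge, so it has degree exactly \(d-1\), and every other vertex of the chain has degree \(d\). At the far end of the chain, the last copy's free terminal is joined by a bridge to the distinguished vertex of a bounded graph \(E_e\) of order \(e\). In \(E_e\) the distinguished vertex has degree \(d-1\) and all others have degree \(d\). Then \(q=mw+e\).

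First I will settle parity and existence of \(E_e\). The degree sum of \(E_e\) is \((d-1)+d(e-1)=de-1\), which is even exactly when \(e\) is odd. Since \(w\) is even, \(q\) odd forces \(e\) odd. The odd values \(e\in\{d+2,d+4,\dots,d+2+(w-2)\}\) form \(w/2\) consecutive odd integers and so cover every odd residue modulo \(w\). Hence each sufficiently large odd \(q\) can be written as \(mw+e\) with \(m\geqslant0\) and \(e\) in this bounded set.

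For existence of \(E_e\), I will pass to complements in \(K_e\). The required complement has degree sequence \((e-d,(e-1-d)^{e-1})\). Its entries differ by at most one and its sum is \(de-1\) less than \(e(e-1)\), hence even, so it is graphical as an almost-regular sequence. Consequently \((d-1,d^{e-1})\) is graphical. All its entries are positive and its sum \(de-1\geqslant2(e-1)\), so the classical fact that such a sequence has a connected realisation gives a connected \(E_e\). For the smallest case \(e=d+2\) one can also take \(K_{d+2}\) minus a near-perfect matching and one further edge at the unmatched vertex. The resulting graph is connected, rooted at \(s\), and has the required degrees.

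For the torsion, I will repeat the Green-kernel computation from Lemma~\ref{lem:cap-path-torsion-deficit}. We have \((D^{-1})_{uv}=\tfrac12\bigl(R(s,u)+R(s,v)-R(u,v)\bigr)\). Each copy of \(L_d\) together with its bridge has series resistance \(2/c+1=w/c\). So for \(u,v\) in the \(i\)-th and \(j\)-th copies from the root, the kernel equals \((w/c)\min(i,j)+O_d(1)\). Summing over the \(w^2\) pairs in each pair of copies gives \(\frac{w^3}{c}\sum_{i,j\leqslant m}\min(i,j)+O_d(m^2)=\frac{w^3m^3}{3c}+O_d(q^2)\).

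The pairs with at least one vertex in \(E_e\) contribute \(O_d(q)\cdot q\cdot e=O_d(q^2)\), because all kernel entries are \(O(q)\). Since \(q=mw+O_d(1)\), this yields \(\mathcal T(C_q^{\rm reg})=q^3/(3(d-1))+O_d(q^2)\). The only step needing care is the existence and connectivity of \(E_e\) in every residue class, handled above. The torsion estimate is routine given the earlier lemma.
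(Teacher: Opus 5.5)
Your proposal is correct and follows the paper's proof: the same bridge chain of \(L_d\)'s rooted at its outer deficient terminal, a bounded end graph of odd order \(e\in\{d+2,\dots,2d+1\}\) obtained through complements in \(K_e\), and the same Green-kernel torsion sum. The only difference is in how the end graph is produced. The paper builds the complement explicitly as a modified circulant on \(\mathbb Z/e\mathbb Z\), and gets connectivity because every component contains a degree-\(d\) vertex, hence has at least \(d+1\) of the at most \(2d+1\) vertices. You instead cite graphicality of near-regular sequences with even sum and the connected-realisation criterion. Both routes work, and the paper's counting argument shows that in fact every realisation of \((d-1,d^{e-1})\) with \(e\leqslant 2d+1\) is connected.
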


\begin{proof}
It suffices to construct bounded end graphs in every
odd residue class modulo \(d+1\).  For
\(0\leqslant r\leqslant(d-1)/2\), put \(h=d+2+2r\).
On \(\mathbb Z/h\mathbb Z\), form a graph \(F\) by taking
all circular differences \(1,\ldots,r\), and adding the
matching
\[
 \bigl\{\{i,i+(h-1)/2\}:1\leqslant i\leqslant(h-1)/2\bigr\}.
\]
Put \(u=r+1\) and \(v=u+(h-1)/2\).  Remove the matching edge
\(uv\), and add \(0u\) and \(0v\).  These are new edges:
the circular distances from zero are \(r+1\) and
\((d+1)/2\), respectively, both greater than \(r\).
Also \(u\leqslant(h-1)/2\), so the removed edge belongs to
the specified matching.

The resulting \(F\) has degree \(2r+2\) at zero and degree
\(2r+1\) elsewhere.  Its complement is therefore a one-terminal
graph \(B_h\) of the required degrees, with root zero.
It is connected: each component contains a vertex other
than zero and hence has at least \(d+1\) vertices, whereas
\(h\leqslant2d+1\).
The values \(h=d+2,d+4,\ldots,2d+1\) cover all odd residues
modulo \(d+1\).  Thus, for every sufficiently large odd \(q\),
write \(q=h+m(d+1)\), join \(B_h\) to \(m\) canonical graphs,
and distinguish the outer terminal.  The Green-kernel
sum in Lemma~\ref{lem:cap-path-torsion-deficit} gives the asserted
torsion estimate.
\end{proof}

\begin{proof}[Proof of Theorem~\ref{thm:bounded-end-canonical-middle}]
We first treat the minimum-degree and odd-regular cases.
Consider the bridge tree of \(G\), with canonical components as
defined above.

The sharp minimum-degree theorem and quantitative stability imply
that all but \(o(n)\) vertices belong to canonical components
lying on one path in this tree.  The path assertion follows
directly from the stability proof: its recovered components lie
between nested initial bridge cuts.  Choose a tree path containing
as many canonical components as possible, and write \(m\) for
their number.  Put
\[
 H=n-m(d+1)=o(n).
\]
The graph decomposes into these \(m\) copies, two end graphs, and
two-terminal graphs between successive copies, connected in a line
by bridges.  All branches off the chosen tree path are included
in the corresponding end graph or two-terminal graph.  Their total order
is \(H\).  The terminals of an interior two-terminal graph may coincide.

An interior two-terminal graph contains no canonical component on its
terminal-to-terminal tree path, since all such components were included
among the \(m\) copies.  Thus Lemma~\ref{lem:port-path-deficit}
applies to each two-terminal graph.  The end graphs contain no canonical
component: otherwise the chosen path could be extended through
one while retaining every canonical component already on it.
Every two-terminal graph and end graph has the required degree conditions
because its only exterior edges are its attachment bridges.

By Lemma~\ref{lem:many-module-evacuation}, exact minimality forces
every interior two-terminal graph to have fewer than \(C_d'H\) canonical
copies on at least one side.  Since \(H=o(m)\), these two end
regions are disjoint for large \(n\).  Absorb all exceptional
pieces in the left region, and the canonical graphs between them,
into a left end graph \(Q_-\); define \(Q_+\) similarly.  Their orders
are at most \(C_d''H\), and the graph between them is a canonical
chain.

Let \(h_-\) and \(h_+\) count the vertices of \(Q_-\) and \(Q_+\)
outside the chosen canonical components, so \(h_-+h_+=H\).  Suppose the left
end graph contains \(j\) canonical components from the chosen path.
No rooted tree path within this end graph can contain more than \(j\)
canonical components.  Such a path could otherwise replace the
left portion of the globally chosen path and increase its total.
Hence, by the definition of \(b(Q_-)\),
\[
 b(Q_-)\geqslant |V(Q_-)|-j(d+1)=h_-.
\]
The analogous bound holds on the right.  One of the two end graphs
therefore has \(b(Q)\geqslant H/2\) and order \(q\leqslant C_d''H\).

If \(H\) were unbounded along a sequence of minimisers,
Lemma~\ref{lem:cap-path-torsion-deficit} and the canonical end graph
comparison would give
\[
 \mathcal T(C_q)-\mathcal T(Q)
 \geqslant\zeta_d(H/2)^3-O_d(H^2)>0.
\]
Moreover, \(\mu(G)q^2=o(1)\).  The uniform response estimate
therefore shows that replacing \(Q\) by \(C_q\) strictly
decreases the response of the end graph at \(x=\mu(G)\).

The Fiedler value at the exterior attachment is nonzero.  Indeed,
if its value at \(u\) vanished, the equation on the end graph would be
\[
 (L_Q+\mathbf e_s\mathbf e_s^{\mathsf T}
                    -x\mathsf I)f_Q=0.
\]
The displayed matrix is positive definite when \(xq^2=o(1)\);
this follows by first eliminating \(D_Q-x\mathsf I\), whose
remaining scalar is \(1+r_Q(x)>0\).  Thus \(f_Q=0\), and the
current through the attachment bridge is also zero.  Starting
from zero value and current, the exact transfers through the canonical
middle and elimination of the other end graph force the entire
eigenfunction to vanish.  All eliminations are valid because
both end graphs have order \(O_d(H)=o(n)\), while
\(x=\mu(G)=O_d(n^{-2})\).
The Schur-complement argument in
Corollary~\ref{cor:reduced-cap-replacement} now gives a connected
graph of the same order and minimum degree at least \(d\), with
strictly smaller algebraic connectivity, a contradiction.

Thus \(H\) is bounded in terms of \(d\).  The localisation
already proved then bounds both end regions in terms of \(d\),
which proves the minimum-degree assertion.

For fixed odd \(d\), start instead with any minimiser in the
regular class.  The sharp asymptotic bound and stability again
apply, and moving a two-terminal graph preserves every vertex degree.
The handshaking identity makes the order \(q\) of each final end graph
odd, since its degree sum is \(dq-1\).  Use the regular end graph from
Lemma~\ref{lem:odd-regular-comparison-caps} for the final replacement.
It has the same leading torsion term, and attaching its terminal
restores degree \(d\).  The strict competitor is therefore
regular, and the same contradiction proves the assertion.
\end{proof}

Theorem~\ref{thm:bounded-end-canonical-middle} also applies to every minimiser under the condition
\(\delta(G)=d\).  Indeed, a minimiser under \(\delta(G)\geqslant d\)
has a nonempty canonical middle at sufficiently large order,
and hence has minimum degree exactly \(d\).  The two classes
therefore have the same minimum value, so every minimiser in the
smaller class is a minimiser in the larger one.

\subsection{The canonical middle in even regular degree}
\label{sec:even-exact-middle}

For even regular degree the canonical blocks meet at cut vertices.
We count each shared vertex once. As in the preceding subsection,
we first estimate the rooted torsion of the end graphs. A resistance
bound for two-terminal graphs then allows us to move every
noncanonical subgraph towards an end. We allow both orientations
of each canonical block throughout the argument. The lemmas below
concern even \(d\geqslant6\); we write \(c=2(d-2)\) and
\(w=d+1\).

In this subsection a block is canonical if it is $M_d$, with the two
singleton end vertices as terminals. For a connected graph $Q$ rooted
at $s$, let $m_s(Q)$ be the largest number of canonical blocks on a
block-cut path starting at $s$, and put
$b_s(Q)=|V(Q)|-1-wm_s(Q)$. If $s$ is not a cut vertex, the path starts
at its unique block; equivalently, one may add a leaf labelled $s$ to
the block-cut tree.

\begin{lemma}
\label{lem:even-rooted-torsion}
Let $Q$ be a connected simple graph of order $h$ with distinguished
vertex $s$. Suppose that every vertex other than $s$ has degree $d$,
and that $\deg_Q(s)$ is even and belongs to $\{2,\ldots,d-2\}$.
Then $b_s(Q)\geqslant0$ and
\[
 \mathcal T(Q)\leqslant\frac{h^3}{3c}
                  -\zeta_d b_s(Q)^3+C_dh^2,
\]
where $\zeta_d>0$ and $C_d<\infty$ depend only on $d$.
\end{lemma}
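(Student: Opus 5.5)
The plan is to follow the proof of Lemma~\ref{lem:cap-path-torsion-deficit}, replacing the minimum-degree block analysis with the even-degree interpolation of Proposition~\ref{prop:even-strict-compression}.

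\emph{Setup.} Put $f=D_Q^{-1}\one$ with $f(s)=0$. This vector is nonnegative, and $L_Qf=\one-h\mathbf e_s$. Order the vertices increasingly starting at $s$, and let $\mathbf y$ be the increments and $\mathsf K$ the cut matrix. Then $\mathsf K\mathbf y=\mathbf b$ with $b_i=h-i$, and $\mathcal T(Q)=\sum_i b_iy_i$.

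\emph{Parity and the sign of $b_s(Q)$.} Every initial cut $S_i$ contains $s$, so $q_i=di-\deg_Q(s)\cdot 0-2e(S_i)+(\deg_Q(s)-d)\equiv\deg_Q(s)\pmod 2$. Hence all cut sizes are even, as in the regular even case. The hypothesis $\deg_Q(s)\leqslant d-2$ changes the endpoint bounds only by a bounded amount. The claim $b_s(Q)\geqslant0$ then comes from a pure counting step. Canonical blocks on a path from $s$ are consecutive copies of $M_d$ sharing terminals, so they contribute $wm$ vertices beyond the root. The root's block has at least one vertex besides $s$, because $\deg_Q(s)\geqslant2$, and a non-canonical first step only adds vertices. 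Therefore $h-1\geqslant wm_s(Q)$.

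\emph{Torsion bound.} Apply the partition of Proposition~\ref{prop:even-strict-compression} to this ordering. The size-two cuts play the role of bridge indices, the canonical intervals are the copies of $Q_d$ (equivalently, the copies of $M_d$ after regrouping), and all other pieces are strict with coefficient $c+\nu_d$, apart from a bounded number of end pieces. On each piece I would use the dual-vector argument from the proof of Lemma~\ref{lem:cap-path-torsion-deficit}. If the resistance bound for the piece is $R$, there is $v\geqslant0$ with $\mathsf K_{BB}v\geqslant\one$ and $\sum v_i\leqslant R$. Since $\mathsf K\geqslant0$ entrywise, this gives $\sum_{i\in B}y_i\leqslant R\max_{i\in B}b_i$.

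The one modification is that the even-degree pieces use level-split flow bounds rather than pure cut-kernel blocks. For the boundary intervals of Lemma~\ref{lem:even-collar}, I would therefore phrase the estimate as a potential drop across a level interval: the drop is at most the resistance bound times the current, and the current through the level $z_a$ equals $b$ up to $O_d(1)$. The $h_a$ and $\delta_a$ corrections telescope exactly as they do in the coordinates $x_a$.

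Summing over pieces gives
\[
\mathcal T(Q)\leqslant\frac1c\sum_i b_i^2-\frac{\nu_d}{c(c+\nu_d)}\sum_{i\in\mathcal B}b_i^2+O_d(h^2),
\]
where $\mathcal B$ is the union of the strict pieces.

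\emph{Relating strict pieces to $b_s(Q)$.} The canonical intervals have size-two cuts $S_a\ni s$ that are nested. Consecutive canonical intervals form chains of $M_d$ joined at cut vertices, as in the chain argument of Theorem~\ref{thm:fixed-even-stability}. So all canonical copies lie on one block-cut path from $s$, and their number is at most $m_s(Q)$. Every piece has bounded rank span, so the vertices outside canonical intervals number at least $b_s(Q)-O_d(1)$. The charging of nonstrict pieces to strict ones then gives $|\mathcal B|\gtrsim_d b_s(Q)-O_d(1)$. Since $\sum_{i\in\mathcal B}b_i^2\geqslant\sum_{j\leqslant|\mathcal B|}j^2$, the cubic deficit follows.

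\emph{Main obstacle.} The hard step is adapting Proposition~\ref{prop:even-strict-compression}, which is stated for energies and $L^2$ comparisons, to a weighted linear functional with the source term $\mathbf b$. The level splitting at the $z_a$ means increments are not simply partitioned among pieces. I would handle this with the clipped functions: apply the dual-vector bound to each clipped vector $\min\{\max\{f,s\},t\}$. That vector satisfies $L_Q$-inequalities with flux equal to the number of vertices above level $s$, which is $b_i$ up to a bounded error. This yields the drop bound on each potential interval directly, without going through energies.
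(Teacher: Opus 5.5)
Your proposal is correct in outline but takes a different route from the paper in the analytic step. The combinatorial half agrees with the paper: cuts are even because $s$ lies in every initial shore, canonical intervals give $M_d$ blocks on one block-cut path from $s$, the count $h\geqslant1+wm_s(Q)$ gives $b_s(Q)\geqslant0$, and bounded rank spans give $|\mathcal B|\gtrsim_d b_s(Q)-O_d(1)$.

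\textbf{Your route.} Write $N(z)=|\{v:f(v)>z\}|$, and let $I_p=[\sigma_p,\tau_p]$ be the potential intervals of the pieces. You use $\mathsf K\mathbf y=\mathbf b$ to prove, piece by piece, the linear bound $|I_p|\leqslant R_pN(\sigma_p)$. Summing then gives $T=\int_0^{\max f}N\leqslant\sum_pR_pN(\sigma_p)^2$.

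For cut-kernel pieces the dual vector of Lemma~\ref{lem:cap-path-torsion-deficit} gives this bound. For the level-split pieces it is a new lemma you still have to prove, because Lemmas~\ref{lem:even-collar} and~\ref{lem:even-short-cores} are stated only as energy bounds. The lemma is true, but it must be proved in the network subdivided at the two levels. The flow bounds involve the fractional segment resistances $d_e/D_e$, which the unit-resistance clipped vector does not see. In the subdivided network, $f$ has a unit source at each interior vertex. Pair it with the unit-current potential $\psi$ between the wired levels, with $\psi=0$ at $\sigma_p$ and $\psi=R_{\mathrm{eff}}\leqslant R_p$ at $\tau_p$. This gives $\tau_p-\sigma_p=R_{\mathrm{eff}}\,|\{v:f(v)\geqslant\tau_p\}|+\sum_{\sigma_p<f(v)<\tau_p}\psi(v)\leqslant R_pN(\sigma_p)$.

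\textbf{The paper's route.} The paper never needs the linear lemma. It keeps Proposition~\ref{prop:even-strict-compression} as a black box in quadratic form and uses $T=\mathcal E_Q(f)=\sum_vf(v)$. A single Cauchy--Schwarz step, $\int g=\int(B-x)g'\leqslant\sqrt{TJ}$, then gives $T\leqslant J+O_d(h^2)$. The cost is a step-function comparison of $\sum_vf(v)$ with $\int g$. It also needs an explicit root piece of length $ca/2$ when the first size-two cut has $a\leqslant d$.

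\textbf{Comparison.} Your discrete set-up allows the same shortcut without the step-function comparison. Since $\sum_p|I_p|^2/R_p\leqslant\sum_p\mathcal E_{I_p}(f)\leqslant T$, we get
\[
 T\leqslant\sum_p|I_p|N(\sigma_p)
 \leqslant\Bigl(\sum_p\frac{|I_p|^2}{R_p}\Bigr)^{1/2}
 \Bigl(\sum_pR_pN(\sigma_p)^2\Bigr)^{1/2}
 \leqslant\sqrt T\,\Bigl(\sum_pR_pN(\sigma_p)^2\Bigr)^{1/2}.
\]
This is the paper's argument in discrete form, and it makes the linear lemma unnecessary. What your local bookkeeping genuinely buys is a trivial root end. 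Your remark that the root changes things ``only by a bounded amount'' is justified there: $y_i\leqslant b_i/q_i\leqslant h/2$ on the first $O_d(1)$ increments costs only $O_d(h^2)$. So no special initial piece is needed.
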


\begin{proof}
Let $f(s)=0$ and $f|_{V(Q)\setminus\{s\}}=D_Q^{-1}\one$.
Then $f$ is positive away from $s$, and
\begin{equation}\label{eq:even-rooted-poisson}
 T:=\mathcal T(Q)=\sum_{v\ne s}f(v)=\mathcal E_Q(f).
\end{equation}
Order the vertices increasingly by $f$, with $s$ first.
Every cut has positive even size.  Every intervening interval
used in the cut identities contains only vertices of degree $d$:
in particular, for $1\leqslant i<j\leqslant h-1$ the full
low-cut bounds and
\[
 q_i+q_j\geqslant(j-i)(d+1-j+i),\qquad
 K_{i,i+1}=\frac{q_i+q_{i+1}-d}{2}
\]
are unchanged.  Thus all interior boundary-interval, central-interval, and short-core
estimates of Proposition~\ref{prop:even-strict-compression}
apply to this ordering.  The degree defect at $s$ requires only
the following modification at the left endpoint.

If the first size-two cut $a$ satisfies $a>d$, its reversed
$d$-vertex boundary interval contains no root and the original construction
applies.  The preceding central partition is valid even if its
first cut is low: the full low-cut matrix uses degrees of the
vertices following that cut, all of which are $d$.
If $a\leqslant d$, replace the initial potential interval, from $0$ to
$z_a=(f_a+f_{a+1})/2$, by a single exceptional affine piece.
For its truncated increments $u_1,\ldots,u_a\geqslant0$, the
energy on this interval is at least
\[
 \sum_{i=1}^a q_i u_i^2\geqslant
       \frac2a\left(\sum_{i=1}^a u_i\right)^2
       =\frac2a z_a^2.
\]
Give this piece length $ca/2$ and let it finish at the original
cut coordinate $x_a$.  Its initial coordinate is then displaced
from rank one by $O_d(1)$, its initial value is exactly zero,
and it has coefficient $c$.  There is at most one size-two cut
among the first $d$ indices.  If there is no size-two cut, the
original central partition on all increments applies.  The right
endpoint construction is unchanged.  If $h$ is bounded in terms
of $d$, the conclusion follows at once by increasing $C_d$.

We consequently obtain a continuous nondecreasing piecewise
affine $g$ on $[A,B]$, with $g(A)=0$ and $L=B-A=h+O_d(1)$,
such that, for some $\nu_d>0$,
\begin{equation}\label{eq:even-rooted-strict}
 T\geqslant\int_A^B
       (c+\nu_d\mathbf1_{\mathcal B}(x))|g'(x)|^2\,dx,
 \qquad \int_A^B|g-F|^2\leqslant C_dT.
\end{equation}
The step function $F$ has value $f_i$ on its unit interval centred at the corresponding rank,
with the same bounded endpoint adjustment as in that proposition.
Every piece has bounded rank span and coordinate length bounded
above and below by positive constants depending only on $d$.
The set $\mathcal B$ contains all pieces except a bounded number
of end pieces and the canonical size-two-cut cores in that
proposition. For an edge with total potential difference $D$,
a segment with potential difference $b$ contributes at most
$b^2\leqslant Db$ to the retained quadratic form. The quantities
$Db$ sum to the original edge energy. Thus the interval containing
the root and the adjoining boundary interval use disjoint
contributions to the energy.

We record the elementary size bounds needed to compare integrals.
The Green kernel satisfies
\[
 0\leqslant(D_Q^{-1})_{uv}
 =\frac{R_Q(s,u)+R_Q(s,v)-R_Q(u,v)}2\leqslant h-1.
\]
Hence $\max f\leqslant h^2$ and $T\leqslant h^3$.
Cauchy--Schwarz in \eqref{eq:even-rooted-strict}, together with
the bounded endpoint adjustment, now gives
\begin{equation}\label{eq:even-rooted-integral}
 \left|\int_A^B g-T\right|
 \leqslant C_d\sqrt{hT}+C_d\max f\leqslant C_dh^2.
\end{equation}
Since $g(A)=0$, integration by parts and weighted
Cauchy--Schwarz show that
\[
 \int_A^B g
 =\int_A^B(B-x)g'(x)\,dx\leqslant\sqrt{TJ},
 \qquad
 J=\int_A^B\frac{(B-x)^2}{c+\nu_d\mathbf1_{\mathcal B}(x)}\,dx.
\]
The inequality $\sqrt{TJ}\leqslant(T+J)/2$ and
\eqref{eq:even-rooted-integral} imply $T\leqslant J+O_d(h^2)$.
Among measurable sets of prescribed length the integral of
$(B-x)^2$ is smallest on a terminal interval.  Therefore
\begin{equation}\label{eq:even-rooted-torsion-length}
 T\leqslant\frac{h^3}{3c}
  -\frac{\nu_d}{3c(c+\nu_d)}|\mathcal B|^3+O_d(h^2).
\end{equation}

We now relate $|\mathcal B|$ to $b_s(Q)$.
Apart from the bounded number of end pieces, every interval outside
$\mathcal B$ is a copy of $K_{d+1}-P_3$ between two size-two
cuts, with equal boundary corrections.  Adjoining the
single exterior vertex at its split end produces one $M_d$.
All its middle vertices already have degree $d$.  The two
terminals separate the part containing $s$ from the remaining
part, and there is no bypass across the core.  Consequently this
$M_d$ is a block.  These blocks are distinct and lie on
the single block-cut path from $s$ to the last vertex in the
ordering: every path between these vertices must traverse their
terminal separators in rank order.

If $r$ is the number of these disjoint cores, then
$m_s(Q)\geqslant r$.  The positive lower bound on interval lengths and the
bounded rank span of the interpolation give
\[
 h-wr\leqslant C_d|\mathcal B|+C_d,
 \qquad b_s(Q)\leqslant C_d|\mathcal B|+C_d.
\]
Also, $m$ distinct canonical blocks on any block-cut path contain
at least $1+wm$ vertices, since successive blocks share at most
one vertex.  This proves $b_s(Q)\geqslant0$.
Combining the last display with
\eqref{eq:even-rooted-torsion-length} proves the claim, absorbing
the resulting quadratic error into $C_dh^2$.
\end{proof}

\begin{lemma}
\label{lem:even-half-gauge}
The constructions in Lemmas~\ref{lem:even-short-cores}
and~\ref{lem:even-central-allocation} remain valid with
$\gamma=1/2$. Except on canonical intervals with one concentrated
and one split inner shore, the energy coefficient can be chosen
strictly larger than $c$, by an amount depending only on $d$.
The interpolation at the outer endpoints remains as in
Proposition~\ref{prop:even-strict-compression}.
\end{lemma}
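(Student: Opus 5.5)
The plan is to re-run the verification of Lemmas~\ref{lem:even-short-cores} and~\ref{lem:even-central-allocation} with the gauge parameter \(\gamma\) treated as a free variable. The constant \(\gamma\) enters only through the boundary corrections \(\delta_a\in\{-\gamma,0,\gamma\}\) in the coordinates \(x_a\). Lemmas~\ref{lem:even-level-split}, \ref{lem:even-collar}, \ref{lem:even-low-block-reserve}--\ref{lem:even-boundary-charges} and the interior partition do not depend on \(\gamma\), except for the coordinate bookkeeping at the size-two cuts. Moreover, \(|\delta_a|\leqslant1/2\) keeps the coordinates monotone with bounded displacement, so Lemma~\ref{lem:even-level-split} survives at \(\gamma=1/2\): the strict bound \(|\delta_a|<1/2\) was only used to keep the displacement below \(d-1\).

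For Lemma~\ref{lem:even-central-allocation}, I will redo the boundary-interval deficit computation in the proof of Lemma~\ref{lem:even-collar}. With general \(\gamma\), the deficit for a split right shore is \((d+1/2-\gamma)/c-R_S(d,d)\), and for a concentrated right shore it is \((d+1/2+\gamma)/c-R_C(d,d)\). At \(\gamma=1/2\) the smaller of the two is the split one, namely \(d/c-R_S(d,d)=\tfrac12-\tfrac14-\tfrac{3}{2(d-2)}\). This replaces \(B_d\), and I must check that it stays positive for \(d\geqslant6\). The combined inequalities \(C_d+(d/c-\rho_d)-2P_d-T_d>0\) and the conclusion of Lemma~\ref{lem:even-boundary-charges} then need to hold with the new, smaller \(C_d\). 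I expect this to be the main obstacle: the margins in Lemma~\ref{lem:even-low-block-reserve} were tuned to the original \(C_d\). My first attempt is to note that the boundary intervals need not be exactly length \(d\). I can shift one increment between the boundary interval and the adjacent central block and use the strict reserve \(2/1335\) together with the generous bound \(U_d<1/2\) for large \(d\). The finitely many small even \(d\) would then be checked directly, as in the paper.

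For Lemma~\ref{lem:even-short-cores}, I will redo the \(SS/SC/CC\) comparisons with thresholds \((t-2\gamma)/c\), \(t/c\), \((t+2\gamma)/c\) at \(\gamma=1/2\). The \(SC\) threshold is unchanged. The \(CC\) threshold only increases, so these cases become easier. The \(SS\) case is the one to recheck: I need the uncorrected sum below \((t-1)/c\), using endpoint concavity in \(t\) exactly as before. The exceptional case \(t=d+1\) with \(SS\) shores needs particular care, since the margin \(8(d+1)/((d-2)(d+2)(d^2+2d-4))\) was computed at the old \(\gamma\). If it fails, I will note that an \(SS\) pair at distance \(d+1\) forces a complement with two disjoint missing edges, and compute its terminal resistance directly, as in the \(t=d+2\), \(CC\) analysis. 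For \(CS\) shores at \(t=d+1\), the correction \(\delta_b-\delta_a\) equals \(0\) or \(2\gamma=1\), so strictness holds except for the canonical \(SC\) configuration.

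Finally, the strict improvement by an amount depending only on \(d\) follows as before, because only finitely many shore types and lengths occur. The outer end pieces are defined without reference to \(\gamma\), so they are unchanged from Proposition~\ref{prop:even-strict-compression}.
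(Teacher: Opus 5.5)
Your overall plan is the paper's: rerun the short-core and central-allocation checks with $\gamma=\tfrac12$, with $SS$ and the long cores as the pinch points. The long-core step, however, does not close as you have set it up.

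First, an arithmetic slip: $d/c=\tfrac12+\tfrac1{d-2}$, so the worst (split) boundary deficit at $\gamma=\tfrac12$ is $\tfrac14-\tfrac1{2(d-2)}$, not $\tfrac14-\tfrac3{2(d-2)}$. Your value is nonpositive at $d=6,8$. Two such deficits give exactly $T_d=C_d-\tfrac{2}{(d-2)(d+2)}$.

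The tools you name do not recover the lost $\tfrac2{(d-2)(d+2)}$:
\begin{enumerate}[(i)]
\item The reserve $2/1335$ exceeds it only when $(d-2)(d+2)>1335$, i.e.\ $d\geqslant38$.
\item A direct check of the generic low-cut matrices with boundary intervals of length $d$ cannot succeed at $d=8$. There a truncated profile $(4,4)$ with no high residue has resistance bound $\tfrac14+\tfrac14=\tfrac2{12}+T_8$, with equality.
\end{enumerate}
This is why the paper, keeping length $d$, proves the sharper inequality $\max_{k,r<d}\{R_r-r/c\}+P_d<T_d$ for $d\ne8$, so that full low blocks never need the boundary deficit. At $d=8$ it breaks the equality only by examining the actual graph: the common vertex of the two size-four cuts, its two nine-vertex sides ($K_9$ minus three edges), and their explicit flow energies.

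Your shift idea gives a cleaner route if you make it precise. Take $d-1$ vertices at a split inner shore and $d$ at a concentrated one. The worst-case deficits are then
\begin{gather*}
 \frac{d-1}{c}-R_S(d,d-1)=\frac14+\frac1{2(d-2)}-\frac1{d-1}=B_d+\frac{3}{(d-2)(d-1)(d+2)},\\
 \frac{d+1}{c}-R_C(d,d)=\frac{d}{4(d+2)}=B_d+\frac{1}{(d-2)(d+2)}.
\end{gather*}
So each boundary interval beats the old $B_d$, and the total exceeds the old $C_d$. The central partition and its excess bounds do not depend on where the central interval starts, since all its cuts are still at least four. Hence Lemma~\ref{lem:even-boundary-charges} and $C_d+(d/c-\rho_d)-2P_d-T_d>0$ apply verbatim, with no new matrix checks and no $d=8$ case. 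As proposed, though, this is only a suggestion, and it is exactly where new work must be done.

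In the short cores, your fallback for $SS$ at $t=d+1$ is forced, not contingent. The equal split gives $\tfrac12+\tfrac{d+6}{d^2+2d-4}$, which exceeds $d/c$ for $d>4$, and by convexity no split does better. Your structural claim there is also wrong. Without a bypass, the two missing edges need not be disjoint: they may form a $P_3$ whose centre receives one crossing edge from each side, with interior energy $\tfrac1{2d}$. A single bypass edge is also possible, leaving one missing edge and the bound $\tfrac14+\tfrac1{2(d-1)}$. The disjoint case has two partitions, with energies $\tfrac1{d+1}$ and $\tfrac1{d-1}$. All these configurations must be computed, as the paper does, to reach $R\leqslant\tfrac12+\tfrac1{d-1}<\tfrac dc$. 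Finally, in the mixed case at $t=d+1$ the nonzero correction is $\gamma=\tfrac12$, not $2\gamma$; this is harmless.
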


\begin{proof}
We adapt the estimates in those two lemmas.  For short cores,
the least corrections for $SS,SC,CC$ are now $-1,0,1$.
The same convexity argument proves all the required strict
inequalities for $d+2\leqslant t\leqslant2d+1$.
For each parity of $t$, substitute the two endpoint boundary interval
lengths.  Clearing positive denominators gives polynomials with
positive coefficients after $d=6+2u$, $u\geqslant0$.
For example the $SS$ differences at $t=d+2$ and $t=d+3$ are
\[
 \frac{d^3+16}{2(d-2)(d+2)(d^2+2d-4)},\qquad
 \frac{d}{(d-2)(d+2)}.
\]
The $CC$ estimate at $d=6,t=8$ also becomes strict with this
larger correction, so no exception is needed there.

For $t=d+1$, the canonical mixed core has its original equality
bound.  In the $SS$ case with no bypass the complement of the
core consists of two edges.  If these are disjoint, the two
possible partitions of their endpoints between the shores give
interior flow energies $1/(d+1)$ and $1/(d-1)$ when each
boundary edge carries current $1/2$. If the complement is $P_3$,
the common terminal has zero net incoming current and the
corresponding energy is $1/(2d)$.
With one bypass the complement has one edge and the resistance bound
is at most $1/4+1/(2(d-1))$.
Thus the resistance bound in the $SS$ case is at most $1/2+1/(d-1)$, and
\[
 \frac{d}{c}-\left(\frac12+\frac1{d-1}\right)
 =\frac1{(d-2)(d-1)}>0.
\]

For long cores, the smallest total improvement in the resistance
bounds for two boundary intervals of length $d$ is attained in the
$SS$ case and equals $T_d=(d-4)/(2(d-2))$. With
$P_d=(d-4)/(2d(d-2))$ as before, the leading submatrices of the same
full low-cut matrices satisfy, for every even $6\leqslant d\leqslant40$
other than $d=8$,
\[
 \max_{k,r<d}\{R_r-r/c\}+P_d<T_d.
\]
These finite inequalities are checked directly.
For $d\geqslant42$, the diagonal bound $U_d$ in
Lemma~\ref{lem:even-low-block-reserve} gives
$U_d+P_d<1/2$.  The inequality holds directly at $42$ and
persists because both summands decrease.  For $r\geqslant2$,
\[
 R_r-r/c+P_d\leqslant U_d-2/c+P_d<T_d;
\]
the case $r=1$ follows from $R_1\leqslant1/4$.
The combined resistance excess of the initial high-cut remainder
and the final truncated low-cut block is therefore strictly smaller
than the total resistance deficit of the two boundary intervals.
Full low-cut blocks, together with their following high-cut
remainders, retain the preceding strict bound.

For $d=8$, we use the following estimates.
If an initial high residue precedes a full low block, merge
them, retaining the adjacent off-diagonal entry $k/2$.
For residue lengths $0\leqslant t\leqslant7$ and $k=4,6$,
the resulting matrix of size $t+8$ has resistance strictly
less than $(t+8)/12-1/24$.  This also bounds the resistance of the following
high residue, with a strict inequality.  If there is no full low
block, the same merge with a truncated block of length $r<8$
has resistance strictly below $(t+r)/12+1/3$, except for
the sole two-entry profile $(4,4)$ with $t=0$, where equality
holds.  These are direct positive-definite matrix calculations.
If a full high block occurs, its strict improvement absorbs this
bounded equality case by the original proportional allocation.

The only case still requiring examination is a central interval
consisting of just $(4,4)$, between two split boundary intervals.
The two consecutive cuts have $K_{i,i+1}=0$, so their common
vertex $v$ has four edges to each side and no edge bypasses it.
On either side of $v$ the size-two cut and $v$ enclose nine
vertices.  If $z$ incoming edges bypass these vertices and go
directly to $v$, their induced graph is $K_9$ with $3-z$ edges
missing.  Let $l_i$ count the incoming cut edges at vertex $i$, and write
$l=(l_i)$. For $z=0$, the incoming current vector is $l/2$,
with two distinct nonzero entries, and the missing-edge degrees are
$l_i+g_i$, where $g_i\in\{0,1\}$ counts an edge to $v$.
The possibilities are: three disjoint edges, a $P_3$ and an
edge, or a $P_4$.  There are five choices of $l$ up to symmetry.
The corresponding flow energies, with $v$ grounded, are
\[
 \frac13,\quad\frac{33}{94},\quad\frac{685}{2228},\quad
 \frac{2529}{7924},\quad\frac9{32}.
\]
For $z=1$, the complement is two disjoint edges or a $P_3$;
the flow energies are $27/212$ and $3/28$.
For $z=2$ no current enters the interior.
Every value is strictly below $1/2$.
The incoming edge segments contribute at most $1/2-h_a$ to
the resistance bound on the left; reversing the order gives the
corresponding bound on the right.
Thus each half has a resistance strictly below the previous
equality bound $1-h_a$ or $1/2+h_b$, respectively.  This proves
the required strict improvement in the last exceptional case.

Each group contains a bounded number of consecutive ranks.
Its resistance bound and coordinate length are positive and bounded,
and the improvement in the bound depends only on $d$.
The proportional choice of lengths used in
Lemma~\ref{lem:even-central-allocation} therefore proves the result.
The decomposition by potential levels includes each edge segment
exactly once.
\end{proof}

\begin{theorem}
\label{thm:even-adjusted-resistance}
Let $d\geqslant6$ be an even integer.
Let $Q$ be a connected simple graph of order $q$ with distinct
terminals $s,t$ of degrees $a,b\in\{2,d-2\}$, respectively.
Every other vertex has degree $d$.
Then
\[
 cR_Q(s,t)\leqslant q-1+d-a-b.
\]
If the terminal-to-terminal block-cut path contains no canonical $M_d$
block, then, for a constant $\kappa_d>0$,
\[
 q-1+d-a-b-cR_Q(s,t)\geqslant\kappa_d(q-1).
\]
\end{theorem}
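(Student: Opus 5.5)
We bound $R_Q(s,t)$ via the Dirichlet principle rather than Thomson's principle: with the effective-resistance potential $p$ satisfying $\mathsf L(Q)p=\mathbf e_s-\mathbf e_t$, we have $R_Q(s,t)^{-1}=\cE_Q(h)$ for $h=(p-p(t))/R_Q(s,t)$, which increases from $0$ at $t$ to $1$ at $s$. Order the vertices by $h$, with $t$ first and $s$ last; along this ordering the voltage profile is monotone. This is the setting of Proposition~\ref{prop:even-strict-compression} and of its rooted adaptation in Lemma~\ref{lem:even-rooted-torsion}. Every interior vertex has degree $d$, and the degree sum $dq-(d-a)-(d-b)$ is even, so every cut $q_i$ is a positive even integer. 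The interior cut identities, the full low-cut bounds, and the boundary-interval, central and short-core estimates therefore apply unchanged. The only modifications occur at the two ends, where the terminals have degree defects $d-a$ and $d-b$.

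The key steps run as follows.
\begin{enumerate}[(i)]
\item Build the piecewise affine interpolation $g$ of Proposition~\ref{prop:even-strict-compression}, using the gauge $\gamma=1/2$ of Lemma~\ref{lem:even-half-gauge}. This gives a nondecreasing $g$ on $[A,B]$ with $g(A)=0$, $g(B)=1$, and
\[
 \cE_Q(h)\geqslant\int_A^B(c+\nu_d\mathbf 1_{\mathcal B})|g'|^2 .
\]
\item Handle the two endpoints as in Lemma~\ref{lem:even-rooted-torsion}. The interval from a terminal to the first size-two cut becomes a single exceptional piece whose resistance bound is explicit from the terminal degree. We must compute the exact coordinate length it consumes. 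The bookkeeping target is that the total coordinate length satisfies
\[
 B-A\leqslant q-1+(d-a)+(d-b)-d=q-1+d-a-b .
\]
Equivalently, a terminal of degree $2$ behaves like a concentrated or split shore of a size-two cut, contributing the boundary correction $(d-2)$, while a terminal of degree $d-2$ contributes nothing beyond one increment. We check this by matching the terminal pieces against the collar bounds $R_S,R_C$ of Lemma~\ref{lem:even-collar} in the cases $a,b\in\{2,d-2\}$, with $\delta=\pm\gamma$ fixed accordingly.
\item Apply Cauchy--Schwarz: $\int_A^B|g'|^2\geqslant 1/(B-A)$. This gives $R_Q(s,t)=\cE_Q(h)^{-1}\leqslant (B-A)/c$, which is the first inequality.
\item For the strict version, use the same Cauchy--Schwarz argument with the weight $c+\nu_d\mathbf 1_{\mathcal B}$. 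It gives
\[
 cR_Q\leqslant (B-A)-\kappa'_d|\mathcal B| .
\]
It then suffices to show $|\mathcal B|\geqslant \kappa''_d(q-1)-O_d(1)$ when the terminal-to-terminal block-cut path has no canonical $M_d$ block. As in the last paragraph of the proof of Lemma~\ref{lem:even-rooted-torsion}, every piece outside $\mathcal B$ other than a bounded number of end pieces is a canonical core $K_{d+1}-P_3$ between size-two cuts. Adjoining its exterior vertex yields an $M_d$ block, and these blocks lie on the block-cut path from $t$ to $s$, since every $s$--$t$ path crosses their terminal separators in rank order. Under the hypothesis no such cores exist, so all but $O_d(1)$ pieces are strict. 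Piece lengths are bounded below, so $|\mathcal B|\geqslant (q-1)/C_d-C_d$.
\item Small $q$: here the $O_d(1)$ loss is absorbed directly. The strict inequality for bounded $q$ (finitely many graphs) follows from the first inequality being strict whenever some piece is strict. A bounded graph without a canonical block has at least one strict piece, or is handled by the explicit terminal computation.
\end{enumerate}

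We expect the main obstacle to be step (ii): obtaining the exact additive constant $d-a-b$ rather than $O_d(1)$. This requires the terminal pieces to be sharp, in particular when a terminal sits within distance $d$ of a size-two cut, or when the whole graph is short enough that the two terminal pieces interact. We would first try to treat each terminal as if an exterior size-two (or $(d-2)$-size) cut were attached, reducing to the collar and short-core lemmas. If that fails, we would compute the grounded Laplacian resistances of the completed terminal neighbourhoods directly, exactly as in the proof of Lemma~\ref{lem:even-collar}, and verify the resulting finite inequalities.
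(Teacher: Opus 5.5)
Your skeleton matches the paper's: order by the potential, build the half-gauge interpolation, finish with Cauchy--Schwarz, and get the linear deficit from strict pieces. The problem is step~(ii), which you leave as a bookkeeping target. It is the entire content of the first inequality, and the tools you cite cannot supply it.

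\begin{itemize}
\item The endpoint piece of Lemma~\ref{lem:even-rooted-torsion} uses only $q_i\geqslant2$. At a degree-two terminal it charges resistance $1/2$ with length $c/2$, whereas the true flow resistance to the level $z_1$ is $h_1\leqslant1/4$. Its starting coordinate is correct only up to $O_d(1)$.
\item The end treatment of Proposition~\ref{prop:even-strict-compression} likewise adds up to $c(P_d+T_d)$ at each end.
\item Either way you get $B-A=q+O_d(1)$, not $q-1+d-a-b$.
\item A terminal of degree $d-2$ has a further problem. The first cut has size $d-2$, and the nearest size-two cut may be far away or absent. The terminal lies inside a boundary interval or low-cut block whose estimates use degree $d$ at every vertex. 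So these estimates do not apply unchanged there, and there is no single piece with an explicit bound.
\item Your step~(v) for small $q$ presupposes the exact inequality, so it inherits the gap.
\end{itemize}

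The missing device is to make the whole potential range lie between size-two cuts, so that no lossy end pieces occur. Order with $s$ first, of degree $a$.
\begin{itemize}
\item If $a=2$, the set $\{s\}$ is itself a size-two cut with concentrated shore. Sending current $1/2$ along each of its two edges gives resistance exactly $h_1$ up to $z_1$. With $\delta_1=\gamma=1/2$ one has $x_1=2+ch_1$, so this piece, of length $ch_1$, starts at $A=2$. With a smaller $\gamma$ it would start at $3/2+\gamma<2$, which is why the half gauge is needed.
\item If $a=d-2$, attach to $s$ two zero-resistance auxiliary edges from distinct exterior vertices. This restores degree $d$ in every cut identity and creates a virtual size-two cut of rank $0$ with concentrated inner shore, $\delta_0=-1/2$ and $h_0=1/2$. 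Hence $A=x_0=d-2$.
\item Symmetrically, $B=q+d-3$ if $b=2$ and $B=x_q=q+1$ if $b=d-2$. So $B-A=q-1+d-a-b$ in all four cases.
\end{itemize}
Between consecutive actual or virtual size-two cuts, Lemma~\ref{lem:even-half-gauge} gives resistance bounds exactly $(x_b-x_a)/c$, and these telescope with no loss.

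In the strict part, the only exempt pieces are then the degree-two terminal pieces. Each has length at most $c/4$, because $1/8\leqslant h_1\leqslant1/4$: the potential's minimum away from $s$ is attained at a neighbour of $s$. So the bound $\kappa_d(q-1)$ holds for every $q$ without a finite check. You also need that a canonical interval beside a virtual cut still gives an $M_d$ block of $Q$. This holds because its split end lies on the actual side.
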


\begin{proof}
Use the unit-current potential, increasingly ordered with
$f(s)=0$ first and $f(t)=R_Q(s,t)$ last. If $a=d-2$, add
two incoming auxiliary edges at $s$; if $b=d-2$, add two
outgoing auxiliary edges at $t$. Each auxiliary edge joins its
terminal to a distinct exterior vertex and has resistance zero.
These edges complete the terminal degrees to $d$ for the interior
cut identities. The exterior vertices remain distinct in the
cut-incidence counts, and the degree condition applies to the
vertices of $Q$.

At a high-degree left terminal use a virtual size-two cut of rank
zero with $\delta_0=-1/2$, $h_0=1/2$, and hence
$x_0=d-2$.  At a high-degree right terminal use a virtual cut of
rank $q$ with $\delta_q=1/2$, $h_q=0$, and hence $x_q=q+1$.
These parameters mean precisely that the portions of the incoming
auxiliary edges after the left cut and of the outgoing auxiliary
edges before the right cut have resistance zero.  The proof for the boundary intervals is valid with these
endpoint values.  In particular its bound for a concentrated terminal is
$R_C(d,r)-1/4$, which is also obtained directly by grounding
the rank anchor in a graph whose initial vertex has degree $d-2$.

For a degree-two left terminal, the first actual cut is size two.
The flow carrying current $1/2$ along each cut edge contributes
exactly $h_1$ to the resistance from $s$ to the cut level.  Since $x_1=2+ch_1$, the left endpoint coordinate is
$A=x_1-ch_1=2$.
For a degree-two right terminal the corresponding contribution is
$1/2-h_{q-1}$, giving endpoint coordinate $B=q+d-3$.
Thus, in all four boundary combinations, the complete interval
has endpoints
\[
 A=a,\qquad B=q+d-1-b,
 \qquad L=B-A=q-1+d-a-b.
\]
Lemma~\ref{lem:even-half-gauge} applies between the initial
and final actual or virtual size-two cuts.  Its degree identities
involve only the completed vertices of $Q$, and its simplicity
bounds concern induced subgraphs of $Q$, which are unchanged.
Add the degree-two endpoint pieces just described when present.
The flow estimates on each potential interval give an interpolation $g$ with endpoint
difference $R=R_Q(s,t)$ and energy at most the original energy
$R$.  Weighted Cauchy--Schwarz over these intervals therefore gives
$R^2\leqslant R(L/c)$, proving the first assertion.  Equivalently,
one may apply Thomson's principle after subdividing at the actual
potential levels and wiring the equal-potential cut terminals; this
operation preserves the resistance for the given harmonic potential.

For the strict assertion, every canonical interval of this
interpolation gives an $M_d$ block on the $s$--$t$ block-cut
path: its terminals separate its interior from the rest of the
graph, as in the stability proof.  This is
also true at a virtual endpoint: the exterior vertex adjoined
at the split shore is on the other, actual side of the core.
The hypothesis therefore makes all the interior pieces strict.
Only the two possible degree-two endpoint pieces are exempt.
If their complement has coordinate length $L_0$, the strict
coefficient gives
\[
 R_Q(s,t)\leqslant\frac{L-L_0}{c}
                  +\frac{L_0}{c+\nu_d}.
\]
There is a constant $a_d>0$, depending only on $d$, such that
$L_0\geqslant a_d(q-1)$.
At a degree-two left endpoint $1/8\leqslant h_1\leqslant1/4$,
and the reversed estimate holds at the right endpoint.
Indeed, all degrees are even, so deleting a degree-two endpoint
leaves a connected graph: otherwise one of the components would
have a cut of odd size one.  The minimum principle therefore
gives a positive minimum away from $s$, attained at a neighbour
of $s$.  One of the two defining ratios for $h_1$ is $1/2$,
and both are at most $1/2$.
Thus $L_0\geqslant q-d-1$ when $a=b=2$, while
$L_0=q-d+3$ when $a=b=d-2$; the mixed cases have
$L_0\geqslant q-1-c/4$.
The degree conditions imply $q\geqslant d+3$ when $a=b=2$ and $q\geqslant d+2$ in the remaining boundary-degree patterns.
These bounds give the claimed positive proportion directly.
Rearranging the preceding resistance inequality proves the
linear strict deficit.
\end{proof}

We orient $M_d$ positively when its left terminal has degree two,
and negatively otherwise, and count each shared vertex in the piece
to its left. For $x>0$, put $k=\sqrt{cx}$ and use transfer coordinates
$(f,I/k)$, where $f$ is the vertex value and $I$ is the rightward
current. Write $R_\theta=\mathcal R(\theta)$.

For a connected piece $Q$ between consecutive canonical blocks with
terminals $s,t$, put $h=|V(Q)|-1$. If $s\ne t$ and their degrees in
$Q$ are $a,b$, define $\mathcal R_Q=R_Q(s,t)-(d-a-b)/c$; if $s=t$,
put $\mathcal R_Q=(d-4)/c$. Write $\Delta_Q=h-c\mathcal R_Q$,
and denote its transfer matrix in specified terminal bases by $T_Q$.

\Needspace{24\baselineskip}
\begin{lemma}\label{lem:even-gauged-cluster}
There are real analytic basis matrices $S_+(k),S_-(k)$ in which both
canonical transfers are the rotation $R_\theta$, where
\begin{equation}\label{eq:even-exact-phase}
 \cos\theta=1-\frac{x(w-x)^2}{2c},\qquad
 \theta=\frac wc k+O_d(k^3).
\end{equation}
Let $Q$ be a connected piece between consecutive canonical blocks,
including all branches at its internal vertices, with terminals $s,t$.
Suppose that its terminal-to-terminal block-cut path has no canonical
block and that $Q$ is not a singleton shared by two canonical blocks
contributing degrees $2$ and $d-2$ at that vertex. Then its transfer
matrix between the corresponding bases satisfies
\begin{equation}\label{eq:even-gauged-module}
 T_Q=\mathsf I+k\begin{pmatrix}0&-\mathcal R_Q\\h/c&0\end{pmatrix}
       +O_d(k^2h^2),
 \qquad \Delta_Q\geqslant\kappa_dh.
\end{equation}
The error has diagonal part $O_d(k^2h^2)$ and off-diagonal
part $O_d(k^3h^3)$, uniformly for complex $k$ with $|k|h=o(1)$.
The same assertions hold for an intervening subgraph obtained by combining
pieces with no intervening canonical block.
\end{lemma}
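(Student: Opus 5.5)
The plan has three parts: compute the canonical transfers and diagonalise them to a rotation, expand the transfer of a general piece as in the odd case, and deduce the linear deficit \(\Delta_Q\geqslant\kappa_d h\) from Theorem~\ref{thm:even-adjusted-resistance}.

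\emph{Canonical transfers.} First I would compute the transfer of \(M_d=K_1+K_2+K_{d-2}+K_1\) in each orientation, in coordinates \((f,I/k)\). I would eliminate the interior: the pair \(K_2\) and the clique \(K_{d-2}\) are each constant under symmetry, so the Schur complement onto the two terminals is explicit. This gives a \(2\times2\) matrix \(P_\pm(k)\) with determinant one, real analytic in \(k\) and even in the appropriate sense. Its trace would be computed directly from the \(3\times3\) reduced eigen-equations (terminal, pair, clique, terminal), which yields \(\operatorname{tr}P_\pm=2-x(w-x)^2/c\). Both orientations share this trace, since reversal conjugates the transfer by \(\operatorname{diag}(1,-1)\) and inverts it. Setting \(\cos\theta=\operatorname{tr}/2\) and expanding gives \(\theta=(w/c)k+O_d(k^3)\); the leading term follows because the terminal resistance is \(2/(d-2)+\cdots\), equal to \(w/c\) after the orientation-dependent shift.

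Exactly as in the proof of Lemma~\ref{lem:many-module-evacuation}, I would set \(J_\pm=(P_\pm-\cos\theta\,\mathsf I)/\sin\theta\) and \(S_\pm=(\mathbf e_1,J_\pm\mathbf e_1)\). Then \(S_\pm^{-1}P_\pm S_\pm=R_\theta\), and \(S_\pm\) is analytic because \((P_\pm-\cos\theta\,\mathsf I)/\sin\theta\) is analytic in \(k\), with off-diagonal leading term \(\pm1+O(k^2)\) and diagonal entry \(O(k)\). The point of the gauge is that the \(O(k)\) diagonal entry \(u_\pm\) of \(S_\pm\) absorbs the degree asymmetry of the terminals.

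\emph{Transfer of a general piece.} Next, for a piece \(Q\), I would repeat the grounded-resolvent computation of Lemma~\ref{lem:uniform-defect-transfer}: ground at \(s\), write \(W(x)=(D_Q-x\mathsf I)^{-1}\) with entries \(O(h)\), and expand. This gives the unscaled transfer
\[
\mathsf I+k\begin{pmatrix}0&-R_Q(s,t)\\ h/c&0\end{pmatrix}+O_d(k^2h^2),
\]
with off-diagonal error \(O(k^3h^3)\). Conjugating by \(S_-^{-1}\) on the left and \(S_+\) on the right, or whichever bases the neighbouring orientations dictate, shifts the \((1,2)\) entry by the \(O(k)\) gauge terms. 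These are constants determined by the terminal degrees \(a,b\in\{2,d-2\}\), and I expect them to produce exactly the shift \((d-a-b)/c\), so that \(\mathcal R_Q\) appears. I would pin the constant by a sanity check: a canonical block itself, treated as a piece, must give \(\mathcal R=w/c\) and \(\Delta=0\). For the case \(s=t\), the transfer is \(\begin{pmatrix}1+r&-1\\-r&1\end{pmatrix}\) before gauging, and the two gauge shifts combine to \((d-4)/c\). I would verify this case separately, and that is where the excluded singleton case comes from: with degrees \(2\) and \(d-2\) it is just the shared cut vertex, whose transfer is trivial. Products of pieces with no canonical block between them have additive first-order terms, hence additive \(h\) and \(\mathcal R_Q\), and second-order cross terms of size \(O(k^2h_1h_2)\). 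This yields the final sentence of the lemma.

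\emph{Main obstacle.} The step I expect to be hardest is \(\Delta_Q\geqslant\kappa_dh\). For \(s\neq t\) with degrees in \(\{2,d-2\}\), it is precisely Theorem~\ref{thm:even-adjusted-resistance}, since \(h=q-1\) and \(h-c\mathcal R_Q=q-1+d-a-b-cR_Q(s,t)\). For \(s=t\), I would argue directly: the degree of the shared vertex within \(Q\) is even, and nontrivial branches force \(h\geqslant d+1\), so \(\Delta_Q=h-(d-4)\geqslant\kappa_d h\). Terminal degrees in \(Q\) outside \(\{2,d-2\}\) cannot occur, because the adjacent canonical blocks contribute \(2\) or \(d-2\) and the total degree is \(d\). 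The combined case follows by additivity of \(h\) and \(\mathcal R\), which makes \(\Delta\) additive as well. The delicate point throughout is tracking that the gauge constants give exactly \((d-a-b)/c\).
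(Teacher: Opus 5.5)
Your plan follows the paper's proof. You eliminate the layers of $M_d$, take $S_\sigma=(\mathbf e_1,J_\sigma\mathbf e_1)$ with $J_\sigma=(P_\sigma-\cos\theta\,\mathsf I)/\sin\theta$, expand the grounded resolvent of a piece, and conjugate to $S_\tau^{-1}T_QS_\sigma$. You then obtain $\Delta_Q\geqslant\kappa_dh$ from Theorem~\ref{thm:even-adjusted-resistance} when $s\ne t$, and directly when $s=t$. Two steps, however, do not go through as written.

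\textbf{The gauge shift.} The shift $(d-a-b)/c$ is exactly what the first display asserts, and you leave it as an expectation. The check you propose cannot detect it. When the same basis is used on both sides, $S^{-1}(\mathsf I+kA)S=\mathsf I+kA+O(k^2)$, so a canonical block treated as a piece gives $\mathcal R=w/c$ and $\Delta=0$ whatever the first-order shears of $S_\pm$ are.

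The shears come from the second-order diagonal entries of $P_\sigma$. Let the block's left and right terminals have degrees $p$ and $d-p$ inside it, and let the block carry the mass of its right terminal. Elimination gives $(P_\sigma)_{11}=1-\frac{wp}{c^2}k^2+O(k^4)$. With $\cos\theta=1-\frac{w^2}{2c^2}k^2+O(k^4)$ and $\sin\theta=\frac wck+O(k^3)$, this yields $(J_\sigma)_{11}=\frac{w-2p}{2c}k+O(k^3)$. Hence $S_\pm=\mathsf I+\alpha_\pm kE_{12}+O(k^2)$ with $\alpha_+=\frac{d-3}{2c}$ and $\alpha_-=\frac{5-d}{2c}$.

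Conjugation then adds $\alpha_\sigma-\alpha_\tau$ to the $(1,2)$ coefficient $-R_Q(s,t)$. Since $\deg_Q(s)=2$ exactly when $\sigma=+$, and $\deg_Q(t)=2$ exactly when $\tau=-$, the four orientation pairs give precisely $\alpha_\sigma-\alpha_\tau=(d-a-b)/c$.

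Two minor points. First, the terminal resistance of $M_d$ is exactly $\frac1p+\frac1{p(d-p)}+\frac1{d-p}=\frac wc$, so $\theta=\frac wck+O(k^3)$ follows from the trace with no orientation-dependent shift. Second, equality of the two traces needs, besides reversal, moving the terminal mass to the other end; this is harmless by cyclicity of the trace.

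\textbf{Coinciding terminals.} Your transfer $\begin{pmatrix}1+r&-1\\-r&1\end{pmatrix}$ for coinciding terminals is the odd-degree formula of Lemma~\ref{lem:uniform-defect-transfer}, which contains the following unit bridge. In the even chain there is no bridge. A piece hanging at the shared vertex leaves the value unchanged and only adds current: $f\mapsto f$ and $I\mapsto I+x\bigl(h+x\one^{\mathsf T}(D_Q-x\mathsf I)^{-1}\one\bigr)f$. So its raw $(1,2)$ entry is zero.

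With your matrix the first-order $(1,2)$ coefficient becomes $-1-(d-4)/c$. That gives $\mathcal R_Q=1+(d-4)/c$ instead of the defined value $(d-4)/c$, and $\Delta_Q=h-3d+8$. This is negative for the smallest admissible pieces. For example, $h=d+1$ occurs: remove from $K_{d+2}$ a five-edge star at the root and a perfect matching on the other $d-4$ vertices.

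With the bridgeless transfer, the nontrivial case is $\sigma=-$, $\tau=+$, with both neighbouring blocks contributing degree two. The shear difference $\alpha_+-\alpha_-=(d-4)/c$ gives $\mathcal R_Q=(d-4)/c$ and $\Delta_Q=h-d+4\geqslant4h/d$. This is also the version consistent with your own remark that the excluded singleton has trivial transfer.

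\textbf{The combined subgraph.} Your additivity argument works when each constituent satisfies the hypotheses. The degrees at a junction without a canonical block are then complementary, so $h$, $R$, $\mathcal R$ and $\Delta$ add, and the cross terms keep the parity structure of the error. The paper instead regards the union as a single piece of one of the two types and reapplies the same computation and Theorem~\ref{thm:even-adjusted-resistance} to it directly. That avoids checking the hypotheses constituent by constituent.
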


\begin{proof}
Write the four layers of a canonical block as $1,a,b,1$, with
$a+b=d$ and $ab=c$.  The interior values are constant in each
layer for the transfer solution. Let $u,p,q,v$ be the successive
layer values, and let $I,J$ be the rightward currents at the left
and right terminals, respectively. Direct elimination gives
\[
\begin{aligned}
 p&=u-I/a, & q&=((b+1-x)p-u)/b,\\
 v&=(a+1-x)q-ap, & J&=bq-(b-x)v.
\end{aligned}
\]
The resulting scaled transfer $P_\sigma$ has determinant one
and trace $2-x(w-x)^2/c$, for either orientation $\sigma$.
Set
\[
 J_\sigma=\frac{P_\sigma-\cos\theta\mathsf I}{\sin\theta},
 \qquad S_\sigma=(\mathbf e_1,J_\sigma\mathbf e_1).
\]
Then $J_\sigma^2=-\mathsf I$, and $S_\sigma^{-1}P_\sigma S_\sigma=R_\theta$.
Let $E_{12}$ be the matrix whose only nonzero entry is a one
in row one and column two. These bases extend analytically
through $k=0$, with
\[
 S_+=\mathsf I+\frac{d-3}{2c}k E_{12}+O_d(k^2),\qquad
 S_-=\mathsf I+\frac{5-d}{2c}k E_{12}+O_d(k^2).
\]
More precisely their diagonal corrections are even in $k$,
and their off-diagonal corrections are odd; the next
 off-diagonal term has order $k^3$.

First suppose that the terminals $s,t$ of $Q$ are distinct, of
internal degrees $a,b\in\{2,d-2\}$.  Put
$D=L_Q[V(Q)\setminus\{s\}]$ and
\[
 A=\one^{\mathsf T}(D-x\mathsf I)^{-1}\one,\quad
 B=\one^{\mathsf T}(D-x\mathsf I)^{-1}\mathbf e_t,\quad
 r=\mathbf e_t^{\mathsf T}(D-x\mathsf I)^{-1}\mathbf e_t.
\]
With the left terminal assigned mass zero, the exact equations are
\[
 J=\frac{I+x(h+xA)f(s)}{1+xB},\qquad
 f(t)=(1+xB)f(s)-rJ.
\]
The entries of $D^{-1}$ are nonnegative and at most $h$.
Its Neumann series therefore gives $A=O(h^3)$, $B=O(h^2)$,
and $r=R_Q(s,t)+O(xh^3)$, also for complex $x$ in the indicated
uniform range.  Thus its scaled transfer matrix has leading
matrix $\mathsf I+k\bigl(\begin{smallmatrix}0&-R_Q(s,t)\\h/c&0\end{smallmatrix}\bigr)$,
with the claimed parity and error bounds.
If $\sigma,\tau$ are the canonical orientations before and
after $Q$, conjugate this transfer as $S_\tau^{-1}T S_\sigma$.
The difference of the two first-order shears is
$(d-a-b)/c$.  Hence
\[
 \mathcal R_Q=R_Q(s,t)-\frac{d-a-b}{c},\qquad
 \Delta_Q=h+d-a-b-cR_Q(s,t).
\]
Theorem~\ref{thm:even-adjusted-resistance} proves the required
uniform positive deficit.

If the two terminals coincide, two complementary exterior degrees
leave a singleton, which was omitted.  The only other possible
case has both exterior degrees two.  The remaining graph is
rooted at a vertex of degree $d-4$, all its other degrees being
$d$.  Its transfer is $f\mapsto f$ and
$I\mapsto I+x(h+xA)f$, where $A$ is the grounded resolvent sum.
This is a transition from $-$ to $+$, so
$\mathcal R_Q=(d-4)/c$ and $\Delta_Q=h-d+4$.
Here $h\geqslant d$, and consequently
$\Delta_Q\geqslant4h/d$.
Finally, a combined intervening subgraph is itself a connected piece of one
of these types, with no canonical block on its terminal-to-terminal path.  The same
argument applies directly to it.
\end{proof}

For a graph $G$ formed from $m$ canonical blocks, intervening two-terminal
graphs, and two end graphs, write $H=|V(G)|-mw$, counting shared vertices
once. In the bases $S_+(k),S_-(k)$, write its scalar characteristic
function as $F(M)$, where $M=m\theta$.

\begin{lemma}\label{lem:even-uniform-cluster-root}
Let $G$ be formed from $m$ canonical blocks in arbitrary orientations,
intervening two-terminal graphs as in
Lemma~\ref{lem:even-gauged-cluster}, and two connected end graphs.
Suppose that $H=o(m)$. Then $F$ can be normalised so that
\[
 F(M)=\sin M+O_d((H/m)M),\qquad
 F'(M)=\cos M+O_d(H/m),
\]
uniformly on bounded real intervals. Its first positive root is
$\pi+O_d(H/m)$ and is simple. The same estimates hold after moving
an intervening subgraph past one neighbouring canonical block.
\end{lemma}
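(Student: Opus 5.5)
The plan is to follow the uniform root argument already given inside the proof of Lemma~\ref{lem:many-module-evacuation}, now in the even-degree bases $S_\pm(k)$ of Lemma~\ref{lem:even-gauged-cluster}.

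\textbf{Step 1: the characteristic function as a product.} Write the characteristic expression as
\[
 F=r_0\,\Pi\,\ell_0,
\]
where $\Pi$ is the ordered product of the rotations $R_\theta$ (one for each canonical block, in either orientation) and the intervening transfers $T_Q$. Here $\ell_0$ is the left boundary column and $r_0$ the right boundary row. Both come from the end graphs, whose responses are expanded as in Lemma~\ref{lem:uniform-defect-transfer}: an end graph of order $h$ has response $-xh+O(x^2h^3)$. I normalise $\ell_0$ to have first coordinate one. Then, in the scaled coordinates, its second coordinate is $O_d(kh)$, and similarly for $r_0$ up to an overall normalisation.

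\textbf{Step 2: the unperturbed function and the error.} Since $k\asymp_d\theta=M/m$ on a bounded $M$-interval, each intervening piece contributes a perturbation $T_Q-\mathsf I$ of norm $O_d(kh_Q)$. The end vectors deviate from $\mathbf e_1$ and $(0,1)$ by $O_d(kh)$. With all perturbations removed, the product collapses to $R_{m\theta}$, and with the suitable normalisation of $r_0$ the expression equals $\sin M$. A product expansion, with every partial product bounded by $\exp(O_d(kH))=O_d(1)$, then gives
\[
 F(M)-\sin M=O_d(kH)=O_d(HM/m).
\]
At $M=0$, i.e.\ $k=0$, every transfer is the identity and $F$ vanishes exactly, which is consistent with the factor $M$ in this bound.

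\textbf{Step 3: the derivative bound.} I extend the estimate to complex $M$ in a fixed disc. The grounded resolvents $(D-x\mathsf I)^{-1}$ converge by their Neumann series uniformly there, because $|x|h^2\leqslant|x|H^2=O(H^2/m^2)=o(1)$. The bases $S_\pm(k)$ and the phase relation \eqref{eq:even-exact-phase} are analytic in $k$, hence in $M$; here one should invert the phase relation analytically so that $k$ is an analytic function of $\theta$. The same product estimate then holds on the complex disc. Cauchy's estimate on a slightly smaller disc gives
\[
 F'(M)=\cos M+O_d(H/m).
\]

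\textbf{Step 4: the first root.} From Step 2, $F$ is bounded away from zero on $[\varepsilon,\pi-\varepsilon]$. Near $0$, $F(M)=M\bigl(1+O_d(H/m)\bigr)$, because $F(0)=0$ and $F'=1+o(1)$ there. Near $\pi$, $F'\leqslant-1/2$, so $F$ has exactly one root there, at distance $O_d(H/m)$ from $\pi$, and that root is simple. Since $xH^2=o(1)$, the eliminations are valid throughout, so no eigenvalues are lost.

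\textbf{Moved configuration.} Moving an intervening subgraph past one canonical block changes only the order of the factors and possibly the orientation bases. In the bases $S_\pm$ every block is still $R_\theta$, and Lemma~\ref{lem:even-gauged-cluster} gives the same bounds for the moved piece. The estimates above therefore hold verbatim.

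The main obstacle I expect is Step 1 at orientation changes and at singleton shared vertices with degrees $2$ and $d-2$. Lemma~\ref{lem:even-gauged-cluster} excludes these singletons, so they must be shown to give exactly the identity transfer between $S_\sigma$ and $S_\tau$, or to be absorbed into a neighbouring piece. The end-vector normalisation in the mixed bases also needs to be checked.
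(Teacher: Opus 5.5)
Your proposal is correct and follows essentially the same route as the paper. Both arguments take the boundary vector and covector to be $\mathbf e_1$ and $\mathbf e_2^{\mathsf T}$ up to $O_d(kH)$ and the intervening transfers to be $\mathsf I+O_d(kh)$; both then telescope the product on a complex $M$-disc, use Cauchy's estimate for $F'$, and note that moving one block changes neither $m$ nor $H$. The obstacle you flag resolves at once: a shared singleton receiving degrees $2$ and $d-2$ forces the two neighbouring blocks to have equal orientations, so its transfer is $S_\sigma^{-1}S_\sigma=\mathsf I$; and $S_\sigma^{-1}\mathbf e_1=\mathbf e_1$ because $\mathbf e_1$ is the first column of $S_\sigma$, which settles the end-vector normalisation.
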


\begin{proof}
If an end graph $C$ of order $q$ counts its root with weight
$\alpha\in\{0,1\}$, its scalar response is
\[
 r_C^\alpha(x)=-x(q-1+\alpha)
 -x^2\one^{\mathsf T}(D_C-x\mathsf I)^{-1}\one.
\]
Take weight one for the left end graph and zero for the right end graph.
Their boundary vector and covector are respectively
$\mathbf e_1+O_d(kH)$ and $\mathbf e_2^{\mathsf T}+O_d(kH)$
in the corresponding bases.  For an intervening subgraph with $h+1$ vertices, its transfer
matrix differs from the identity by $O_d(|k|h)$. The sum of
these parameters $h$ and the two end-graph orders is $O(H)$, so telescoping the transfer product gives the
first displayed estimate.  It holds on every fixed complex
$M$-disc: $k=O_d(M/m)$, all grounded resolvents have
$|x|q^2=o(1)$, and the product norms are bounded by
$\exp(O_d(|M|H/m))$.  The error vanishes at $M=0$.
Cauchy's estimate on a slightly larger disc gives the derivative
estimate.  In particular $F(M)/M=1+o(1)$ near zero; on compact
subintervals of $(0,\pi)$ it is positive; and near $\pi$ it is
strictly decreasing and has one zero.
Every eigenvalue in this range is detected by this scalar
transfer: the grounded eliminations are nonsingular, and zero
entry value and current would propagate to zero throughout the
graph.  Thus this is the first positive eigenvalue.  Moving one canonical block changes neither $m$ nor $H$, so all estimates remain uniform.
\end{proof}

\begin{lemma}
\label{lem:even-local-cluster-shift}
Let $G$ consist of a chain of $m$ canonical blocks $M_d$, two connected end
graphs, and intervening two-terminal graphs whose terminal-to-terminal
block-cut paths contain no canonical block. Suppose that $H=o(m)$. If $G$ minimises algebraic connectivity among connected
simple $d$-regular graphs of its order, then every nontrivial
intervening subgraph lies within $C_dH$ canonical blocks of one of
the two ends, for every choice of the canonical orientations.
\end{lemma}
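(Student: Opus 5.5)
We follow the proof of Lemma~\ref{lem:many-module-evacuation}, now using the gauged transfers of Lemma~\ref{lem:even-gauged-cluster} and the uniform root control of Lemma~\ref{lem:even-uniform-cluster-root}. The argument is by contradiction. Suppose a nontrivial intervening subgraph $Q$, of order $h+1$, has at least $C_dH$ canonical blocks on each side. By reversal we may assume it is preceded by $j\leqslant m/2$ canonical blocks. We compare $G$ with the graph $G'$ in which $Q$ is moved past these $j$ blocks to sit next to the left end graph. This move preserves every degree, and hence $d$-regularity and connectivity. It also preserves simplicity, since $Q$ is reattached at terminals of the same degree pattern. If a block's orientation is incompatible with the degrees at the new position, we first reverse that block. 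This is allowed because the statement holds for every orientation choice, and both orientations are rotations $R_\theta$ in the bases $S_\pm$.

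In the bases $S_\pm(k)$, let $B$ be the product of the $j$ preceding canonical transfers together with any small intervening factors among them. By Lemma~\ref{lem:even-gauged-cluster}, $B=R_\phi+U_{\rm diag}+U_{\rm off}$ with $\phi=j\theta$. The size bounds are those of Lemma~\ref{lem:many-module-evacuation}: $\|U_{\rm diag}\|=O_d(kH\sin\phi+k^2H^2)$ and $\|U_{\rm off}\|=O_d(kH)$. The diagonal bound uses that first-order terms $k R_{\phi-t}A_iR_t$ have diagonal entries of size $O(kh_i\sin\phi)$. Let $\ell_0$ be the left boundary column and $r_0$ the right boundary row. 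The characteristic functions satisfy
\[
F_{\rm old}-F_{\rm end}=r_0[T_Q,B]\ell_0 .
\]
The leading commutator is $[kA_Q,R_\phi]$ with $A_Q=\bigl(\begin{smallmatrix}0&-\mathcal R_Q\\h/c&0\end{smallmatrix}\bigr)$, and equals
\[
\frac{k\Delta_Q}{c}\sin\phi\,\operatorname{diag}(1,-1).
\]
The remaining terms are $O_d(k^2hH)$ on the diagonal and $O_d(k^2hH\sin\phi+k^3hH^2)$ off the diagonal.

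We evaluate at the first eigenvalue of $G'$, where $F_{\rm end}=0$ and $m\theta=\pi+O_d(H/m)$ by Lemma~\ref{lem:even-uniform-cluster-root}, so $k\asymp_d m^{-1}$. Write $r_0=t(-v_2,v_1)$ with $v=BT_Q\ell_0$ and $t=-1+O_d(kH)$. Then
\[
F_{\rm old}-F_{\rm end}=\frac{k\Delta_Q}{c}\sin^2\phi+O_d\bigl(k^2hH\sin\phi+k^3hH^2\bigr).
\]
Since $\Delta_Q\geqslant\kappa_dh$ by Lemma~\ref{lem:even-gauged-cluster}, which rests on Theorem~\ref{thm:even-adjusted-resistance}, this difference is positive once $\sin\phi\geqslant C'_dkH$. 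That follows from $C_dH\leqslant j\leqslant m/2$ for $C_d$ large, because then $\sin\phi\gtrsim j/m$. Lemma~\ref{lem:even-uniform-cluster-root} also gives that $F_{\rm old}$ has a simple first root near $\pi$ and decreases across it. So $F_{\rm old}>0$ at $\mu(G')$ means $\mu(G)>\mu(G')$, contradicting minimality.

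The main obstacle is that both the old and the moved positions need the uniform gauge. This includes the case where moving $Q$ creates, or removes, the excluded singleton shared by blocks of degrees $2$ and $d-2$. The move must keep every intervening piece within the hypotheses of Lemma~\ref{lem:even-gauged-cluster}, merging it with adjacent pieces if necessary, as that lemma's last sentence allows. Relatedly, the cancellation of the $O(kH)$ diagonal errors through $r_0v=0$ must be checked in the $S_\pm$ bases, whose off-diagonal corrections are odd in $k$. I would verify that the conjugations preserve the parity structure, as in the proof of Lemma~\ref{lem:even-gauged-cluster}.
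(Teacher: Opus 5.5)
\textbf{Gap: the long-range move is not available in even regular degree.} In Lemma~\ref{lem:many-module-evacuation} every junction is a bridge between deficient terminals, so any rearrangement preserves degrees. Here consecutive pieces share a vertex whose two degree contributions must sum to $d$. Since the terminals of $M_d$ have degrees $2$ and $d-2$, a run of consecutive canonical blocks has a single orientation, and orientation changes only across intervening subgraphs.

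You let $B$ contain the other intervening factors among the $j$ preceding blocks, move $Q$ across all of them to the end graph, and assert that this preserves every degree. It need not.
\begin{itemize}
\item In $E_LP_+^{j_1}Q_1P_-^{j_2}QP_+\cdots$, the piece $Q_1$ has terminal degrees $(2,2)$, $Q$ has terminal degrees $(d-2,d-2)$, and the root of $E_L$ has degree $d-2$. Then $Q$ cannot be attached to $E_L$ in either orientation, whatever blocks are reversed, so the competitor $G'$ does not exist.
\item When the degrees can be repaired, as in $E_LP_+^{a}Q'P_+^{b}QP_-\cdots$, the repair reverses the whole segment $P_+^aQ'P_+^b$. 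This also carries $Q'$ to its mirror position inside the segment. The reversed product $B'$ then differs from $B$ already at first order in $k$. Consequently $F_{\rm old}-F_{\rm end}=r_0(T_QB-B'T_Q)\ell_0$ is no longer the commutator $r_0[T_Q,B]\ell_0$. It contains a first-order term coming from $Q'$, of size up to order $kh'$, which is not controlled by $h$ and can swamp $\frac{k\Delta_Q}{c}\sin^2\phi$ when $h\ll H$.
\end{itemize}
The phrase ``for every choice of the canonical orientations'' does not license reorienting blocks; a reversal is admissible only if the competitor stays $d$-regular. Your argument is therefore valid only when no other intervening subgraph lies between $Q$ and the chosen end, whereas the lemma must handle all $O(H)$ of them. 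The gauge-parity issue you flagged as the main obstacle is not the real one.

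\textbf{How the paper avoids this.} First merge pieces that have no canonical block between them. Then move $Q$ by a single canonical block only, $P_\sigma QP_\tau\mapsto QP_\tau^2$ or $P_\sigma^2Q$, reversing the moved copy if $\sigma\neq\tau$. This keeps the graph $d$-regular, and $Q$ never crosses another intervening subgraph.

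Evaluate at the \emph{old} eigenvalue, where $F_0=rTl=0$, with $l\approx(\cos\phi,\sin\phi)^{\mathsf T}$ and $r\approx(\sin\phi,-\cos\phi)$. Minimality and Lemma~\ref{lem:even-uniform-cluster-root} give $F_\pm\geqslant0$ for both competitors $rR_\theta^{\pm1}TR_\theta^{\mp1}l$.

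Exact identities express $R_\theta TR_\theta^{-1}\pm R_\theta^{-1}TR_\theta$ through the symmetric traceless part of $T$. That part equals $\frac{k\Delta_Q}{2c}\bigl(\begin{smallmatrix}0&1\\1&0\end{smallmatrix}\bigr)+O_d(k^2h^2)$. This yields
\begin{itemize}
\item $F_++F_-\approx\frac{2k\Delta_Q}{c}\sin^2\theta\cos2\phi$,
\item $F_+-F_-\approx-\frac{k\Delta_Q}{c}\sin2\theta\sin2\phi$.
\end{itemize}
Nonnegativity of both $F_+$ and $F_-$ forces $\cos2\phi\geqslant-O_d(kH)$ and $|\sin2\phi|=O_d(kH)$. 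Hence $\operatorname{dist}(\phi,\{0,\pi\})=O_d(kH)$ and $\min\{j,m-j\}=O_d(H)$.

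This uses only first-order optimality against the two one-block shifts. No direction has to be chosen, and no other exceptional piece is ever disturbed.
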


\begin{proof}
Merge any pieces separated by zero canonical blocks.
Each intervening subgraph then has a canonical block immediately
on each side.  If its incoming and outgoing orientations are
$\sigma,\tau$, the replacements
\[
 P_\sigma Q P_\tau\longmapsto P_\sigma^2Q,
 \qquad
 P_\sigma Q P_\tau\longmapsto QP_\tau^2
\]
preserve the graph class. They move one canonical block across $Q$, reversing
that copy if $\sigma\ne\tau$.  The degree contributions at every shared vertex again sum to
$d$; the construction preserves order, simplicity,
connectedness, and every vertex degree.  It does not move one
exceptional intervening subgraph across another.

Evaluate at the old gap $x$, and abbreviate $R=R_\theta$.
Let $l$ be the left boundary vector propagated to the left
terminal of $Q$, and let $r$ be the right boundary covector
propagated to its right terminal. Writing $T$ for the transfer
matrix of $Q$ in the chosen bases, the original characteristic
equation is $F_0=rTl=0$. The two competitors have values
\[
 F_+=rRTR^{-1}l,\qquad F_-=rR^{-1}TRl.
\]
Their first positive roots are at least the old one, by
minimality.  Lemma~\ref{lem:even-uniform-cluster-root} shows that
all three roots lie in the same interval where the normalised
characteristic functions decrease.  Hence $F_+,F_-\geqslant0$.

Put $J=\bigl(\begin{smallmatrix}0&-1\\1&0\end{smallmatrix}\bigr)$ and
$Z=(T+T^{\mathsf T}-\operatorname{tr}(T)\mathsf I)/2$.
The following identities hold for every real two-by-two
matrix $T$:
\[
 RTR^{-1}+R^{-1}TR-2T=-4\sin^2\theta\,Z,
 \qquad
 RTR^{-1}-R^{-1}TR=2\sin(2\theta)JZ.
\]
Let $j$ be the number of canonical blocks before $Q$, put
$\phi=j\theta$, and write $h=|V(Q)|-1$.  The uniform product
bounds and $m\theta=\pi+O_d(kH)$ give
\[
 l=\binom{\cos\phi}{\sin\phi}+O_d(kH),\qquad
 r=(\sin\phi,-\cos\phi)+O_d(kH).
\]
Writing $X=\bigl(\begin{smallmatrix}0&1\\1&0\end{smallmatrix}\bigr)$,
Lemma~\ref{lem:even-gauged-cluster} yields
\[
 Z=\frac{k\Delta_Q}{2c}X+O_d(k^2h^2).
\]
Since $h\leqslant H$ and $\Delta_Q\geqslant\kappa_dh$, it follows that
\begin{align*}
 F_++F_-&=\frac{2k\Delta_Q}{c}\sin^2\theta\cos(2\phi)
              +O_d(k^2hH\sin^2\theta),\\
 F_+-F_-&=-\frac{k\Delta_Q}{c}\sin(2\theta)\sin(2\phi)
              +O_d(k^2hH|\sin(2\theta)|).
\end{align*}
Nonnegativity of both competitors implies that the first line
is nonnegative and the absolute value of the second does not
exceed the first.  As $\theta\asymp_d k$, division gives
\[
 \cos(2\phi)\geqslant-O_d(kH),\qquad
 |\sin(2\phi)|=O_d(kH+\theta)=O_d(kH).
\]
Here $H\geqslant1$.  The first inequality excludes the possible
neighbourhood of $\pi/2$ allowed by the second.  Since
$0\leqslant\phi\leqslant m\theta=\pi+O_d(kH)$, we obtain
$\operatorname{dist}(\phi,\{0,\pi\})=O_d(kH)$.
Dividing by $\theta\asymp_d k$ proves
$\min\{j,m-j\}=O_d(H)$, as required.
\end{proof}

\begin{proof}[Proof of Theorem~\ref{thm:bounded-end-canonical-middle}
in the even-regular case]
For \(d=4\), Theorem~\ref{thm:quartic-unique} and the
definition of \(\mathcal G_n\) give the assertion; its block
\(M_0\) is \(M_4\). Suppose henceforth that \(d\geqslant6\).
Put $w=d+1$ and $c=2(d-2)$.
The sharp fixed-degree theorem and quantitative stability give
$n/w-o(n)$ canonical $M_d$ blocks on a single block-cut path.
The stability proof also shows that the terminals of these blocks
separate them in their order along the chain, so the blocks lie on
a single block-cut path. Choose a block-cut path
containing the largest possible number $m$ of canonical blocks.
Retain all those blocks, and include every branch in the
corresponding end graph or intervening two-terminal graph.
Writing $a,b$ for the two end graph orders and $q_i$ for the orders of
the nontrivial two-terminal graphs gives the exact order identity
\begin{equation}\label{eq:even-middle-net-mass}
 n=mw+H,\qquad H=a+b-1+\sum_i(q_i-1)=o(n).
\end{equation}
Since the two terminals of a canonical block have degrees $2,d-2$
inside that block, each intervening graph with distinct terminals has
terminal degrees in $\{2,d-2\}$ and all other degrees $d$.
Moreover, its terminal-to-terminal block-cut path contains no canonical
block, since all canonical blocks on the chosen path were retained.

There is only one nontrivial common-terminal possibility: the two
neighbouring canonical blocks each contribute degree two at
that vertex, leaving root degree $d-4$ in the two-terminal graph.  A common
terminal with complementary exterior contributions has internal
degree zero and is only a singleton; we omit it.  Two exterior
contributions of $d-2$ cannot meet at one vertex for $d\geqslant6$.
For a nontrivial common-terminal two-terminal graph $Q$ of order $q$, all other
vertices have degree $d$, so $q\geqslant d+1$ and
\[
 R_Q(s,s)=0,\qquad
 \Delta_Q=(q-1)-(d-4)=q-d+3\geqslant\frac4d(q-1).
\]
For distinct terminals, the uniform positive adjusted defect is
Theorem~\ref{thm:even-adjusted-resistance}.

Apply Lemma~\ref{lem:even-local-cluster-shift} to this decomposition.
Each nontrivial interior two-terminal graph is bordered by a canonical block
on either side.  If several exceptional pieces have no canonical
block between them, first regard their connected union as one
two-terminal graph; the same degree conditions and resistance theorem apply.
The lemma compares only the two single-block shifts of each two-terminal graph.
When the two neighbouring canonical blocks have different
orientations, the moved block is reversed. Thus the degree at each
shared vertex is preserved, and the order of the intervening
subgraphs is unchanged.
Exact minimality forces each two-terminal graph to lie within $C_dH$
canonical blocks of one of the two global ends.

Since $H=o(m)$, these two regions are disjoint for large $n$.
Absorb them into end graphs $Q_-,Q_+$, of orders $O_d(H)$, leaving
a canonical middle.  With $j_-,j_+$ the numbers of absorbed
canonical blocks, global maximality of the original path gives
$m_s(Q_\pm)\leqslant j_\pm$: a rooted path with a larger count
could replace that end of the original path and increase its
total count.  Consequently
\begin{align*}
 b_s(Q_-)+b_s(Q_+)
 &\geqslant (|Q_-|-1-wj_-)+(|Q_+|-1-wj_+)\\
 &=H-1.
\end{align*}
Thus at least one end graph $Q$ has $b_s(Q)\geqslant(H-1)/2$,
root degree $2$ or $d-2$, and all other degrees $d$.

We construct comparison end graphs of the same order and root degree.
For each $r\in\{2,d-2\}$ and all sufficiently large $q$, an
end graph $C_{q,r}$ exists with
\begin{equation}\label{eq:even-comparison-cap-torsion}
 \mathcal T(C_{q,r})=\frac{q^3}{3c}+O_d(q^2).
\end{equation}
To construct it, take a long consistently oriented canonical
chain with initial terminal degree two.  If $r>2$, identify that terminal with the root of a bounded
graph whose root has degree $r-2$; for $r=2$ use the terminal
alone. At the far terminal attach a bounded rooted graph whose
root has degree two.  Such bounded rooted graphs are obtained by deleting
disjoint offset-two edges from a sufficiently large $d$-regular
circulant of offsets $\pm1,\ldots,\pm d/2$ and joining their
endpoints to a new root.  The original cycle remains, and all
old degrees are restored to $d$.  Varying the bounded far-end graph
order over $w$ consecutive values covers every residue modulo
$w$.  Let $t$ be the coordinate along the chain, starting at zero
at its first terminal and increasing by $w$ across each block.
The trial function $(qt-t^2/2)/c$, harmonically interpolated
inside each canonical block, proves
the lower estimate in \eqref{eq:even-comparison-cap-torsion};
its sum and energy have errors $O_d(q^2)$.
Lemma~\ref{lem:even-rooted-torsion} proves the matching upper bound.

Suppose now that $H$ is unbounded along a sequence of minimisers.
For the end graph just selected, of order $q=O_d(H)$, the rooted torsion
deficit yields
\[
 \mathcal T(C_{q,r})-\mathcal T(Q)
 \geqslant\zeta_d((H-1)/2)^3-O_d(H^2)>0.
\]
Count a root of an end graph with weight
$\alpha\in\{0,1\}$ according to whether the neighbouring
piece has already counted it.  Its scalar Schur response is
\[
 r_Q^{\alpha}(x)
 =-x(q-1+\alpha)-x^2\mathcal T(Q)+O_d(x^3q^5),
 \qquad xq^2=o(1).
\]
This follows directly by eliminating
$D_Q-x\mathsf I$: the exact second term is
$-x^2\one^{\mathsf T}(D_Q-x\mathsf I)^{-1}\one$.
The left end graph counts its root once and the right end graph excludes its
root, which is already counted by the final canonical block.
Using the same convention in an end graph replacement preserves the
ordinary unweighted vertex mass of the graph.

At $x=\mu(G)=O_d(n^{-2})$, we have $xH^2=o(1)$, so the
torsion difference times $x^2$ dominates the response remainder.
Replacing $Q$ by $C_{q,r}$ strictly decreases its scalar Schur
response at $x$.  The Fiedler value at the shared attachment is
nonzero: if it vanished, positivity of $D_Q-x\mathsf I$ would force
the end graph interior to vanish.  The root equation would give zero
current into the canonical middle; its exact transfers and
elimination of the other end graph would then force the entire
eigenfunction to vanish.  Thus the strict Schur-complement
comparison gives a connected simple regular graph of the same
order and strictly smaller algebraic connectivity, a contradiction.

It follows that $H$ is bounded in terms of $d$.
The localisation by single-block moves already proved then bounds both end
regions by a constant depending only on $d$, as required.
\end{proof}

\subsection{The block structure of the ends}

It remains to rule out branching at both kinds of node in the
block-cut tree. We use edge switches in the regular classes and
also a grafting in the minimum-degree class.

\begin{lemma}\label{lem:finite-end-fiedler}
Let \(d\geqslant3\) and \(C\) be fixed. Let \(G_m\) consist of a
chain of \(m\) copies of \(L_d\), joined by bridges to connected
end graphs containing at most \(C\) vertices in total. Suppose that
\(G_m\) has minimum degree at least \(d\). Then, for all sufficiently
large \(m\), its algebraic connectivity is simple, and every Fiedler
vector is nonzero at the two exterior attachment vertices of the end
graphs and is either strictly positive or strictly negative on each
end graph.

Moreover, if \(f\) is a Fiedler vector positive on one end graph,
\(u\) is a cut vertex in that end graph, and \(B\) is a component
of \(G_m-u\) not containing the canonical middle, then
\[
 f(v)>f(u)>0\qquad(v\in B).
\]
\end{lemma}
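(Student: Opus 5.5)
Our approach is to work in the small-eigenvalue regime \(x=\mu(G_m)=O_d(m^{-2})\). In this regime the end graphs are bounded and \(x\) is tiny, so every grounded matrix \(D_Q-x\mathsf I\) attached to an end graph is positive definite and has an entrywise nonnegative inverse, by its Neumann series. Following Lemma~\ref{lem:uniform-defect-transfer} and the phase analysis in the proof of Lemma~\ref{lem:many-module-evacuation}, we reduce the eigenvalue problem to the scaled transfer product through the canonical chain, closed off by the two scalar end responses \(r_A,r_B\). We take \(H=O(C)\), with all two-terminal perturbations absorbed into the end graphs. The argument of Lemma~\ref{lem:many-module-evacuation} then applies with \(H\) bounded. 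The characteristic function is \(F(M)=\sin M+O(M/m)\), with \(F'(M)=\cos M+O(1/m)\), where \(M=m\theta\). The first positive root is therefore simple and equals \(\pi+O(1/m)\).

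\textbf{Simplicity.} Any eigenvector with eigenvalue \(x\) in this range is determined by its value and current at the left attachment. Once the value is fixed, the left boundary condition fixes the current, because the eliminations are nonsingular. The eigenspace therefore has dimension at most one. Simplicity also follows from the simple root of \(F\), since every eigenvalue in the range is detected by the scalar transfer.

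\textbf{Nonvanishing at the attachments.} We argue as in the proof of Theorem~\ref{thm:bounded-end-canonical-middle}. If \(f(u)=0\) at an exterior attachment vertex \(u\), then on the end graph \(Q\) the restricted equation reads \((L_Q+\mathbf e_s\mathbf e_s^{\mathsf T}-x\mathsf I)f_Q=0\). This matrix is positive definite, so \(f_Q=0\). The bridge then carries zero current, and propagating zero value and zero current through the transfers forces \(f\equiv0\).

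\textbf{Sign and monotonicity on the ends.} The transfer solution at the terminals is \(z_k\approx C\cos(k\theta+\phi)\) with small boundary phase, so the two attachments carry values of opposite signs. Normalise \(f(u)>0\) at one end. On that end graph \(Q\) with root \(s\), we solve \((D_Q-x\mathsf I)g=\)(boundary term from the bridge) and add a constant as in Lemma~\ref{lem:uniform-defect-transfer}. This gives \(f|_Q=f(s)\one+x f(s)(D_Q-x\mathsf I)^{-1}\one+\cdots\), where every correction has nonnegative coefficients and \(f(s)>0\).

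Concretely, let \(z=f(s)\). From the root equation \((1+\deg)\)-balance, \(z\) has the same sign as \(f(u)\) up to a factor \(1+r_Q(x)>0\). Hence \(z>0\) and \(f=z\one+xz(D_Q-x\mathsf I)^{-1}\one\) on \(Q\), which is strictly positive.

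For the cut vertex claim, let \(u\) be a cut vertex inside \(Q\) and \(B\) a component of \(G_m-u\) away from the middle. Then \(B\cup\{u\}\) is a graph grounded at \(u\), and \(f|_B=f(u)\one+x f(u)(D-x\mathsf I)^{-1}\one\), where \(D\) is the Laplacian of \(B\cup\{u\}\) grounded at \(u\). Every entry of \((D-x\mathsf I)^{-1}\one\) is strictly positive because \(B\) is connected. Since \(f(u)>0\) by the previous step, this gives \(f(v)>f(u)\) for all \(v\in B\).

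The main obstacle is the sign claim, that is, showing that \(f\) has opposite signs on the two ends and does not vanish at the roots. The plan is to read this off from \(z_k\approx\cos(k\theta+\phi_A)\) with total phase \(\pi+O(1/m)\). Bounded ends contribute only \(O(1/m)\) to the phase, so the endpoint cosines are \(\pm1+O(1/m)\) and are bounded away from zero.
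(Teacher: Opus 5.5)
Your proposal is correct and follows the paper's proof. Positive-definite elimination on an end graph, together with propagation of zero value and zero current through the chain, gives nonvanishing at the attachments and, through injectivity of evaluation at an attachment, simplicity; the strictly positive inverse of \(L_Q+\mathbf e_s\mathbf e_s^{\mathsf T}-x\mathsf I\) (equivalently your factor \(1+r_Q(x)>0\)) and the grounded formula \(f_B=f(u)\bigl(\one+x(D_B-x\mathsf I)^{-1}\one\bigr)\) then give the sign and monotonicity claims. The characteristic-function and phase analysis is unnecessary, and so is the opposite-sign assertion you call the main obstacle, because the lemma only asks for a strict sign on each end graph separately.
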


\begin{proof}
Write \(x=\mu(G_m)=O_{d,C}(m^{-2})\).  Every connected
subgraph of an end graph, grounded at its exterior boundary, has
smallest grounded eigenvalue bounded below by a positive
constant depending only on \(C\). This follows by grounding one additional vertex, bounding effective
resistances by distance, and obtaining an inverse with
operator norm at most \((C+1)^2\).

If a Fiedler vector vanished at an exterior attachment,
positive definite elimination would force it to vanish on
that end graph and give zero current through the attachment bridge.
Vanishing value and current then imply vanishing throughout every
canonical graph.  Indeed,
its interior coordinates are equal when \(x<1\), by subtracting
their eigenvalue equations.  The equation at its zero incoming
terminal forces this common value to vanish; the interior equation
then forces the other terminal to vanish, and the outgoing current
is zero.  Elimination of the other end graph consequently forces
the whole vector to vanish.  Evaluation at one exterior attachment
is consequently an injective linear map from the Fiedler
eigenspace to \(\mathbb R\), proving simplicity.

For an entire end graph \(Q\) with terminal \(s\), the matrix
\(L_Q+\mathbf e_s\mathbf e_s^{\mathsf T}-x\mathsf I\)
is irreducible and has a strictly positive inverse.  Its
response to the exterior attachment therefore gives values on the end graph with the same strict sign.  For the last assertion, let
\(D_B=L_{G_m}[B]\).  Its only boundary vertices are attached
to \(u\), so \(D_B\one\) is their boundary-degree vector.
The eigenvalue equation gives
\[
 f_B=f(u)\bigl(\one+x(D_B-x\mathsf I)^{-1}\one\bigr).
\]
The inverse is nonnegative with strictly positive row sums.
Since \(x>0\), this proves the strict inequalities.
\end{proof}

\begin{lemma}\label{lem:finite-end-slack-grafting}
Fix \(d\geqslant3\) and \(C\). Let \(G\) be a bridge chain of
\(m\) copies of \(L_d\) with a connected end graph at each end,
the two end graphs having total order at most \(C\). Suppose that
\(G\) minimises algebraic connectivity among connected graphs of
its order and minimum degree at least \(d\). Let \(u\) be a cut vertex
in an end graph with at least two components of \(G-u\) not containing
the middle. Then, for all sufficiently large \(m\), every such
component \(B\) satisfies
\[
 \deg_G(u)-|N_G(u)\cap B|\leqslant d-1.
\]
In particular, \(\deg_G(u)\leqslant2d-3\).
\end{lemma}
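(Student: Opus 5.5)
We argue by contradiction with a grafting move. Let \(f\) be the Fiedler vector of \(G\) and \(x=\mu(G)\). By Lemma~\ref{lem:finite-end-fiedler}, for large \(m\) the eigenvalue \(x\) is simple, and we may normalise \(f\) to be positive on the end graph containing \(u\). Let \(B_1,B_2,\dots\) be the components of \(G-u\) not containing the middle; there are at least two. The same lemma gives \(f(v)>f(u)>0\) on each \(B_i\).

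Suppose that some component \(B=B_1\) violates the inequality, that is, \(u\) has at least \(d\) neighbours outside \(B\). Pick another component \(B'=B_2\) and a vertex \(v\in B'\) adjacent to \(u\). Choose \(v\) to maximise \(f(v)\) among such neighbours, or more simply take any neighbour. The graft is as follows: delete the edge \(uv\) and add an edge from \(v\) to a suitable vertex \(b\in B\), so that \(B'\) now hangs off \(B\) rather than off \(u\). The choice of \(b\) is the point where care is needed.

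\begin{enumerate}[(a)]
\item After the move, \(u\) loses one edge but still has degree at least \(d\), since it had at least \(d+1\) neighbours outside \(B\) counting the edge to \(v\). Here I am reading the violation as \(\deg(u)-|N(u)\cap B|\geqslant d\). Strictly, we need the count of neighbours outside \(B\) other than \(v\) to be at least \(d\) minus whatever \(u\) needs. So I would arrange that \(\deg(u)\geqslant d+1\) after accounting for the neighbours in \(B\): the failure gives \(\deg(u)-|N(u)\cap B|\geqslant d\), and \(|N(u)\cap B|\geqslant1\), so \(\deg(u)\geqslant d+1\). Hence the new degree is at least \(d\).
\item Vertex \(v\) keeps its degree.
\item The vertex \(b\) gains a degree, which is harmless, and simplicity holds because \(b\notin B'\).
\item The graph stays connected, and the order is unchanged.
\end{enumerate}

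For the energy comparison, use \(f\) itself as a test vector. It remains orthogonal to \(\one\) with the same norm, and the energy changes by
\[
 (f(v)-f(b))^2-(f(v)-f(u))^2 .
\]
It suffices to choose \(b\in B\) with \(|f(v)-f(b)|\leqslant f(v)-f(u)\), that is, \(f(b)\in[f(u),2f(v)-f(u)]\). The neighbours of \(u\) in \(B\) have values slightly above \(f(u)\). By the formula \(f_B=f(u)(\one+x(D_B-x\mathsf I)^{-1}\one)\), these values are \(f(u)(1+O(x))\), and the same holds for \(f(v)\). So the comparison reduces to comparing \(x\)-order corrections, which is delicate.

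The cleaner route is to choose the orientation. Take the two components \(B_1,B_2\) and compare \(\max f\) on the neighbours. Let \(v\) be the neighbour of \(u\), in either component, with the \emph{largest} value, and let \(b\) be a neighbour of \(u\) in the other component. Then
\[
 f(u)<f(b)\leqslant f(v),
\]
so the energy does not increase, and it strictly decreases because \(f(b)>f(u)\). The Rayleigh quotient is then strictly less than \(x\), contradicting minimality. Equality cannot rescue the argument, since the decrease is strict.

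The degree condition now needs the violation at the component we are \emph{not} grafting into. Here lies the main obstacle: the orientation is forced by the \(f\)-values, while the hypothesis concerns a specified \(B\). I would handle it by always moving an edge out of \(u\) to a component other than \(B\), so that the remaining degree of \(u\) counts its neighbours in \(B\). If instead the maximiser lies in \(B\), rewire an edge \(uv\) with \(v\in B\) to \(b\in B'\), and use the same inequality. The degree of \(u\) then drops to \(\deg(u)-1\), which is still at least \(d\) as computed above.

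In either direction, the degree of \(u\) after the move is \(\deg(u)-1\geqslant d\), since \(\deg(u)\geqslant d+1\). So both orientations are admissible, and the strict Rayleigh decrease contradicts minimality.

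For the final assertion, apply the inequality to a component \(B\) minimising \(|N(u)\cap B|\). There are at least two such components, and at least one edge goes to the middle side. Hence \(|N(u)\cap B|\leqslant(\deg(u)-1)/2\), which gives \(\deg(u)\leqslant d-1+(\deg(u)-1)/2\), that is, \(\deg(u)\leqslant2d-3\).
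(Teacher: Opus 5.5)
Your final argument is correct, but it follows a different route from the paper's.

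The paper's proof works as follows:
\begin{itemize}
\item It picks a vertex \(w\) in a second branch component.
\item It moves \emph{all} \(r=|N_G(u)\cap B|\) edges from \(u\) into \(B\) over to \(w\).
\item It tests with the shifted vector \(g=f+a\mathbf 1_B\), where \(a=f(w)-f(u)>0\). Energy is unchanged, and the centred norm grows to \(1+2a\sum_Bf+a^2|B|(1-|B|/n)>1\).
\item The hypothesis \(\deg_G(u)-r\geqslant d\) is exactly what keeps \(u\) at degree at least \(d\) after losing \(r\) edges.
\end{itemize}

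You instead replace the single edge \(uv\) by \(vb\). Here \(v\) is the highest-valued neighbour of \(u\) in the two branch components, and \(b\) is a neighbour of \(u\) in the other one. You test with \(f\) itself. Since \(f(u)<f(b)\leqslant f(v)\), the energy drops strictly, and no centring or norm computation is needed.

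Your version uses the violation only through \(\deg_G(u)\geqslant d+1\), which holds because \(|N_G(u)\cap B|\geqslant1\). It therefore proves the stronger fact that \(\deg_G(u)\leqslant d\) for every cut vertex with two branch components. Both the per-component inequality and the bound \(2d-3\) follow at once. Your worry about which component carries the violation is therefore moot: both orientations only need \(\deg_G(u)-1\geqslant d\). The paper's graft yields exactly the slack inequality it later quotes in Lemma~\ref{lem:no-branching-cutvertex}, and your stronger bound would serve equally well there.

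In a write-up, cut the first attempted choice of \(b\in B\) and the discussion of \(O(x)\) corrections. Also say explicitly why the new graph is connected. The component of \(G-u\) containing \(v\) keeps all its internal edges. If \(uv\) was its only edge to \(u\), it is reattached through \(vb\) and \(ub\).
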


\begin{proof}
Take a unit, mean-zero Fiedler vector which is positive on
the end graph.  Choose a vertex \(w\) in another component away
from the middle.  By the preceding lemma,
\(a=f(w)-f(u)>0\), and \(f\) is positive on \(B\).
If \(r=|N_G(u)\cap B|\) and \(\deg_G(u)-r\geqslant d\),
replace every edge \(uv\), \(v\in N_G(u)\cap B\), by
\(wv\).  All the new edges were absent, and the new graph
\(G'\) is connected and has minimum degree at least \(d\).

Put \(g=f+a\mathbf1_B\), and subtract its mean.  Internal
edge differences are unchanged, while on each moved edge
the new difference is
\(g(w)-g(v)=f(u)-f(v)\).  Thus the energy is unchanged.
Writing \(b=|B|\), the centred squared norm is
\[
 1+2a\sum_{v\in B}f(v)+a^2b(1-b/n)>1.
\]
The Rayleigh principle gives \(\mu(G')<\mu(G)\), a
contradiction.  This proves the first assertion.

For two distinct components \(B_1,B_2\), let
\(r_i=|N_G(u)\cap B_i|\).  The component towards the
middle contains at least one neighbour, so
\(r_1+r_2\leqslant\deg_G(u)-1\).  The first assertion gives
\(r_i\geqslant\deg_G(u)-d+1\).  Combining these inequalities
yields \(\deg_G(u)\leqslant2d-3\).
\end{proof}

\Needspace{14\baselineskip}
\begin{lemma}\label{lem:even-finite-end-fiedler}
Fix an even integer \(d\geqslant4\) and \(C\). Let \(G\) be a connected
\(d\)-regular graph consisting of a chain of \(m\) copies of \(M_d\),
identified at consecutive terminals, and two connected end graphs of
total order at most \(C\). For all sufficiently large \(m\), the algebraic
connectivity of \(G\) is simple, every Fiedler vector is nonzero at both
roots of the end graphs, and its values on each end graph are either
all positive or all negative.

Let \(f\) be a Fiedler vector positive on one end graph. If \(u\) is
a cut vertex there and \(B\) is a component of \(G-u\) away from the
middle, then
\[
 f(v)>f(u)>0\qquad(v\in B).
\]
\end{lemma}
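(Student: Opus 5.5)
I will follow the proof of Lemma~\ref{lem:finite-end-fiedler} closely, replacing the bridge-chain transfer argument by the $M_d$-chain transfer of Lemma~\ref{lem:even-gauged-cluster}. Put $x=\mu(G)$. Since $G$ is $d$-regular with canonical middle, Theorem~\ref{thm:fixed-regular-sharp} gives $x=O_{d,C}(m^{-2})$; in particular $x<u_0$ for any prescribed small $u_0>0$ once $m$ is large. As in the odd case, every connected subgraph of an end graph, grounded at its exterior boundary (the root, or the cut vertex $u$), has smallest grounded eigenvalue bounded below by a constant depending only on $C$. This holds because grounded resistances are bounded by graph distance, so the inverse has norm at most $(C+1)^2$. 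Consequently all the grounded matrices $D-x\mathsf I$ that arise are positive definite with entrywise positive inverses, by irreducibility and the Neumann series.

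\emph{Simplicity and nonvanishing at the roots.} Suppose a Fiedler vector $f$ vanishes at the root $s$ of an end graph $Q$. The eigenvalue equations on $V(Q)\setminus\{s\}$ read $(D_Q-x\mathsf I)f_Q=0$, so $f$ vanishes on $Q$, and the equation at $s$ then gives zero current from $Q$ into the first copy of $M_d$. I then propagate value and current zero through the chain, block by block. Inside a copy of $M_d$, with layers $1,a,b,1$ ($a+b=d$), the transfer solution is constant on layers. This holds because the difference of two vertices in the same layer satisfies $(\text{layer size}+1-x)\cdot(\text{difference})=0$ when $x<1$; I will check this layer-size term from the clique structure rather than assume it. The explicit elimination formulas in the proof of Lemma~\ref{lem:even-gauged-cluster} then show that entering data $(u,I)=(0,0)$ gives $p=q=v=0$ and $J=0$. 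After the last block, elimination of the other end graph, again using positive definiteness, forces $f=0$ there. Hence evaluation at either root is injective on the Fiedler eigenspace, which proves simplicity and nonvanishing at both roots.

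\emph{Sign on an end graph.} Since the end graph $Q$ is connected, the matrix $D_Q-x\mathsf I$ is an irreducible M-matrix for small $x$, so its inverse is entrywise positive. The equation $(D_Q-x\mathsf I)f_Q=f(s)\,\mathbf b$ holds, where $\mathbf b\geqslant0$ is the nonzero vector of neighbours of $s$. Therefore $f_Q$ has the strict sign of $f(s)\neq0$ on $V(Q)\setminus\{s\}$, and together with $s$ itself this gives a constant strict sign on the end graph. Here I must fix the convention that the shared root belongs to the end graph; the sign conclusion covers it in either convention.

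\emph{Monotonicity past a cut vertex.} Let $B$ be a component of $G-u$ away from the middle, and put $D_B=L_G[B]$. Then $D_B\one$ is the vector counting edges to $u$. The eigenvalue equation on $B$ gives $(D_B-x\mathsf I)f_B=f(u)D_B\one$, so
\[
 f_B=f(u)\bigl(\one+x(D_B-x\mathsf I)^{-1}\one\bigr).
\]
The inverse is entrywise positive, $x>0$, and $f(u)>0$ by the previous step, which yields $f(v)>f(u)>0$.

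I expect the main obstacle to be the propagation of zero data through the chain. That step requires checking that the layer-constancy and the explicit transfer of $M_d$ remain valid for exact (not merely transfer) eigenvectors, and that no grounded elimination is singular at $x=\mu(G)$. The smallness $x=O(m^{-2})$ together with the internal Dirichlet eigenvalues of $M_d$ being bounded away from zero should settle this; for $d=4$ these eigenvalues are $1,5,5,5$, as computed earlier in the paper.
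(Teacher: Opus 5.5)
Your proposal is correct and follows essentially the paper's route: grounded elimination at a root, then propagation of zero value and current through each copy of $M_d$ (using constancy on the interior layers and the block equations), then elimination of the far end graph, giving injectivity of evaluation at either root, and finally the formula $f_B=f(u)\bigl(\one+x(D_B-x\mathsf I)^{-1}\one\bigr)$ for the sign and monotonicity statements. Three small repairs are needed. First, the twin-subtraction factor is $d+1-x$ for both interior layers, not the layer size plus $1-x$. Second, $D_Q-x\mathsf I$ is reducible when the root $s$ is a cut vertex of the end graph $Q$, so argue on each component of $Q-s$ separately, as the paper does; each component receives a nonzero part of the neighbour vector of $s$. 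Third, the bound $\mu(G)=O_{d,C}(m^{-2})$ comes from a cosine test function along the chain, not from Theorem~\ref{thm:fixed-regular-sharp}, which for a given $G$ yields only a lower bound.
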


\begin{proof}
Write \(x=\mu(G)=O_{d,C}(m^{-2})\), where \(m\) is the
number of middle blocks.  Grounding a root in any connected
end graph leaves a positive definite matrix whose smallest eigenvalue
is bounded below uniformly over the finitely many end graphs of order
at most \(C\).

We first show that vanishing value and current at one terminal
force vanishing throughout the chain.  In a canonical
block, write \(s,t\) for its terminal values and \(\alpha,\beta\)
for the common values on its two interior layers.  The two vertices
of the first layer have equal values, and so do the \(d-2\) vertices
of the second layer: subtraction of their equations gives the
factor \(d+1-x\), which is nonzero when \(x<1\).
If the incoming terminal has value zero and the current from the
preceding piece is zero, its vertex equation gives \(\alpha=0\)
when the incoming terminal has degree two in this block.
The two interior equations are
\[
 (d-1-x)\alpha=s+(d-2)\beta,\qquad
 (3-x)\beta=2\alpha+t.
\]
They imply \(\beta=t=0\), and the outgoing current is zero.
For the reversed block the same argument starts with \(\beta=0\)
and then gives \(\alpha=s=0\).  This also works at a terminal
shared by consecutive blocks, because the preceding block contributes
zero current there.

If a Fiedler vector vanished at a root of an end graph, grounded elimination
would make it vanish on that end graph.  The root equation supplies zero
incoming current to the middle.  The preceding propagation and the
elimination of the other end graph would make the entire vector zero.
Evaluation at a root of an end graph is therefore an injective linear map from
the eigenspace to \(\mathbb R\), proving simplicity and nonvanishing.

For any component \(B\) attached only at a vertex \(u\), put
\(D_B=L_G[B]\).  Its boundary-degree vector is \(D_B\one\).
For sufficiently small \(x\),
\[
 f_B=f(u)\bigl(\one+x(D_B-x\mathsf I)^{-1}\one\bigr).
\]
The inverse is nonnegative with strictly positive row sums.
Applied to the components of an end graph minus its root, this shows
that its Fiedler values are either all positive or all negative.  Applied at any other cut vertex,
it gives the asserted strict increase away from the middle.
\end{proof}

The inequalities on the end graphs in Lemmas~\ref{lem:finite-end-fiedler}
and~\ref{lem:even-finite-end-fiedler} are the only properties of
the canonical middle used by the following switches. Thus the
regular-class arguments apply in both degree parities.

\begin{lemma}\label{lem:no-branching-cutvertex}
Let \(G\) be a minimiser in one of the classes of
Theorem~\ref{thm:bounded-end-canonical-middle}, with sufficiently
large order.  No cut vertex in either bounded end graph separates two
components away from the canonical middle.  Thus every
cut-vertex node of its block-cut tree has degree at most two.
\end{lemma}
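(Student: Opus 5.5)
We argue by contradiction, using an edge switch that preserves every degree, and apply Rayleigh's principle as in Lemma~\ref{lem:finite-end-slack-grafting}. By Theorem~\ref{thm:bounded-end-canonical-middle}, $G$ is a canonical middle with two end graphs of total order $O_d(1)$. Lemmas~\ref{lem:finite-end-fiedler} and~\ref{lem:even-finite-end-fiedler} then give a simple Fiedler eigenvalue. We take a unit mean-zero Fiedler vector $f$ positive on the end graph in question. Suppose a cut vertex $u$ there has two components $B_1,B_2$ of $G-u$ away from the middle. Those lemmas give $f(v)>f(u)>0$ on $B_1\cup B_2$.

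The switch is as follows. Choose an edge $uv_1$ with $v_1\in B_1$ and an edge $xy$ inside $\{u\}\cup B_2$ with $f(x)\leqslant f(y)$. We may take $x$ to be a neighbour of $u$ in $B_2$, or an edge of $B_2$ chosen so that it is not a bridge of $G[B_2\cup\{u\}]$. Replace $uv_1,xy$ by $v_1y$ and $ux$ (or $v_1x,uy$), choosing the pair that is absent in $G$. Degrees are unchanged, so the regular and minimum-degree classes are preserved. The energy change for $f$ is
\[
 (f(v_1)-f(y))^2+(f(u)-f(x))^2-(f(v_1)-f(u))^2-(f(x)-f(y))^2
 =-2(f(v_1)-f(x))(f(y)-f(u)).
\]
This is negative when $f(v_1)>f(x)$ and $f(y)>f(u)$. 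Since $\|f\|=1$ and $f\perp\one$ are unchanged, Rayleigh's principle gives $\mu(G')<\mu(G)$ with strict inequality. (If only $\leqslant$ holds, $f$ would be a Fiedler vector of $G'$; comparing eigenvalue equations at $u$ gives a contradiction.) This contradicts minimality.

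We choose the switch to realise these sign conditions, using symmetry between $B_1$ and $B_2$. Label the components so that $\max_{B_1}f\geqslant\max_{B_2}f$, and let $v_1\in B_1$ attain the larger maximum. We take $x=u$-neighbour side appropriately: take $x=u$ itself. Then the switch becomes replacing $uv_1$ and $uy$ by \dots\ Since that degenerates, we instead pick $x\in B_2\cup\{u\}$ and $y\in B_2$ adjacent with $f(u)<f(y)$ and $f(x)<f(v_1)$. Taking $y$ to be a neighbour of $u$ in $B_2$ and $x=u$ fails, so we take the edge $xy$ inside $B_2$ with $y$ maximising $f$ on $B_2$ and $x$ any neighbour of $y$. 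Then $f(x)\leqslant\max_{B_2}f\leqslant f(v_1)$ and $f(y)>f(u)$. We also need $v_1$ adjacent to $u$. If it is not, we run the same computation with $uv_1$ replaced by an edge $uv$, $v\in B_1$, and the edge $xy$ chosen in $B_2$ with $f(x)<f(v)$. That such an edge exists is the delicate point; if it fails for both orderings, $f$ on the $u$-neighbourhoods forces an inequality chain that we contradict by exchanging the roles of $B_1$ and $B_2$.

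The main obstacle is connectivity and simplicity after the switch. Adding $v_1y$ joins $B_1$ and $B_2$, and $ux$ keeps $x$ attached. We must check that removing $xy$ and $uv_1$ does not disconnect the graph, which requires choosing $xy$ on a cycle or using that $B_1$ has further edges to $u$. In the minimum-degree class we may invoke Lemma~\ref{lem:finite-end-slack-grafting} to know that $u$ has at least two neighbours in each $B_i$ when $d\geqslant3$. In the regular classes, edge cuts of even size (for even $d$), together with the degree bounds, give similar redundancy. The cases where the new edges already exist are handled by picking another edge from the bounded set of candidates. If needed, the finite case checks rely on the bounded order $O_d(1)$ of the ends.
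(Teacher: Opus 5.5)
Your proposal has a genuine gap. The only move you use removes an attachment edge $uv$ with $v\in B_1$ and an internal edge $xy$ of $B_2$, and adds $vy$ and $ux$. The new edge $ux$ must be absent, so $x\notin N(u)$. The move then lowers the energy only if $f(x)<f(v)$, and nothing forces such a pair to exist in either ordering.

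Here is a configuration where it fails. Take $d=3$, and let $u$ be joined by bridges to the middle side and to two copies of $C_5$, rooted at their degree-two vertices $v_1,v_2$.
\begin{itemize}
\item The Fiedler vector is simple and positive on the end graph, so it is invariant under the automorphism exchanging the copies. Hence $f(v_1)=f(v_2)$.
\item Lemma~\ref{lem:finite-end-fiedler}, applied at the cut vertex $v_j$, gives $f(x)>f(v_j)$ for every $x\in B_j\setminus\{v_j\}$.
\end{itemize}
So every switch of your type strictly increases the energy, in both orientations. Exchanging the roles of $B_1$ and $B_2$ reproduces the same situation, so it gives no contradiction.

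The missing idea is a second switch that does not involve $u$, used when the value intervals $[m_B,M_B]$ and $[m_C,M_C]$ overlap.
\begin{itemize}
\item With $M_B\leqslant M_C$, take an internal nonbridge edge $ab$ of $B$ with $f(b)=M_B$, and an internal edge $cd$ of $C$ with $f(c)\leqslant M_B\leqslant f(d)$ and $f(c)<f(d)$.
\item Replace $ab,cd$ by $ac,bd$. The energy changes by $2(f(a)-f(d))(f(b)-f(c))\leqslant0$.
\item Equality in the Rayleigh principle is excluded, because $\mathsf L f$ changes at $a$ or at $b$.
\end{itemize}
Your attachment-edge switch is the right tool only when the intervals are disjoint, $M_B<m_C$. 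Even then, $v$ should be a minimum vertex of $C$. Such a vertex is automatically adjacent to $u$, since otherwise its eigenvalue equation would give $\mu(G)f(v)\leqslant0$. Choosing $v$ at a maximum, as you did, is what created your adjacency difficulty.

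Three further points are not handled.
\begin{itemize}
\item \emph{The case $B\subseteq N(u)$.} The disjoint-interval switch needs a vertex of $B$ outside $N(u)$. If there is none, the vertex equations make $f$ constant on $B$, equal to $f(u)/(1-\mu(G))$, and force $|B|\geqslant d$. This case also contains the degenerate overlap of two constant components.
\item \emph{How to finish that case.} In the regular classes it contradicts $\deg u=d$, because $u$ also has neighbours in $C$ and towards the middle. In the minimum-degree class you need Lemma~\ref{lem:finite-end-slack-grafting} applied to $C$, since $\deg u-|N(u)\cap C|\geqslant|B|+1>d$. That lemma gives only $|N(u)\cap B_i|\geqslant\deg u-d+1$, not two neighbours in each component as you claim, and the latter would not help anyway.
\item \emph{Connectivity.} This is secured by deleting nonbridge edges, not by appeal to redundancy or to finite checks on bounded ends. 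A maximum vertex of $B$ meets no bridge leading away from $u$, since summing the eigenvalue equation over that side would produce a larger value; so it lies on an internal nonbridge edge. If $B\not\subseteq N(u)$, some internal nonbridge edge has an endpoint outside $N(u)$, since otherwise a pendant tree would contain a vertex of degree one.
\end{itemize}
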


\begin{proof}
Take a unit, mean-zero Fiedler vector, oriented to be positive
on the end graph.  Write \(x=\mu(G)\).  Suppose that a cut vertex
\(u\) has two components \(B,C\) away from the middle.
All their values exceed \(f(u)>0\), by
Lemma~\ref{lem:finite-end-fiedler} or
Lemma~\ref{lem:even-finite-end-fiedler}, as appropriate.  Put
\[
 m_B=\min_B f,\quad M_B=\max_B f,
 \qquad m_C=\min_C f,\quad M_C=\max_C f.
\]

We use two observations.  At a maximum vertex
of \(B\), an incident bridge cannot lead to a component away
from \(u\): summing the eigenvalue equation on that component
would make the value at its other endpoint strictly larger.
There is at most one bridge towards \(u\).  Since the degree
is at least three and at most one neighbour is \(u\), some
nonbridge edge incident with this maximum has both endpoints in \(B\).

Second, if \(B\) contains a vertex not adjacent to \(u\),
then it contains an internal nonbridge edge \(yz\) with
\(y\notin N(u)\).  Otherwise every edge incident with
\(B\setminus N(u)\) would be a bridge.  Each component
induced by these vertices would be a tree attached by exactly
one edge to the connected graph on \(\{u\}\cup N(u)\).
Such a tree has a vertex of degree one in \(G\), a contradiction.

Suppose first that the value intervals overlap, and relabel
them so that \(M_B\leqslant M_C\).  Unless \(C\) is constant
at \(M_B\), choose an internal edge \(cd\) of \(C\) with
\(f(c)\leqslant M_B\leqslant f(d)\) and \(f(c)<f(d)\).
Such an edge is found on a path between a minimum and a
maximum of \(C\); at an endpoint of the interval use an edge
entering or leaving the corresponding level set.  Choose an
internal nonbridge edge \(ab\) of \(B\) with \(f(b)=M_B\).

Replace \(ab,cd\) by \(ac,bd\).  The new edges were absent
because \(B,C\) are distinct components of \(G-u\).
Deleting \(ab\) does not disconnect \(B\cup\{u\}\): every
cycle containing it lies in that subgraph.  If \(cd\) is a
bridge, its higher endpoint \(d\) lies on its side away from
\(u\), so the new edge \(bd\) reconnects that side.  If it
is not a bridge, its cycle lies in \(C\cup\{u\}\) and is
unaffected by deleting \(ab\).  Thus the switched graph is
connected and preserves every degree.

Its energy change at \(f\) is
\[
 2(f(a)-f(d))(f(b)-f(c))\leqslant0.
\]
The four values are not all equal.  If the energy change is
zero, the changes in the Laplacian coordinates at \(a,b\)
are \(f(b)-f(c)\) and \(f(a)-f(d)\), at least one of which
is nonzero.  Equality in the Rayleigh principle is therefore
impossible, so the new algebraic connectivity is strictly
smaller in all cases.

If \(C\) is constant at \(M_B\) but \(B\) is not, use
an internal nonbridge edge of \(C\) and an internal edge of
\(B\) entering its maximum level.  The same argument applies.
The case when both components are constant is treated below.

Now suppose that the intervals are disjoint, say
\(M_B<m_C\).  A minimum vertex \(v\) of \(C\) must be
adjacent to \(u\), since otherwise its eigenvalue equation
would give \(xf(v)\leqslant0\).  If \(B\) has a vertex
not adjacent to \(u\), choose its nonbridge edge \(yz\)
as in the second observation and replace \(uv,yz\) by
\(uy,vz\).  These are new edges.  Deleting \(yz\) leaves
\(B\cup\{u\}\) connected; deleting \(uv\) either leaves
the rest connected or separates \(C\), which is reattached
by \(vz\).  Every degree is preserved, and the energy change
\[
 2(f(u)-f(z))(f(v)-f(y))<0
\]
contradicts minimality.

The remaining possibility is that all vertices of one
component, say \(B\), are adjacent to \(u\).  Their equations
give
\[
 \bigl(L_{G[B]}+(1-x)\mathsf I\bigr)f_B=f(u)\one,
 \qquad f_B=\frac{f(u)}{1-x}\one.
\]
Conversely, a positive constant component must have this
property, by its vertex equations.  Thus this case includes
the previously deferred pair of constant components.
Its order is at least \(d\), because each of its vertices
has degree at most \(|B|\).  In the regular class this is
impossible: \(u\) also has a neighbour towards the middle
and a neighbour in \(C\), giving degree greater than \(d\).
In the minimum-degree class,
\[
 \deg_G(u)-|N_G(u)\cap C|\geqslant |B|+1>d.
\]
The strict grafting of Lemma~\ref{lem:finite-end-slack-grafting}
therefore supplies a competitor.  This final contradiction
proves the lemma.
\end{proof}

\begin{lemma}\label{lem:no-branching-block}
Let \(G\) be a minimiser in one of the classes of
Theorem~\ref{thm:bounded-end-canonical-middle}, with sufficiently
large order.  Every block of \(G\) contains at most two cut vertices.
\end{lemma}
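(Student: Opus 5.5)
We argue by contradiction in the style of Lemma~\ref{lem:no-branching-cutvertex}. Suppose a block \(X\) of \(G\) contains three cut vertices. By Theorem~\ref{thm:bounded-end-canonical-middle} the canonical middle blocks each have exactly two cut vertices, namely their terminals. So \(X\) lies in a bounded end graph, say the one on which a unit, mean-zero Fiedler vector \(f\) is positive. Lemma~\ref{lem:finite-end-fiedler} or Lemma~\ref{lem:even-finite-end-fiedler}, as appropriate, makes \(f\) simple and positive there.

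Let \(u_0\) be the cut vertex of \(X\) nearest the middle; if \(X\) contains the root, take \(u_0\) to be the vertex separating \(X\) from the middle. Then at least two further cut vertices \(u_1,u_2\in X\) carry components \(B_1,B_2\) of \(G-u_i\) lying away from the middle. By the same lemmas, \(f>f(u_i)>0\) on \(B_i\). Moreover, the eigenvalue equation on \(B_i\), that is \(f_{B_i}=f(u_i)(\one+x(D_{B_i}-x\mathsf I)^{-1}\one)\), shows that \(f\) on \(B_i\) exceeds \(f(u_i)\) by an amount of order \(x\).

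The plan is to move the hanging component \(B_2\) from \(u_2\) to \(u_1\), or the reverse, by a degree-preserving edge switch. This creates a cut vertex with two components away from the middle, and Lemma~\ref{lem:no-branching-cutvertex} already forbids that. It is cleaner, though, to get a direct strict Rayleigh decrease.

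Relabel so that \(f(u_1)\geqslant f(u_2)\). Choose an edge \(ab\) with \(a\in B_2\), \(b\) adjacent inside \(B_2\cup\{u_2\}\), and \(f(b)\) maximal; as in the first observation of Lemma~\ref{lem:no-branching-cutvertex}, such an edge can be taken to be a nonbridge edge at a maximum of \(B_2\). Choose also an edge \(cd\) in \(B_1\cup\{u_1\}\) whose values straddle \(f(b)\), or an edge \(u_1y\) with \(y\in B_1\) if the value intervals are disjoint. The switch \(ab,cd\mapsto ac,bd\) has energy change \(2(f(a)-f(d))(f(b)-f(c))\leqslant0\). Strictness follows exactly as in Lemma~\ref{lem:no-branching-cutvertex}, via the Laplacian coordinate change at \(a,b\). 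Connectivity is preserved because both removed edges lie on cycles inside their pieces, or else the new edge reattaches the separated side.

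The switched graph keeps all degrees, and hence the regular class. In the minimum-degree class it keeps degrees at least \(d\). If a constant component (all vertices adjacent to \(u_i\)) occurs, we instead use the grafting of Lemma~\ref{lem:finite-end-slack-grafting}, moving \(B_2\)'s edges from \(u_2\) to a vertex of \(B_1\). We then choose \(g=f+a\mathbf 1_{B_2}\) with \(a=f(w)-f(u_2)>0\), which strictly increases the centred norm at unchanged energy.

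The main obstacle is this last subcase: when \(B_1,B_2\) are small cliques fully joined to their cut vertices, in the regular class, where grafting changes degrees. There I would try to exploit the fact that \(X\) is 2-connected. Take a path in \(X\) from \(u_1\) to \(u_2\) avoiding \(u_0\), and switch one edge of \(B_2\)'s attachment with an edge on that path whose value lies below \(f(u_1)\). This yields a negative energy change \(2(f(u_2)-f(z))(f(v)-f(y))\) as in the disjoint-interval case of Lemma~\ref{lem:no-branching-cutvertex}. If this fails, the fallback is to show the constant-component configuration is incompatible with \(d\)-regularity at \(u_i\), since \(u_i\) already has at least two neighbours in \(X\).
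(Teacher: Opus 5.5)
Your switch handles the easy configurations, but it breaks down in the configuration that the paper's proof needs its last step for. With your labelling \(f(u_1)\geqslant f(u_2)\), write \(m_i,M_i\) for the minimum and maximum of \(f\) on \(B_i\). Nothing excludes
\[
 f(u_2)<m_2\leqslant M_2<f(u_1)<m_1\leqslant M_1 .
\]
Then every value on \(B_2\cup\{u_2\}\) is smaller than every value on \(B_1\cup\{u_1\}\). Take any edge \(ab\) of the first piece and any edge \(cd\) of the second, attachment edges included. Both factors in \(2(f(a)-f(d))(f(b)-f(c))\) are negative, and likewise for the other reconnection \(2(f(a)-f(c))(f(b)-f(d))\). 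No edge of \(B_1\cup\{u_1\}\) straddles \(M_2\), and your fallback with \(u_1y\) gives \(2(f(a)-f(y))(M_2-f(u_1))>0\). So no switch confined to the two hanging pieces decreases the energy here. This, not the constant-component case, is the real obstacle.

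Two smaller labelling points. When the intervals overlap, you must take the maximal edge from the component with the smaller maximum. When \(M_1<m_2\), you must swap roles, pairing the maximal edge of \(B_1\) with \(u_2v\) for \(v\) minimal in \(B_2\); this works since \(f(u_2)\leqslant f(u_1)<M_1\). Labelling by attachment values arranges neither.

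The missing idea is to use an edge of the block \(X\) itself. Having three cut vertices, \(X\) is not a bridge, so \(X-u_2\) is connected. A path in \(X-u_2\) from \(u_0\) (where \(f(u_0)<f(u_2)<M_2\)) to \(u_1\) (where \(f(u_1)>M_2\)) therefore contains an edge \(yz\) with \(f(y)<M_2\leqslant f(z)\) and \(y,z\ne u_2\). Let \(ab\) be an internal nonbridge edge of \(B_2\) with \(f(b)=M_2\), and replace \(ab,yz\) by \(ay,bz\). This switch works for four reasons:
\begin{enumerate}[(i)]
\item the edges \(ay,bz\) are new, because \(u_2\) is the only exterior neighbour of \(B_2\);
\item the graph stays connected, because \(yz\) lies on a cycle in \(X\) and \(ab\) on one in \(B_2\cup\{u_2\}\);
\item all degrees are preserved;
\item the energy changes by \(2(f(a)-f(z))(f(b)-f(y))\leqslant0\), and \(\mu\) decreases strictly since \((\mathsf L f)(a)\) changes by \(f(b)-f(y)>0\).
\end{enumerate}
Your path from \(u_1\) to \(u_2\) avoiding \(u_0\), combined with an attachment edge \(u_2v\), does not give this. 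The crossing edge may end at \(u_2\) or at a neighbour of \(u_2\), so the new edge at \(u_2\) may already exist.

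Finally, the grafting you propose for constant components is not available. Such a component is completely joined to its cut vertex and has at least \(d\) vertices, so moving its attachment edges off \(u_2\) can leave \(u_2\) with fewer than \(d\) neighbours. It is also unnecessary. In the regular class constant components are impossible, as your fallback says. In every class, the only genuinely new case is two components constant at the same value \(M\). This is handled by the degree-preserving switch \(ab,u_2v\mapsto u_2a,vb\), with \(ab\) in \(B_1\) and \(v\in B_2\), which leaves the energy unchanged but changes \((\mathsf L f)(v)\) by \(f(u_2)-M\ne0\).
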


\begin{proof}
Only an end graph could contain an exceptional block.  Orient
a unit Fiedler vector to be positive on that end graph, and write
\(x=\mu(G)\).  Suppose that a block \(D\) has a cut vertex
\(p\) towards the canonical middle and two distinct cut
vertices \(u_1,u_2\) away from it.  Let \(H_i\) be a
component of \(G-u_i\) away from \(D\).  These components
are disjoint, and their only external neighbours are
\(u_1,u_2\), respectively.

By Lemma~\ref{lem:finite-end-fiedler} or
Lemma~\ref{lem:even-finite-end-fiedler},
\[
 f(p)<f(u_i)<\min_{H_i}f.
\]
Write \(m_i=\min_{H_i}f\), \(M_i=\max_{H_i}f\), and
relabel so that \(M_1\leqslant M_2\).  Each \(H_i\) has
an internal nonbridge edge incident with a maximum vertex,
by the observation in Lemma~\ref{lem:no-branching-cutvertex}.
Every cycle containing an internal edge of \(H_i\) lies
in \(H_i\cup\{u_i\}\).

If \([m_1,M_1]\) and \([m_2,M_2]\) overlap, and the two
components are not both constant at the same value, the
overlapping-interval switch in that lemma gives a connected
degree-preserving competitor with strictly smaller algebraic
connectivity.  That switch uses internal edges from the two
components, so it does not require their attachment vertices
to coincide.  The cycles used for connectivity are still
unaffected by the edge deletion in the other component.

If both components are constant at the same value \(M\),
every vertex of \(H_i\) is adjacent to \(u_i\), and
\(f(u_i)=(1-x)M<M\).  Choose an internal nonbridge edge
\(ab\) of \(H_1\) and an attachment edge \(u_2v\), with
\(v\in H_2\).  Replace them by \(u_2a,vb\).  The new
edges were absent, the graph remains connected, and all
degrees are preserved.  The energy change is zero, but the
Laplacian coordinate of \(f\) at \(v\) changes by
\(f(u_2)-M\ne0\).  Equality in the Rayleigh principle is
therefore impossible, giving a strict decrease again.

We may consequently assume that \(M_1<m_2\).  Choose a
minimum vertex \(v_2\) of \(H_2\).  It is adjacent to
\(u_2\), since otherwise its eigenvalue equation would
give \(xf(v_2)\leqslant0\).  Choose an internal nonbridge
edge \(ab\) of \(H_1\) with \(f(b)=M_1\).

If \(f(u_2)<M_1\), replace \(u_2v_2,ab\) by
\(u_2a,v_2b\).  Both new edges were absent.  The deletion
of \(ab\) leaves \(H_1\cup\{u_1\}\) connected; if
\(u_2v_2\) is a bridge, its detached component is reattached
by \(v_2b\), and otherwise its cycle is unaffected.
Every degree is preserved, and the energy change is
\[
 2(f(u_2)-f(b))(f(v_2)-f(a))<0.
\]
This contradicts minimality.

Finally suppose that \(f(u_2)\geqslant M_1\).  Since
\(D\) has at least three cut vertices, it is not a bridge
block, and \(D-u_1\) is connected.  Choose a path in it
from \(p\) to \(u_2\).  Its initial value is less than
\(M_1\) and its final value is at least \(M_1\), so it
contains an edge \(yz\) with
\[
 f(y)<M_1\leqslant f(z).
\]
Both endpoints differ from \(u_1\).  Replace \(ab,yz\)
by \(ay,bz\).  These cross edges were absent because the
only exterior neighbour of \(H_1\) is \(u_1\).
The edge \(yz\) lies on a cycle in \(D\), while the cycle
for \(ab\) lies in \(H_1\cup\{u_1\}\); hence the two
deletions leave the graph connected.  All degrees are
preserved, and the energy change is
\[
 2(f(a)-f(z))(f(b)-f(y))\leqslant0.
\]
If it is zero, the Laplacian coordinate at \(a\) changes
by \(f(b)-f(y)>0\).  As before the new algebraic
connectivity is strictly smaller.  This final contradiction
proves the lemma.
\end{proof}

\begin{proof}[Proof of Theorem~\ref{thm:guiduli-mohar-pathlike}]
By Theorem~\ref{thm:bounded-end-canonical-middle}, every minimiser
in the classes under consideration consists of a canonical middle and
two end graphs of bounded total order. Lemma~\ref{lem:finite-end-fiedler} supplies the end graph
inequalities for the minimum-degree and odd-regular classes;
Lemma~\ref{lem:even-finite-end-fiedler} supplies them for even
regular degree, including degree four.
Lemma~\ref{lem:no-branching-cutvertex} gives degree at most two
at every cut-vertex node of the block-cut tree, and
Lemma~\ref{lem:no-branching-block} gives the same at every block
node. This connected tree is therefore a path, with all
noncanonical blocks confined to its bounded ends.

For sufficiently large order the canonical middle is nonempty
and contains vertices of degree \(d\). Thus the least values
under \(\delta(G)\geqslant d\) and under \(\delta(G)=d\) agree.
Every minimiser in the latter class is consequently also a
minimiser in the former, proving the asserted version.
\end{proof}
\section{Adjacency eigenvalues and random walks}\label{sec:applications}

In this section, we derive the consequences for adjacency spectral
radius and for hitting and commute times. The adjacency bound follows
by adjoining a bounded number of vertices to two copies of an extremal
nonregular graph and applying the fixed-degree spectral bound.
We then deduce an effective-resistance bound from the minimum-degree
theorem. This gives the commute-time estimate; the hitting-time
estimate follows by applying the cut inequalities to the corresponding
electrical potential.

\subsection{The extremal adjacency spectral radius}
\label{sec:liu-spectral-radius}

\begin{proof}[Proof of Theorem~\ref{thm:liu-spectral-radius}]
Let $G$ attain $\rho(n,d)$. It has at most $d$ vertices of degree
less than $d$. Indeed, if there were more, two of these vertices
would be nonadjacent. Adding the edge between them would preserve
maximum degree $d$ and leave another vertex of degree less than $d$, so the graph
would remain nonregular. Its spectral radius would strictly increase,
a contradiction. Consequently the total degree deficiency satisfies
\[
 1\leq D:=\sum_v(d-\deg_G(v))\leq d(d-1).
\]

Choose a connected simple $d$-regular graph $R$ of bounded order
with a matching of size $D$ whose deletion leaves it connected.
For an explicit construction, put $M=4D+2d+2$ and label the
vertices by $0,\ldots,M-1$, with differences taken modulo $M$. If $d=2k+1$, join vertices at cyclic differences
$\pm1,\ldots,\pm k$ and opposite vertices; delete $D$ edges of
the opposite-vertex matching. If $d=2k$, use differences
$\pm1,\ldots,\pm k$ and delete the disjoint edges
$(4j,4j+2)$ for $0\leq j<D$. In both cases the spanning cycle
of difference one remains, and the deletion leaves $2D$ distinct
vertices of degree $d-1$.

Take two disjoint copies of $G$ and this modified graph $R$.
For each vertex $v$ in either copy, add $d-\deg_G(v)$ edges
from $v$ to distinct vertices of degree $d-1$ in $R$, using each
of these $2D$ vertices exactly once. The resulting graph
$\widehat G$ is connected, simple, and $d$-regular, with
$N=|V(\widehat G)|=2n+O_d(1)$ vertices. Let $f$ be a positive
eigenvector of $\mathsf A(G)$ for $\rho(G)$, normalised by
$\|f\|_2=1$, and assign the values $f$ and $-f$ to the two copies
and zero to every vertex of $R$. This gives a mean-zero vector
with squared norm two and Laplacian Rayleigh quotient
\[
 \sum_{uv\in E(G)}(f(u)-f(v))^2
 +\sum_v(d-\deg_G(v))f(v)^2
 =d-\rho(n,d).
\]
The sharp fixed-degree bound of
Theorem~\ref{thm:fixed-regular-sharp} therefore gives
\[
 d-\rho(n,d)\geq\mu(\widehat G)
 \geq(c_d-o(1))\frac{\pi^2}{N^2}
 =(c_d-o(1))\frac{\pi^2}{4n^2}.
\]
The matching upper bound is supplied by the constructions in
\cite[Theorems~6.1 and~6.2]{LiuSpectralRadius2024}.
\end{proof}

We next determine the sharp constant in Cioab\u{a}'s diameter bound.

\Needspace{12\baselineskip}
\begin{proposition}\label{prop:sharp-diameter-spectral-radius-constant}
The largest universal constant $c$ for which
\[
 \Delta(G)-\rho(G)>\frac{c}{|V(G)|\operatorname{diam}(G)}
\]
holds for every finite connected nonregular simple graph is $c=1$.
There is a sequence $(G_h)$, each with one vertex of degree two and
all other vertices of degree three, such that
\[
 |V(G_h)|\operatorname{diam}(G_h)(3-\rho(G_h))\longrightarrow1.
\]
\end{proposition}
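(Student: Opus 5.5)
The bound itself is Cioab\u{a}'s theorem \cite{Cioaba2007}: $\Delta(G)-\rho(G)>1/(|V(G)|\operatorname{diam}(G))$ holds for every connected nonregular graph. So $c=1$ is admissible, and it suffices to construct the sequence $(G_h)$. Then no constant $c>1$ can work, since it would force $|V(G_h)|\operatorname{diam}(G_h)(3-\rho(G_h))>c$ for all $h$.

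For an upper bound on $3-\rho$, I use the variational formula obtained from $3\mathsf I-\mathsf A=\mathsf L+\operatorname{diag}(3-\deg)$:
\[
 3-\rho(G)=\min_{x\neq0}\frac{\sum_{uv\in E(G)}(x(u)-x(v))^2+\sum_v(3-\deg_G v)x(v)^2}{\sum_v x(v)^2}.
\]
With a single vertex $s$ of degree two, the penalty term is just $x(s)^2$. Electrically, $s$ is grounded through a unit resistor. The heuristic is therefore that $3-\rho\approx1/(N\,\mathcal R)$, where $N$ is the mass located far from $s$ and $\mathcal R$ is one plus the resistance from $s$ to that mass. To make $nD(3-\rho)\to1$, the resistance along the graph must grow at rate one per unit of distance, and almost all the vertices must sit at distance about $D$ from $s$. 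This is why the chains of $L_3$ are unsuitable: across a diamond the distance is $2$ but the resistance is only $1$.

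The construction is as follows.
\begin{itemize}
\item Take a path $s=v_0,v_1,\ldots,v_h$, all of whose edges are bridges.
\item At each $v_i$ with $0\leqslant i<h$, attach by a bridge a pendant copy of $C_5$ from Section~\ref{sec:cubic-sharpness}, joined at its distinguished vertex $\xi_5$. Then each $v_i$ with $1\leqslant i<h$ has degree three, and $s$ has degree two.
\item At $v_h$, attach by a single edge a large block $Y_h$: a cubic graph of order $N_h$ and diameter $O(\log N_h)$ with one edge subdivided, joined at the subdivision vertex. Cubic expanders, or any explicit family of logarithmic diameter, will serve.
\item Choose $N_h\to\infty$ with $\log N_h=o(h)$ and $h=o(N_h)$, for example $N_h=h^2$ rounded to a realisable order.
\end{itemize}
Every vertex other than $s$ has degree three, $n=N_h+O(h)$, and $\operatorname{diam}(G_h)=h+O(\log N_h)=(1+o(1))h$.

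For the test vector, put $x(v_i)=1+i$ and give each pendant copy of $C_5$ the constant value of its attachment vertex. Put $x\equiv h+1$ on $Y_h$ and on the edge attaching it. Only the $h$ path edges and the penalty at $s$ contribute, so the numerator is $h+1$. The denominator is at least $N_h(h+1)^2$. Hence $3-\rho(G_h)\leqslant1/(N_h(h+1))$, and
\[
 n\operatorname{diam}(G_h)(3-\rho(G_h))\leqslant\frac{(N_h+O(h))(h+O(\log N_h))}{N_h(h+1)}\longrightarrow1.
\]
Cioab\u{a}'s bound gives the matching lower limit of $1$. The main points to check are the realisability of the degree pattern and the choice of $N_h$, which must make both the pendant mass $O(h)$ and the blob diameter $O(\log N_h)$ negligible; both are routine with the growth rates above.
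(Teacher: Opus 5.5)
Your strategy is the paper's. You use Cioab\u{a}'s theorem to show $c=1$ is admissible. You then build a path of bridges carrying pendant copies of the subdivided $K_4$ (the paper's $B$ is exactly your $C_5$), attach a large block of logarithmic diameter at the far end, and evaluate the linear ramp test vector in $\sum_{uv\in E}(x(u)-x(v))^2+\sum_v(3-\deg v)x(v)^2$.

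\textbf{The gap.} As written, the construction has an off-by-one error that breaks it. You attach pendants only at $v_i$ with $0\leqslant i<h$. So $v_h$ is adjacent only to $v_{h-1}$ and to the subdivision vertex of $Y_h$, and $\deg v_h=2$. The graph then has two vertices of degree two, contrary to the statement. Worse, the penalty term picks up $(3-\deg v_h)\,x(v_h)^2=(h+1)^2$. The numerator of your quotient becomes $(h+1)(h+2)$, so you only get $3-\rho(G_h)\leqslant (h+2)/(N_h(h+1))$. Your bound on $n\operatorname{diam}(G_h)(3-\rho(G_h))$ then grows like $h$ instead of tending to $1$.

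\textbf{The fix.} Attach a pendant $C_5$ at every $v_i$ with $0\leqslant i\leqslant h$. The paper does exactly this: it attaches $B_i$ at each path vertex, including the last one, which is also joined to the large block. Then $s$ is the only vertex of degree two. The extra pendant at $v_h$ receives the constant value $h+1$ and costs no energy, and your computation goes through verbatim.

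\textbf{Comparison with the paper.} With this correction your proof coincides with the paper's up to parameters. The paper uses the explicit block $R_h$: a complete binary tree of height $h$ with a cycle through its leaves. This is itself a cubic graph with one edge subdivided, the root being the subdivision vertex, of order $2^{h+1}-1$ and diameter at most $2h$. The paper takes path length $L=h^2$. That is the same regime $\log N\ll L\ll N$ as your choice of $N_h\approx h^2$ with path length $h$. The explicit $R_h$ simply removes the need to cite a family of cubic graphs of logarithmic diameter at suitable orders.
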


\begin{proof}
Cioab\u{a}'s inequality \cite{Cioaba2007} gives the lower bound
with $c=1$. The following sequence proves that this constant is sharp.

Let $h\ge2$, and let $R_h$ be a complete rooted binary tree of height
$h$ with a cycle added through its $2^h$ leaves. It has order
$N_h=2^{h+1}-1$ and diameter at most $2h$, with its root of degree
two and all other vertices of degree three. Let $B$ be obtained by
subdividing one edge of $K_4$, and denote the new vertex by $r$.
This graph has five vertices, all within distance two of $r$;
$r$ has degree two and the remaining vertices have degree three.

Start with a path $v_1,\ldots,v_L$, where $L\ge h$.  Attach to each $v_i$
a disjoint copy $B_i$ of $B$ by the edge $v_ir_i$, where $r_i$
corresponds to $r$, and attach $v_L$ to
the root of a disjoint copy of $R_h$ by one further edge. The resulting
graph $G_{h,L}$ is connected and simple, with $v_1$ of degree two and
all other vertices of degree three, as is checked directly. Its order
and diameter are
\[
 n=N_h+6L,\qquad D:=\operatorname{diam}(G_{h,L})=L+h+3.
\]
Indeed, two vertices in the path and its attached copies of $B$ are
at distance at most $L+5$, a vertex there and a vertex of $R_h$ are
at distance at most $L+h+3$, and two vertices of $R_h$ are at distance
at most $2h$.  Equality is attained by a vertex of $B_1$ at distance
two from $r_1$ and any leaf of $R_h$.

For any real vector $f$ on this graph,
\[
 f^{\mathsf T}(3\mathsf I-\mathsf A(G_{h,L}))f
 =\sum_{uv\in E(G_{h,L})}(f(u)-f(v))^2+f(v_1)^2.
\]
Set $f=1$ on $R_h$ and $f=i/(L+1)$ on
$\{v_i\}\cup V(B_i)$. The only nonzero contributions are
$(L+1)^{-2}$ from each of the $L-1$ path edges, the edge from
$v_L$ to $R_h$, and the displayed deficiency term. Consequently
\[
 3-\rho(G_{h,L})
 \le\frac{1/(L+1)}{\sum_v f(v)^2}
 \le\frac{1}{N_h(L+1)}.
\]
It follows that
\[
 1<nD(3-\rho(G_{h,L}))
 \le\left(1+\frac{6L}{N_h}\right)
       \frac{L+h+3}{L+1}.
\]
Taking $G_h=G_{h,h^2}$ and letting $h\to\infty$ proves the result.
\end{proof}

\subsection{A sharp resistance bound}
\label{sec:two-terminal-resistance}

We next derive a resistance bound from the minimum-degree spectral
bound and its stability theorem.

Recall that \(L_d=K_{d+1}-e\), with the endpoints of the missing
edge as its terminals.

\Needspace{12\baselineskip}
\begin{proposition}\label{prop:two-terminal-resistance}
Let \(d\geqslant3\) be an integer, and let \(H\) be a connected graph
of order \(h\) with distinct vertices \(s,t\). Suppose that
\(\deg_H(s),\deg_H(t)\geqslant d-1\) and every other vertex has
degree at least \(d\). Then
\[
 1+R_H(s,t)\leqslant\frac{h}{d-1}.
\]
Equality holds if and only if \(H\) is a chain of copies of
\(L_d\), joined by bridges, with \(s,t\) as its two
outer terminals.
\end{proposition}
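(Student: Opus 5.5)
We obtain the inequality by a tensor-power argument that feeds $H$ into the sharp minimum-degree bound of Theorem~\ref{thm:minimum-degree-bound}. We characterise equality by induction along the bridge tree, using Lemma~\ref{lem:port-path-deficit} for pieces that contain no canonical component.

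\emph{Step 1 (the inequality).} For $m\geqslant1$, let $G_m$ consist of $m$ disjoint copies of $H$ joined in a line by bridges $t^{(i)}s^{(i+1)}$. Close off the two free terminals by bridges to two fixed bounded end graphs, for instance complete graphs $K_{d+1}$ with one attachment vertex each. Then $G_m$ is connected, has minimum degree at least $d$, and has order $N=mh+O_d(1)$. Because $s\neq t$, each copy together with its outgoing bridge behaves as a resistor of resistance $1+R_H(s,t)$ between consecutive terminal levels.

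We build a trial function as follows. Put $\vartheta=\pi/m$ and prescribe the values $\cos(j\vartheta)$ at the successive entry terminals $s^{(j+1)}$. Inside each copy, take the unit-current harmonic potential between $s$ and $t$, rescaled so that the drop across the copy plus its outgoing bridge equals the prescribed increment, with the bridge carrying its share $1/(1+R_H)$ of that drop. Make the function constant on the end graphs. The energy of a copy is then $(\Delta u_j)^2/(1+R_H(s,t))$. Each interior value lies between the two neighbouring terminal values, so the centred squared norm is $\tfrac{mh}{2}+O(m^{1/2})$ or better. The Rayleigh principle therefore gives
\[
 \limsup_{m\to\infty}N^2\mu(G_m)\leqslant\frac{h\,\pi^2}{1+R_H(s,t)},
\]
exactly as in the proof of Lemma~\ref{lem:port-path-deficit}. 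Comparing with the lower bound $(d-1+o(1))\pi^2/N^2$ of Theorem~\ref{thm:minimum-degree-bound} yields $h/(1+R_H(s,t))\geqslant d-1$, which is the claim. The only care needed is the $O(1/m)$ bookkeeping for the norm and mean, and it vanishes in the limit.

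\emph{Step 2 (equality cases are extremal).} For the chain of $k$ copies of $L_d$ joined by bridges, the resistance is additive. It equals $k\cdot\tfrac{2}{d-1}+(k-1)$, because each $L_d$ has terminal resistance $2/(d-1)$ as recorded in Subsection~\ref{sec:minimum-degree-stability}. Hence $1+R=k(d+1)/(d-1)=h/(d-1)$.

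\emph{Step 3 (converse, by induction on $h$).} Consider the path from $s$ to $t$ in the bridge tree of $H$, with canonical components defined after attaching one exterior bridge at each terminal. If this path contains no canonical component, then Lemma~\ref{lem:port-path-deficit} gives $\Delta_H=h-(d-1)(1+R_H(s,t))\geqslant\kappa_dh>0$, so equality fails.

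Otherwise, fix a canonical component $U\cong L_d$ on the path. Its two boundary edges are bridges meeting its terminals. Splitting $H$ at these bridges writes $H$ as $H_1$, then a bridge, then $U$, then a bridge, then $H_2$, where $H_1$ or $H_2$ may be empty when $s$ or $t$ is itself a terminal of $U$. Any branches hanging off the path are assigned to $H_1$ or $H_2$. Each nonempty piece again satisfies the hypotheses, since its new terminal loses only the one bridge edge and so has degree at least $d-1$. Resistances add in series across the bridges, and orders add. Summing the inequality of Step 1 over $H_1$, $U$ and $H_2$ shows that $\Delta_H=\Delta_{H_1}+\Delta_U+\Delta_{H_2}$, with $\Delta_U=0$ and $\Delta_{H_i}\geqslant0$. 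Equality for $H$ therefore forces equality for each nonempty $H_i$, and induction identifies each $H_i$ as a bridge chain of $L_d$ with the right terminals. Hence $H$ is such a chain.

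The main point to verify is the degree condition at the cut points. One must check that the terminal of $U$ facing $H_i$ is a deficient terminal of $L_d$, and that $H_i$ cannot be a single vertex. Both follow from the definition of a canonical component, whose two boundary edges meet the deficient terminals, and from the minimum-degree hypothesis.
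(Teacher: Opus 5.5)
Your Step~1 is the paper's argument, up to the choice of cosine values at the entry terminals. Your Step~2 is its final sentence. The converse is where you take a different route, and it works once one omitted subcase is added.

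\textbf{How the two converses differ.} The paper applies Theorem~\ref{thm:minimum-degree-stability} directly to the periodic graph $G_q$. The recovered copies of $L_d$ have no bridges, so none of them can contain an edge joining two copies of $H$. Since only $o(N)$ vertices are uncovered, some copy $H_j$ is covered entirely by these copies of $L_d$. Contracting them inside $H_j$ leaves a tree whose nodes all have degree two once the two outside edges are counted, so the tree is a path and $H$ is a bridge chain. You instead invoke Lemma~\ref{lem:port-path-deficit} and peel canonical components off the $s$--$t$ path in the bridge tree. This uses that $h$ and $1+R$ are additive in series, which gives $\Delta_H=\Delta_{H_1}+\Delta_U+\Delta_{H_2}$ with $\Delta_U=0$, where $\Delta=h-(d-1)(1+R)$. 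The identity comes from this additivity, not from summing the inequality of Step~1.

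\textbf{What each route buys.} The paper's route needs only the stated vertex-coverage conclusion of the stability theorem, with no induction and no decomposition bookkeeping. Yours rests on Lemma~\ref{lem:port-path-deficit}. That lemma's proof uses a stronger fact taken from inside the stability proof: the recovered components lie on a single path of the bridge tree, because the bridge cuts are nested. In return, removing all canonical components $U_1,\dots,U_k$ on the $s$--$t$ path at once gives
\[
h-(d-1)\bigl(1+R_H(s,t)\bigr)\geqslant\kappa_d\bigl(h-(d+1)k\bigr).
\]
This is a quantitative stability form of the resistance bound, which the paper's argument does not provide.

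\textbf{The omitted subcase.} Your claim that each nonempty piece again satisfies the hypotheses fails when the bridge from $H_1$ to $U$ is incident with $s$ itself, or the bridge from $U$ to $H_2$ is incident with $t$. Then that piece has coinciding terminals, with degree only at least $d-2$ there. The proposition requires distinct terminals, so the induction hypothesis does not apply. The case is harmless. The piece has $R=0$, and it has at least $d+1$ vertices, because its terminal has a neighbour of degree at least $d$. Hence its $\Delta$ is at least $(d+1)-(d-1)=2$, which is the coinciding-terminal case of Lemma~\ref{lem:port-path-deficit}. So equality cannot hold in this subcase, and the induction goes through.
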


\begin{proof}
Write \(R=R_H(s,t)\), and for a positive integer \(q\) take copies
\(H_1,\ldots,H_q\) of \(H\), with distinguished vertices \(s_j,t_j\)
corresponding to \(s,t\). Joining consecutive copies by the edges
\(t_js_{j+1}\) and attaching a copy of \(K_{d+1}\) by a single edge
at each outer terminal gives a graph \(G_q\) of minimum degree at
least \(d\) and order \(N=qh+2(d+1)\).

We first show that
\begin{equation}\label{eq:periodic-resistance-trial}
 \limsup_{q\to\infty}N^2\mu(G_q)
 \leqslant\frac{h\pi^2}{1+R}.
\end{equation}
Put \(a_j=\cos((j-\tfrac12)\pi/q)\) and define \(f\) by extending
the values \(a_j,a_{j+1}\) at \(s_j,s_{j+1}\) harmonically across
\(H_j\) and the following bridge for \(1\leqslant j<q\), and by
taking the corresponding constant endpoint values on \(H_q\) and
each end graph. Each harmonic part has energy
\((a_j-a_{j+1})^2/(1+R)\), so
\[
 \cE_{G_q}(f)=\frac{1}{1+R}
      \sum_{j=1}^{q-1}(a_j-a_{j+1})^2.
\]
The harmonic extension lies between its boundary values, so
\(|a_j-a_{j+1}|=O(q^{-1})\) implies that each value on \(H_j\)
differs from \(a_j\) by \(O(q^{-1})\). Since \(H\) is fixed and
\(\sum_j a_j=0\),
\[
 \sum_v f(v)^2=h\sum_{j=1}^q a_j^2+O_H(1)
              =\frac{hq}{2}+O_H(1),
 \qquad \sum_v f(v)=O_H(1).
\]
Subtracting the mean and using the first nonconstant eigenvector
of \(P_q\) proves \eqref{eq:periodic-resistance-trial}.
Theorem~\ref{thm:minimum-degree-bound} now gives
\((d-1)\pi^2\leqslant h\pi^2/(1+R)\), as required.

Suppose that equality holds. Then \((G_q)\) is an asymptotically
extremal sequence. By Theorem~\ref{thm:minimum-degree-stability},
all but \(o(N)\) vertices of \(G_q\) lie in disjoint induced copies
of \(L_d\), each having exactly two boundary edges, both bridges.
Since \(L_d\) has no bridge, such a copy cannot cross an edge
joining two copies of \(H\), or an edge joining an end graph.
Consequently, for sufficiently large \(q\), some \(H_j\) is
entirely covered by these copies of \(L_d\); otherwise there
would be at least \(q\) uncovered vertices.

Contract the copies covering this \(H_j\). Each contracted vertex
has two incident boundary edges, counting the two edges leading
outside \(H_j\); since the quotient is connected and its internal
edges are bridges, it is a path with the external edges at its
endpoints. The attachment vertices of each copy are its two deficient
terminals, so \(H\), with the specified vertices \(s,t\), is the
chain in the statement.

Conversely, the resistance between the terminals of \(L_d\) is
\(2/(d-1)\).  A chain of \(m\) such copies has order \(m(d+1)\)
and terminal resistance \(2m/(d-1)+m-1\), giving equality.
\end{proof}

\subsection{Maximal hitting and commute times}
\label{sec:hitting-commute}

We now prove the sharp bounds on hitting and commute times
conjectured by Aldous and Fill \cite[Open Problem~6.14]{AldousFill}.
Recall that \(T_t\) is the first
hitting time of \(t\), and that
\[
 t_{\mathrm{hit}}(G)=\max_{s,t\in V(G)}\mathbb E_sT_t,
 \qquad
 \tau^*(G)=\max_{s,t\in V(G)}
       (\mathbb E_sT_t+\mathbb E_tT_s).
\]
\begin{theorem}\label{thm:extremal-times}
\label{thm:hitting-time}\label{cor:commute-time}
Let \(G\) be a connected \(d\)-regular graph of order \(n\), where
\(d\geqslant3\). Then
\begin{align}
 t_{\mathrm{hit}}(G)
 &\leqslant\min\left\{
 \frac{d(n-1)(n+d)}{2(d-1)},\,
 dn\left(\frac{n}{d-1}-1\right)\right\}
 \leqslant\frac34(n-1)(n+3),
 \label{eq:hitting-time-finite}\\
 \tau^*(G)&\leqslant dn\left(\frac{n}{d-1}-1\right)
 \leqslant\frac32n^2-3n.
 \label{eq:commute-time-finite}
\end{align}
The bounds independent of \(d\) also hold when \(d=2\) and
\(n\geqslant3\). As \(n\) tends to infinity through
even integers, the maxima over all connected regular graphs of order
\(n\) satisfy
\[
 \max_G t_{\mathrm{hit}}(G)=\frac34n^2+O(n),
 \qquad
 \max_G\tau^*(G)=\frac32n^2+O(n).
\]
\end{theorem}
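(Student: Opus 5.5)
The plan has four steps: bound commute time through effective resistance, bound hitting time directly from cut sizes, maximise the resulting expressions over \(d\), and check sharpness on cubic chains.

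\textbf{Commute time.} By the commute-time identity, \(\mathbb E_sT_t+\mathbb E_tT_s=dn\,R_G(s,t)\). So it suffices to prove \(R_G(s,t)\leqslant n/(d-1)-1\) for distinct \(s,t\). I would apply Proposition~\ref{prop:two-terminal-resistance} to \(H=G\) with terminals \(s,t\). The hypotheses hold, since every degree is \(d\geqslant d-1\), and the proposition gives \(1+R_G(s,t)\leqslant n/(d-1)\). This yields the first inequality in \eqref{eq:commute-time-finite}. The function \(d\mapsto dn(n/(d-1)-1)\) decreases in \(d\) once \(n\geqslant(d-1)^2\), and \(d=3\) gives \(\tfrac32n^2-3n\). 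For large \(d\) relative to \(n\), I would compare \(dn\,n/(d-1)\leqslant\tfrac32n^2\) directly, using \(d\geqslant3\). For \(d=2\), the cycle has resistance at most \(n/4\), so its commute time is at most \(n^2/2\).

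\textbf{Hitting time.} I would use the electrical formula \(\mathbb E_sT_t=\sum_v d\,p(v)\), where \(p\) solves \(\mathsf L p=d\one-dn\,\mathbf e_t\) with \(p(t)=0\). Order the vertices decreasingly by \(p\) with \(t\) last. Summing the equation over the top \(i\) vertices gives \(\sum_{\text{cut}}(p_a-p_b)=di\), and each crossing drop is at least \(y_i\). As in \eqref{eq:prefix-bound}, this gives \(q_iy_i\leqslant di\). Hence \(\mathbb E_sT_t\leqslant p(s)\cdot d\) in the appropriate normalisation, and the hitting time is at most \(d\sum_i i/q_i\).

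The main obstacle is bounding \(\sum_{i=1}^{n-1} i/q_i\) by \((n-1)(n+d)/(2(d-1))\). Lemma~\ref{lem:interval-cut} gives \(q_i\geqslant d-1\) except at low cuts. The bridge cuts, those with \(q_i<d-1\), are compensated in blocks of length \(d+1\) by Lemma~\ref{lem:general-low-cut}. I would try a block-averaging argument: on each block \(W\) of length \(r\), use \(\sum_{W} i/q_i\leqslant(\max_W i)\sum_W 1/q_i\). Then bound \(\sum_W 1/q_i\leqslant r/(d-1)\) by the reciprocal-sum analogue of Lemma~\ref{lem:general-low-cut}, as in the diagonal \(F_d\) estimate. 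Summing an arithmetic progression with blocks shifted by at most \(d\) should give the \((n-1)(n+d)\) form. If that fails, I would fall back on \(\mathbb E_sT_t\leqslant\tau^*\), which already gives the second term of the minimum. Setting \(d=3\) gives \(\tfrac34(n-1)(n+3)\), and monotonicity in \(d\) handles larger \(d\).

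\textbf{Sharpness.} On \(X_n\), I would take \(s,t\) at opposite ends of the chain. The resistance is \(\sim n/2\), since each diamond plus bridge has resistance \(2\) per \(4\) vertices. So the commute time is \(3n\cdot n/2+O(n)=\tfrac32n^2+O(n)\). For the hitting time from one end to the other, the prefix-mass formula gives \(\sum_i 3i\cdot(\text{resistance density }1/2)\approx\tfrac34n^2\). Both match the upper bounds up to \(O(n)\).
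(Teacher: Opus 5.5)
Your commute-time argument is the paper's: the commute identity plus Proposition~\ref{prop:two-terminal-resistance} applied to $G$ itself. Note also that $dn(n/(d-1)-1)$ is decreasing for every $d\geqslant2$, so no condition on $n$ is needed.

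The gap is in the hitting-time bound. The step $q_iy_i\leqslant di$ keeps only the diagonal of the cut kernel. The resulting quantity $d\sum_i i/q_i$ cannot be bounded by $d(n-1)(n+d)/(2(d-1))$, because your estimate $\sum_{i\in W}1/q_i\leqslant r/(d-1)$ is false on bridge blocks. In Lemma~\ref{lem:general-low-cut}, the bridge case $k=1$ is proved with the full matrix $\operatorname{diag}(1,B)$, whose off-diagonal entries are essential. The diagonal estimate $F_d$ appears only in the nonbridge case $k\geqslant2$.

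Concretely, a bridge block in a chain of $L_3$'s has cut profile $(1,2,3,2)$, whose reciprocal sum is $7/3>2$. Take $X_n$ with $t$ in one end graph. The ordering by $\mathbb E_vT_t$ runs through the diamonds in order with exactly this profile. Hence $3\sum_i i/q_i=\tfrac78n^2+O(n)$, which exceeds $\tfrac34(n-1)(n+3)$. So your intermediate inequality is already too weak on the extremal graph, and no block summation can rescue it.

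The lost information is visible there. The two middle vertices of a diamond have equal values, so the true increment at the size-three cut is zero, whereas the diagonal bound allows $di/3$. Your fallback $t_{\mathrm{hit}}\leqslant\tau^*$ gives only $\tfrac32n^2-3n$.

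The missing step is to keep the full principal blocks. The identity $\mathbb E_sT_t=d\sum_vV(v)$ holds for the unit-current voltage, $\mathsf LV=\mathbf e_s-\mathbf e_t$. Your $p$ with $\mathsf Lp=d\one-dn\mathbf e_t$ is instead $\mathbb E_vT_t$ itself. Order by $V$ from $t$ to $s$. Current conservation across each initial cut gives $\mathsf K\mathbf y=\one$. On each block $B$ of the low-cut partition from Theorem~\ref{thm:minimum-degree-bound}, Lemma~\ref{lem:general-low-cut} and nonnegativity of all entries give
\[
 \frac{c}{r_B}Y_B^2\leqslant\mathbf y_B^{\mathsf T}\mathsf K_{BB}\mathbf y_B
 \leqslant\sum_{i\in B}y_i(\mathsf K\mathbf y)_i=Y_B .
\]
Hence $Y_B\leqslant r_B/c$ with $c=d-1$, and
\[
 \frac1d\mathbb E_sT_t=\sum_i(n-i)y_i\leqslant\frac1c\sum_Br_B(n-a_B)\leqslant\frac{(n-1)(n+d)}{2c}.
\]
Your ordering by $p$ also works after this repair, since there $(\mathsf K\mathbf y)_i=di$.

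Your reduction to $d=3$ by monotonicity is also wrong. The function $d(n+d)/(d-1)$ increases once $d>1+\sqrt{n+1}$, and exceeds its value at $d=3$ when $d>(n+3)/2$. For $d\leqslant(n+3)/2$, use
\[
 \frac{d(n-1)(n+d)}{2(d-1)}-\frac34(n-1)(n+3)=\frac{(d-3)(n-1)(2d-n-3)}{4(d-1)}\leqslant0 .
\]
Beyond $(n+3)/2$, use the decreasing second term of the minimum. For $d=2$ you also need $t_{\mathrm{hit}}(C_n)=\lfloor n^2/4\rfloor\leqslant\tfrac34(n-1)(n+3)$.

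Your sharpness computations on $X_n$ are fine.
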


The resistance bound gives the commute-time assertion immediately.
For hitting times, we also use the local cut estimates to control
the sum of the voltages.

\begin{proof}[Proof of Theorem~\ref{thm:extremal-times}: commute times]
The commute-time identity gives
\[
 \mathbb E_sT_t+\mathbb E_tT_s=2|E(G)|R_G(s,t)=dnR_G(s,t).
\]
Applying Proposition~\ref{prop:two-terminal-resistance} to \(G\)
with the specified vertices \(s,t\) gives the first inequality,
and the second follows because \(dn(n/(d-1)-1)\) decreases with
\(d\geqslant3\).
A connected \(2\)-regular graph of order \(n\) is the cycle \(C_n\), for which
\(\tau^*(C_n)=2\lfloor n^2/4\rfloor\leqslant3n^2/2-3n\)
when \(n\geqslant3\).
Finally, the cubic chains \(X_n\) have bounded end
graphs and \(n/4+O(1)\) copies of \(L_3\). Each copy has
terminal resistance one, and consecutive copies are joined
by a unit-resistance bridge.  Thus their end-to-end resistance
is \(n/2+O(1)\), and their commute time is
\(3n^2/2+O(n)\).
\end{proof}

\begin{proof}[Proof of Theorem~\ref{thm:extremal-times}: hitting times]
Fix distinct vertices \(s,t\), and let \(V\) be the voltage for
unit current from \(s\) to \(t\), normalised by \(V(t)=0\), so that
\(\mathsf L(G)V=\mathbf e_s-\mathbf e_t\). Since \(V\) is harmonic
away from \(s,t\), with its minimum at \(t\) and maximum at \(s\),
we may order the vertices so that
\[
 t=v_1,\qquad V(v_1)\leqslant\cdots\leqslant V(v_n),
 \qquad v_n=s.
\]
Put \(y_i=V(v_{i+1})-V(v_i)\geqslant0\) and
\(\mathbf y=(y_1,\ldots,y_{n-1})^{\mathsf T}\).
For this vertex order, use the cut notation \(S_i,q_i,\mathsf K\)
from Section~\ref{sec:preliminaries}. The current across every initial
cut is one, so
\begin{equation}\label{eq:voltage-cut-equation}
 \mathsf K\mathbf y=\one.
\end{equation}
This remains true when some voltage values coincide.

Put \(c=d-1\).  The low-cut partition in the proof of
Theorem~\ref{thm:minimum-degree-bound} applies to any vertex
ordering: its construction uses only cut sizes and the degree
condition.  It partitions the increment indices into intervals
\(B\) of lengths \(r_B\leqslant d+1\), on each of which
Lemma~\ref{lem:general-low-cut}, or the singleton bound
\(q_i\geqslant c\), gives
\[
 \mathbf y_B^{\mathsf T}\mathsf K_{BB}\mathbf y_B
 \geqslant\frac{c}{r_B}
       \left(\sum_{i\in B}y_i\right)^2.
\]
All entries of \(\mathsf K\) and \(\mathbf y\) are nonnegative.
Writing \(Y_B=\sum_{i\in B}y_i\), equation
\eqref{eq:voltage-cut-equation} therefore implies
\[
 \frac{c}{r_B}Y_B^2
 \leqslant\mathbf y_B^{\mathsf T}\mathsf K_{BB}\mathbf y_B
 \leqslant\sum_{i\in B}y_i(\mathsf K\mathbf y)_i=Y_B,
 \qquad Y_B\leqslant\frac{r_B}{c}.
\]

The hitting-time identity for regular graphs is
\(\mathbb E_sT_t=d\sum_vV(v)\).  Indeed, if
\(h(v)=\mathbb E_vT_t\), then
\(\mathsf L(G)h=d\one-dn\mathbf e_t\), and taking its inner product
with \(V\) proves the identity.  If \(a_B\) is the first
index in \(B\), it follows that
\begin{align*}
 \frac1d\mathbb E_sT_t
 &=\sum_{i=1}^{n-1}(n-i)y_i
 \leqslant\sum_B(n-a_B)Y_B\\
 &\leqslant\frac1c\sum_B r_B(n-a_B)
 =\frac{n(n-1)+\sum_Br_B(r_B-1)}{2c}
 \leqslant\frac{(n-1)(n+d)}{2c}.
\end{align*}
The second bound in the minimum follows from the commute-time
identity and Proposition~\ref{prop:two-terminal-resistance}.

To obtain the uniform bound, put \(B_n=3(n-1)(n+3)/4\).
For \(3\leqslant d\leqslant(n+3)/2\),
\[
 \frac{d(n-1)(n+d)}{2(d-1)}-B_n
 =\frac{(d-3)(n-1)(2d-n-3)}{4(d-1)}\leqslant0.
\]
For \(d\geqslant(n+3)/2\), the second bound decreases with
\(d\), and its value at \((n+3)/2\) is
\(n(n-1)(n+3)/(2(n+1))\leqslant B_n\).
Cycles satisfy
\(t_{\mathrm{hit}}(C_n)=\lfloor n^2/4\rfloor\), so they also
satisfy the uniform bound.

Finally, the cubic chains \(X_n\) have end-to-end commute time
\(3n^2/2+O(n)\), so at least one of the two directed hitting times
is at least half this value. Together with
\eqref{eq:hitting-time-finite}, this proves the last assertion.
\end{proof}


\begin{thebibliography}{99}

\bibitem{AbdiGhorbaniPartI}
M. Abdi and E. Ghorbani,
\emph{Graphs with minimum algebraic connectivity I:
Proofs of Aldous--Fill and Guiduli--Mohar conjectures},
arXiv:2609.26699v1, 2026,
\url{https://arxiv.org/abs/2609.26699v1}.

\bibitem{AbdiGhorbaniPartII}
M. Abdi and E. Ghorbani,
\emph{Graphs with minimum algebraic connectivity II:
Regular graphs of even degree},
arXiv:2609.26700v1, 2026,
\url{https://arxiv.org/abs/2609.26700v1}.

\bibitem{AbdiGhorbaniQuartic}
M. Abdi and E. Ghorbani,
\emph{Quartic graphs with minimum spectral gap},
J. Graph Theory 102 (2023), 205--233.

\bibitem{AbdiGhorbaniMinDegree}
M. Abdi and E. Ghorbani,
\emph{Graphs of degree at least \(3\) with minimum algebraic connectivity},
SIAM J. Discrete Math. 38 (2024), 2447--2467.

\bibitem{AbdiGhorbaniDiameter}
M. Abdi and E. Ghorbani,
\emph{Minimum algebraic connectivity and maximum diameter:
Aldous--Fill and Guiduli--Mohar conjectures},
J. Combin. Theory Ser. B 167 (2024), 164--188.

\bibitem{AbdiGhorbaniImrich}
M. Abdi, E. Ghorbani, and W. Imrich,
\emph{Regular graphs with minimum spectral gap},
European J. Combin. 95 (2021), 103328, 18 pp.

\bibitem{AksoyChungTaitTobin}
S. G. Aksoy, F. R. K. Chung, M. Tait, and J. Tobin,
\emph{The maximum relaxation time of a random walk},
Adv. in Appl. Math. 101 (2018), 1--14.

\bibitem{AldousFill}
D. Aldous and J. A. Fill,
\emph{Reversible Markov Chains and Random Walks on Graphs},
unfinished monograph, 2002; recompiled 2014,
\url{https://www.stat.berkeley.edu/~aldous/RWG/book.html}.

\bibitem{AlonMilman1985}
N. Alon and V. D. Milman,
\emph{\(\lambda_1\), isoperimetric inequalities for graphs, and superconcentrators},
J. Combin. Theory Ser. B 38 (1985), 73--88.

\bibitem{BauerKellerWojciechowski2015}
F. Bauer, M. Keller, and R. K. Wojciechowski,
\emph{Cheeger inequalities for unbounded graph Laplacians},
J. Eur. Math. Soc. 17 (2015), 259--271.

\bibitem{BrandGuiduliImrich}
C. Brand, B. Guiduli, and W. Imrich,
\emph{Characterization of trivalent graphs with minimal eigenvalue gap},
Croat. Chem. Acta 80 (2007), 193--201.

\bibitem{BussemakerEtAl}
F. C. Bussemaker, S. \v{C}obelji\'c, D. M. Cvetkovi\'c, and J. J. Seidel,
\emph{Cubic graphs on \(\leqslant14\) vertices},
J. Combin. Theory Ser. B 23 (1977), 234--235.

\bibitem{Chung1989}
F. R. K. Chung,
\emph{Diameters and eigenvalues},
J. Amer. Math. Soc. 2 (1989), 187--196.

\bibitem{Chung1997}
F. R. K. Chung,
\emph{Spectral Graph Theory},
CBMS Regional Conference Series in Mathematics 92,
American Mathematical Society, Providence, RI, 1997.

\bibitem{Cioaba2007}
S. M. Cioab\u{a},
\emph{The spectral radius and the maximum degree of irregular graphs},
Electron. J. Combin. 14 (2007), Research Paper 38, 10 pp.

\bibitem{CioabaGregoryNikiforov2007}
S. M. Cioab\u{a}, D. A. Gregory, and V. Nikiforov,
\emph{Extreme eigenvalues of nonregular graphs},
J. Combin. Theory Ser. B 97 (2007), 483--486.

\bibitem{Dodziuk1984}
J. Dodziuk,
\emph{Difference equations, isoperimetric inequality and transience of certain random walks},
Trans. Amer. Math. Soc. 284 (1984), 787--794.

\bibitem{Fiedler1973}
M. Fiedler,
\emph{Algebraic connectivity of graphs},
Czechoslovak Math. J. 23 (1973), 298--305.

\bibitem{GuiduliThesis}
B. Guiduli,
\emph{Spectral Extrema for Graphs},
Ph.D. thesis, University of Chicago, 1996.

\bibitem{Guiduli1997}
B. Guiduli,
\emph{The structure of trivalent graphs with minimal eigenvalue gap},
J. Algebraic Combin. 6 (1997), 321--329.

\bibitem{HoffmanKahlePaquette2021}
C. Hoffman, M. Kahle, and E. Paquette,
\emph{Spectral gaps of random graphs and applications},
Int. Math. Res. Not. IMRN 2021, no. 11, 8353--8404.

\bibitem{HornJohnson}
R. A. Horn and C. R. Johnson,
\emph{Matrix Analysis}, 2nd ed.,
Cambridge University Press, Cambridge, 2013.

\bibitem{LalPatraSahoo}
A. K. Lal, K. L. Patra, and B. K. Sahoo,
\emph{Algebraic connectivity of connected graphs with fixed number of pendant vertices},
Graphs Combin. 27 (2011), 215--229.
\href{https://doi.org/10.1007/s00373-010-0975-0}{doi:10.1007/s00373-010-0975-0}.

\bibitem{LawlerSokal1988}
G. F. Lawler and A. D. Sokal,
\emph{Bounds on the \(L^2\) spectrum for Markov chains and Markov processes:
a generalization of Cheeger's inequality},
Trans. Amer. Math. Soc. 309 (1988), 557--580.

\bibitem{LevinPeresWilmer}
D. A. Levin and Y. Peres,
\emph{Markov Chains and Mixing Times}, 2nd ed.,
with contributions by E. L. Wilmer,
American Mathematical Society, Providence, RI, 2017.

\bibitem{LiuSpectralRadius2024}
L. Liu,
\emph{Extremal spectral radius of nonregular graphs with prescribed maximum degree},
J. Combin. Theory Ser. B 169 (2024), 430--479.

\bibitem{LiuXue}
R. Liu and J. Xue,
\emph{Cubic bipartite graphs with minimum spectral gap},
Adv. in Appl. Math. 180 (2026), 103134.

\bibitem{LiuDualCheeger2015}
S. Liu,
\emph{Multi-way dual Cheeger constants and spectral bounds of graphs},
Adv. Math. 268 (2015), 306--338.

\bibitem{Lovasz1996}
L. Lov\'asz,
\emph{Random walks on graphs: a survey},
in \emph{Combinatorics, Paul Erd\H{o}s is Eighty, Vol.~2}
(Keszthely, 1993), Bolyai Soc. Math. Stud. 2,
J\'anos Bolyai Math. Soc., Budapest, 1996, 353--397.

\bibitem{LubetzkySly2010}
E. Lubetzky and A. Sly,
\emph{Cutoff phenomena for random walks on random regular graphs},
Duke Math. J. 153 (2010), 475--510.

\bibitem{MarcusSpielmanSrivastava2015}
A. W. Marcus, D. A. Spielman, and N. Srivastava,
\emph{Interlacing families I: Bipartite Ramanujan graphs of all degrees},
Ann. of Math. (2) 182 (2015), 307--325.

\bibitem{Mohar1991}
B. Mohar,
\emph{Eigenvalues, diameter, and mean distance in graphs},
Graphs Combin. 7 (1991), 53--64.

\bibitem{Schrijver}
A. Schrijver,
\emph{Theory of Linear and Integer Programming},
John Wiley \& Sons, Chichester, 1986.

\end{thebibliography}
\end{document}